\newtheorem{thm}{Theorem}[section]
\newtheorem*{thm*}{Theorem}
\newtheorem{prop}[thm]{Proposition}
\newtheorem{lem}[thm]{Lemma}
\newtheorem{coro}[thm]{Corollary}
\theoremstyle{remark}
\newtheorem{rem}[thm]{Remark}
\newtheorem{ex}[thm]{Example}
\newtheorem*{ex*}{Example}
\theoremstyle{definition}
\newtheorem{defi}[thm]{Definition}
\renewcommand{\leq}{\leqslant}
\renewcommand{\geq}{\geqslant}
\newcommand{\ZZ}{\mathbb{Z}}
\newcommand{\QQ}{\mathbb{Q}}
\newcommand{\RR}{\mathbb{R}}
\newcommand{\CC}{\mathbb{C}}
\newcommand{\PP}{\mathbb{P}}
\renewcommand{\to}{\rightarrow}
\newcommand{\To}{\longrightarrow}
\renewcommand{\H}{\operatorname{H}}
\newcommand{\dR}{\mathrm{dR}}
\newcommand{\dd}{\mathrm{d}}
\newcommand{\dlog}{\mathrm{dlog}}
\newcommand{\rk}{\mathrm{cr}}
\newcommand{\W}{\mathrm{W}}
\newcommand{\F}{\mathrm{F}}
\newcommand{\gr}{\mathrm{gr}}
\newcommand{\Res}{\mathrm{Res}}
\newcommand{\can}{\mathrm{can}}
\newcommand{\omegacan}{\varpi}
\newcommand{\mapdef}[5]{
\begin{array}{cccc}
#1 \colon & #2 &\longrightarrow & #3 \\
& #4 &\longmapsto& #5
\end{array}
}
\newcommand{\logforms}[1]{\Omega^{#1}_{\,\log}}
\newcommand{\diagram}[1]{\SelectTips{lu}{10}\xymatrix{#1}}
\renewcommand{\varnothing}{\emptyset}
\definecolor{MyBlue}{HTML}{2789f0}
\definecolor{MyGreen}{HTML}{289d43}
\definecolor{MyDarkBlue}{HTML}{22427c}
\definecolor{MyRed}{HTML}{e83b3b}
\title{Positive geometries and canonical forms via mixed Hodge theory}
\date{}
\author{Francis Brown}
\address{All Souls College, Oxford, Oxford OX1 4AL, UK}
\email{francis.brown@all-souls.ox.ac.uk}
\author{Cl\'{e}ment Dupont}
\address{Institut Montpelli\'erain Alexander Grothendieck, Universit\'e de Montpellier, CNRS,
Montpellier, France}
\email{clement.dupont@umontpellier.fr}
\begin{document}

\begin{abstract}
``Positive geometries'' are a class of semi-algebraic domains which admit a unique ``canonical form'': a logarithmic form whose residues  match the boundary structure of the domain. The study of such geometries is motivated by recent progress in particle physics, where the corresponding canonical forms are interpreted as the integrands of scattering amplitudes. We recast these concepts in the language of mixed Hodge theory, and identify ``genus zero pairs'' of complex algebraic varieties as a natural and general framework for the study of positive geometries and their canonical forms. In this framework, we prove some basic properties of canonical forms which have previously been proved or conjectured in the literature. We give many examples and study in detail the case of arrangements of hyperplanes and convex polytopes.
\end{abstract}
 
\maketitle

Prompted by Arkani-Hamed and Trnka's discovery of amplituhedra \cite{arkanihamedtrnkaamplituhedron}, the concept of \emph{positive geometry} \cite{arkanihamedbailampositive} recently emerged as an important tool in the study of scattering amplitudes in particle physics. Roughly speaking, a positive geometry is a semi-algebraic domain $\sigma$ for which there exists a unique logarithmic form $\omegacan_\sigma$, called the \emph{canonical form}, whose residues match the boundary structure of $\sigma$. Semi-algebraic domains are defined by the positivity of certain polynomials, hence the terminology. Convex polytopes, defined by the positivity of linear functions, provide the simplest examples of positive geometries.

The aim of this article is to recast these notions as natural byproducts of Deligne's mixed Hodge theory \cite{delignehodge2}, a central organizing principle in complex algebraic geometry which is intimately linked to properties of logarithmic forms and their residues. 

Our main object of study is the relative homology group $\H_n(X,Y)$, for $X$ a compact complex algebraic variety of dimension $n$ and $Y\subset X$ a closed subvariety, such that $X\setminus Y$ is smooth. It is a finite-dimensional $\QQ$-vector space spanned by classes of oriented domains $\sigma\subset X$ with $\partial(\sigma) \subset Y$, and is equipped with a \emph{mixed Hodge structure}, which consists of  linear algebra data from which one may extract a set of \emph{Hodge numbers} $h^{-p,-q}$, for $0\leq p,q\leq n$.
Let us call $(X,Y)$ a \emph{genus zero pair} if $\H_n(X,Y)$ has vanishing Hodge numbers $h^{-p,0}$ for all $p>0$.
A class of examples is given by $X=\PP^n_\CC$ and $Y$ a union of hyperplanes; in this situation, $\H_n(X,Y)$ is spanned by the classes of projective polytopes $\sigma$ bounded by $Y$. 

Genus zero pairs are a natural framework for  positive geometries and their canonical forms, as summarized by the following result  (see \S\ref{sec:canonical-forms} for more precise statements):

\begin{thm*}
For every genus zero pair $(X,Y)$ as above we define a map
\begin{equation}\label{eq:can-intro}
\mapdef{\can}
{\H_n(X,Y)}{\logforms{n}(X\setminus Y)}
{\sigma}{\omegacan_\sigma}
\end{equation}
and call $\omegacan_\sigma:=\can(\sigma)$ the \emph{canonical form} of $\,\sigma$. The maps $\can$ satisfy the following compatibilities.
\begin{enumerate}[(a)]
\item Linearity, i.e., compatibility with triangulations and with orientation: the total canonical form of a triangulation is the sum of the individual canonical forms of the triangulation, and changing the orientation of $\sigma$  changes the sign of the canonical form.
\item Recursion, i.e., compatibility between boundaries of relative chains and residues of logarithmic forms: the canonical form of the boundary of $\sigma$ is the residue of the canonical form of $\sigma$.
\item Invariance under modifications (e.g., blow-ups).
\item Functoriality: the canonical form of the pushforward (resp. pullback) of $\sigma$ is the pushforward (resp. pullback) of the canonical form of $\sigma$.
\item Multiplicativity, i.e., compatibility with products of pairs of complex varieties: the canonical form of a product is the wedge product of the associated canonical forms. 
\end{enumerate}
\end{thm*}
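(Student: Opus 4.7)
\emph{Proof sketch.} The plan is to construct $\can$ directly from Deligne's mixed Hodge structure on $\H_n(X,Y)$, and to derive properties (a)--(e) from naturality. First, after a log resolution (whose independence will be justified by property (c) below), we may assume $X$ is smooth and $Y$ is a normal crossings divisor; write $U = X \setminus Y$. Lefschetz duality, as a morphism of mixed Hodge structures, gives $\H_n(X, Y; \QQ) \cong \H^n(U; \QQ)(n)$, and Deligne's theorem on logarithmic de Rham cohomology identifies $\F^n \H^n(U; \CC) = \logforms{n}(U)$. Applying the Tate twist yields the target $\F^0 \H_n(X, Y; \CC) \cong \logforms{n}(U)$.

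\emph{Construction of $\can$.} In the Deligne bigrading $\H_n(X, Y; \CC) = \bigoplus I^{-p, -q}$, one has $\F^0 = \bigoplus_q I^{0, -q}$ and $\gr^W_0 = I^{0, 0}$. The genus zero hypothesis $h^{-p, 0} = 0$ for $p > 0$, combined with conjugate symmetry $h^{-p, -q} = h^{-q, -p}$, forces $I^{0, -q} = 0$ for $q > 0$. Hence $\F^0 \H_n(X, Y; \CC) = \gr^W_0 \H_n(X, Y; \CC)$, and since $\gr^W_0$ is defined over $\QQ$ we obtain the canonical $\QQ$-linear map
\[ \can \colon \H_n(X, Y; \QQ) \twoheadrightarrow \gr^W_0 \H_n(X, Y; \QQ) \hookrightarrow \gr^W_0 \H_n(X, Y; \CC) = \F^0 \H_n(X, Y; \CC) = \logforms{n}(U). \]

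\emph{Verifying properties (a)--(e).} Linearity (a) is immediate. Functoriality (d) and multiplicativity (e) follow from the naturality of Deligne's mixed Hodge structures and the K\"unneth formula, using that $\logforms{n}(U) \otimes \logforms{m}(V) \hookrightarrow \logforms{n+m}(U \times V)$. Invariance under modifications (c) holds because a proper modification of $(X, Y)$ induces an isomorphism of mixed Hodge structures on $\H_n$ that is compatible with pullback of logarithmic forms, so the construction descends from a log resolution back to the original pair. Property (b) requires checking that $\can$ intertwines the connecting boundary map of the pair $(X, Y)$ with the Poincar\'e residue along $Y$.

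\emph{Main obstacle.} The delicate step is property (b): matching the homological boundary $\partial$ with the algebraic residue $\Res$. Both operations appear as connecting morphisms in long exact sequences of mixed Hodge structures---the boundary triangle of the pair $(X, Y)$ on the one hand, the residue triangle of the logarithmic de Rham complex on the other---so compatibility should reduce to the naturality of the identification $\F^0 = \gr^W_0$ across these triangles. Making this precise requires a detailed description of Deligne's weight filtration on $\H^\bullet(U)$ in terms of the stratification of $Y$, together with careful bookkeeping of Tate twists, orientations, and signs through Lefschetz duality.
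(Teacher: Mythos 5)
Your construction of $\can$ is essentially the paper's: you project $\H_n(X,Y)$ onto $\gr_0^\W\H_n(X,Y)$, observe that genus zero makes the natural surjection $\F^0\H_n(X,Y)_\CC\to\gr_0^\W\H_n(X,Y)_\CC$ an isomorphism, and transport through Poincar\'e duality and the identification $\logforms{n}(X\setminus Y)\simeq\F^n\H^n(X\setminus Y)_\CC$. Your Deligne-bigrading argument ($I^{0,-q}=0$ for $q>0$, hence $\F^0=I^{0,0}\simeq\gr_0^\W$) is a clean variant of the paper's dimension count, and (a), (c), (d) go through exactly as you say. Two remarks: the initial reduction to a log resolution is unnecessary (the definition works for any compact $X$ with $X\setminus Y$ smooth, since $\logforms{n}$ is intrinsic to the open part), and for (e) you should still check that the product pair has genus zero (a short K\"unneth argument) and pin down the sign $(-1)^{n_1n_2}$ coming from commuting the cross product past Poincar\'e duality.

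The genuine gap is in (b), and it is twofold. First, the statement as you pose it is not even well-formed without an extra hypothesis: for $\omegacan_{\partial(\sigma)}$ to be defined one needs the boundary pair $(Y,Z)$ to have genus zero, and this does \emph{not} follow from $(X,Y)$ having genus zero (the paper exhibits $(\PP^1_\CC\times C,\{0\}\times C)$ with $C$ of positive genus, and a rank-two example in \S\ref{sec:example-non-recursive-positive-geometry}); so the recursion must be stated with $g(Y,Z)=0$ as an assumption. Second, the substantive content you defer --- that Poincar\'e duality intertwines the topological boundary map $\partial\colon\H_n(X,Y)\to\H_{n-1}(Y,Z)$ with the residue map $\Res\colon\H^n(X\setminus Y)\to\H^{n-1}(Y\setminus Z)(-1)$ --- is not a consequence of ``naturality of the identification $\F^0=\gr_0^\W$ across the two triangles.'' That identification is applied only \emph{after} one knows the square of duality isomorphisms commutes; the commutation itself is a statement about the duality pairing, not about Hodge or weight filtrations, and it is exactly where the work lies. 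The paper handles it by passing to an embedded resolution $f\colon\widetilde X\to X$ which is an isomorphism over $X\setminus Z$, so that $\widetilde Y\cup E$ is a simple normal crossing divisor, and then invoking the explicit compatibility of Lefschetz duality with partial boundaries and Poincar\'e residues proved in \cite[Proposition 4.12]{BD1} (which is where the sign convention of Remark \ref{rem:sign-convention-residue} earns its keep). Without supplying this step, or an equivalent local computation in the normal crossing model, property (b) remains unproved.
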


Before going into the details of our construction, we start with an illustrative example (many more examples, of increasing complexity, can be found in \S\ref{sec:examples} and \S\ref{sec:hyperplane-arrangements}).

\begin{ex*}[The one-dimensional case]
Let $X$ be a smooth compact complex curve (also known as a compact Riemann surface) of genus $g$, and let $Y=\{a,b\}$ consist of two distinct points of $X$. There exists a holomorphic $1$-form $\omega$ on $X\setminus \{a,b\}$ with logarithmic poles at $a$ and $b$ and residues $\Res_a(\omega)=-1$, $\Res_b(\omega)=1$, which is only well-defined up to adding  to $\omega$ a global holomorphic form on $X$, whose residues vanish everywhere. The space of global holomorphic forms on $X$ has dimension $g$, and therefore $\omega$ is unique if and only if $g=0$, i.e., if $X=\PP^1_\CC$. In the language of mixed Hodge theory, the obstruction to the uniqueness of $\omega$ is measured by the Hodge number $h^{-1,0}$ of $\H_1(X,Y)$, which equals $g$.

Let now $X=\PP^1_\CC$ and $Y=\{a,b\}\subset \CC\subset \PP^1_\CC$ consist of two distinct complex numbers. The homology group $\H_1(\PP^1_\CC,\{a,b\})$ has dimension $1$ and its only non-zero Hodge number $h^{0,0}=1$; hence $(\PP^1_\CC,\{a,b\})$ is a genus zero pair. All continuous paths $\gamma_{a,b}$ from $a$ to $b$ in $\PP^1_\CC$ have the same homology class, and the corresponding canonical form is
$$\omegacan_{\gamma_{a,b}}= \mathrm{dlog}\left(\frac{z-b}{z-a}\right) = \frac{(b-a)\,\dd z}{(z-a)(z-b)}.$$
It is the unique holomorphic form on $\PP^1_\CC\setminus\{a,b\}$ with logarithmic poles at $a$ and $b$ and respective residues $-1$ and $1$. The latter is the  recursive property of canonical forms described in the  theorem.  Indeed, the residues of $\omegacan_{\gamma_{a,b}}$ are the canonical forms of the boundaries of $\gamma_{a,b}$:
$$\Res_a(\omegacan_{\gamma_{a,b}})=-1=\omegacan_{\partial_a(\gamma_{a,b})} \qquad \mbox{ and } \qquad \Res_b(\omegacan_{\gamma_{a,b}})=1=\omegacan_{\partial_b(\gamma_{a,b})},$$
where $\partial_a(\gamma_{a,b})=-\{a\}$, $\partial_b(\gamma_{a,b})=\{b\}$ are the boundaries of $\gamma_{a,b}$, and the canonical form of a point is the zero-degree form $1$.
The linearity property of canonical forms, also described in the previous theorem, is illustrated by the following identities:
$$\omegacan_{\gamma_{a,c}} = \omegacan_{\gamma_{a,b}} + \omegacan_{\gamma_{b,c}} \qquad , \qquad  \omegacan_{\gamma_{b,a}}= -\omegacan_{\gamma_{a,b}}.$$
\end{ex*}

In the rest of the introduction, we sketch our construction of the map $\can$  and make some general remarks about canonical forms from the point of view of mixed Hodge theory. Let $X$ be a compact complex variety of dimension $n$ and $Y\subset X$ be a closed subvariety such that $X\setminus Y$ is smooth.

\subsection*{Logarithmic forms}

The  vector space $\logforms{n}(X\setminus Y)$ appearing in the theorem above consists of global holomorphic $n$-forms on $X\setminus Y$ \emph{with logarithmic poles at infinity}, i.e., on some smooth compactification of $X\setminus Y$ obtained by adding a normal crossing divisor. Mixed Hodge theory implies that this notion is independent of the choice of compactification and therefore intrinsic to $X\setminus Y$, which is already a remarkable statement in itself. Indeed, there is a natural isomorphism
\begin{equation}\label{eq:intro-log-forms-and-hodge-filtration}
\logforms{n}(X\setminus Y)\simeq \F^n\H^n(X\setminus Y;\CC),
\end{equation}
where $\F$ is the Hodge filtration on cohomology with complex coefficients.

\subsection*{The definition of canonical forms}

By combining \eqref{eq:intro-log-forms-and-hodge-filtration} with the natural (Poincar\'{e}) duality between $\H_n(X,Y)$ and $\H^n(X\setminus Y)$, mixed Hodge theory provides a surjective linear map
\begin{equation}\label{eq:intro-R-map}
R\colon \logforms{n}(X\setminus Y) \twoheadrightarrow \gr_0^\W\H_n(X,Y;\CC),
\end{equation}
where $\gr_0^\W$ denotes the weight zero quotient of homology. The kernel of \eqref{eq:intro-R-map} has dimension equal to the \emph{genus} of $(X,Y)$, defined (in this paper) to be the sum of the Hodge numbers $h^{-p,0}$ of $\H_n(X,Y)$, for $p>0$. If $(X,Y)$ has genus zero, then the $R$-map (which stands for ``iterated residue map'') is an isomorphism and we define \eqref{eq:can-intro} as the composition of the natural quotient map 
$$\H_n(X,Y)\twoheadrightarrow \gr_0^\W\H_n(X,Y)$$
with the inverse of  \eqref{eq:intro-R-map}. Note that, by definition, \eqref{eq:can-intro} becomes surjective after tensoring with $\CC$, and hence \emph{every} holomorphic $n$-form on $X \setminus Y$ with logarithmic poles at infinity is a canonical form of a relative cycle with complex coefficients.

If the genus $g$ of $(X,Y)$ is $>0$, then since \eqref{eq:intro-R-map} is surjective, logarithmic forms with analogous properties to the canonical forms will still exist, but are only defined modulo the $g$-dimensional vector space $\ker(R)$. Therefore, as observed in the case of curves, the genus zero hypothesis is equivalent to the \emph{uniqueness} of canonical forms.

\subsection*{The genus of a pair of varieties}

The genus of a pair $(X,Y)$ generalizes the geometric genus, which is defined for a smooth compact complex variety $X$ of dimension $n$ to be the maximal number of independent holomorphic $n$-forms on $X$,
$$g(X) = \dim \H^0(X,\Omega^n_X).$$
If $X$ is a (possibly singular) compact complex curve and $Y$ is a finite set of points, then the genus of $(X,Y)$ is the classical genus of $X$. In \S\ref{sec:genus} we study the more general notion of the genus of a pair, defined as above as a sum of mixed Hodge numbers of relative homology, and provide tools to compute it. Roughly speaking, the genus of $(X,Y)$ has contributions from the genus of $X$, the genus of the irreducible components of $Y$, and their multiple intersections. 

An important class of examples arises for $X=\PP^n_\CC$ and $Y=$ a hypersurface of degree $d$. The genus is always zero if $d\leq n$, but can still be zero when $d\geq n+1$ if $Y$ is sufficiently singular, e.g., any singular cubic curve in the plane, or any union of hyperplanes in $\PP^n_\CC$.

Being of  genus zero is a weaker condition, and therefore more general,  than being  mixed Tate (which means that  the Hodge numbers $h^{-p,-q}$ vanish for $p\neq q$). For instance, a smooth cubic threefold $X\subset \PP^4_\CC$ has genus zero but has non-zero Hodge numbers $h^{-1,-2}=h^{-2,-1}=5$. 

\subsection*{Computation of canonical forms: recursive and non-recursive}

The properties (a)--(e) listed in the theorem above are important tools for the computation of canonical forms. In particular, the recursion property (b) (see Proposition \ref{prop:recursion}) states that, under natural assumptions, $\omegacan_\sigma$ is the unique holomorphic $n$-form on $X\setminus Y$ with logarithmic poles at infinity whose residue along $Y$ is equal to  the canonical form of the boundary $\partial(\sigma)\subset Y$:
\begin{equation}\label{eq:intro-recursion}
\Res(\omegacan_{\sigma}) = \omegacan_{\partial(\sigma)}.
\end{equation}
This property, which mirrors the \emph{recursive definition} of positive geometries \cite{arkanihamedbailampositive}, sometimes allows one to compute canonical forms by induction on the dimension. Note, however, that this approach simply does not work in  situations where  the genus zero assumption is not met on the boundary, i.e., there exist genus zero pairs which do not meet  the (recursive) definition of positive geometries, and are therefore more general (see \S\ref{sec:genus-zero-not-recursive} and \S\ref{sec:example-non-recursive-positive-geometry}).

Even in situations where the recursive approach holds, it may still be less practical than our non-recursive definition. If $X$ is smooth and $Y$ is a normal crossing divisor, then the $R$-map \eqref{eq:intro-R-map} is computed by  \emph{corner residues}, i.e., iterated $n$-fold residues along local branches of $Y$, and the canonical form $\omegacan_\sigma$ is the unique holomorphic $n$-form on $X\setminus Y$ with logarithmic poles at infinity whose corner residues match the corner boundaries of $\sigma$ (Corollary \ref{coro:corners}). In some situations it is more efficient, and more instructive,  to compute these iterated residues directly rather than applying a recursive definition.

As an application of our non-recursive approach to canonical forms in a more singular situation, we give (in Proposition \ref{prop:canonical-form-nbc}) a general formula for canonical forms of convex polytopes, which we believe to be new.

\subsection*{The combinatorial rank}

Besides the genus, there is another important numerical invariant of the pair $(X,Y)$, which is the Hodge number $h^{0,0}$ of $\H_n(X,Y)$, or equivalently the dimension of the weight zero quotient $\gr_0^\W\H_n(X,Y)$. We call it the \emph{combinatorial rank} because in the case where $X$ is smooth and $Y$ is a normal crossing divisor, it is completely determined by the dual complex $\Delta(Y)$ (see Proposition \ref{prop:rank-as-combinatorial-invariant}). The latter is a classical invariant which appears in tropical and non-archimedean geometry. In our framework, the combinatorial rank is the rank of the $R$-map \eqref{eq:intro-R-map}, and in the genus zero case, equals the dimension of the space of canonical forms.

To summarize: the \emph{existence} of a non-zero canonical form is equivalent to the non-vanishing of the combinatorial rank; its \emph{uniqueness} is equivalent to the vanishing of the genus.

\subsection*{Complex geometry, real geometry}

In the recent literature on positive geometries, the pair $(X,Y)$ is assumed to be the complexification of a pair $(X_\RR,Y_\RR)$ of \emph{real} algebraic varieties, and the domain $\sigma$ to be a semi-algebraic subset of $X_\RR(\RR)$ whose boundary lies on $Y_\RR(\RR)$. As our results show, this assumption is unnecessary for the formalism of canonical forms to hold, even though the real case exhibits interesting phenomena related to convexity, see for example \cite{laminvitation}.

\subsection*{Canonical forms and periods}

By Serre's GAGA theorem, all canonical forms are global algebraic $n$-forms on $X\setminus Y$. Furthermore, if the pair $(X,Y)$ is obtained by extension of scalars from a pair $(X_K,Y_K)$ defined over a subfield $K\subset \CC$, then canonical forms are defined over a finite extension $L$ of $K$, and hence give rise to a linear map
$$\H_n(X,Y) \to \H^n_\dR(X_K\setminus Y_K)\otimes_K L,$$
where $\H^\bullet_\dR$ denotes algebraic de Rham cohomology. This context is discussed in our earlier work \cite{brownnotes, BD1} (there, the mixed Hodge structures $\H^n(X,Y)$ corresponding to genus zero pairs of complex varieties were called ``separated'') where canonical forms were implicit in the definition of the ``de Rham projection''. The latter allows one to pass from a period to a ``single-valued period'' and is roughly given in symbols by
\begin{equation}\label{eq:intro-from-period-to-sv-period}
\int_\sigma \nu \qquad \leadsto \qquad (2\pi\mathrm{i})^{-n} \int_X \omegacan_\sigma\wedge \overline{\nu},
\end{equation}
where $\nu$ is some other algebraic $n$-form whose poles are sufficiently far from $Y$. This operation preserves all algebraic relations ``coming from geometry'', which are believed to exhaust all algebraic relations among periods (Grothendieck's period conjecture). A useful slogan is that: in the genus zero situation 
 motivic periods have a well-defined ``de Rham version'' (which appears as the leading term in the motivic coaction), and consequently there exists a well-defined single-valued projection map which formalizes the operation \eqref{eq:intro-from-period-to-sv-period}.

\subsection*{Relative homology and locally finite homology}
The formalism of canonical forms only depends upon the open variety $X\setminus Y$, and, since $X$ is compact, the relative homology group $\H_n(X,Y)$ is identified with the locally finite (or ``Borel--Moore'') homology group $\H_n^{\mathrm{lf}}(X\setminus Y)$. We have chosen to work in the setting of relative homology because in many concrete situations the compactification $X$ is already part of the data. However, locally finite homology is more canonical in situations when  the geometry  in the initial data does not already come with a preferred compactification (see the case of cluster varieties \cite[Conjecture 1]{laminvitation}). 

\subsection*{The compactness assumption}

In practice one may encounter a situation where the cycle $\sigma$ lies  on some non-compact variety $X$ (for instance when studying integrals of the form  \eqref{eq:intro-from-period-to-sv-period}). As explained in \cite[\S 4]{BD1}, all the relevant parts of the mixed Hodge structure are preserved by compactifying the geometry, hence it is sufficient to consider the case when $X$ is compact.

\subsection*{Beyond genus zero}

If the genus $g$ of $(X,Y)$ is $>0$, then, as noted above, canonical forms are only well-defined up to the $g$-dimensional space $\ker(R)$, where $R$ is  the map \eqref{eq:intro-R-map}. Thus, if one is happy to  work with cosets of logarithmic forms and hence drop the uniqueness property, then the entire theory may be generalized. If $X$ is smooth of positive dimension, then $\ker(R)$ contains all global holomorphic $n$-forms on $X$; if furthermore $Y$ is a simple normal crossing divisor, then $\ker(R)$ consists of those logarithmic $n$-forms whose iterated residue is holomorphic on some stratum of $Y$ of positive dimension (see Remark \ref{rem:ker-of-R}).

In some particular  situations,  the existence of additional structure may lead to a preferred splitting of $R$, and therefore still give rise to preferred choices of canonical forms. See \S\ref{subsec:beyond-genus-zero} for a discussion in the case of (elliptic) curves.

\subsubsection*{Contents}

In \S\ref{sec:generalities} we review the general formalism of mixed Hodge theory and discuss the notion of logarithmic  form. The construction of canonical forms and the proof of our main theorem are carried out in \S\ref{sec:canonical-forms}. In \S\ref{sec:genus} and \S\ref{sec:combinatorial-rank} we study the general notion of genus and combinatorial rank, respectively. Many examples illustrating our construction are given in \S\ref{sec:examples}, and \S\ref{sec:hyperplane-arrangements} contains a specific study of the case of arrangements of hyperplanes and convex polytopes.

\subsubsection*{Conventions and notation}

A \emph{complex variety} is a reduced (not necessarily irreducible) scheme over $\CC$ which is separated and of finite type. We sometimes view such an object as a  complex analytic variety, or as a topological space for the analytic topology. For convenience, we sometimes implicitly assume that our complex varieties are equidimensional. For a $\QQ$-vector space $V$, we let $V_\CC:=V\otimes_\QQ\CC$ denote its complexification.

\subsubsection*{Signs}

We have endeavored to work out the correct signs in all our formulas (in particular in the formulas for canonical forms given in \S\ref{sec:examples} and \S\ref{sec:hyperplane-arrangements}); the difficulty  comes from signs in the boundary and residue maps. We warn the reader that our sign convention for residues is not entirely standard (Remark \ref{rem:sign-convention-residue}) but has the advantage of making the recursion formula \eqref{eq:intro-recursion} sign-free. Furthermore, our formulas for canonical forms sometimes differ from those in the literature by a sign, due to different conventions.

\subsubsection*{Acknowledgements}
We have benefited from helpful discussions and correspondence with Michi Borinsky, Melody Chan, Gabriele Dian, Henri Guenancia, Johannes Henn, Pierre Lairez, Thomas Lam, Lionel Mason, Ana\"{e}lle Pfister, Anna-Laura Sattelberger, Bernd Sturmfels, Raluca Vlad, and Lauren Williams.
F.~B.\ acknowledges support from STFC grant ST/X000761/1. C.~D.\ was supported by the Agence Nationale de la Recherche through grant ANR-23-CE40-0011 CYCLADES.

\section{Generalities on relative (co)homology, mixed Hodge theory, and logarithmic forms}\label{sec:generalities}

\subsection{Relative (co)homology}

    \subsubsection{Notation}

    Basic facts about  relative (co)homology can be found in standard textbooks on algebraic topology, e.g., \cite{hatcherbook}.

    \begin{defi}
    A \emph{pair} of complex varieties is a pair $(X,Y)$ where $X$ is a complex variety and $Y\subset X$ is a closed subvariety. A \emph{morphism} of pairs $f:(X',Y')\to (X,Y)$ is a morphism of complex varieties $f:X'\to X$ which satisfies $f(Y')\subset Y$.
    \end{defi}
    
    For a pair $(X,Y)$ of complex varieties, we denote by $\H_\bullet(X,Y)$ the corresponding relative (singular) homology groups with coefficients in $\QQ$. They are finite-dimensional $\QQ$-vector spaces. By definition, an element of $\H_k(X,Y)$ is the class $[\sigma]$ of a relative $k$-chain $\sigma$, i.e., a formal $\QQ$-linear combination of $k$-simplices, which are continuous maps   $\Delta^k\to X$, whose boundary $\partial\sigma$ is (a linear combination of simplices) contained in $Y$. When there is no ambiguity we will abuse notation and simply write $\sigma$ instead of $[\sigma]$.  We will also work with the corresponding relative cohomology groups, which are the  dual vector spaces $\H^k(X,Y) = \H_k(X,Y)^\vee$.  The usual singular (co)homology groups of a complex variety $X$ are the relative (co)homology groups of the pair $(X,\varnothing)$. Recall the long exact sequence in relative homology for a triple $(X,Y,Z)$, where $Z\subset Y\subset X$ are closed subvarieties, which reads:
    \begin{equation}\label{eq:long-exact-sequence-relative-homology}\cdots \to \H_\bullet(Y,Z)\to \H_\bullet(X,Z) \to \H_\bullet(X,Y)\to \H_{\bullet-1}(Y,Z)\to \cdots
    \end{equation}
    Dually, we have the long exact sequence in relative cohomology:
    \begin{equation}\label{eq:long-exact-sequence-relative-cohomology}\cdots \to \H^{\bullet-1}(Y,Z)\to \H^\bullet(X,Y) \to \H^\bullet(X,Z)\to \H^\bullet(Y,Z)\to \cdots
    \end{equation}

    Relative (co)homology is functorial for morphisms of pairs: a morphism $f:(X',Y')\to (X,Y)$ induces a linear map $f_*:\H_\bullet(X',Y')\to \H_\bullet(X,Y)$, called a pushforward map, and dually induces  a pullback map $f^*:\H^\bullet(X,Y)\to \H^\bullet(X',Y')$.

    If $X$ is compact, then $\H^\bullet(X,Y)$ is isomorphic to the compactly supported cohomology of the complement $X\setminus Y$, and dually $\H_\bullet(X,Y)$ is isomorphic to the locally finite homology of $X\setminus Y$ (which is also known as  Borel-Moore homology):
    \begin{equation}\label{eq:relative-homology-is-lf-homology}
    \H_\bullet(X,Y)\simeq \H^{\mathrm{lf}}_\bullet(X\setminus Y) \qquad \mbox{ and } \qquad \H^\bullet(X,Y)\simeq \H^\bullet_{\mathrm{c}}(X\setminus Y).
    \end{equation}

    \subsubsection{Modifications and the excision theorem}
    
    The classical excision theorem implies that the relative (co)homology of $(X,Y)$ is isomorphic to the reduced (co)homology of the topological quotient $X/Y$ (which is almost never an algebraic variety). This explains why the (co)homology of the pair $(X,Y)$ is insensitive to ``changes occurring purely inside $Y$'', such as (partial) resolutions of singularities. The relevant notion is as follows.
    
    \begin{defi}\label{defi:modification}
    Let $(X,Y)$ be a pair of complex varieties. A \emph{modification} of $(X,Y)$ is a morphism of pairs $f:(X',Y')\to (X,Y)$ where $f:X'\to X$ is proper, $Y'=f^{-1}(Y)$, and $f|_{X'\setminus Y'}:X'\setminus Y'\to X\setminus Y$ is an isomorphism.
    \end{defi}
    
    The main example of a modification is a blow-up
    $$\pi:(\widetilde{X},\widetilde{Y}\cup E) \to (X,Y)$$
    along a closed subvariety $Z\subset Y$, where $E=\pi^{-1}(Z)$ denotes the exceptional divisor, and $\widetilde{Y}$ the strict transform of $Y$. By Hironaka's theorem \cite{hironaka} on embedded resolution of singularities, for every pair $(X,Y)$ of complex varieties, there exists a modification $(X',Y')\to (X,Y)$ with $X'$ smooth and $Y'$ a normal crossing divisor.

\begin{prop}[Excision] 
    Let $f:(X',Y')\to (X,Y)$ be a modification. Then the natural pushforward and pullback maps induce isomorphisms in homology and cohomology:
    \begin{equation}\label{eq:iso-relative-cohomology-excision}
    f_*: \H_\bullet(X',Y')\stackrel{\sim}{\to} \H_\bullet(X,Y)\qquad\mbox{ and } \qquad f^*: \H^\bullet(X,Y)\stackrel{\sim}{\to} \H^\bullet(X',Y').
    \end{equation}
\end{prop}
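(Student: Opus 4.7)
The plan is to reduce the statement to a topological assertion about the quotient spaces $X'/Y'$ and $X/Y$. The key observation is that $f$ descends to a continuous map $\bar{f}\colon X'/Y'\to X/Y$ by the universal property of the quotient, and that this map is a homeomorphism; once this is established, the classical identification of relative homology with reduced homology of the quotient for good pairs yields the desired isomorphisms.

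To prove that $\bar{f}$ is a homeomorphism, I would first check bijectivity. The equality $Y'=f^{-1}(Y)$ ensures that distinct points of $X'\setminus Y'$ have distinct images in $X/Y$ and that no point outside $Y'$ is identified with the collapsed point $[Y']$, while $f|_{X'\setminus Y'}$ being an isomorphism combined with $f(X')$ being closed (by properness) and containing the dense open $X\setminus Y$ shows that $\bar{f}$ is surjective. Next, I would argue that $\bar{f}$ is a closed map: since $f$ is proper and the analytic topologies are Hausdorff, $f$ itself is closed, and a short verification using that the preimage under the quotient map of a closed set is a closed subset of $X'$ which is either saturated (contains $Y'$) or disjoint from $Y'$ transfers this property to $\bar{f}$.

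For the final step, I would invoke the good-pair property: complex varieties admit triangulations compatible with any closed subvariety, so both $(X,Y)$ and $(X',Y')$ are good pairs and there are natural isomorphisms $\H_\bullet(X,Y;\QQ)\simeq \widetilde{\H}_\bullet(X/Y;\QQ)$, and likewise for $(X',Y')$. The pushforward $f_*$ then fits into a commutative square with $\bar{f}_*$ via these isomorphisms; since $\bar{f}$ is a homeomorphism, $\bar{f}_*$ is an isomorphism, and hence so is $f_*$. The cohomology statement follows by $\QQ$-linear duality between the finite-dimensional vector spaces $\H_\bullet(X,Y)$ and $\H^\bullet(X,Y)$.

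The main technical point I expect to be delicate is the verification that $\bar{f}$ is closed, since it requires a little care with quotient topologies and saturations. A more conceptual alternative is entirely sheaf-theoretic: relative cohomology is the hypercohomology $\H^\bullet(X;j_!\QQ_{X\setminus Y})$, where $j\colon X\setminus Y\hookrightarrow X$ is the open inclusion, and proper base change together with $Rf_*=Rf_!$ for the proper map $f$ and the fact that $f$ restricts to an isomorphism on complements yields $Rf_* j'_!\QQ_{X'\setminus Y'}\simeq j_!\QQ_{X\setminus Y}$, from which the isomorphism of hypercohomology groups follows directly.
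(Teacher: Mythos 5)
Your proposal is correct and follows essentially the same route as the paper, which in fact gives both of your arguments: the reduction to a homeomorphism of the quotient spaces $X'/Y'\to X/Y$ (using that $f$ is proper, hence closed, and that relative (co)homology is the reduced (co)homology of the quotient), and the sheaf-theoretic identification $f_*j'_!\QQ_{X'\setminus Y'}\simeq j_!\QQ_{X\setminus Y}$ coming from $f_*=f_!$. Your added care about saturated closed sets and the good-pair property via triangulations fills in details the paper leaves implicit, but the substance is identical.
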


     \begin{proof} 
    We give two proofs of the  equivalent statements \eqref{eq:iso-relative-cohomology-excision}. 
    \begin{enumerate}[1)]
    \item The assumptions imply that the continuous, bijective map $X'/Y'\to X/Y$ induced by $f$ on the topological quotients is a homeomorphism since  $X/Y$ is Hausdorff and $f$ is proper and hence closed. The claim follows from the interpretation of the cohomology of $X$ relative to $Y$ as the reduced cohomology of the topological quotient $X/Y$.  
    \item Using the language of sheaves. If we denote by $j: X\setminus Y \hookrightarrow X$ and $j': X'\setminus Y' \hookrightarrow X'$ the two open immersions, there is a  natural isomorphism of sheaves $f_*j'_!\QQ_{X'\setminus Y'}\simeq j_!\QQ_{X\setminus Y}$, since $f_*= f_!$ by properness of $f$. The claim follows on taking cohomology over $X$. 
    \end{enumerate}
    \end{proof}

    \subsubsection{Partial boundary maps}

    Let $Y_1,Y_2\subset X$ be closed subvarieties of $X$. The long exact sequence \eqref{eq:long-exact-sequence-relative-homology} for the triple $(X,Y_1\cup Y_2,Y_2)$ gives rise to a \emph{partial boundary map}
    \begin{equation}\label{eq:partial-boundary-map-defi}
    \partial_{Y_1}:\H_\bullet(X,Y_1\cup Y_2) \to \H_{\bullet-1}(Y_1\cup Y_2,Y_2)\simeq \H_{\bullet-1}(Y_1,Y_1\cap Y_2),
    \end{equation}
    where the isomorphism on the right follows from excision \eqref{eq:iso-relative-cohomology-excision} applied to the closed immersion of pairs  $(Y_1, Y_1 \cap Y_2)\hookrightarrow (Y_1 \cup Y_2,Y_2)$. The map \eqref{eq:partial-boundary-map-defi} sends (the class of) a relative cycle $\sigma\subset X$ with $\partial(\sigma)\subset Y_1\cup Y_2$ to ``the part of the boundary $\partial(\sigma)$ which is contained in  $Y_1$''.

    \subsubsection{A spectral sequence in relative (co)homology}

	Let $X$ be a complex variety, $Y_1,\ldots,Y_N$ be closed subvarieties of $X$, and write $Y=Y_1\cup\cdots \cup Y_N$. For every subset $I\subset \{1,\ldots,N\}$ we write 
    $$Y_I = \bigcap_{i\in I}Y_i$$
    with the convention $Y_\varnothing=X$. There is a spectral sequence in relative homology:
    \begin{equation}\label{eq:relative-homology-spectral-sequence}
    E^1_{p,q} = \bigoplus_{|I|=p}\H_q(Y_I) \quad \Longrightarrow \quad \H_{p+q}(X,Y),
    \end{equation}
    where the $d^1$ differentials are given by the (alternating sums of the) maps induced in homology by the inclusions $Y_{I\cup\{i\}}\subset Y_I$ with $i\notin I$.
    Dually, there is a spectral sequence in relative cohomology:
    \begin{equation}\label{eq:relative-cohomology-spectral-sequence}
    E_1^{p,q} = \bigoplus_{|I|=p}\H^q(Y_I) \quad \Longrightarrow \quad \H^{p+q}(X,Y).
    \end{equation}
    
\subsection{Mixed Hodge theory}

    \subsubsection{Pure Hodge structures}
			
		\begin{defi}\label{defi:pure-hodge-structure}
		Let $w\in\ZZ$. A \emph{pure Hodge structure} of weight $w$ is the data of a finite dimensional $\QQ$-vector space $H$ and a $\CC$-linear direct sum decomposition
		$$H_\CC = \bigoplus_{\substack{p,q\in\ZZ\\p+q=w}} H^{p,q},$$
        called the \emph{Hodge decomposition}, which satisfies the \emph{Hodge symmetry}
		$$\overline{H^{p,q}}=H^{q,p} \quad \mbox{ for all } p,q.$$
		The corresponding \emph{Hodge numbers} are defined to be  the dimensions 
        $$h^{p,q}=\dim_\CC H^{p,q}=\dim_\CC H^{q,p} = h^{q,p}.$$
		\end{defi}
		
		A landmark result of Hodge \cite{hodge} (see, e.g., \cite{voisin}) implies that if $X$ is a smooth projective complex variety, the cohomology group $\H^k(X)$ carries a natural pure Hodge structure of weight $k$, where the subspace $\H^{p,q}(X)$ appearing in the Hodge decomposition is the space of cohomology classes which can be represented by a smooth differential form of type $(p,q)$. We have an isomorphism:
        \begin{equation}\label{eq:hodge-p-q-as-sheaf-cohomology}
        \H^{p,q}(X)\simeq \H^q(X,\Omega^p_X).
        \end{equation}
		
		For a pure Hodge structure $H$ of weight $w$ one defines a decreasing filtration $\F$ on $H_\CC$, called the \emph{Hodge filtration}, defined by
		$$\F^pH_\CC := \bigoplus_{\substack{a+b=w\\ a\geq p}} H^{a,b}.$$
		One can recover the Hodge decomposition from the Hodge filtration: $H^{p,q} = \F^pH_\CC \cap \overline{\F^qH_\CC}$. A finite dimensional $\QQ$-vector space $H$ equipped with a filtration $\F$ on its complexification defines a pure Hodge structure of weight $w$ if and only if
		$$H_\CC = \F^pH_\CC \oplus \overline{\F^{w-p+1}H_\CC} \quad \mbox{ for all } p.$$
		This axiomatization of the notion of pure Hodge structure turns out to be more relevant than Definition \ref{defi:pure-hodge-structure} for the purpose of defining  mixed Hodge strucutres.

    \subsubsection{Mixed Hodge structures}
			
		\begin{defi}
		A \emph{mixed Hodge structure} is the data of a finite dimensional $\QQ$-vector space $H$ and
		\begin{itemize}
		\item an increasing filtration $\W$ on $H$, called the \emph{weight filtration},
		\item a decreasing filtration $\F$ on $H_\CC$, called the \emph{Hodge filtration},
		\end{itemize}
		such that for every integer $w$, the filtration induced by $\F$ on $\gr_w^\W H:=\W_wH/\W_{w-1}H$ defines a pure Hodge structure of weight $w$. The corresponding \emph{Hodge numbers} are the dimensions 
        $$h^{p,q} = \dim_\CC H^{p,q} = h^{q,p}  \qquad \mbox{ where } \qquad  H^{p,q} := (\gr_{p+q}^\W H)^{p,q}.$$
		\end{defi}
		
		As a first approximation, one can view a mixed Hodge structure $H$ as the collection of the pure Hodge structures $\gr_w^\W H$ of different weights $w$. (However it contains more subtle ``extension data'' between those pure components, which will not play a role in this article.)
  
		Deligne proved \cite{delignehodge1, delignehodge2, delignehodge3} that the cohomology groups of all complex varieties carry natural mixed Hodge structures, which coincide with the pure Hodge structures of classical Hodge theory in the smooth projective case. More generally, there are natural mixed Hodge structures on the relative (co)homology groups $\H_k(X,Y)$ and $\H^k(X,Y)$ for all pairs $(X,Y)$ of complex varieties. We note that if $X$ has dimension $n$, then for all $k$,
        \begin{equation}\label{eq:bound-on-hodge-numbers}
        \H^k(X,Y)^{p,q}\neq 0 \quad \Longrightarrow \quad  \begin{cases} p,q\in\{0,\ldots,k\} & \mbox{ if } k\leq n; \\ p,q\in\{k-n,\ldots,n\} & \mbox{ if } k\geq n. \end{cases}
        \end{equation}
        This is a consequence of \cite[Th\'{e}or\`{e}me 8.2.4 (i), (ii)]{delignehodge3} and the long exact sequence in relative cohomology \eqref{eq:long-exact-sequence-relative-cohomology} with $Z=\varnothing$. Also,
        \begin{equation}\label{eq:bound-on-hodge-numbers-smooth}
        \mbox{for $X$ smooth,}\qquad \H^k(X)^{p,q}\neq 0 \quad \Longrightarrow \quad p+q\geq k
        \end{equation}
        by \cite[Th\'{e}or\`{e}me 8.2.4, (iv)]{delignehodge3}.

    \subsubsection{The category of mixed Hodge structures}

        A morphism between mixed Hodge structures is by definition a $\QQ$-linear map which is compatible with the weight filtration, and whose complexification is compatible with the Hodge filtration. Mixed Hodge structures form an abelian category and the  functors $H\mapsto \gr_w^\W H$ and $H\mapsto \gr_\F^p H_\CC$ are  exact for every $w, p$,  which allows one to compute Hodge numbers via long exact sequences and spectral sequences. 
        
        If $f:(X',Y')\to (X,Y)$ is a morphism of pairs of complex varieties, then the pushforward and pullback maps $f_*:\H_\bullet(X',Y')\to \H_\bullet(X,Y)$ and $f^*:\H^\bullet(X,Y)\to \H^\bullet(X',Y')$ are morphisms of mixed Hodge structures. The long exact sequences \eqref{eq:long-exact-sequence-relative-homology} and \eqref{eq:long-exact-sequence-relative-cohomology}, the excision isomorphism \eqref{eq:iso-relative-cohomology-excision}, the partial boundary map \eqref{eq:partial-boundary-map-defi}, and the spectral sequences \eqref{eq:relative-homology-spectral-sequence} and \eqref{eq:relative-cohomology-spectral-sequence}, all exist in the category of mixed Hodge structures.
  
        The tensor product of two mixed Hodge structures $H_1$, $H_2$ is the vector space $H_1\otimes H_2$ with weight filtration $\W_n (H_1 \otimes H_2) = \sum_{i+j=n} \W_i H_1 \otimes \W_j H_2$ and  Hodge filtration $\F^n (H_1 \otimes H_2)_\CC = \sum_{i+j=n} \F^i (H_1)_{\CC} \otimes \F^j (H_2)_{\CC}$. The K\"{u}nneth isomorphism and the cup-product map in cohomology are compatible with mixed Hodge structures.
        
        The dual of  a mixed Hodge structure $H$ is the vector space $H^{\vee}= \mathrm{Hom}(H,\QQ)$  equipped with the weight filtration $\W_{-n} H^{\vee} =(H/\W_{n-1}H)^{\vee}= \ker (H^{\vee} \rightarrow (\W_{n-1}H)^{\vee})$ and the Hodge filtration $\F^{-n} H^{\vee}_{\CC} =(H_{\CC} /\F^{n+1} H_{\CC})^{\vee}=\ker(H_\CC^\vee\to (\F^{n+1}\H_\CC)^\vee)$. In terms of Hodge numbers, $h^{-p,-q}(H^\vee)=h^{p,q}(H)$ for all $p,q\in\ZZ$. The duality between relative homology and cohomology is compatible with mixed Hodge structures.

    \subsubsection{The mixed Tate case}

        For every $n\in\mathbb{Z}$, the \emph{pure Tate} Hodge structure $\QQ(-n)$ is the pure Hodge structure of weight $2n$ whose underlying vector space is $\QQ$. Its only non-zero Hodge number is $h^{n,n}=1$. For a mixed Hodge structure $H$, the \emph{Tate twist} $H(-n):=H\otimes \QQ(-n)$ satisfies $\W_{w+2n} H(-n)= \W_{w}H$ for all $w$ and $\F^{p+n} H(-n)_\CC = \F^p H_\CC$ for all $p$. If $H$ is pure of weight $w$ then $H(-n)$ is pure of weight $w+2n$.

        A Hodge structure $H$ is said to be \emph{mixed Tate} (or to be a \emph{mixed Hodge--Tate structure}) if its Hodge numbers satisfy $h^{p,q}=0$ for $p\neq q$, or equivalently if its weight-graded quotients $\gr_w^\W H$ are direct sums of pure Tate Hodge structures $\QQ(-w/2)$ for even $w$, and zero for odd $w$.

\subsection{Poincar\'{e} duality}

    Let $X$ be a compact complex variety of dimension $n$ and $Y\subset X$ a closed subvariety such that $X\setminus Y$ is smooth. Recall \eqref{eq:relative-homology-is-lf-homology} that the relative cohomology of the pair $(X,Y)$ is isomorphic to the compactly supported cohomology of $X\setminus Y$. Poincar\'{e} duality therefore produces an isomorphism of mixed Hodge structures:
    \begin{equation}\label{eq:poincare-duality}
    \H_k(X,Y) \stackrel{\sim}{\to} \H^{2n-k}(X\setminus Y)(n).
    \end{equation}
    By definition, \eqref{eq:poincare-duality} sends the class of a relative $k$-cycle $\sigma$ in $X$ with $\partial\sigma\subset Y$ to the class of a smooth closed $(2n-k)$-form $\varphi$ on $X\setminus Y$ which is characterized by the fact that for every smooth closed $k$-form $\eta$ on $X\setminus Y$ with compact support, the following equality holds: 
    \begin{equation}\label{eq:poincare-duality-integration}
    (2\pi\mathrm{i})^{-n} \int_X \varphi\wedge \eta = \int_\sigma\eta.
    \end{equation}

\subsection{Logarithmic forms, residues, and the mixed Hodge theory of smooth varieties}\label{sec:MHS-smooth-varieties}

	We discuss Deligne's construction \cite{delignehodge2, voisin} of the mixed Hodge structure on the cohomology of a smooth complex variety $U$. It uses a smooth compactification $\overline{U}$ of $U$ such that $D=\overline{U}\setminus U$ is a normal crossing divisor, but the mixed Hodge structure on the cohomology of $U$ does not depend on this choice. Such a compactification always exists by Nagata's embedding theorem \cite{nagata} and Hironaka's resolution of singularities \cite{hironaka}.
 
\subsubsection{Logarithmic forms and residues}

     The starting point of the construction is the fact that the cohomology $\H^\bullet(U)_\CC$ can be computed as the hypercohomology of the logarithmic de Rham complex of sheaves
	$$\Omega^\bullet_{\overline{U}}(\log D).$$
	The sections of the latter are the holomorphic forms on $U$ with logarithmic poles along $D$. Concretely, in local holomorphic coordinates $z_1,\ldots,z_n$ on $\overline{U}$ for which $D=\{z_1\cdots z_r=0\}$, they are the holomorphic forms which can be expressed as $\CC$-linear combinations of forms 
	\begin{equation}\label{eq:local-expression-log-form}
	\alpha\wedge \dlog(z_{i_1})\wedge\cdots \wedge \dlog(z_{i_s}),
	\end{equation}
	where $\alpha$ is holomorphic on $\overline{U}$ and $1\leq i_1<\cdots <i_s\leq r$.

    We recall the notion of the  residue of a logarithmic form, in the case where $D$ is a \emph{simple} normal crossing divisor, i.e., whose irreducible components are smooth. Let us denote these components by $D_1,\ldots,D_N$. 
     The residue along $D_N$ of a $k$-form on $\overline{U}$ with logarithmic poles along $D$ is a $(k-1)$-form on $D_N$ with logarithmic poles along $D_N\cap (D_1\cup\cdots\cup D_{N-1})$. Locally, if $D_N$ is defined by an equation $z_N=0$, we have
        \begin{equation}\label{eq:definition-residue}
        \Res_{D_N}\left(\omega\wedge \dlog(z_N)\right) =\omega|_{D_N}, 
        \end{equation}
        for any $(k-1)$-form $\omega$ with logarithmic poles along $D_1\cup\cdots \cup D_{N-1}$.

        \begin{rem}\label{rem:sign-convention-residue}
        The residue operation commutes with the exterior differential: $\Res\circ \dd = \dd\circ \Res$. However, since $\Res$ decreases the degree of forms by $1$, the Koszul sign convention would normally require that $\mathrm{Res}\circ\dd = - \dd \circ \mathrm{Res}$.  This is why in some references a different sign convention is adopted for residues, where $\omega\wedge \dlog(z_N)$ is replaced with $\dlog(z_N)\wedge \omega$ in \eqref{eq:definition-residue}. This multiplies the residue of $k$-forms by the overall sign $(-1)^{k-1}$. Our choice \eqref{eq:definition-residue} of sign convention will make certain formulas simpler, most notably the recursion formula in Proposition \ref{prop:recursion}.
        \end{rem}
        
\subsubsection{The Hodge filtration, and forms with logarithmic poles at infinity}
	
	The Hodge filtration on the cohomology of $U$ is induced by the ``stupid'' filtration of the logarithmic de Rham complex,
	$$\F^p\Omega^\bullet_{\overline{U}}(\log D) := \Omega^{\geq p}_{\overline{U}}(\log D).$$
	The corresponding hypercohomology spectral sequence degenerates at the $E_1$ page, giving rise to isomorphisms
	\begin{equation}\label{eq:cohomology-of-log-forms-hodge-filtration}
    \gr_\F^p\H^{p+q}(U)_\CC \simeq \H^q(\overline{U},\Omega^p_{\overline{U}}(\log D)).
    \end{equation}
    We now focus on the $q=0$ case.
	
	\begin{defi}
	The vector space of \emph{holomorphic $k$-forms on $U$ with logarithmic poles at infinity} is the space of global sections of $\Omega^k_{\overline{U}}(\log D)$, denoted by
	$$\logforms{k}(U) := \H^0(\overline{U},\Omega^k_{\overline{U}}(\log D)).$$ 
	\end{defi}

    The next proposition shows that it is a space of forms on $U$ which is independent of the choice of compactification $\overline{U}$, and so the notation 
    $\logforms{k}(U)$ is unambiguous.

    \begin{prop}\label{prop:logforms-independent-of-compactification}
    The subspace $\logforms{k}(U)\subset \H^0(U,\Omega^k_U)$ is independent of the choice of $\overline{U}$.
    \end{prop}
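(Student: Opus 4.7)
The plan is to leverage mixed Hodge theory to identify $\logforms{k}(U)$, defined via any smooth normal crossing compactification $\overline{U}$, with the same canonical subspace $\F^k\H^k(U)_\CC$ of $\H^k(U)_\CC$. First, given two such compactifications $\overline{U}_1$ and $\overline{U}_2$ of $U$, I would produce a common dominating compactification via Hironaka: take the closure of the diagonal image of $U$ in $\overline{U}_1\times \overline{U}_2$ and resolve singularities (leaving $U$ untouched) to obtain a smooth normal crossing compactification $\overline{U}_3$ equipped with proper birational morphisms $p_i:\overline{U}_3\to \overline{U}_i$ which restrict to the identity on $U$. It therefore suffices to prove the statement for a single proper birational morphism $p:\overline{U}'\to \overline{U}$ between smooth normal crossing compactifications.

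For such a $p$, pullback preserves logarithmic forms: writing $p^*f = z_1^{a_1}\cdots z_r^{a_r}\,g$ in local coordinates on $\overline{U}'$ (with $z_1\cdots z_r=0$ defining $D'$ locally and $g$ a unit), the pullback of a local generator $\dlog(f)$ becomes $\sum_i a_i\,\dlog(z_i)+\dlog(g)$, which is a logarithmic form along $D'$. Since $p$ restricts to the identity on $U$, the pullback $p^*\omega$ agrees with $\omega$ when viewed in $\H^0(U,\Omega^k_U)$. This yields an inclusion $\logforms{k}(U)|_{\overline{U}} \subset \logforms{k}(U)|_{\overline{U}'}$ of subspaces of $\H^0(U,\Omega^k_U)$.

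To promote this inclusion to an equality, I would invoke the isomorphism \eqref{eq:cohomology-of-log-forms-hodge-filtration} with $p=k$, $q=0$, combined with the vanishing $\F^{k+1}\H^k(U)_\CC=0$, which follows from the bounds \eqref{eq:bound-on-hodge-numbers} and \eqref{eq:bound-on-hodge-numbers-smooth}. Together these yield canonical isomorphisms
$$\logforms{k}(U)|_{\overline{U}} \;\xrightarrow{\sim}\; \F^k\H^k(U)_\CC \;\xleftarrow{\sim}\; \logforms{k}(U)|_{\overline{U}'},$$
given in both cases by the cohomology-class map $\omega\mapsto [\omega]$, which is well-defined because $E_1$-degeneration of the logarithmic Hodge-to-de Rham spectral sequence forces every global logarithmic form to be closed. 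This class map depends only on $\omega$ as a closed form on the intrinsic open $U$, not on the chosen compactification. Since both isomorphisms have the same image, the inclusion above must be an equality. The main point to verify carefully is this last compatibility claim: it ultimately rests on the fact that both log de Rham complexes $\Omega^\bullet_{\overline{U}}(\log D)$ and $\Omega^\bullet_{\overline{U}'}(\log D')$ are quasi-isomorphic to $Rj_*\CC_U$ for the respective open immersions $j$, so that the cohomology class of a logarithmic form is intrinsic to $U$.
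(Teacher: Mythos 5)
Your proposal is correct and follows essentially the same route as the paper: a common dominating normal crossing compactification via Hironaka, functoriality of logarithmic forms under pullback, and the identification $\logforms{k}(U)\simeq \gr^k_\F\H^k(U)_\CC=\F^k\H^k(U)_\CC$ coming from the $E_1$-degeneration, which makes the pullback maps isomorphisms and forces the images in $\H^0(U,\Omega^k_U)$ to coincide. The only differences are cosmetic (you reduce to a single birational morphism and spell out the local check that pullback preserves logarithmic poles, which the paper subsumes under ``functoriality of the sheaves of logarithmic forms'').
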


    \begin{proof}
    We repeat Deligne's argument from \cite[Th\'{e}or\`{e}me 3.2.5 (ii)]{delignehodge2}, which proves that the image of $\logforms{k}(U)$ inside the cohomology space $\H^k(U)_\CC$ is independent of the choice of $\overline{U}$. Let $\overline{U}_1$ and $\overline{U}_2$ be two compactifications of $U$, with $D_i=\overline{U}_i\setminus U_i$ a normal crossing divisor for $i=1,2$. By Hironaka's resolution of singularities \cite{hironaka}, one can find a third compactification $\overline{U}$ of $U$, with $D=\overline{U}\setminus U$ a normal crossing divisor, which fits into the following commutative diagram, where $\pi_1$ and $\pi_2$ are the identity on $U$: 
    $$\diagram{
    & \overline{U} \ar[dl]_{\pi_1} \ar[rd]^{\pi_2}& \\
    \overline{U}_1 & & \overline{U}_2 \\
    & U \ar@{_(->}[ul]^{j_1} \ar@{_(->}[uu]^j \ar@{_(->}[ur]_{j_2}& 
    }$$
    By functoriality of the sheaves of logarithmic forms we therefore deduce  the following commutative diagram of pullbacks:
    $$
    \diagram{
    & \H^0(\overline{U},\Omega^k_{\overline{U}}(\log D)) \ar@{^(->}[dd]_{j^*} & \\
    \H^0(\overline{U}_1,\Omega^k_{\overline{U}_1}(\log D_1)) \ar[ru]^{\pi_1^*} \ar@{^(->}[rd]_{j_1^*} & & \H^0(\overline{U}_2,\Omega^k_{\overline{U}_2}(\log D_2)) \ar[lu]_{\pi_2^*} \ar@{^(->}[ld]^{j_2^*} \\
    & \H^0(U,\Omega^k_U) & 
    }$$
    By \eqref{eq:cohomology-of-log-forms-hodge-filtration}, $\pi_1^*$ and $\pi_2^*$ are isomorphisms, and therefore $\mathrm{Im}(j_1^*)=\mathrm{Im}(j^*)=\mathrm{Im}(j_2^*)$.
    \end{proof}
	
	By the degeneration of the hypercohomology spectral sequence for the Hodge filtration at the $E_1$ page, all forms on $U$ with logarithmic poles at infinity are closed, and the map $\logforms{k}(U)\to \H^k(U)_\CC$ sending such a form to its cohomology class induces an identification with the last step of the Hodge filtration:
	\begin{equation}\label{eq:log-forms-to-cohomology}
    \logforms{k}(U)\simeq \F^k\H^k(U)_\CC\subset \H^k(U)_\CC.
    \end{equation}

    Note that forms with logarithmic poles can be multiplied together, making $\logforms{\bullet}(U)$ into a graded algebra.
 
    \begin{rem}
    By Serre's GAGA theorem \cite{serreGAGA} applied to $\Omega^k_{\overline{U}}(\log D)$, all elements of $\logforms{k}(U)$ are automatically \emph{algebraic}, for example, if $U$ is an open in a closed subvariety of projective space $\PP^N_\CC$ with homogeneous coordinates $x_0,\ldots, x_N$, then such a form may be written  as a linear combination of  forms $\frac{f}{g}  \dd x_{i_1} \wedge \ldots \wedge \dd x_{i_k}$,  for some polynomials $f, g \in \CC[x_0, \ldots, x_N]$.
    \end{rem}

    A particularly interesting phenomenon arises in the following extreme case.

    \begin{prop}\label{prop:2-pure}
    If $\,\H^k(U)$ is pure of weight $2k$ then we have a canonical isomorphism
    $$\logforms{k}(U)\simeq \H^k(U)_\CC.$$
    \end{prop}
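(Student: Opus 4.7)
The plan is to combine the identification in \eqref{eq:log-forms-to-cohomology} with the purity hypothesis to show that every cohomology class of $\H^k(U)_\CC$ lies in $\F^k\H^k(U)_\CC$ and is therefore represented by a logarithmic form at infinity. Concretely, \eqref{eq:log-forms-to-cohomology} identifies $\logforms{k}(U)$ with $\F^k\H^k(U)_\CC$, viewed as a subspace of $\H^k(U)_\CC$, so it suffices to prove the equality
\[
\F^k\H^k(U)_\CC = \H^k(U)_\CC
\]
under the assumption that $\H^k(U)$ is pure of weight $2k$. This would produce the canonical isomorphism claimed in the statement.

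To establish this equality, I would examine the Hodge decomposition of $\H^k(U)_\CC$. Purity of weight $2k$ forces all nonzero Hodge components $\H^k(U)^{p,q}$ to satisfy $p+q=2k$, while by definition $\F^k\H^k(U)_\CC$ consists of those components which additionally satisfy $p \geq k$. The task therefore reduces to ruling out nonzero $(p,q)$-components with $p<k$ (equivalently, $q>k$). I would invoke the general bound \eqref{eq:bound-on-hodge-numbers} applied to the pair $(U,\varnothing)$ with $n=\dim U$: the nonvanishing of $\H^k(U)^{p,q}$ forces $q\leq k$ when $k\leq n$, and $q\leq n<k$ when $k>n$ (in the latter range the constraint $p+q=2k$ together with $p,q\leq n$ yields $2k=p+q\leq 2n<2k$, a contradiction, so $\H^k(U)=0$ and the statement becomes vacuous). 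In either case $q\leq k$, and combined with $p+q=2k$ this gives $p\geq k$, yielding the desired equality.

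There is no real obstacle here: once \eqref{eq:log-forms-to-cohomology} is in hand and \eqref{eq:bound-on-hodge-numbers} is available, the argument is purely formal. The only mild subtlety is that \eqref{eq:bound-on-hodge-numbers} is phrased for pairs, so one needs to observe that it also constrains the Hodge types of $\H^k(U)=\H^k(U,\varnothing)$. One may note in passing that the proof in fact shows more: in the nontrivial range $k\leq n$, the purity hypothesis together with Hodge symmetry $h^{p,q}=h^{q,p}$ collapses the Hodge decomposition to the single component $\H^k(U)^{k,k}$, making $\H^k(U)$ a direct sum of copies of the pure Tate Hodge structure $\QQ(-k)$.
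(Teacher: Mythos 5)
Your proposal is correct and follows essentially the same route as the paper: both reduce to showing $\F^k\H^k(U)_\CC=\H^k(U)_\CC$ by combining purity of weight $2k$ with the Hodge-type bounds of \eqref{eq:bound-on-hodge-numbers}, and then invoke the identification \eqref{eq:log-forms-to-cohomology} (equivalently the $q=0$ case of \eqref{eq:cohomology-of-log-forms-hodge-filtration}). You merely spell out the case analysis $k\leq n$ versus $k>n$ more explicitly than the paper does.
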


    \begin{proof}
    If $\H^k(U)$ is pure of weight $2k$, then by \eqref{eq:bound-on-hodge-numbers} the Hodge numbers of $\H^k(U)$ satisfy $h^{p,q}=0$ for $(p,q)\neq (k,k)$, and hence $\F^k\H^k(U)_\CC=\H^k(U)_\CC$. The result then follows from \eqref{eq:cohomology-of-log-forms-hodge-filtration}.
    \end{proof}

    In particular, if $\H^k(U)$ is pure of weight $2k$ for all $k$ then we have a canonical ``formality'' isomorphism of graded algebras $\logforms{\bullet}(U)\simeq \H^\bullet(U)_\CC$. In other words, all cohomology classes can be canonically represented as global forms with logarithmic poles at infinity, in such a way that algebraic relations among cohomology classes are already satisfied by their representatives. This phenomenon arises, e.g., if $U$ is the complement of an arrangement of hyperplanes in affine or projective space, by a result of Brieskorn \cite{brieskorn}.

\subsection{Which forms have logarithmic poles at infinity?}

    \subsubsection{Some general remarks}

    The following proposition, which is a special case of the more general functoriality of logarithmic forms with respect to morphisms, gives a simple way to construct a supply of forms with logarithmic poles at infinity.

    \begin{prop}
    Let $U$ be a smooth complex variety, and let $f:U\to \CC^*$ be a non-vanishing holomorphic function. Then $\mathrm{dlog}(f):=\frac{\dd f}{f}\in \logforms{1}(U)$ is logarithmic. 
    \end{prop}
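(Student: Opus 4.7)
The plan is to build, from $U$ and $f$, a smooth compactification on which $\mathrm{dlog}(f)$ is manifestly logarithmic, and then invoke Proposition \ref{prop:logforms-independent-of-compactification} to conclude independently of the choice of compactification.

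First I would view $f$ as a morphism $U\to \CC^*\hookrightarrow \PP^1_\CC$. Pick any smooth compactification $\overline{U}_0$ of $U$ with boundary a normal crossing divisor (which exists by Nagata and Hironaka). The composition gives a rational map $\overline{U}_0 \dashrightarrow \PP^1$. By Hironaka's resolution of indeterminacies, there exists a proper birational morphism $\pi:\overline{U}\to \overline{U}_0$, an isomorphism over $U$, such that the rational map extends to an honest morphism $\overline{f}:\overline{U}\to \PP^1$, and such that $D:=\overline{U}\setminus U$ is a simple normal crossing divisor. The crucial consequence is that $\overline{f}$ is now a meromorphic function on $\overline{U}$, and since $f$ takes values in $\CC^*$ on $U$, the divisor $\mathrm{div}(\overline{f})$ (counting both zeros and poles) is set-theoretically supported on $D$.

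Next I would work in local holomorphic coordinates $z_1,\ldots,z_n$ around a point of $D$ in which $D=\{z_1\cdots z_r=0\}$. Since $\mathrm{div}(\overline{f})$ is a Cartier divisor supported on the simple normal crossing divisor $D$, unique factorization in the local ring gives integers $a_1,\ldots,a_r\in \ZZ$ (possibly negative) and a holomorphic unit $u$ with
\[
f = u \cdot z_1^{a_1}\cdots z_r^{a_r}.
\]
A direct computation then yields
\[
\mathrm{dlog}(f) = \mathrm{dlog}(u) + \sum_{i=1}^r a_i\,\mathrm{dlog}(z_i),
\]
where $\mathrm{dlog}(u)=du/u$ is holomorphic (since $u$ is a unit) and the other summands are in the local form \eqref{eq:local-expression-log-form}. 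Hence $\mathrm{dlog}(f)$ is a section of $\Omega^1_{\overline{U}}(\log D)$, i.e.\ $\mathrm{dlog}(f)\in \logforms{1}(U)$ for this choice of compactification. By Proposition \ref{prop:logforms-independent-of-compactification}, this suffices.

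The only genuine input beyond formal manipulation is the existence of a good compactification in Step 1; this is the main obstacle, and it is handled by Hironaka's theorems on resolution of singularities and of indeterminacies. Once $\overline{f}$ is a morphism to $\PP^1$ with $D$ a simple normal crossing divisor, the local computation is automatic. I would also note the natural strengthening one gets for free: the same argument shows that $\mathrm{dlog}$ factors through the map $\mathcal{O}(U)^\times \to \logforms{1}(U)$, which justifies calling these classes ``$\mathrm{dlog}$-forms'' in the sequel.
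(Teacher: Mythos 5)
Your argument is correct, and it is more explicit than what the paper offers: the paper states this proposition without proof, deriving it as ``a special case of the more general functoriality of logarithmic forms with respect to morphisms'' (i.e.\ $\dlog(f)=f^*\dlog(z)$ with $\dlog(z)\in\logforms{1}(\CC^*)$, and pullback preserves logarithmic poles at infinity). Your proof instead unwinds what that functoriality amounts to in this special case: compactify, reduce to a local factorization $f=u\,z_1^{a_1}\cdots z_r^{a_r}$ with $u$ a unit, and compute $\dlog(f)=\dlog(u)+\sum_i a_i\,\dlog(z_i)$, which is visibly of the form \eqref{eq:local-expression-log-form}; the appeal to Proposition~\ref{prop:logforms-independent-of-compactification} then correctly removes the dependence on the chosen compactification. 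One simplification you could make: the resolution-of-indeterminacies step is superfluous. Since $f$ is holomorphic and non-vanishing on $U$, its divisor as a rational function on \emph{any} smooth compactification $\overline{U}_0$ with normal crossing boundary $D_0$ is already (set-theoretically) supported on $D_0$, so unique factorization in the regular local rings of $\overline{U}_0$ gives the factorization $f=u\,z_1^{a_1}\cdots z_r^{a_r}$ directly, without ever extending $f$ to a morphism to $\PP^1$. What your direct route buys is a self-contained verification (and, as you note, the observation that $\dlog$ is defined on all of $\mathcal{O}(U)^\times$); what the paper's route buys is that the same one-line deduction applies uniformly to pullbacks of any logarithmic form along any morphism, not just to $\dlog(z)$ along $f$.
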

    
    On the other hand, we note that forms with logarithmic poles at infinity have at most simple poles in the sense of the following definition, which we only state in the case when the ambient variety $X$ is  smooth,  for simplicity. A \emph{hypersurface} $Y\subset X$ is a subvariety of codimension $1$.

    \begin{defi}
    Let $X$ be a smooth compact complex variety of dimension $n$, and $Y\subset X$ be a hypersurface. A holomorphic $n$-form $\omega$ on $X\setminus Y$  \emph{has at most a simple pole along $Y$}  if, for every local equation $f$ for $Y$, the product $f\omega$ extends to a holomorphic $n$-form on $X$.
    \end{defi} 

    The space of holomorphic $n$-forms on $X\setminus Y$ with at most a simple pole along $Y$ is therefore  the space of global sections of the line bundle $\Omega^n_X(Y)$, which we denote by
    $$\mathcal{S}^n(X,Y):=\H^0(X,\Omega^n_X(Y)).$$

    \begin{prop}\label{prop:log-forms-have-simple-poles}
    Let $X$ be a smooth compact complex variety of dimension $n$, and $Y\subset X$ be a hypersurface. We have the inclusion
    \begin{equation}\label{eq:inclusion-log-forms-simple-poles}
    \logforms{n}(X\setminus Y) \subset \mathcal{S}^n(X,Y),
    \end{equation}
    which is an equality if $Y$ is a normal crossing divisor.
    \end{prop}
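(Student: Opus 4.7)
The plan is to deduce the inclusion in the general case from the equality in the normal crossing case, so I would first prove the equality. When $Y$ is already a normal crossing divisor, one may take the smooth compactification of $X\setminus Y$ to be $X$ itself with $D=Y$, so that $\logforms{n}(X\setminus Y)=\H^0(X,\Omega^n_X(\log Y))$ while $\mathcal{S}^n(X,Y)=\H^0(X,\Omega^n_X(Y))$. The claim then reduces to the equality of line bundles $\Omega^n_X(\log Y)=\Omega^n_X(Y)$, which can be verified locally: in coordinates $z_1,\ldots,z_n$ in which $Y=\{z_1\cdots z_r=0\}$, the local model \eqref{eq:local-expression-log-form} shows that every top-degree logarithmic form can be rewritten as $h(z)\,\dlog(z_1)\wedge\cdots\wedge\dlog(z_r)\wedge \dd z_{r+1}\wedge\cdots\wedge \dd z_n$ for a holomorphic function $h$, so both sheaves coincide with the free $\mathcal{O}_X$-module generated by $\frac{\dd z_1 \wedge\cdots\wedge \dd z_n}{z_1\cdots z_r}$.

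For the general inclusion, I would apply Hironaka's theorem to obtain a modification $\pi\colon \widetilde{X}\to X$ with $\widetilde{X}$ smooth and $\widetilde{Y}:=\pi^{-1}(Y)$ a normal crossing divisor. By Proposition \ref{prop:logforms-independent-of-compactification}, every $\omega\in\logforms{n}(X\setminus Y)$ lifts to a global section $\widetilde{\omega}$ of $\Omega^n_{\widetilde{X}}(\log\widetilde{Y})$, which by the first step has at most a simple pole along $\widetilde{Y}$. Now fix a point $x\in Y$ and a local equation $f$ for $Y$ near $x$; since $\widetilde{Y}=\pi^{-1}(Y)$ set-theoretically, $\pi^* f$ vanishes to some order $m_j\geq 1$ along each irreducible component of $\widetilde{Y}$ lying over a neighborhood of $x$, and a direct local computation shows that $\pi^*(f\omega)=\pi^* f\cdot \widetilde{\omega}$ extends holomorphically across $\widetilde{Y}$, and hence defines a global section of $\Omega^n_{\widetilde{X}}$.

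The final step, and the one I expect to be the main subtlety, is the descent from $\widetilde{X}$ back to $X$: the meromorphic form $f\omega$ on $X$, holomorphic on $X\setminus Y$, has holomorphic pullback to $\widetilde{X}$, and we must conclude that $f\omega$ itself is holomorphic on $X$. This rests on the standard identity $\pi_*\Omega^n_{\widetilde{X}}=\Omega^n_X$ for a proper birational morphism between smooth varieties of dimension $n$, which follows from the invertibility of $\Omega^n_X$ on a smooth (hence normal) base together with the fact that $\pi$ is an isomorphism on a dense open subset. Applying $\pi_*$ to the section $\pi^*(f\omega)\in \H^0(\widetilde{X},\Omega^n_{\widetilde{X}})$ yields a global section of $\Omega^n_X$ whose restriction to $X\setminus Y$ is $f\omega$, which gives the required holomorphic extension and thus $\omega\in \mathcal{S}^n(X,Y)$.
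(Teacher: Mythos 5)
Your proof is correct, but it routes the argument differently from the paper. The paper never leaves $X$: it restricts $\omega$ to $U=X\setminus Y^{\mathrm{sing}}$, where the resolution is an isomorphism, observes that $\omega|_U$ therefore has logarithmic (hence at most simple) poles along the \emph{smooth} divisor $Y^{\mathrm{reg}}$, so that $(f\omega)|_U$ is holomorphic on $U$, and then extends across $Y^{\mathrm{sing}}$ by Hartogs since $\mathrm{codim}(Y^{\mathrm{sing}})\geq 2$. You instead work entirely upstairs on the resolution $\widetilde{X}$, showing that $\pi^*f\cdot\pi^*\omega$ is holomorphic everywhere on $\widetilde{X}$ by comparing the multiplicities $m_j\geq 1$ of $\pi^*f$ along the components of $\widetilde{Y}$ with the simple pole of $\pi^*\omega$, and then descend via $\pi_*\Omega^n_{\widetilde{X}}=\Omega^n_X$. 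Both arguments are valid and rely on the same two inputs (embedded resolution, and a Hartogs-type extension on the smooth base); the difference is where the Hartogs input lives. In the paper it is applied directly to $f\omega$ across $Y^{\mathrm{sing}}$; in your version it is hidden inside the identity $\pi_*\Omega^n_{\widetilde{X}}=\Omega^n_X$, whose proof requires not merely that $\pi$ is an isomorphism on a dense open set (as you state) but that the locus in $X$ over which $\pi$ fails to be an isomorphism has codimension $\geq 2$ (a consequence of Zariski's main theorem for a proper birational morphism onto the normal variety $X$), after which one extends sections of the invertible sheaf $\Omega^n_X$ across that locus. You should make that codimension estimate explicit; with it, your descent step is complete. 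Your treatment of the normal crossing case, via the local identification of $\Omega^n_X(\log Y)$ and $\Omega^n_X(Y)$ as line bundles with common generator $\frac{\dd z_1\wedge\cdots\wedge\dd z_n}{z_1\cdots z_r}$, matches what the paper declares to be clear from the definitions, and is moreover the ingredient your step on $\widetilde{X}$ genuinely needs (it is special to top-degree forms, where both sheaves are invertible).
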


    \begin{proof}
    Let us write $Y^{\mathrm{sing}}$ for the singular locus of $Y$, and $Y^{\mathrm{reg}}=Y\setminus Y^{\mathrm{sing}}$. Consider the open $U = X\setminus Y^{\mathrm{sing}}$, and note that $Y^{\mathrm{reg}}$ is a smooth subvariety of $U$. By embedded resolution of singularities, there is a modification $\pi:(X',Y')\to (X,Y)$ where $X'$ is smooth and $Y'$ is a normal crossing divisor. We can further assume that this modification is an isomorphism over $U$. Let $\omega\in \logforms{n}(X\setminus Y)$, then by definition $\pi^*\omega$ has logarithmic poles along $Y'$, and since $\pi$ is an isomorphism over $U$, the restriction $\omega|_U$ has logarithmic poles along $Y^{\mathrm{reg}}$. Therefore, $\omega|_U$ has at most a simple pole along $Y^{\mathrm{reg}}$, and for every local equation $f$ for $Y$, the product $(f\omega)|_U$ extends to a holomorphic $n$-form on $U$.
    Since $X\setminus U$ has codimension $\geq 2$ in $X$, the first claim then follows by Hartogs' theorem. The fact that all forms with at most simple poles are logarithmic in the normal crossing case is clear and follows from the definition. 
    \end{proof}

    \begin{rem}\label{rem:log-forms-on-projective-space}
    For $Y\subset \PP^n_\CC$ a hypersurface of degree $d$, we have an isomorphism of line bundles 
    $$\Omega^n_{\PP^n_\CC}(Y)\simeq \mathcal{O}(d-n-1).$$
    Therefore, if $d\geq n+1$, one can identify $\mathcal{S}^n(\PP^n_\CC, Y)$ with the space of homogeneous polynomials of degree $d-n-1$ in $n+1$ variables. Concretely, choosing homogeneous coordinates $x_0,\ldots, x_n$ on $\PP^n_\CC$ and a homogeneous polynomial $f=f(x_0,\ldots,x_n)$ of degree $d$ whose vanishing locus is $Y$, the elements of $\mathcal{S}^n(\PP^n_\CC, Y)$ are the forms
    $$\frac{\varphi}{f}\sum_{i=0}^n (-1)^i x_i \,\dd x_0\wedge\cdots\wedge \widehat{\dd x_i}\wedge \cdots\wedge \dd x_n,$$
    for $\varphi=\varphi(x_0,\ldots,x_n)$ a homogeneous polynomial of degree $d-n-1$. In particular, we have
    $$\dim\logforms{n}(\PP^n_\CC\setminus Y)\leq \binom{d-1}{n},$$
    with equality if $Y$ is a normal crossing divisor.
    \end{rem}
    
    The inclusion \eqref{eq:inclusion-log-forms-simple-poles} can be strict if $Y$ is not a normal crossing divisor (see, e.g., \S\ref{sec:cuspidal-cubic-and-line} for concrete computations). In the next paragraph, we prove a useful criterion to decide which forms, among those with at most simple poles, are logarithmic at infinity.

    \subsubsection{A useful criterion}

    Let $X$ be a smooth compact complex variety of dimension $n$, and $Y\subset X$ be a hypersurface. Let us write $Y^{\mathrm{sing}}$ for the singular locus of $Y$, and $Y^{\mathrm{reg}}=Y\setminus Y^{\mathrm{sing}}$. When restricted to $X\setminus Y^{\mathrm{sing}}$, a form in $\mathcal{S}^n(X,Y)$ is an $n$-form with logarithmic poles along the smooth subvariety $Y^{\mathrm{reg}}$, hence there is a well-defined residue map
    \begin{equation}\label{eq:residue-for-simple-poles}
    \Res_Y:\mathcal{S}^n(X,Y)\to \H^0(Y^{\mathrm{reg}},\Omega^{n-1}_{Y^{\mathrm{reg}}}).
    \end{equation}
    This map allows one to detect forms which  have logarithmic poles at infinity, as follows.

    \begin{prop}\label{prop:criterion-logarithmic}
    Let $X$ be a smooth compact complex variety of dimension $n$, and let  $Y\subset X$ be a hypersurface. Let $\omega\in\mathcal{S}^n(X,Y)$ be a form on $X\setminus Y$ with at most a simple pole along $Y$. 
    We have the equivalence: 
    $$\omega\in \logforms{n}(X\setminus Y) \qquad \Longleftrightarrow \qquad \Res_Y(\omega)\in \logforms{n-1}(Y^{\mathrm{reg}}).$$ 
    \end{prop}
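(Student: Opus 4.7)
The plan is to combine classical consequences of resolution of singularities for $(\Rightarrow)$ with the strictness of morphisms of mixed Hodge structures for $(\Leftarrow)$.

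For $(\Rightarrow)$, I would choose an embedded resolution $\pi\colon (X',Y')\to (X,Y)$ with $X'$ smooth, $Y'$ a simple normal crossing divisor, $\pi$ an isomorphism over $X\setminus Y^{\mathrm{sing}}$, and such that the strict transform $\widetilde{Y}$ of $Y$ is smooth (so that $\pi|_{\widetilde{Y}}\colon \widetilde{Y}\to Y$ is a resolution of singularities). Writing $E$ for the union of exceptional components of $Y'$, the smooth variety $\widetilde{Y}$ is a compactification of $Y^{\mathrm{reg}}$ with normal crossing divisor at infinity $\widetilde{Y}\cap E$. If $\omega\in\logforms{n}(X\setminus Y)$, then $\pi^*\omega$ is by definition a global section of $\Omega^n_{X'}(\log Y')$, so its Poincar\'{e} residue along $\widetilde{Y}$ is a section of $\Omega^{n-1}_{\widetilde{Y}}(\log(\widetilde{Y}\cap E))$ restricting to $\Res_Y(\omega)$ on $\widetilde{Y}\setminus E\simeq Y^{\mathrm{reg}}$; this exhibits $\Res_Y(\omega)\in\logforms{n-1}(Y^{\mathrm{reg}})$.

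The heart of the argument is the converse $(\Leftarrow)$, where the key input will be the strictness of the Gysin residue map with respect to the Hodge filtration. Since a top-degree meromorphic form is automatically closed, $\omega$ defines a cohomology class $[\omega]\in \H^n(X\setminus Y;\CC)$, and its image under the Gysin map $\Res_Y\colon \H^n(X\setminus Y)\to \H^{n-1}(Y^{\mathrm{reg}})(-1)$ is $[\Res_Y(\omega)]$. The hypothesis $\Res_Y(\omega)\in\logforms{n-1}(Y^{\mathrm{reg}})$, combined with the identification \eqref{eq:log-forms-to-cohomology}, places this class in $\F^{n-1}\H^{n-1}(Y^{\mathrm{reg}})_\CC=\F^n\H^{n-1}(Y^{\mathrm{reg}})(-1)_\CC$. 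Since the Gysin map is a morphism of mixed Hodge structures, it is strict for the Hodge filtration, so there exists an element of $\F^n\H^n(X\setminus Y)_\CC\simeq \logforms{n}(X\setminus Y)$ mapping to $[\Res_Y(\omega)]$; this provides some $\omega'\in \logforms{n}(X\setminus Y)$ with $[\Res_Y(\omega')]=[\Res_Y(\omega)]$. The injectivity of $\logforms{n-1}(Y^{\mathrm{reg}})\hookrightarrow \H^{n-1}(Y^{\mathrm{reg}};\CC)$ (again from \eqref{eq:log-forms-to-cohomology}) then upgrades the equality of cohomology classes to the equality of forms $\Res_Y(\omega')=\Res_Y(\omega)$ on $Y^{\mathrm{reg}}$.

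It then remains to observe that $\omega-\omega'$ lies in $\mathcal{S}^n(X,Y)$ and has vanishing residue along $Y$. A direct local calculation shows that such a form extends holomorphically across $Y^{\mathrm{reg}}$, and Hartogs' theorem extends it further across the codimension-$\geq 2$ subvariety $Y^{\mathrm{sing}}\subset X$; this gives $\omega-\omega'\in \H^0(X,\Omega^n_X)\subseteq \logforms{n}(X\setminus Y)$, and hence $\omega=\omega'+(\omega-\omega')\in \logforms{n}(X\setminus Y)$. The main conceptual obstacle is the appeal to Hodge-theoretic strictness, which allows one to bypass an otherwise delicate direct analysis on a resolution---a naive approach would have to control the pole orders of $\pi^*\omega$ along exceptional divisors of $\pi$ that may fail to meet the strict transform $\widetilde{Y}$, and for which the residue along $\widetilde{Y}$ gives no direct information.
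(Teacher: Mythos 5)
Your proposal is correct. The converse implication $(\Leftarrow)$ is essentially identical to the paper's argument: both invoke strictness of the residue morphism of mixed Hodge structures for the Hodge filtration to produce $\omega'\in\logforms{n}(X\setminus Y)$ with the same residue class, use the injectivity of $\logforms{n-1}(Y^{\mathrm{reg}})\hookrightarrow \H^{n-1}(Y^{\mathrm{reg}})_\CC$ from \eqref{eq:log-forms-to-cohomology} to upgrade this to an equality of forms, and then conclude via the vanishing-residue extension over $Y^{\mathrm{reg}}$ followed by Hartogs' theorem over $Y^{\mathrm{sing}}$.

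Where you genuinely diverge is in the forward implication. The paper disposes of $(\Rightarrow)$ in one line by noting that the cohomological residue map preserves the Hodge filtration and then appealing to \eqref{eq:log-forms-to-cohomology}; you instead work entirely at the level of forms, pulling $\omega$ back to an embedded resolution, taking the Poincar\'{e} residue of $\pi^*\omega$ along the strict transform $\widetilde{Y}$, and observing that this exhibits $\Res_Y(\omega)$ as a global section of $\Omega^{n-1}_{\widetilde{Y}}(\log(\widetilde{Y}\cap E))$ on a smooth compactification of $Y^{\mathrm{reg}}$ with normal crossing boundary. Your route is slightly longer but arguably tighter: the paper's argument only directly shows that the cohomology class $[\Res_Y(\omega)]$ lies in $\F^{n-1}\H^{n-1}(Y^{\mathrm{reg}})_\CC$, and passing from that to the statement that the \emph{form} $\Res_Y(\omega)$ itself is logarithmic at infinity requires an extra identification that your form-level computation on the resolution makes explicit. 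The cost is that you must set up the resolution carefully (smoothness of $\widetilde{Y}$ component by component, and the fact that $\widetilde{Y}\cap E$ is normal crossing in $\widetilde{Y}$), together with the compactification-independence of $\logforms{n-1}(Y^{\mathrm{reg}})$ from Proposition \ref{prop:logforms-independent-of-compactification}; the paper's one-liner stays entirely within the Hodge-theoretic formalism it has already built.
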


    \begin{proof}
    We consider the residue morphism in cohomology,
    $$\Res_Y: \H^n(X\setminus Y) \to \H^{n-1}(Y^{\mathrm{reg}})(-1),$$
    which can be  computed by the map \eqref{eq:residue-for-simple-poles} on classes of $n$-forms which have at most simple poles. It is a morphism of mixed Hodge structures, which implies in particular that
    $$\Res_Y(\F^n\H^n(X\setminus Y)_\CC) \subset \F^{n-1}\H^{n-1}(Y^{\mathrm{reg}})_\CC.$$
    Thanks to \eqref{eq:log-forms-to-cohomology}, this proves the implication $\Longrightarrow$. For the reverse implication, we use the fact that morphisms of mixed Hodge structures are \emph{strictly} compatible with the Hodge filtration \cite[Th\'{e}or\`{e}me 2.3.5 (iii)]{delignehodge2} (this is essentially equivalent to the exactness of the functors $\gr^p_F$). This implies that if $\Res_Y(\omega)\in \logforms{n-1}(Y^{\mathrm{reg}})\simeq \F^{n-1}\H^{n-1}(Y^{\mathrm{reg}})_\CC$, then there exists a form $\eta\in \logforms{n}(X\setminus Y)$ such that $\Res_Y([\omega])=\Res_Y([\eta])$. Since $\Res_Y(\omega)$ and $\Res_Y(\eta)$ are two elements of $\logforms{n-1}(Y^{\mathrm{reg}})$ with the same cohomology class,  \eqref{eq:log-forms-to-cohomology} implies that they are equal. Thus, the difference $\omega-\eta$ is a holomorphic $n$-form on $X\setminus Y$ with at most simple poles along $Y$, and with zero residue along $Y^{\mathrm{reg}}$, which means that it extends to a holomorphic $n$-form on $X\setminus Y^{\mathrm{sing}}$. Since $X$ is smooth and $Y^{\mathrm{sing}}$ has codimension $\geq 2$ in $X$, Hartogs' theorem implies that $\omega-\eta$ extends to a holomorphic $n$-form on $X$, and the claim follows.
    \end{proof}

\section{Canonical forms}\label{sec:canonical-forms}

In this section we define canonical forms and prove our main theorem.

\subsection{Genus, rank, and the \texorpdfstring{$R$}{R}-map}

    Let $X$ be a complex variety of dimension $n$, and $Y\subset X$ be a closed subvariety. By \eqref{eq:bound-on-hodge-numbers}, the only non-vanishing Hodge numbers of the relative homology group $\H_n(X,Y)$ are of the form $h^{-p,-q}$ for $0\leq p,q\leq n$ and thus lie in a square.  Among those, the ones of relevance to the study of canonical forms lie along two edges of this square: 
    \begin{enumerate}
    \item The sum of the  Hodge numbers $h^{-p,0}=h^{0,-p}$ for $p>0$ will be called  the \emph{genus}.  It measures the  obstruction to the uniqueness of canonical forms: canonical forms are therefore only well-defined in the genus zero case.
    \item We shall call the  Hodge number $h^{0,0}$  the \emph{combinatorial rank}. In the genus zero case, it equals the dimension of the space of canonical forms.
    \end{enumerate}

    \subsubsection{Genus}
    
    The following terminology is non-standard.
    
    \begin{defi}\label{defi:genus}
    The \emph{genus} of $(X,Y)$ is
    $$g(X,Y):=\sum_{p>0}h^{-p,0}(\H_n(X,Y)).$$
    \end{defi}

    We will study the notion of genus in more detail in \S\ref{sec:genus} below. Until then, let us simply mention the bound
    $$g(X,Y)\geq g(X):=g(X,\varnothing),$$
    and the fact that, in the case when $X$ is smooth and compact of dimension $n\geq 1$, then
    $$g(X)=\dim \H^0(X,\Omega^n_X)$$
    coincides with the classical \emph{geometric genus}.
    
    \subsubsection{The combinatorial rank}

    Consider the weight $0$ quotient 
    $$\gr_0^\W\H_n(X,Y) = \H_n(X,Y)/\W_{-1}\H_n(X,Y).$$
    It underlies a pure Hodge structure of weight $0$ and type $(0,0)$.

    \begin{defi}\label{defi:rank}
    The \emph{combinatorial rank} of $(X,Y)$ is the dimension of the weight $0$ quotient of $\H_n(X,Y)$, or equivalently the Hodge number $h^{0,0}$ of $\H_n(X,Y)$, denoted by
    $$\rk(X,Y):=\dim \gr_0^\W\H_n(X,Y) = h^{0,0}(\H_n(X,Y)).$$
    \end{defi}

    We will study the notion of combinatorial rank in more detail in \S\ref{sec:combinatorial-rank} below. We note the equality
    \begin{equation} \label{eq: g-plus-rk-equals-dimF0} g(X,Y) + \rk(X,Y) = \dim\F^0\H_n(X,Y).
    \end{equation}

\subsubsection{The $R$-map}

    The genus and the combinatorial rank are related as follows. 

    \begin{lem}\label{lem:map-from-F0-to-H00}
    The natural linear map
    \begin{equation}\label{eq:map-from-F0-to-H00}
    \F^0\H_n(X,Y)_\CC \to  \H_n(X,Y)_\CC/\W_{-1}\H_n(X,Y)_\CC = \gr_0^\W\H_n(X,Y)_\CC
    \end{equation}
    is surjective and its kernel has dimension equal to the genus of $(X,Y)$. 
    \end{lem}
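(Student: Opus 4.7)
\medskip

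The plan is to extract both claims from a single input: the strict compatibility of morphisms of mixed Hodge structures with the Hodge filtration, applied to the surjection $\H_n(X,Y) \twoheadrightarrow \gr_0^\W \H_n(X,Y)$. Write $H = \H_n(X,Y)$ for brevity. Since the nonvanishing Hodge numbers of $H$ are of the form $h^{-p,-q}$ with $0 \le p, q \le n$ (i.e.\ $H^{a,b} \ne 0$ forces $a, b \le 0$), the pure Hodge structure $\gr_0^\W H$ of weight $0$ lives entirely in bidegree $(0,0)$, so $\F^0 \gr_0^\W H_\CC = \gr_0^\W H_\CC$.

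First I would prove surjectivity. The quotient map $H \twoheadrightarrow \gr_0^\W H$ is a morphism of mixed Hodge structures, hence by strictness with respect to the Hodge filtration \cite[Th\'{e}or\`{e}me 2.3.5 (iii)]{delignehodge2} its image on $\F^0$ is exactly $\F^0 \gr_0^\W H_\CC$. By the previous observation this equals $\gr_0^\W H_\CC$, proving surjectivity.

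Next, I would identify the kernel. Again by strictness, $\F^0 H_\CC \cap \W_{-1} H_\CC = \F^0 \W_{-1} H_\CC$, where $\F^0$ on $\W_{-1}H$ denotes the induced Hodge filtration. Now use exactness of the functors $\gr_w^\W$ and $\gr_\F^p$ to compute
\begin{equation*}
\dim \F^0 \W_{-1} H_\CC \;=\; \sum_{w \le -1} \dim \F^0 \gr_w^\W H_\CC \;=\; \sum_{w \le -1}\; \sum_{\substack{a+b=w \\ a \ge 0}} h^{a,b}.
\end{equation*}
Since $h^{a,b} \ne 0$ forces $a \le 0$, only $a = 0$ contributes, giving $\sum_{q \ge 1} h^{0,-q}$, which equals $g(X,Y)$ by Hodge symmetry $h^{0,-q} = h^{-q,0}$ and Definition~\ref{defi:genus}.

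The only real content is the strictness of morphisms of mixed Hodge structures with respect to $\F$; everything else is bookkeeping with the constraints on the Hodge diamond of $\H_n(X,Y)$ coming from \eqref{eq:bound-on-hodge-numbers}. No single step is a genuine obstacle, but the one that requires the most care is making sure the ``intersection equals induced filtration'' identity $\F^0 H_\CC \cap \W_{-1} H_\CC = \F^0 \W_{-1} H_\CC$ is invoked correctly, as this is exactly where strictness enters.
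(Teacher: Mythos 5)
Your proof is correct and takes essentially the same route as the paper: surjectivity follows because $\gr_0^\W\H_n(X,Y)$ is pure of type $(0,0)$ so that $\F^0$ exhausts it, and the kernel dimension is obtained by counting Hodge numbers using the constraint that only $h^{-p,-q}$ with $p,q\geq 0$ can be non-zero. The only (cosmetic) difference is that you compute $\dim(\F^0\cap\W_{-1})$ directly via exactness of $\gr_\F$ and $\gr^\W$, whereas the paper computes $\dim\F^0\H_n(X,Y)_\CC=\sum_{p\geq 0}h^{0,-p}$ and subtracts the dimension $h^{0,0}$ of the image.
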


    \begin{proof}
    Since the pure Hodge structure $H=\gr^W_0 \H_n(X,Y)$ has  weight $0$ and type $(0,0)$, we have  $H_\CC=\F^0H_\CC$, which is equivalent to \eqref{eq:map-from-F0-to-H00} being surjective. Since the dimension of $\F^0\H_n(X,Y)_\CC$ is the sum of the Hodge numbers $h^{0,-p}=h^{-p,0}$ for $p\geq 0$, the second claim follows. 
    \end{proof}

    Using mixed Hodge theory and Poincar\'{e} duality, we can now recast \eqref{eq:map-from-F0-to-H00} as a map involving logarithmic forms, as follows.
    \begin{defi}
    Let $X$ be a compact complex variety of dimension $n$, and $Y\subset X$ be a closed subvariety such that $X\setminus Y$ is smooth. We define a surjective linear map
    \begin{equation}\label{eq:R-map}
R\,:\, \logforms{n}(X\setminus Y) \twoheadrightarrow \gr_0^\W\H_n(X,Y)_\CC
    \end{equation}
    by composing the following three maps:
    \begin{enumerate}[a)]
    \item the isomorphism \eqref{eq:log-forms-to-cohomology},
    $$\logforms{n}(X\setminus Y)\simeq \F^n\H^n(X\setminus Y)_\CC;$$
    \item the (inverse of the) Poincar\'{e} duality isomorphism \eqref{eq:poincare-duality} in degree $k=n$,
    $$\F^n\H^n(X\setminus Y)_\CC \simeq \F^0\H_n(X,Y)_\CC;$$
    \item the surjection \eqref{eq:map-from-F0-to-H00},
    $$\F^0\H_n(X,Y)_\CC\twoheadrightarrow \gr_0^\W\H_n(X,Y)_\CC.$$
    \end{enumerate}
    \end{defi}

 If $Y=D$ is a simple normal crossing divisor, the $R$-map \eqref{eq:R-map} is computed by taking $n$-fold iterated residues along the corners (zero-dimensional strata) of $D$, see Proposition \ref{prop:corners}. The symbol $R$ is intended to  suggest  an iterated residue. 

The following lemma is a restatement of Lemma \ref{lem:map-from-F0-to-H00}.

\begin{lem}\label{lem:R-map-and-genus}
The kernel of the $R$-map \eqref{eq:R-map} has dimension equal to the genus of $(X,Y)$.
\end{lem}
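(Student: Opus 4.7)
The plan is to chase the three-step definition of the $R$-map and observe that the first two constituent maps are isomorphisms, so that $\ker(R)$ maps isomorphically onto the kernel of the third map, namely the natural surjection $\F^0 \H_n(X,Y)_\CC \twoheadrightarrow \gr_0^\W \H_n(X,Y)_\CC$ appearing in Lemma \ref{lem:map-from-F0-to-H00}.

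First I would unpack step (a): by \eqref{eq:log-forms-to-cohomology}, the identification $\logforms{n}(X\setminus Y) \simeq \F^n \H^n(X\setminus Y)_\CC$ is an isomorphism, coming from the $E_1$-degeneration of the Hodge filtration on the logarithmic de Rham complex of any smooth normal crossing compactification of $X\setminus Y$. Next, step (b) is the restriction of Poincar\'{e} duality \eqref{eq:poincare-duality} to the $\F^n$-piece of cohomology: since \eqref{eq:poincare-duality} is an isomorphism of mixed Hodge structures, it restricts to an isomorphism on each step of the Hodge filtration, and combined with the Tate-twist formula $\F^0 \H^n(X\setminus Y)(n)_\CC = \F^n \H^n(X\setminus Y)_\CC$ this yields $\F^n \H^n(X\setminus Y)_\CC \simeq \F^0 \H_n(X,Y)_\CC$.

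It follows that $\dim \ker(R) = \dim \ker\bigl(\F^0 \H_n(X,Y)_\CC \twoheadrightarrow \gr_0^\W \H_n(X,Y)_\CC\bigr)$, and by Lemma \ref{lem:map-from-F0-to-H00} the latter equals $g(X,Y)$. There is no substantive obstacle to overcome here: the present statement is essentially a reformulation of Lemma \ref{lem:map-from-F0-to-H00} through the Poincar\'{e} duality and the logarithmic-form identifications already established in \S\ref{sec:generalities}, which is precisely what the author signals by calling it a restatement.
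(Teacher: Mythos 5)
Your proof is correct and follows exactly the paper's route: the paper's entire proof is the remark that this lemma is a restatement of Lemma \ref{lem:map-from-F0-to-H00}, since steps (a) and (b) in the definition of the $R$-map are isomorphisms and so the kernel is carried isomorphically to the kernel of the surjection \eqref{eq:map-from-F0-to-H00}. Your unpacking of the Tate-twist bookkeeping and the strict compatibility of Poincar\'{e} duality with the Hodge filtration is accurate detail that the paper leaves implicit.
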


\subsection{Definition of canonical forms}

By  Lemma \ref{lem:R-map-and-genus}, the $R$-map is an isomorphism if (and only if) $(X,Y)$ has genus zero, which allows us to make the following definition.

\begin{defi}\label{defi:can-compact}
Let $X$ be a compact complex variety of dimension $n$, and $Y\subset X$ be a closed subvariety such that $X\setminus Y$ is smooth. Assume that $(X,Y)$ has genus zero. We define the morphism 
\begin{equation}\label{eq:can-defi}
\can\colon \H_n(X,Y)\to \logforms{n}(X\setminus Y)
\end{equation}
to be the composition of the quotient map
$$\H_n(X,Y)\twoheadrightarrow \gr_0^\W\H_n(X,Y)$$
with the inverse of the $R$-map
$$\logforms{n}(X\setminus Y)\stackrel{\sim}{\to} \gr_0^\W\H_n(X,Y)_\CC.$$
For a class $\sigma\in \H_n(X,Y)$ we write
$$\omegacan_\sigma:=\can(\sigma)$$
and call it the \emph{canonical form} of $\sigma$.
\end{defi}

\begin{rem} \label{rem: every-log-form-is-canonical}
In the setting of the previous definition, extending scalars to $\CC$ gives rise to a surjective $\CC$-linear map
\begin{equation}\label{eq:can-complexified}
\can_\CC:\H_n(X,Y)_\CC \twoheadrightarrow \logforms{n}(X\setminus Y),
\end{equation}
and therefore \emph{every} holomorphic $n$-form on $X\setminus Y$ with logarithmic poles at infinity is the canonical form of a relative $n$-cycle of $(X,Y)$ with coefficients in $\CC$.
\end{rem}

\begin{rem}\label{rem:2-pure-canonical-forms}
Let $X$ be a compact complex variety of dimension $n$, and $Y\subset X$ be a closed subvariety such that $X\setminus Y$ is smooth. If $\H^n(X\setminus Y)$ is pure of weight $2n$, then Poincar\'{e} duality \eqref{eq:poincare-duality} implies that $\H_n(X,Y)$ is pure of weight $0$ and type $(0,0)$, i.e.,  $\H_n(X,Y)\simeq \gr_0^\W\H_n(X,Y)$. Therefore, $(X,Y)$ has genus zero and combinatorial rank equal to the dimension of $\H_n(X,Y)$. In this case, the morphism \eqref{eq:can-complexified} is an isomorphism.
\end{rem}

\subsection{Basic properties of canonical forms}

In this paragraph, $X$ is a compact complex variety of dimension $n$ and $Y\subset X$ is a closed subvariety such that $X\setminus Y$ is smooth and $(X,Y)$ has genus zero. We state and prove the compatibilities between the morphisms \eqref{eq:can-defi} promised in the introduction, only providing a proof when a result is not an easy consequence of the definition.

\subsubsection{Invariance under modification}

If $f:(X',Y')\to (X,Y)$ is a modification, then $(X',Y')$ has genus zero because of the isomorphism of mixed Hodge structures \eqref{eq:iso-relative-cohomology-excision} for $\bullet=n$. The pairs $(X,Y)$ and $(X',Y')$ have ``the same canonical forms'' because we have the following commutative diagram whose horizontal maps are isomorphisms:
\begin{equation}\label{eq:can-and-modification}
\begin{gathered}
\diagram{
\H_n(X',Y') \ar[r]^-{\can} \ar[d]_-{f_*}^{\sim} & \logforms{n}(X'\setminus Y') \\
\H_n(X,Y) \ar[r]^-{\can} & \logforms{n} (X\setminus Y) \ar[u]_-{f^*}^-{\sim}
}
\end{gathered}
\end{equation}

\begin{rem}
Invariance under modification is immediate in our formalism. By contrast, in the setting of positive geometries in the sense of \cite{arkanihamedbailampositive}, invariance under modifications is more subtle \cite{wondertopes}.
\end{rem}

\subsubsection{Spurious boundary, spurious poles}

Sometimes one is presented with a domain $\sigma$ in $X$ whose boundary is contained in a smaller subvariety $Y'\subset Y$ of $X$. In this case, the corresponding canonical form $\omegacan_\sigma$ is the same whether one views $\sigma$ as a class in $\H_n(X,Y)$ or $\H_n(X,Y')$, and therefore only has poles along the subvariety $Y'\subset Y$.

In other words, we have the following commutative diagram:
\begin{equation}\label{eq:irrelevant-boundary}
\begin{gathered}
\diagram{
\H_n(X,Y') \ar[d]\ar[r]^-{\can} & \logforms{n}(X\setminus Y') \ar@{^(->}[d] \\
\H_n(X,Y) \ar[r]^-{\can} & \logforms{n}(X\setminus Y)
}
\end{gathered}
\end{equation}
\subsubsection{Linearity}

By definition, the morphism \eqref{eq:can-defi} is $\QQ$-linear, which amounts to the following identities on canonical forms:
\begin{equation}\label{eq:can-linearity} 
\omegacan_{\sigma+\sigma'} = \omegacan_\sigma + \omegacan_{\sigma'}\quad \mbox{ and } \quad \omegacan_{a\sigma}=a\omegacan_\sigma, 
\end{equation}
for $\sigma,\sigma'\in\H_n(X,Y)$ and $a\in\QQ$. In particular, reversing the orientation  of $\sigma$ multiplies the canonical form by $-1$:
$$\omegacan_{-\sigma} = -\omegacan_\sigma.$$
This linearity property allows one to compute canonical forms by ``triangulating'', i.e., writing a chain representing $\sigma$ as a linear combination of chains.
Note that new boundary components may appear in the individual chains constituting the triangulation (e.g., when cutting a square along a diagonal to form two triangles), and one implicitly uses \eqref{eq:irrelevant-boundary} to justify the computation.

\subsubsection{Functoriality for pushforwards}

Let $X'$ be another compact complex variety of the same dimension $n$, and $Y'\subset X'$ be a closed subvariety such that $X'\setminus Y'$ is smooth and $(X',Y')$  also has genus zero. Let $f:X'\to X$ be a morphism such that $f(Y')\subset Y$.  We have a pushforward map in relative homology: 
$$f_*:\H_\bullet(X',Y')\to \H_\bullet(X,Y),$$
which, owing to Poincar\'{e} duality \eqref{eq:poincare-duality},  gives rise to a pushforward map:
$$f_*:\H^\bullet(X'\setminus Y')\to \H^\bullet(X\setminus Y).$$
This is a morphism of mixed Hodge structures, and therefore after applying the Hodge filtration functor $\F^\bullet$ (more precisely, $\F^k$ on $\H^k$, see \eqref{eq:log-forms-to-cohomology}) we deduce  a pushforward map:
$$f_*:\logforms{\bullet}(X'\setminus Y')\to \logforms{\bullet}(X\setminus Y).$$
This can be thought of as  a trace, or ``summation over  fibers''   when the fibers of $f$ are finite sets of points. We have the following commutative diagram: 
$$\diagram{
\H_n(X',Y') \ar^-{\can}[r] \ar[d]_-{f_*} & \logforms{n}(X'\setminus Y') \ar[d]^-{f_*}\\
\H_n(X,Y) \ar^-{\can}[r] & \logforms{n}(X\setminus Y)
}$$
In other words, for every $\sigma'\in\H_n(X',Y')$ we have the equality
\begin{equation*}
\omegacan_{f_*\sigma'} = f_*\omegacan_{\sigma'}.
\end{equation*}

\begin{ex}
Let $N\geq 1$ be an integer, and let $\mu_N$ denote the set of complex $N$-th roots of unity. Let $X=X'=\PP^1_\CC$, $Y=\{0,1\}$, $Y'=\{0\}\cup\mu_N$, and $f$ be  given by $w\mapsto z= w^N$. Let $\sigma'$ be (the relative homology class of) a path from $0$ to $\xi\in\mu_N$, for which $f_*\sigma'$ is (the relative homology class of) a path from $0$ to $1$. We check that:
$$f_*\omegacan_{\sigma'} = f_*\dlog\left(\frac{w-\xi}{w}\right) = \sum_{w^N=z}\dlog\left(\frac{w-\xi}{w}\right) = \dlog\left(\frac{z-1}{z}\right) = \omegacan_{f_*\sigma'}.$$
\end{ex}

\subsubsection{Functoriality for pullbacks}

Let $X'$ be another compact complex variety of the same dimension $n$, and $Y'\subset X'$ be a closed subvariety such that $X'\setminus Y'$ is smooth and $(X',Y')$ has genus zero. Let $g:X'\setminus Y'\to X\setminus Y$ be a covering map, i.e., a smooth morphism. 
Then we have a pullback map on the level of locally finite homology: 
$$g^*:\H_\bullet^{\mathrm{lf}}(X\setminus Y)\to \H_\bullet^{\mathrm{lf}}(X'\setminus Y'),$$
which by \eqref{eq:relative-homology-is-lf-homology} we view as a pullback map in relative homology:
$$g^*:\H_\bullet(X,Y)\to \H_\bullet(X',Y').$$
We have the following commutative diagram: 
$$\diagram{
\H_n(X',Y') \ar^-{\can}[r]  & \logforms{n}(X'\setminus Y') \\
\H_n(X,Y) \ar^-{\can}[r] \ar[u]^-{g^*} & \logforms{n}(X\setminus Y) \ar[u]_-{g^*}
}$$
In other words, for every $\sigma\in\H_n(X,Y)$ we have the equality
\begin{equation*}
\omegacan_{g^*\sigma} = g^*\omegacan_{\sigma}.
\end{equation*}

\begin{ex}
Let $N\geq 1$ be an integer, and let $\mu_N$ denote the set of complex $N$-th roots of unity. Let $X=X'=\PP^1_\CC$, $Y=\{0,\infty,1\}$, $Y'=\{0,\infty\}\cup\mu_N$, and $g$ be given by $w\mapsto z=w^N$. Let $\sigma$ be (the relative homology class of) a path from $0$ to $1$, for which $g^*\sigma$ is the sum of the (relative homology classes of) paths from $0$ to $\xi$, for $\xi\in\mu_N$. We check:
$$g^*\omegacan_{\sigma} = g^*\dlog\left(\frac{z-1}{z}\right) = \dlog\left(\frac{w^N-1}{w^N}\right) = \sum_{\xi\in \mu_N}\dlog\left(\frac{w-\xi}{w}\right) = \omegacan_{g^*\sigma},$$
where in the last equality we have used the linearity \eqref{eq:can-linearity}.
\end{ex}

\subsubsection{Multiplicativity}

For $i=1,2$, let $X_i$ be a compact complex variety of dimension $n_i$, and $Y_i\subset X_i$ be a closed subvariety such that $X_i\setminus Y_i$ is smooth.
We let
$$(X,Y)=(X_1\times X_2, (Y_1\times X_2)\cup (X_1\times Y_2)), $$
with projection maps $p_i:X\setminus Y\to X_i\setminus Y_i$. The cross product map in relative homology reads
$$\H_{n_1}(X_1,Y_1)\otimes \H_{n_2}(X_2,Y_2) \to \H_{n_1+n_2}(X,Y) \;\; , \;\; \sigma_1\otimes \sigma_2\mapsto \sigma_1\times \sigma_2.$$

\begin{prop}\label{prop:multiplicativity}
If $(X_1,Y_1)$ and $(X_2,Y_2)$ have genus zero, then so does $(X,Y)$, and we have the following equality, for $\sigma_i\in \H^{n_i}(X_i,Y_i)$,
    $$\omegacan_{\sigma_1\times\sigma_2} = (-1)^{n_1n_2} p_1^*\omegacan_{\sigma_1}\wedge p_2^*\omegacan_{\sigma_2} = p_2^*\omegacan_{\sigma_2}\wedge p_1^*\omegacan_{\sigma_1}.$$
\end{prop}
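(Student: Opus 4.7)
The plan is to reduce everything to the Künneth decomposition for relative (co)homology and its interaction with the mixed Hodge structure and Poincaré duality, then to extract the sign from a direct Fubini computation.

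For the genus-zero claim, observe that $X\setminus Y=(X_1\setminus Y_1)\times(X_2\setminus Y_2)$ is smooth, and that Künneth gives a decomposition of mixed Hodge structures
$$\H_n(X,Y)\;\simeq\;\bigoplus_{k+l=n}\H_k(X_1,Y_1)\otimes \H_l(X_2,Y_2).$$
Dualizing the Hodge-number bounds \eqref{eq:bound-on-hodge-numbers} to homology shows that $\H_k(X_i,Y_i)$ has nontrivial pieces $h^{0,-q}$ only when $k\leq n_i$, so $(k,l)=(n_1,n_2)$ is the only term that contributes to $\F^0$ or to $\gr_0^\W$. Consequently
\begin{align*}
\F^0\H_n(X,Y)_\CC&\simeq\F^0\H_{n_1}(X_1,Y_1)_\CC\otimes\F^0\H_{n_2}(X_2,Y_2)_\CC,\\
\gr_0^\W\H_n(X,Y)&\simeq\gr_0^\W\H_{n_1}(X_1,Y_1)\otimes\gr_0^\W\H_{n_2}(X_2,Y_2),
\end{align*}
and genus zero for each factor, combined with \eqref{eq: g-plus-rk-equals-dimF0} and Lemma \ref{lem:map-from-F0-to-H00}, yields genus zero for the product.

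For multiplicativity of canonical forms I would verify that each of the three ingredients (a), (b), (c) composing the $R$-map in Definition \ref{defi:can-compact} is compatible with the Künneth / cross-product structure. Ingredient (a) is multiplicative because the isomorphism \eqref{eq:log-forms-to-cohomology} is compatible with the cup-product and the wedge of two forms with logarithmic poles at infinity is again one, so $(\omega_1,\omega_2)\mapsto p_1^*\omega_1\wedge p_2^*\omega_2$ implements the Künneth map on $\F^n\H^n$. Ingredient (c) is multiplicative because $\W$ is compatible with tensor products. The only nontrivial point is (b): one must check that under \eqref{eq:poincare-duality-integration} the cross product $\sigma_1\times\sigma_2$ is taken to $(-1)^{n_1n_2}\,p_1^*\mathrm{PD}(\sigma_1)\wedge p_2^*\mathrm{PD}(\sigma_2)$. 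I would prove this by testing against a compactly supported closed form $\eta=p_1^*\eta_1\wedge p_2^*\eta_2$ and applying Fubini; the sign $(-1)^{n_1n_2}$ arises exclusively from the graded commutativity needed to move $p_2^*\mathrm{PD}(\sigma_2)$ past $p_1^*\eta_1$ before separating the variables. Combining (a), (b), (c) then gives $\omegacan_{\sigma_1\times\sigma_2}=(-1)^{n_1n_2}\,p_1^*\omegacan_{\sigma_1}\wedge p_2^*\omegacan_{\sigma_2}$, and the second equality in the statement is again the graded commutativity of the wedge product, now applied to an $n_1$-form and an $n_2$-form.

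The main obstacle is purely bookkeeping: reconciling the sign produced by Fubini with the normalization $(2\pi\mathrm{i})^{-n}$ and the Tate twist in \eqref{eq:poincare-duality}, so as to ensure the net sign is exactly $(-1)^{n_1n_2}$ and not something else. The structural, genus-zero part of the argument is instead a formal consequence of Künneth plus Deligne's bounds \eqref{eq:bound-on-hodge-numbers} on Hodge numbers.
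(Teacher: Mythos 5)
Your proposal is correct and follows essentially the same route as the paper: the genus-zero claim is the paper's Proposition \ref{prop:product-genus-zero} (K\"unneth plus the Hodge-number bounds \eqref{eq:bound-on-hodge-numbers} isolating the $(n_1,n_2)$ term), and the sign $(-1)^{n_1n_2}$ is obtained exactly as in the paper, by testing the compatibility of Poincar\'e duality with cross/cup products against $p_1^*\eta_1\wedge p_2^*\eta_2$ and applying Fubini, the sign coming from commuting the degree-$n_2$ dual form of $\sigma_2$ past $\eta_1$.
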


\begin{proof}
The first statement is Proposition \ref{prop:product-genus-zero} below. 
We consider the following diagram, whose rows are the Poincar\'{e} duality isomorphisms and whose columns are the cross product and cup product map respectively (recall that the latter is given on the level of differential forms by the formula $\omega_1\otimes\omega_2\mapsto p_1^*\omega_1\wedge p_2^*\omega_2$): 
$$\diagram{
\H_{n_1+n_2}(X, Y) \ar[r]^-{\sim} & \H^{n_1+n_2}(X\setminus Y)(n_1+n_2) \\
\H_{n_1}(X_1,Y_1)\otimes\H_{n_2}(X_2,Y_2) \ar[r]^-{\sim}\ar[u] & \H^{n_1}(X_1 \setminus Y_1)(n_1)\otimes \H^{n_2}(X_2 \setminus Y_2)(n_2) \ar[u]
}
$$
This square commutes  up to the sign $(-1)^{n_1n_2}$. Indeed, for $\sigma_i\in \H_{n_i}(X_i,Y_i)$ corresponding via Poincar\'{e} duality to classes of smooth forms $\varphi_i$ on $X_i\setminus Y_i$ of degree $n_i$, and for closed smooth forms with compact support $\eta_i$ on $X_i\setminus Y_i$, also of degree $n_i$, we have
\begin{align*}
    (-1)^{n_1n_2}\int_{X_1\times X_2}p_1^*\varphi_{\sigma_1}\wedge p_2^*\varphi_{\sigma_2}\wedge p_1^*\eta_1\wedge p_2^*\eta_2 & = \int_{X_1}\varphi_{\sigma_1}\wedge\eta_1 \cdot \int_{X_2}\varphi_{\sigma_2}\wedge \eta_2 \\
    & = (2\pi\mathrm{i})^{n_1} \int_{\sigma_1}\eta_1 \,\cdot\, (2\pi\mathrm{i})^{n_2}\int_{\sigma_2}\eta_2\\
    & = (2\pi\mathrm{i})^{n_1+n_2}  \int_{\sigma_1\times \sigma_2}p_1^*\eta_1\wedge p_2^*\eta_2.
\end{align*}
The claim follows.
\end{proof}

\begin{ex}
Consider $X_1=X_2=\PP^1_\CC$ with coordinates $(z_1,z_2)$, and $Y_1=Y_2=\{0,1\}$. For $i=1,2$, the canonical form of the interval $\sigma_i=[0,1]\subset X_i$ is 
$$\omegacan_{\sigma_i}=\mathrm{dlog}\left(\frac{z_i-1}{z_i}\right).$$
Since $n_1=n_2=1$, Proposition \ref{prop:multiplicativity} states that the canonical form of the square $\sigma_1\times\sigma_2=[0,1]^2$ is
$$\omegacan_{\sigma_1\times\sigma_2}=-\mathrm{dlog}\left(\frac{z_1-1}{z_1}\right)\wedge \mathrm{dlog}\left(\frac{z_2-1}{z_2}\right).$$
One way to check that the sign is correct is to compare it with the recursion formula from Proposition \ref{prop:recursion} below on all four sides of the square. For instance, taking the residue along the line $\{z_2=1\}$, we get
$$\Res_{\{z_2=1\}}(\omegacan_{\sigma_1\times\sigma_2}) = -\mathrm{dlog}\left(\frac{z_1-1}{z_1}\right).$$
This is indeed the canonical form of the partial boundary $\partial_{\{z_2=1\}}[0,1]^2$, which is the interval $[0,1]$ oriented from $1$ to $0$.
\end{ex}

\subsection{Canonical forms, boundaries, and residues}

\subsubsection{Recursion}

For a closed subvariety $Z\subset Y$, we have a \emph{boundary map}
$$\partial:\H_\bullet(X,Y)\to \H_{\bullet-1}(Y,Z)$$
which appears in the long exact sequence in relative homology \eqref{eq:long-exact-sequence-relative-homology} for the triple $(X,Y,Z)$. If $X\setminus Z$ and $Y\setminus Z$ are smooth, we also have a residue map
$$\Res:\H^\bullet(X\setminus Y)\to \H^{\bullet-1}(Y\setminus Z)(-1),$$
and applying the Hodge filtration functor $\F^\bullet$ produces a map
$$\Res:\logforms{\bullet}(X\setminus Y)\to \logforms{\bullet-1}(Y\setminus Z).$$
Note that we do indeed  require  $X\setminus Z$ to be smooth for this to make sense, even though we only need $X\setminus Y$ to be smooth for the canonical  map \eqref{eq:can-defi} to be defined.

\begin{prop}\label{prop:recursion}
Let $X$ be a compact complex variety of dimension $n\geq 1$, $Y\subset X$ be a closed subvariety of codimension $1$, and $Z\subset Y$ be a closed subvariety of codimension $\geq 1$, such that both $X\setminus Z$ and $Y\setminus Z$ are smooth.
\begin{enumerate}[1)]
\item Assume that $(Y,Z)$ has genus zero. Then $(X,Y)$ has genus zero if and only if $X$ does.
\item Assume that $(Y,Z)$ and $(X,Y)$ have genus zero. Then,
    \begin{enumerate}[a)]
    \item The following diagram commutes: 
    \begin{equation}\label{eq:can-recursion}
    \begin{gathered}
    \diagram{
    \H_n(X,Y) \ar[d]_-{\partial}\ar[r]^-{\can} & \logforms{n}(X\setminus Y) \ar[d]^{\Res} \\
    \H_{n-1}(Y,Z) \ar[r]^-{\can} & \logforms{n-1}(Y\setminus Z)
    }
    \end{gathered}
    \end{equation}
    In other words, for $\sigma\in \H_n(X,Y)$, we have the equality
    \begin{equation}\label{eq:recursion-formula}
    \mathrm{Res}(\omegacan_{\sigma}) = \omegacan_{\partial(\sigma)}.
    \end{equation}
    \item The morphism $\Res$ appearing in \eqref{eq:can-recursion} is injective if and only if $X$ has combinatorial rank zero (e.g., if $X$ is smooth). In this case, \eqref{eq:recursion-formula} completely characterizes $\omegacan_\sigma\in\logforms{n}(X\setminus Y)$.
    \end{enumerate}
    \end{enumerate}
\end{prop}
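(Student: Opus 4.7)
\emph{Plan.} The strategy is to apply the exact functors $\F^0(\cdot)_\CC$ and $\gr_0^\W$ to suitable long exact sequences of mixed Hodge structures, and to invoke \eqref{eq: g-plus-rk-equals-dimF0}, which implies in particular that the \emph{genus of an MHS $H$}, defined as $\sum_{p>0} h^{-p,0}(H)$, is additive in short exact sequences of MHS.

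For \emph{Part 1)}, I start from the long exact sequence of the pair $(X,Y)$,
$$\cdots \to \H_n(Y) \to \H_n(X) \to \H_n(X,Y) \to \H_{n-1}(Y) \to \H_{n-1}(X) \to \cdots.$$
The weight bound \eqref{eq:bound-on-hodge-numbers} applied to the compact variety $Y$ of dimension $n-1$ forces $\F^0 \H_n(Y)_\CC = 0$ and $\gr_0^\W \H_n(Y) = 0$; a short diagram chase then yields the identity $g(X,Y) = g(X) + g(K)$, where $K := \ker(\H_{n-1}(Y) \to \H_{n-1}(X))$. I then decompose $K$ as an extension of $K' := j(K)$ by $K \cap \ker(j)$, where $j: \H_{n-1}(Y) \to \H_{n-1}(Y,Z)$ is the natural map. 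Since $(Y,Z)$ has genus zero, the sub-MHS $K' \subset \H_{n-1}(Y,Z)$ also has genus zero by additivity; the weight bound applied to $Z$ (of codimension $\geq 1$ in $Y$) gives $\F^0 \H_{n-1}(Z)_\CC = 0$, so $\F^0 \ker(j) = 0$, forcing $g(K \cap \ker j) = 0$. Additivity of the genus yields $g(K) = 0$, whence $g(X,Y) = g(X)$, and the equivalence follows.

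\emph{Part 2a)} is a formal consequence of naturality. After inverting the $R$-maps, the recursion \eqref{eq:recursion-formula} becomes the commutativity of $R_Y \circ \Res = \overline{\partial} \circ R_X$, where $\overline{\partial}$ is induced by the boundary on $\gr_0^\W$. Each of the three ingredients in the definition of $R$ in \eqref{eq:R-map} — the isomorphism \eqref{eq:log-forms-to-cohomology}, Poincar\'e duality \eqref{eq:poincare-duality}, and the quotient of Lemma \ref{lem:map-from-F0-to-H00} — is functorial in the pair; $\partial$ corresponds to $\Res$ under Poincar\'e duality (a classical fact, with the sign convention of Remark \ref{rem:sign-convention-residue}); and the strictness of morphisms of MHS with respect to $\F$ identifies the induced map on $\F^n\H^n$ with the residue on logarithmic forms.

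For \emph{Part 2b)}, the isomorphisms $R_X, R_Y$ (Lemma \ref{lem:R-map-and-genus}) reduce the injectivity of $\Res$ to that of $\overline{\partial}$. Exactness of $\gr_0^\W$ applied to the long exact sequence of the triple $(X,Y,Z)$ identifies $\ker(\overline{\partial})$ with $\gr_0^\W$ of the image of $\H_n(X,Z) \to \H_n(X,Y)$; weight bounds for $Y$ and $Z$ (of dimension $\leq n-2$) give $\gr_0^\W \H_n(Y,Z) = 0$ and $\gr_0^\W \H_n(Z) = \gr_0^\W \H_{n-1}(Z) = 0$, so the triple and pair sequences combine to yield $\gr_0^\W \H_n(X) \xrightarrow{\sim} \gr_0^\W \H_n(X,Z) \hookrightarrow \gr_0^\W \H_n(X,Y)$, whence $\dim \ker(\overline{\partial}) = \rk(X)$. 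Thus $\Res$ is injective iff $\rk(X) = 0$, which holds for smooth $X$ of dimension $n \geq 1$ by purity of $\H_n(X) \cong \H^n(X)(n)$ at weight $-n$. When $\Res$ is injective, the identity \eqref{eq:recursion-formula} proved in 2a) pins down $\omegacan_\sigma$ uniquely. The bulk of the technical work lies in Part 1), where the additivity of genus and the careful threading of the $(Y,Z)$ hypothesis through several long exact sequences are the key moves; Parts 2a) and 2b) are essentially formal once the framework of exact functors on MHS is set up.
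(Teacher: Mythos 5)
Your overall architecture matches the paper's: both proofs reduce everything to the exactness of the functors $\F^0$ and $\gr_0^\W$ on long exact sequences of mixed Hodge structures, and your Part 2b) is essentially identical to the paper's argument (identify $\ker(\overline{\partial})$ with $\gr_0^\W\H_n(X,Z)\simeq \gr_0^\W\H_n(X)$ via the triple and pair sequences). For Part 1) you take a genuinely different but valid route: the paper invokes its general recursion inequality for the genus (Proposition \ref{prop:genus-inequalities-new}, applied to the triple $(X,Y,Z)$) together with the Hartogs property (Proposition \ref{prop:hartogs}), whereas you run the homology long exact sequences of the pairs $(X,Y)$ and $(Y,Z)$ directly and use additivity of the genus on subquotients. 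Your version is more self-contained; the paper's factors the same computation through reusable lemmas. (Minor point: additivity of the genus is cleanest to see from the additivity of each individual Hodge number $h^{p,q}$, which follows from exactness of $\gr^\W_w$ and $\gr_\F^p$, rather than from \eqref{eq: g-plus-rk-equals-dimF0}.)

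The one substantive gap is in Part 2a). You correctly reduce the claim to the identity $R_Y\circ \Res = \overline{\partial}\circ R_X$, and correctly isolate its crux: that the boundary map $\partial$ corresponds to the residue map under Poincar\'{e} duality, with the sign of Remark \ref{rem:sign-convention-residue}. But you then dismiss this as ``a classical fact,'' and in the generality of the proposition it is not: $Y$ need not be a normal crossing divisor in $X$ (only $X\setminus Z$ and $Y\setminus Z$ are assumed smooth), so the residue on logarithmic forms is only \emph{defined} by applying $\F^\bullet$ to the cohomological residue, and the classical local computation relating $\partial$ and $\Res$ is not directly available. The paper's proof of 2a) consists precisely of filling this in: it chooses an embedded resolution $f\colon\widetilde{X}\to X$, an isomorphism over $X\setminus Z$, making $f^{-1}(Y)$ and $f^{-1}(Z)$ simple normal crossing, proves the compatibility there by citing \cite[Proposition 4.12]{BD1}, and then transports the statement back using the invariance of $\can$ under modifications \eqref{eq:can-and-modification} applied to both $(\widetilde{X},\widetilde{Y}\cup E)\to(X,Y)$ and $(\widetilde{Y},\widetilde{Y}\cap E)\to(Y,Z)$. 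You should either reproduce this reduction or give an independent proof of the duality between $\partial$ and $\Res$; as written, the key step of 2a) is asserted rather than proved.
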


\begin{proof}
\begin{enumerate}[1)]
\item By Proposition \ref{prop:genus-inequalities-new} below, if $g(Y,Z)=0$ then $g(X,Y)=g(X,Z)$. Since $Z$ has codimension $\geq 2$ in $X$, applying Proposition \ref{prop:hartogs} below yields $g(X,Z)=g(X)$.
\item  
\begin{enumerate}[a)]
\item By embedded resolution of singularities, there exists a proper morphism $f:\widetilde{X}\to X$ with $\widetilde{X}$ smooth, such that both $f^{-1}(Y)$ and $f^{-1}(Z)$ are simple normal crossing divisors. Since $X\setminus Z$ and $Y\setminus Z$ are smooth, we can assume that $f$ is an isomorphism over $X\setminus Z$. We let $\widetilde{Y}$ denote the strict transform of $Y$ with respect to  $f$,  and $E:=f^{-1}(Z)$ denote the total inverse image of $Z$. We therefore have modifications $(\widetilde{X},\widetilde{Y}\cup E)\to (X,Y)$ and $(\widetilde{Y},\widetilde{Y}\cap E)\to (Y,Z)$. By \cite[Proposition 4.12]{BD1}, the following diagram commutes, where the rows are given by the Poincar\'{e} duality isomorphisms \eqref{eq:poincare-duality}:
\begin{equation*}
\begin{gathered}
\diagram{
\H_n(\widetilde{X},\widetilde{Y}\cup E) \ar[d]_-{\partial_{\widetilde{Y}}}\ar[r]^-{\sim}  & \H^n(\widetilde{X}\setminus \widetilde{Y}\cup E)(n) \ar[d]^{\Res_{\widetilde{Y}}} \\
\H_{n-1}(\widetilde{Y},\widetilde{Y}\cap E) \ar[r]^-{\sim}  & \H^{n-1}(\widetilde{Y}\setminus \widetilde{Y}\cap E)(n-1) \\
}
\end{gathered}
\end{equation*}
Note the disappearance of the $(-1)^{n-1}$ sign from \emph{loc.~cit.}, which is consistent with the sign convention for the residue that we are adopting  in the present paper (Remark \ref{rem:sign-convention-residue}). The claim then follows from the invariance of the  canonical morphism  by modifications \eqref{eq:can-and-modification}.
\item By the commutativity of \eqref{eq:can-recursion}, and the fact that 
$R:  \gr_0^\W\H_n(X,Y)_\CC   \simeq \logforms{n}(X\setminus Y)$ is an isomorphism, 
we see that $\Res$ is injective if and only if the $(0,0)$-component of the boundary morphism $\partial^{0,0}:\H_n(X,Y)^{0,0}\to \H_{n-1}(Y,Z)^{0,0}$ is injective. Thanks to \eqref{eq:long-exact-sequence-relative-homology}, this morphism fits into a long exact sequence
$$0 \To \H_n(X,Z)^{0,0}\To \H_n(X,Y)^{0,0}\stackrel{\partial^{0,0}}{\To} \H_{n-1}(Y,Z)^{0,0}\to \cdots,$$
where we have used the vanishing $\H_n(Y,Z)^{0,0}=0$, which follows from \eqref{eq:bound-on-hodge-numbers}. Therefore, $\Res$ is injective if and only if $(X,Z)$ has combinatorial rank zero. Since $Z$ has codimension $\geq 2$ in $X$,
this is equivalent to $X$ having combinatorial rank zero by Proposition \ref{prop:hartogs-rank} (below). The fact that the combinatorial rank of $X$ is zero if $X$ is smooth follows from \eqref{eq:bound-on-hodge-numbers-smooth}.
\end{enumerate}
\end{enumerate}
\end{proof}

\subsubsection{A special case}

A useful special case of Proposition \ref{prop:recursion} is as follows.

\begin{prop}\label{prop:recursion-irreducible-components}
Let $X$ be a compact complex variety of dimension $n\geq 1$, and $Y\subset X$ be a closed subvariety of codimension $1$, decomposed into irreducible components as $Y=Y_1\cup\cdots \cup Y_N$. Set $Z=Z_1\cup \cdots \cup Z_N$ with $Z_i=Y_i\cap\bigcup_{j\neq i}Y_j$, and assume that $X\setminus Z$ and all the $Y_i\setminus Z_i$ are smooth.
\begin{enumerate}[1)]
\item Assume that all the $(Y_i,Z_i)$ have genus zero. Then $(X,Y)$ has genus zero if and only if $X$ does.
\item Assume that all the $(Y_i,Z_i)$ and $(X,Y)$ have genus zero.
    \begin{enumerate}[a)]
    \item The following diagram commutes:
    \begin{equation}\label{eq:can-recursion-special-case}
    \begin{gathered}
    \diagram{
    \H_n(X,Y) \ar[d]_-{\bigoplus_i\partial_{Y_i}}\ar[r]^-{\can} & \logforms{n}(X\setminus Y) \ar[d]^{\bigoplus_i\Res_{Y_i}} \\
    \bigoplus_i \H_{n-1}(Y_i,Z_i) \ar[r]^-{\bigoplus_i\can} & \bigoplus_i \logforms{n-1}(Y_i\setminus Z_i)
    }
    \end{gathered}
    \end{equation}
    In other words, for $\sigma\in \H_n(X,Y)$ we have the equalities
    \begin{equation}\label{eq:recursion-formula-special-case}
    \Res_{Y_i}(\omegacan_\sigma)=\omegacan_{\partial_{Y_i}(\sigma)} \;\; \mbox{ for all } i=1,\ldots,N.
    \end{equation}
    \item The morphism $\bigoplus_i\Res_{Y_i}$ appearing in \eqref{eq:can-recursion-special-case} is injective if and only if $X$ has combinatorial rank zero (e.g., if $X$ is smooth). In this case, \eqref{eq:recursion-formula-special-case} completely characterizes $\omegacan_\sigma\in\logforms{n}(X\setminus Y)$.
    \end{enumerate}
    \end{enumerate}
\end{prop}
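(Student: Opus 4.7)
The plan is to reduce the statement to Proposition \ref{prop:recursion} applied to the triple $(X,Y,Z)$ with $Z = Z_1 \cup \cdots \cup Z_N$. The crucial geometric observation is that $Y \setminus Z = \bigsqcup_{i} (Y_i \setminus Z_i)$ is a \emph{disjoint} union of smooth pieces: a point $y \in Y_i$ lies in $Z$ if and only if it also lies in some $Y_j$ with $j \neq i$, by the definition of $Z_i$. In particular $Y \setminus Z$ is smooth, so Proposition \ref{prop:recursion} applies, and moreover we obtain canonical isomorphisms of mixed Hodge structures $\H_\bullet(Y,Z) \simeq \bigoplus_i \H_\bullet(Y_i,Z_i)$ and of vector spaces $\logforms{\bullet}(Y\setminus Z) \simeq \bigoplus_i \logforms{\bullet}(Y_i\setminus Z_i)$.

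From this decomposition, the Hodge numbers of $\H_{n-1}(Y,Z)$ are the sums of the Hodge numbers of the $\H_{n-1}(Y_i,Z_i)$, hence $g(Y,Z) = \sum_i g(Y_i,Z_i) = 0$ under the hypotheses of either (1) or (2). Claim (1) then follows directly from Proposition \ref{prop:recursion}(1).

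For (2a), I would apply the commutative square \eqref{eq:can-recursion} of Proposition \ref{prop:recursion} to the triple $(X,Y,Z)$, and then identify the bottom row with the direct sum over $i$. The key compatibility is that, under the excision isomorphism $\H_{n-1}(Y,Z) \simeq \bigoplus_i \H_{n-1}(Y_i,Z_i)$, the total boundary map $\partial\colon \H_n(X,Y) \to \H_{n-1}(Y,Z)$ factors as $\bigoplus_i \partial_{Y_i}$. This is essentially the definition \eqref{eq:partial-boundary-map-defi} of the partial boundary map, applied for each $i$ to the decomposition $Y = Y_i \cup \bigcup_{j \neq i} Y_j$; a short diagram chase comparing the long exact sequences of the triples $(X, Y, Z)$ and $(X, Y_i \cup \bigcup_{j\neq i} Y_j, \bigcup_{j\neq i} Y_j)$ makes this rigorous. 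Dually, and because $Y\setminus Z$ is a disjoint union, the residue map $\Res\colon \logforms{n}(X\setminus Y) \to \logforms{n-1}(Y\setminus Z)$ decomposes as $\bigoplus_i \Res_{Y_i}$. Combining these identifications with \eqref{eq:can-recursion} yields \eqref{eq:can-recursion-special-case} and hence \eqref{eq:recursion-formula-special-case}. Part (2b) is then immediate: injectivity of $\bigoplus_i \Res_{Y_i}$ is injectivity of $\Res$ in Proposition \ref{prop:recursion}, which is equivalent to $X$ having combinatorial rank zero; the characterization of $\omegacan_\sigma$ by \eqref{eq:recursion-formula-special-case} follows because $\can\colon \H_n(X,Y) \to \logforms{n}(X\setminus Y)$ factors through the isomorphism $R$ on the weight zero quotient.

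The main (mild) obstacle is verifying the decomposition of the boundary map into the $\partial_{Y_i}$ and of the residue map into the $\Res_{Y_i}$; both are essentially tautological given the disjoint-union structure of $Y \setminus Z$, but checking this carefully, with the correct signs and in a way compatible with the mixed Hodge structure, is the one nontrivial step. Everything else is a formal consequence of Proposition \ref{prop:recursion}.
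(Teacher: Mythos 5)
Your proposal is correct and follows essentially the same route as the paper: the paper's entire proof is the observation that $(\bigsqcup_i Y_i, \bigsqcup_i Z_i)\to (Y,Z)$ is a modification, which is exactly your disjoint-union decomposition $Y\setminus Z=\bigsqcup_i(Y_i\setminus Z_i)$ packaged via excision, after which everything reduces to Proposition \ref{prop:recursion} for the triple $(X,Y,Z)$. The compatibilities you flag as the "one nontrivial step" (boundary map decomposing into the $\partial_{Y_i}$, residue into the $\Res_{Y_i}$) are indeed definitional consequences of the excision isomorphism, as the paper implicitly treats them.
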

\begin{proof}
This follows from Proposition \ref{prop:recursion} because $(\bigsqcup_i Y_i, \bigsqcup_i Z_i) \to (Y,Z)$ is a modification.
\end{proof}

\subsubsection{Having genus zero is not a recursive condition}\label{sec:genus-zero-not-recursive}

In the setting of Proposition \ref{prop:recursion} or Proposition \ref{prop:recursion-irreducible-components}, the fact that $(X,Y)$ has genus zero does not imply that the same holds for $(Y,Z)$,  nor for the $(Y_i,Z_i)$. This is in contrast with the classical definition of a positive geometry \cite{arkanihamedbailampositive} according to which every face of a positive geometry is required to be a positive geometry. 

The simplest example of this  phenomenon is
$$(X,Y)=(\PP^1_\CC\times C,\{0\}\times C)$$
for $C$ a connected smooth projective curve of genus $g>0$. It has genus zero because by the K\"{u}nneth formula,
$$\H_2(X,Y) \simeq \H_2(\PP^1_\CC,\{0\}) \simeq \QQ(1)$$
 only has a single non-zero Hodge number $h^{-1,-1}=1$. However, its face $(Y,\varnothing)=(C,\varnothing)$ has genus $g>0$. Note that this example is uninteresting from the point of view of canonical forms because it has combinatorial rank $0$. In \S\ref{sec:example-non-recursive-positive-geometry} we provide an example of a similar non-recursive phenomenon, but which has  positive combinatorial rank.

\subsection{The normal crossing case: corners}

Let $X$ be a smooth  and compact complex variety of dimension $n$ and $D\subset X$ be a (not necessarily simple) normal crossing divisor. 

\begin{defi}
A \emph{corner} of $D$ is a point of $X$ which is the intersection of $n$ (necessarily distinct) local irreducible components of $D$.
\end{defi}

For a corner $c$ of $D$ we have an iterated boundary map
\begin{equation}\label{eq:corner-boundary}
\partial_c:\H_n(X,D)\to \H_0(\{c\})\simeq \QQ,
\end{equation}
which is well-defined up to a sign. We will not need to make a choice of sign.  If $D$ is a simple normal crossing divisor written as a union $D_1\cup\cdots \cup D_N$ of smooth irreducible components, then $c$ is a point of an intersection $D_{i_1}\cap\cdots \cap D_{i_n}$ for some indices $1\leq i_1<\cdots <i_n\leq N$, and $\partial_c$ is defined as the composition  $\partial_{D_{i_1}\cap\cdots \cap D_{i_n}} \cdots  \partial_{D_{i_{n-1}} \cap D_{i_n}}\circ \partial_{D_{i_n}}$ of partial boundary maps. Changing the order on the set of irreducible components of $D$ only changes $\partial_c$ by a sign since we have, for instance, $\partial_{D_i\cap D_j}\partial_{D_i}=-\partial_{D_i\cap D_j}\partial_{D_j}$. In the general case when $D$ is not necessarily simple, $\partial_c$ is computed by applying the same recipe in a local chart around $c$.

In a similar way,  we may consider the iterated residue map
\begin{equation}\label{eq:corner-residue}
\Res_c:\logforms{n}(X\setminus D)\to \Omega^0(\{c\})\simeq \CC,
\end{equation}
which is also well-defined up to a sign (since, for instance, $\Res_{D_i\cap D_j}\Res_{D_i}=-\Res_{D_i\cap D_j}\Res_{D_j}$ in the simple normal crossing case). Precise sign conventions will be irrelevant in what follows, as long as one uses consistent conventions for both $\partial_c$ and $\Res_c$.

\begin{prop}\label{prop:corners}
The iterated boundary maps along corners \eqref{eq:corner-boundary} induce an injective map
$$\gr_0^\W\H_n(X,D) \hookrightarrow \bigoplus_c \QQ.$$
where the  direct sum is  over the set of  all corners $c$ of $D$. The composition 
$$
\logforms{n}(X\setminus D) \stackrel{R}{\To} \gr_0^\W\H_n(X,D)_\CC \hookrightarrow \bigoplus_c \CC,
$$
where the $R$-map is \eqref{eq:R-map}, is given by the iterated residue maps along corners \eqref{eq:corner-residue}.
\end{prop}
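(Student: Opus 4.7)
The plan is to compute $\gr^\W_0 \H_n(X, D)$ from the spectral sequence \eqref{eq:relative-homology-spectral-sequence} by weight considerations, and then to identify the $R$-map with iterated residues by iterating the single-step compatibility of Poincar\'{e} duality with residues that was already used in the proof of Proposition \ref{prop:recursion}. Since both the weight-zero quotient of relative homology and the space of logarithmic forms are invariant under modifications (see \eqref{eq:can-and-modification} and \eqref{eq:iso-relative-cohomology-excision}), I may first reduce to the case where $D = D_1 \cup \cdots \cup D_N$ is a \emph{simple} normal crossing divisor with smooth irreducible components.

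Applying \eqref{eq:relative-homology-spectral-sequence} to this decomposition, each term $E^1_{p,q} = \bigoplus_{|I|=p} \H_q(D_I)$ is a pure Hodge structure of weight $-q$, since every intersection $D_I$ is smooth and compact. As the spectral sequence lives in the category of mixed Hodge structures, all higher differentials $d^r$ ($r \geq 2$), which shift the weight by $r-1$, must vanish, and the spectral sequence degenerates at $E^2$. Consequently $\gr^\W_0 \H_n(X, D) \simeq E^2_{n,0}$, which equals the kernel of
$$d^1 \colon \bigoplus_{|I|=n} \H_0(D_I) \To \bigoplus_{|I|=n-1} \H_0(D_I),$$
the incoming differential being zero because $D_I = \varnothing$ for $|I| > n$. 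The source is canonically $\bigoplus_c \QQ$ indexed by corners of $D$, and the edge map of the spectral sequence, $\H_n(X, D) \twoheadrightarrow \gr^\W_0 \H_n(X, D) \hookrightarrow \bigoplus_c \QQ$, sends a relative cycle $\sigma$ to its tuple of iterated boundaries $(\partial_c \sigma)_c$, up to the sign ambiguity noted in the statement. This establishes the injectivity claim.

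For the second claim, I iterate $n$ times the commutative square in the proof of Proposition \ref{prop:recursion}, which expresses the compatibility of Poincar\'{e} duality with partial boundary maps in relative homology and partial residue maps on logarithmic forms (i.e., \cite[Proposition 4.12]{BD1}). Concretely, for a corner $c \in D_{i_1} \cap \cdots \cap D_{i_n}$, I apply this square successively along the chain of closed immersions
$$\{c\} \subset D_{i_1} \cap \cdots \cap D_{i_{n-1}} \subset \cdots \subset D_{i_1} \subset X.$$
At each stage the relevant pair is of codimension one in the previous one and generically smooth, so the hypotheses of the single-step result are satisfied; after $n$ iterations the diagram identifies $\partial_c \circ R$ with $\Res_c$ on $\logforms{n}(X \setminus D)$ (with matching sign conventions).

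The principal obstacle is the identification of the spectral sequence edge map with the iterated boundary map $\partial_c$: this requires unwinding the construction of \eqref{eq:relative-homology-spectral-sequence} from a simplicial resolution of $D$ and checking that the iterated connecting homomorphisms coincide, at the level of chains, with partial boundaries along the distinct irreducible components passing through~$c$. Once this is in place, the weight-degeneration argument and the iterated residue compatibility are routine consequences of facts that have already been used elsewhere in the paper.
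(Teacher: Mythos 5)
Your top-level structure matches the paper's: reduce to the case where $D$ is a \emph{simple} normal crossing divisor, then treat that case. The difference is in where the work goes. The paper disposes of the simple normal crossing case by citing \cite[Propositions 4.7, 4.8, 4.13]{BD1} and spends its effort on the reduction, which it performs via an iterated blow-up of strata together with an explicit check in local charts that each corner $c'$ of $D'$ maps to a corner $c$ of $D$ with $\partial_{c'}=\pm\partial_c$ and $\Res_{c'}=\pm\Res_c$ for consistent signs. You instead attempt to prove the simple normal crossing case directly: computing $\gr_0^\W\H_n(X,D)$ from the weight degeneration of the spectral sequence \eqref{eq:relative-homology-spectral-sequence} (which is exactly the mechanism of Proposition \ref{prop:rank-as-combinatorial-invariant}), and identifying the $R$-map with iterated residues by iterating the single-step boundary/residue compatibility from the proof of Proposition \ref{prop:recursion}. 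That is a legitimate, essentially self-contained route, close in spirit to what the cited results of \cite{BD1} establish, and the purity/degeneration argument is correct as far as it goes.

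However, two steps are not actually carried out. First, the reduction: invariance of $\gr_0^\W\H_n$ and of $\logforms{n}$ under modification is not by itself sufficient, because the target $\bigoplus_c\QQ$ and the maps $\partial_c$, $\Res_c$ are indexed by the corners of $D$, and these are not preserved by an arbitrary modification (for the nodal cubic, the single corner at the node becomes two corners $c_\pm$ after blowing up). One must choose the modification to be an iterated blow-up of strata, note that corners of $D'$ then surject onto corners of $D$, and verify in local coordinates that the corner boundary and residue maps agree up to consistent signs; this is precisely the content the paper's proof supplies and yours omits. Second, and more seriously, you flag but do not prove that the edge map $\H_n(X,D)\to E^2_{n,0}\subset\bigoplus_{|I|=n}\H_0(D_I)$ coincides, up to sign, with the tuple of iterated partial boundary maps $(\partial_c)_c$. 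Without this identification you have only shown that \emph{some} map $\gr_0^\W\H_n(X,D)\to\bigoplus_c\QQ$ is injective, not the one built from the $\partial_c$, so the first assertion of the proposition is not yet established. The residue half of your argument is fine modulo sign bookkeeping, since at each step of the flag $X\supset D_{i_n}\supset D_{i_{n-1}}\cap D_{i_n}\supset\cdots$ the hypotheses of the commutative square used in Proposition \ref{prop:recursion} are satisfied. In short: sound strategy, but incomplete exactly at the step you yourself name as the principal obstacle.
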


In \S\ref{sec:combinatorial-rank} we will be more precise about the space $\gr_0^\W\H_n(X,D)$ and how to compute it.

\begin{proof}
In the case of a simple normal crossing divisor, this is \cite[Propositions 4.7, 4.8, 4.13]{BD1}, without the signs because of our sign convention (Remark \ref{rem:sign-convention-residue}). In the general case, repeatedly blowing up strata of $D$ produces a modification $(X',D')\to (X,D)$ such that $D'\subset X'$ is a simple normal crossing divisor and the morphism $X'\to X$ maps a corner of $D'$ to a corner of $D$. Let $c'$ be a corner of $D'$ mapping to a corner $c$ of $D$. As one easily checks in a local chart, the iterated boundary maps along corners \eqref{eq:corner-boundary} are ``the same'' for $c'$ and $c$, and likewise for the iterated residue maps along corners \eqref{eq:corner-residue}. More precisely, the following diagrams commute up to sign.
$$\diagram{
\H_n(X',D')\ar[r]^-{\partial_{c'}} \ar[d]_-{\sim}& \QQ\ar@{=}[d] \\
\H_n(X,D)\ar[r]_-{\partial_c} & \QQ
}
\qquad\qquad\qquad 
\diagram{
\logforms{n}(X',D')\ar[r]^-{\Res_{c'}} & \CC\ar@{=}[d] \\
\logforms{n}(X,D)\ar[r]_-{\Res_c}\ar[u]^-{\sim} & \CC
}
$$
Therefore, we have commutative diagrams as follows, where the horizontal maps are the iterated boundary maps along corners \eqref{eq:corner-boundary} on the left, and the iterated residue maps along corners \eqref{eq:corner-residue} on the right.
$$\diagram{
\gr_0^\W\H_n(X',D')\ar[r] \ar[d]_-{\sim}& \displaystyle\bigoplus_{c'}\QQ \\
\gr_0^\W\H_n(X,D)\ar[r] & \displaystyle\bigoplus_c \QQ \ar[u]
}
\qquad\qquad\qquad 
\diagram{
\logforms{n}(X',D')\ar[r] & \displaystyle\bigoplus_{c'}\CC\\
\logforms{n}(X,D)\ar[r]\ar[u]^-{\sim} & \displaystyle\bigoplus_c\CC \ar[u]
}$$
The claim in the general case then follows from the simple normal crossing case.
\end{proof}

\begin{coro}\label{coro:corners}
Assume that $(X,D)$ has genus zero. For a class $\sigma\in \H_n(X,D)$, the canonical form $\omegacan_\sigma$ is the unique element of $\,\logforms{n}(X\setminus D)$ such that for every corner $c$ of $D$ we have the equality
$$\Res_c(\omegacan_\sigma)=\partial_c(\sigma).$$
\end{coro}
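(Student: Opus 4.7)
The plan is to deduce this corollary as an almost formal consequence of Proposition \ref{prop:corners} and Lemma \ref{lem:R-map-and-genus}. First, the genus zero hypothesis combined with Lemma \ref{lem:R-map-and-genus} says that the $R$-map
$$R\colon \logforms{n}(X\setminus D)\To \gr_0^\W\H_n(X,D)_\CC$$
is an isomorphism. Second, Proposition \ref{prop:corners} gives a commutative triangle
$$\diagram{
\logforms{n}(X\setminus D) \ar[r]^-{R}_-{\sim} \ar[rd]_-{(\Res_c)_c} & \gr_0^\W \H_n(X,D)_\CC \ar@{^(->}[d] \\
& \bigoplus_c \CC
}$$
in which the vertical map, induced by the iterated corner boundaries, is injective. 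Composing an isomorphism with an injection produces an injection, so the iterated residue map $(\Res_c)_c\colon \logforms{n}(X\setminus D)\to \bigoplus_c \CC$ is itself injective. This delivers the \emph{uniqueness} part of the corollary.

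For \emph{existence}, Definition \ref{defi:can-compact} identifies $\omegacan_\sigma$ with $R^{-1}(\overline{\sigma})$, where $\overline{\sigma}\in \gr_0^\W\H_n(X,D)$ is the image of $\sigma$ under the weight zero quotient map. Each $\partial_c\colon \H_n(X,D)\to \H_0(\{c\})\simeq \QQ(0)$ is a morphism of mixed Hodge structures with target pure of weight zero, and so factors through $\gr_0^\W\H_n(X,D)$; by Proposition \ref{prop:corners}, the resulting map $\gr_0^\W\H_n(X,D)\to \bigoplus_c \QQ$ is precisely the injection appearing in the triangle. Commutativity of the triangle, applied to $\overline{\sigma}$, then yields the claimed equality $\Res_c(\omegacan_\sigma)=\partial_c(\sigma)$ for every corner $c$.

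I do not anticipate any real obstacle: the substantive content has already been packaged into Lemma \ref{lem:R-map-and-genus} and Proposition \ref{prop:corners}, and the corollary is diagram chasing on top of them. The one point requiring attention is the sign ambiguity in the separate definitions of $\partial_c$ and $\Res_c$; the equality $\Res_c(\omegacan_\sigma)=\partial_c(\sigma)$ only becomes meaningful once the compatible sign conventions on the two sides are fixed in tandem, exactly as emphasized in the discussion preceding Proposition \ref{prop:corners}.
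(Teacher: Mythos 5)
Your argument is correct and is precisely the deduction the paper intends: the corollary is stated without proof as an immediate consequence of Proposition \ref{prop:corners} together with the fact that genus zero makes the $R$-map an isomorphism (Lemma \ref{lem:R-map-and-genus}), which is exactly the diagram chase you carry out. Your closing remark about fixing compatible sign conventions for $\partial_c$ and $\Res_c$ in tandem is also the right caveat and matches the discussion preceding Proposition \ref{prop:corners}.
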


This corollary allows one to compute canonical forms in cases where the recursive approach fails (\S\ref{sec:genus-zero-not-recursive}).

\subsection{Adjoint hypersurfaces}\label{sec:adjoint}

Let $X$ be a compact complex variety of dimension $n$, and $Y\subset X$ be a closed subvariety such that $X\setminus Y$ is smooth. Assume that $(X,Y)$ has genus zero, and let $\sigma\in \H_n(X,Y)$. If the canonical form $\omegacan_\sigma$ is non-zero, its vanishing locus $Z(\omegacan_\sigma)$ is a hypersurface in $X$ which goes under the name \emph{adjoint hypersurface} in the literature. If $X$ is smooth and $Y=D$ is a normal crossing divisor, then $\omegacan_\sigma$ is simply a section of the line bundle $\Omega^n_X(\log D)$. This can sometimes help to describe the adjoint hypersurface, as in the next proposition,  some versions of which already appeared in different guises \cite{arkanihamedbailampositive,polypol,laminvitation}.

\begin{prop}\label{prop:adjoint-hypersurface-degree}
Let $Y\subset \PP^n_\CC$ be a projective hypersurface of degree $d\geq n+1$. Assume that $(\PP^n_\CC, Y)$ has genus zero. Then for every class $\sigma\in \H_n(\PP^n_\CC, Y)$ such that the canonical form $\omegacan_\sigma$ is non-zero, the adjoint hypersurface $Z(\omegacan_\sigma)\subset \PP^n_\CC$ is a projective hypersurface of degree $d-n-1$.
\end{prop}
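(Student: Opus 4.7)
The plan is to combine Proposition \ref{prop:log-forms-have-simple-poles} with the explicit parameterization of forms with simple poles given in Remark \ref{rem:log-forms-on-projective-space}. First, I would invoke Proposition \ref{prop:log-forms-have-simple-poles} to obtain the inclusion
$$\omegacan_\sigma \in \logforms{n}(\PP^n_\CC \setminus Y) \subset \mathcal{S}^n(\PP^n_\CC, Y),$$
so that $\omegacan_\sigma$ is a global section of the line bundle $\Omega^n_{\PP^n_\CC}(Y) \simeq \mathcal{O}(d-n-1)$. Fix homogeneous coordinates $x_0,\ldots,x_n$ on $\PP^n_\CC$ and a homogeneous polynomial $f$ of degree $d$ cutting out $Y$. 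By Remark \ref{rem:log-forms-on-projective-space}, there exists a homogeneous polynomial $\varphi$ of degree $d-n-1$ such that
$$\omegacan_\sigma \;=\; \frac{\varphi}{f}\,\sum_{i=0}^n (-1)^i x_i\,\dd x_0 \wedge \cdots \wedge \widehat{\dd x_i} \wedge \cdots \wedge \dd x_n.$$

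Second, since the Euler form $\sum_i (-1)^i x_i\, \dd x_0 \wedge \cdots \wedge \widehat{\dd x_i} \wedge \cdots \wedge \dd x_n$ is nowhere vanishing on $\PP^n_\CC$ (as a section of $\mathcal{O}(n+1) \otimes \Omega^n_{\PP^n_\CC} \simeq \mathcal{O}$), the vanishing locus of $\omegacan_\sigma$ on $\PP^n_\CC \setminus Y$ is precisely the zero locus of $\varphi$, and its closure is the projective hypersurface $Z(\varphi) \subset \PP^n_\CC$.

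Finally, I need to check that $\varphi$ genuinely has degree $d-n-1$, i.e., that it is not identically zero. But the identification of $\mathcal{S}^n(\PP^n_\CC, Y)$ with the space of homogeneous polynomials of degree $d-n-1$ is bijective, so $\varphi = 0$ would force $\omegacan_\sigma = 0$, contradicting the hypothesis. Hence $Z(\omegacan_\sigma) = Z(\varphi)$ is a hypersurface of degree exactly $d-n-1$. I do not anticipate a genuine obstacle in this argument, since it is essentially a bookkeeping combination of Proposition \ref{prop:log-forms-have-simple-poles} and the explicit description of $\mathcal{S}^n(\PP^n_\CC, Y)$; the subtle point is simply recognizing that the genus zero hypothesis (via the existence of $\omegacan_\sigma$) plays no role in the degree computation beyond guaranteeing that the canonical form is a well-defined element of $\logforms{n}(\PP^n_\CC \setminus Y)$.
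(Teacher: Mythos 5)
Your proposal is correct and is essentially the paper's own proof, which simply cites Proposition \ref{prop:log-forms-have-simple-poles} and Remark \ref{rem:log-forms-on-projective-space}; you have merely spelled out the bookkeeping (the nowhere-vanishing Euler form and the injectivity of $\varphi\mapsto\omegacan_\sigma$) that the paper leaves implicit.
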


\begin{proof}
This follows from Proposition \ref{prop:log-forms-have-simple-poles} and the description of forms with at most simple poles on projective space (Remark \ref{rem:log-forms-on-projective-space}).
\end{proof}

\begin{rem}\label{rem:simplex-like}
In the case $d=n+1$, we get $Z(\omegacan_\sigma)=\varnothing$. Such a situation is referred to in the literature as a ``simplex-like positive geometry'' \cite{arkanihamedbailampositive} because a first example is given by  a simplex in projective space, with $D$  a union of $n+1$ independent hyperplanes.
\end{rem}

\subsection{Beyond genus zero}\label{subsec:beyond-genus-zero}

Let $X$ be  a compact complex variety of dimension $n$, and $Y\subset X$ be a closed subvariety such that $X\setminus Y$ is smooth. If the genus $g$ of $(X,Y)$ is not assumed to be $0$, then the $R$-map \eqref{eq:R-map} has a kernel of dimension $g$, and one can adapt Definition \ref{defi:can-compact} to get a morphism
$$\can\colon \H_n(X,Y)\to \logforms{n}(X\setminus Y) / \ker(R).$$
In other words, canonical forms are only well-defined over a $g$-dimensional space of ambiguities.

\begin{rem}\label{rem:ker-of-R}
Assume that $X$ is smooth and that $Y=D$ is a normal crossing divisor. Then Proposition \ref{prop:corners} implies that the $g$-dimensional space $\ker(R)$ consists of those elements of $\logforms{n}(X\setminus D)$ whose corner residues all vanish.
If $n\geq 1$, it contains the space of global holomorphic forms on $X$, which account for the Hodge number $h^{-n,0}$ of $\H_n(X,D)$. However, it contains more forms in general because some Hodge number $h^{-p,0}$ may be $\neq 0$ for $0<p<n$ (e.g., $X=\PP^2_\CC$ and $Y=$ a smooth curve of genus $g>0$).
\end{rem}

In certain situations, extra structure may lead to a preferred splitting of $R$, i.e., of the projection $\logforms{n}(X\setminus Y) \twoheadrightarrow \logforms{n}(X\setminus Y)/\ker(R)$, and therefore to preferred choices of canonical forms. To fix a unique choice of ``canonical'' form, one imposes as many linear conditions as the dimension of $\mathrm{ker}(R)$, which equals $g$.
In the remainder of this section, we discuss the case of curves.

\subsubsection{The case of curves}
Let $X$ be a smooth compact complex curve (also known as a compact Riemann surface) of genus $g$ and let $Y=\{a,b\}$ consist of two distinct points of $X$. We seek  a ``canonical form'' $\omega=\omegacan_{\gamma_{a,b}}$ for (the relative homology class of) a path $\gamma_{a,b}$ from $a$ to $b$ in $X$. The residue long exact sequence reads:
\begin{equation}\label{eq:residue-long-exact-sequence-curves}
0 \To \H^1(X)\To \H^1(X\setminus \{a,b\}) \stackrel{\mathrm{Res}}{\To} \H^0(\{a\})(-1) \oplus \H^0(\{b\})(-1) \To \H^2(X)\To 0.
\end{equation}
It follows that the non-vanishing Hodge numbers of $\H^1(X\backslash \{a,b\})$ are $h^{1,0}=h^{0,1}=g$ and $h^{1,1}=1$. Using Poincar\'{e} duality \eqref{eq:poincare-duality}, one sees that the non-vanishing Hodge numbers of $\H_1(X,\{a,b\})$ are $h^{-1,0}=h^{0,-1}=g$ and $h^{0,0}=1$. In particular, the genus of $(X,\{a,b\})$ is $g$. By applying the (exact) Hodge filtration functor $\F^1$ to \eqref{eq:residue-long-exact-sequence-curves} one gets the exact sequence
\[   0 \To \H^0(X,\Omega^1_X) \To \logforms{1}(X\setminus \{a,b\})  \stackrel{\Res}{\To} \CC\oplus \CC\To \CC \To 0,\]
where the last map is the sum $\CC\oplus \CC\to \CC$. It states that given two complex numbers $r_a,r_b$ which sum to $0$, there exists a form $\omega\in \logforms{1}(X\setminus\{a,b\})$ such that $\Res_a(\omega)=r_a$ and $\Res_b(\omega)=r_b$ (such a form is  called  a \emph{differential of the third kind}). It is only well-defined modulo a global holomorphic $1$-form on $X$ (called  a \emph{differential of the first kind}). The space $\H^0(X,\Omega^1_X)$ of global holomorphic $1$-forms on $X$ has dimension $g$.

The literature provides a natural choice of differential of the third kind: there exists a unique $\omega\in\logforms{1}(X\setminus \{a,b\}$ with prescribed residues and whose periods are imaginary, i.e., such that
\[ \mathrm{Re} \, \int_{\delta} \omega= 0  \]
for $\delta$ any closed path in $X\setminus \{a,b\}$. This choice is related to the existence of Green's functions, also known as archimedean height pairings,  on curves. See \cite[\S 6.4]{BD1} for a discussion and an interpretation in purely algebraic terms using a ``determinant trick''.

\subsubsection{The case of elliptic curves} Let $X=E$ be an elliptic curve, and assume that $a,b\in X$ are distinct from the origin $0\in E$. Then we may require the differential of the third kind $\omega$ to vanish at $0$. Since a holomorphic differential on $E$ (i.e., a generator of $\H^0(E,\Omega^1_E)$) is nowhere vanishing, this condition uniquely determines $\omega$.  Furthermore, this choice of $\omega$ is functorial with respect to morphisms between elliptic curves. 

Explicitly, if $E$ is given in Weierstrass form by the affine equation $ y^2 =x^3+ux +v$, then 
\[ \omega = \frac{1}{2} \left( \frac{y+y(b)}{x-x(b)} -  \frac{y+y(a)}{x-x(a)}  \right) \frac{\dd x}{y}\ , \]
has residues $-1$ at $a$ and $1$ at $b$, and vanishes at the origin (the point at infinity $(0:0:1)$).

\section{The genus of a pair of varieties}\label{sec:genus}

We study the  genus of a pair of complex varieties, which generalizes the classical genus of a curve, and the geometric genus of a smooth compact variety. For the sake of convenience, we switch from homology to cohomology and restate Definition \ref{defi:genus} as follows. The terminology introduced here is non-standard.

\begin{defi} \label{defi:genus-with-cohomology}
Let $X$ be a complex variety of dimension $n$, and $Y\subset X$ be a closed suvariety. The \emph{genus} of the pair $(X,Y)$, denoted by $g(X,Y)$, is the sum of the Hodge numbers $h^{p,0}$ of $\H^n(X,Y)$, for $p>0$:
$$g(X,Y)=\sum_{p>0}h^{p,0}(\H^n(X,Y)).$$
\end{defi}

\subsection{The genus of a variety}

We first focus on the case of a single variety, which already presents some interesting subtleties in the singular case. We let $g(X):=g(X,\varnothing)$.

\subsubsection{The smooth case}

In the \emph{smooth and compact} case (which  is the standard context for defining the genus), 
the genus as defined in Definition \ref{defi:genus-with-cohomology} coincides with what is
classically called the \emph{geometric genus}, i.e., the dimension of the space of global holomorphic forms of maximal degree:

\begin{prop}
Let $X$ be a smooth and compact complex variety of dimension $n\geq 1$. Then 
$$g(X) = \dim\H^0(X,\Omega_X^n).$$
\end{prop}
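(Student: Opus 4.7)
The strategy is to observe that the statement is a purely formal consequence of two classical ingredients which have already been recalled in the paper: the purity of $\H^n(X)$ when $X$ is smooth and compact, and the Dolbeault-type identification $\H^{p,q}(X) \simeq \H^q(X, \Omega_X^p)$ from \eqref{eq:hodge-p-q-as-sheaf-cohomology}. There is no substantive obstacle; the whole content is collapsing the sum defining $g(X)$ to a single term via purity.

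First I would invoke Deligne's theorem that for $X$ smooth and compact of dimension $n$, the mixed Hodge structure on $\H^n(X)$ is pure of weight $n$. Concretely, $\gr^\W_w \H^n(X) = 0$ unless $w = n$, so by definition of the Hodge numbers of a mixed Hodge structure one has $h^{p,q}(\H^n(X)) = 0$ whenever $p + q \neq n$.

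Next I would apply this to the sum defining the genus. Since we must have $p + 0 = n$ for $h^{p,0}(\H^n(X))$ to be non-zero, the only surviving term in
$$g(X) = \sum_{p>0} h^{p,0}(\H^n(X))$$
is the one with $p = n$, yielding $g(X) = h^{n,0}(\H^n(X))$. (The assumption $n \geq 1$ is what ensures the index $p=n$ lies in the range $p>0$.)

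Finally, I would conclude by citing \eqref{eq:hodge-p-q-as-sheaf-cohomology}, which gives $\H^{n,0}(X) \simeq \H^0(X, \Omega_X^n)$, and hence $h^{n,0}(\H^n(X)) = \dim \H^0(X, \Omega_X^n)$. The only point that is not entirely routine is the purity of $\H^n(X)$ in the compact but non-projective case; this is part of Deligne's theory as stated in the paper and, if needed, can be reduced to the projective case by Chow's lemma together with resolution of singularities.
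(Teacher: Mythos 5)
Your proposal is correct and follows essentially the same route as the paper: purity of $\H^n(X)$ of weight $n$ collapses the sum defining $g(X)$ to the single term $h^{n,0}$, which is then identified with $\dim\H^0(X,\Omega^n_X)$ via \eqref{eq:hodge-p-q-as-sheaf-cohomology}. Your closing caveat about the compact non-projective case is a reasonable extra precaution, but the paper treats this as already covered by the cited Hodge-theoretic facts.
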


\begin{proof}
Since $\H^n(X)$ is a pure Hodge structure of weight $n$ we have  $g(X)=\dim\H^{n,0}(X)$, which equals the dimension of the space $\H^0(X,\Omega^n_X)$ by  \eqref{eq:hodge-p-q-as-sheaf-cohomology}.
\end{proof}

The general case of smooth varieties easily reduces to the smooth compact case, as the next result shows (it also proves the classical fact that the geometric genus is a birational invariant).

\begin{prop}\label{prop:genus-smooth-compactification}
Let $X$ be a smooth complex variety and $X\subset \overline{X}$ be a smooth compactification. Then we have the equality
$$g(X)=g(\overline{X}).$$
\end{prop}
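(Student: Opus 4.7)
The plan is to reduce both $g(X)$ and $g(\overline{X})$ to a single Hodge number and then exploit the functoriality of mixed Hodge structures under the open inclusion $j\colon X\hookrightarrow\overline{X}$. First, for any smooth complex variety $U$ of dimension $n$, the bounds \eqref{eq:bound-on-hodge-numbers} and \eqref{eq:bound-on-hodge-numbers-smooth} together force $h^{p,0}(\H^n(U))=0$ unless $p=n$ (the former gives $p\leq n$, the latter $p\geq n$), so $g(U)=h^{n,0}(\H^n(U))$; applied to $\overline{X}$, purity of $\H^n(\overline{X})$ recovers $g(\overline{X})=\dim\H^0(\overline{X},\Omega^n_{\overline{X}})$. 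Using Hironaka I would then replace $\overline{X}$ by a further smooth proper modification $\widetilde{X}\to\overline{X}$ which is an isomorphism above $X$ and for which $\widetilde{X}\setminus X$ is a simple normal crossing divisor. Classical birational invariance of the geometric genus between smooth proper varieties identifies $g(\overline{X})$ with $g(\widetilde{X})$, so we may assume $D:=\overline{X}\setminus X$ is a simple normal crossing divisor.

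In this situation, $j^*\colon\H^n(\overline{X})\to\H^n(X)$ is a morphism of mixed Hodge structures. By Deligne's theory, $\W_{n-1}\H^n(X)=0$ (which is just \eqref{eq:bound-on-hodge-numbers-smooth}), and the image of $j^*$ equals $\W_n\H^n(X)=\gr^\W_n\H^n(X)$. The $(n,0)$-Hodge piece of $\H^n(X)$ lies entirely in $\gr^\W_n\H^n(X)$, and by strict compatibility of morphisms of mixed Hodge structures with the Hodge filtration, the surjection $\H^n(\overline{X})\twoheadrightarrow\gr^\W_n\H^n(X)$ restricts to a surjection on $(n,0)$-components. This already gives $h^{n,0}(\H^n(\overline{X}))\geq h^{n,0}(\H^n(X))$.

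For the reverse inequality, I would show that $\ker(j^*)$ has no $(n,0)$-component. The kernel equals the image of the Gysin morphism $\H^n(\overline{X},X)\to\H^n(\overline{X})$; since $D$ is simple normal crossing, $\H^n(\overline{X},X)$ is built out of pieces of the form $\H^{n-2k}(D^{(k)})(-k)$ for $k\geq 1$, where $D^{(k)}$ denotes the disjoint union of $k$-fold intersections of irreducible components of $D$. Each Tate twist $(-k)$ with $k\geq 1$ shifts every Hodge bigrading $(a,b)$ to $(a+k,b+k)$, so all Hodge types appearing in $\ker(j^*)$ satisfy $p,q\geq 1$, excluding $(n,0)$. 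Combining both directions yields $g(X)=g(\overline{X})$. The main subtlety is the Hodge-theoretic analysis of the kernel of $j^*$: once framed via the Gysin spectral sequence for the pair $(\overline{X},D)$, the vanishing of its $(n,0)$-component follows from Tate-twist bookkeeping, which is the crux of the argument.
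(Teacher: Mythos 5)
Your argument is correct, but it takes a genuinely different route from the paper's. You work directly with the restriction $j^*\colon \H^n(\overline{X})\to\H^n(X)$: after reducing (via Hironaka and the classical birational invariance of the geometric genus between smooth proper varieties) to the case where $D=\overline{X}\setminus X$ is a simple normal crossing divisor, you use Deligne's identification $\mathrm{Im}(j^*)=\W_n\H^n(X)$ for one inequality, and for the other the description of $\ker(j^*)$ as the image of the cohomology with supports $\H^n_D(\overline{X})$, whose constituents $\H^{n-2k}(D^{(k)})(-k)$ with $k\geq 1$ have Hodge types with both indices $\geq 1$. The paper instead dualizes: by Poincar\'{e} duality \eqref{eq:poincare-duality} the map $\H^n(\overline{X})\to\H^n(X)$ corresponds to $\H^n(\overline{X},Z)\to\H^n(\overline{X})$ with $Z=\overline{X}\setminus X$ \emph{closed} of dimension $<n$, and the long exact sequence of the pair together with the elementary bound \eqref{eq:bound-on-hodge-numbers} (which kills all $(n-p,n)$ components of $\H^\bullet(Z)$) gives the isomorphism on the relevant components in one stroke --- no resolution of $D$, no weight or Gysin spectral sequence, and no appeal to Deligne's theorem on $\W_n$. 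Your approach buys a more explicit picture of the kernel and cokernel of $j^*$; it costs the extra external input of birational invariance of the geometric genus (not circular, since that concerns two smooth \emph{proper} varieties and the present proposition compares an open variety with its compactification, but it is an additional dependency) and heavier machinery. Two minor points: your opening reduction $g(U)=h^{n,0}(\H^n(U))$ for smooth $U$, obtained by combining \eqref{eq:bound-on-hodge-numbers} with \eqref{eq:bound-on-hodge-numbers-smooth}, is exactly right and needed since the genus sums $h^{p,0}$ over all $p>0$; and the object you write as $\H^n(\overline{X},X)$ should be denoted $\H^n_D(\overline{X})$, as in this paper $\H^\bullet(X,Y)$ is reserved for $Y$ closed.
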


\begin{proof}
The natural map $\H^n(\overline{X})\to \H^n(X)$ is a morphism of mixed Hodge structures, and it is enough to prove that it induces an isomorphism on the $(p,0)$ components for all $p>0$. Let $Z=\overline{X}\setminus X$. By \eqref{eq:poincare-duality} and classical Poincar\'{e} duality for $\overline{X}$ this amounts to proving that the natural map $\H^n(\overline{X},Z)\to \H^n(\overline{X})$ induces an isomorphism on the $(n-p,n)$ components for all $p>0$. Consider the long exact sequence in relative cohomology
$$\cdots \To \H^{n-1}(Z) \To \H^n(\overline{X},Z)\To \H^n(\overline{X}) \To \H^n(Z)\To \cdots \ . $$
Since $Z$ has  dimension $<n$, by \eqref{eq:bound-on-hodge-numbers} we get that
$\H^r(Z)^{n-p,n}=0$ for all $r$ and $p$, hence the claim.
\end{proof}

\begin{ex}\label{ex:genus-smooth-rational}
The genus of projective space is
\begin{equation}\label{eq:genus-projective-space}
g(\PP^n_\CC)=0
\end{equation}
and therefore every smooth rational variety has genus zero. This includes open subschemes of projective spaces or Grassmannians, and blow-ups of such varieties along smooth subvarieties.
\end{ex}

\subsubsection{The singular case}

For singular \emph{curves}, our notion of genus agrees with the standard one:

\begin{prop} \label{prop: genusofcurveandresolution}
Let $C$ be a complex curve, and let $\widetilde{C}$ be its canonical resolution of singularities. Then we have the equality
$$g(C)=g(\widetilde{C}).$$
\end{prop}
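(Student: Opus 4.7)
The plan is to reduce the claim to the smooth case via excision together with a long exact sequence. Let $Z \subset C$ denote the singular locus, which is a finite set of points since $C$ is of finite type and of dimension one. Write $\pi \colon \widetilde{C} \to C$ for the canonical resolution and set $\widetilde{Z} := \pi^{-1}(Z)$, which is also a finite set. Since $\pi$ is finite (hence proper) and restricts to an isomorphism over $C \setminus Z$, the morphism $(\widetilde{C}, \widetilde{Z}) \to (C, Z)$ is a modification in the sense of Definition~\ref{defi:modification}, and the excision isomorphism \eqref{eq:iso-relative-cohomology-excision} yields an isomorphism of mixed Hodge structures $\H^1(C, Z) \simeq \H^1(\widetilde{C}, \widetilde{Z})$.

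Next, I would feed this into the long exact sequence in relative cohomology \eqref{eq:long-exact-sequence-relative-cohomology} for the triples $(C, Z, \varnothing)$ and $(\widetilde{C}, \widetilde{Z}, \varnothing)$. Since $Z$ and $\widetilde{Z}$ are zero-dimensional, $\H^1(Z) = \H^1(\widetilde{Z}) = 0$, while $\H^0(Z)$ and $\H^0(\widetilde{Z})$ are pure of weight $0$ and type $(0,0)$. Applying the exact functor $H \mapsto \gr_\F^1 \gr_1^\W H_\CC$ (which extracts the $(1,0)$-component of $H$) then annihilates $\H^0(Z)$ and $\H^0(\widetilde{Z})$, so the exact sequences reduce to isomorphisms
$$\H^1(C, Z)^{1,0} \stackrel{\sim}{\to} \H^1(C)^{1,0} \quad \text{and} \quad \H^1(\widetilde{C}, \widetilde{Z})^{1,0} \stackrel{\sim}{\to} \H^1(\widetilde{C})^{1,0}.$$
Combining with the excision isomorphism gives $h^{1,0}(\H^1(C)) = h^{1,0}(\H^1(\widetilde{C}))$, which, since a curve has $n = 1$, is exactly the equality $g(C) = g(\widetilde{C})$ from Definition~\ref{defi:genus-with-cohomology}.

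Morally this mirrors the proof of Proposition~\ref{prop:genus-smooth-compactification}: one replaces the pair $(X, Z)$ by a modification where the ambient space is smooth, and then uses the dimension bound \eqref{eq:bound-on-hodge-numbers} applied to $Z$ to show that $Z$ does not contribute to the relevant Hodge type. No serious obstacle arises; the main thing to check is that the canonical resolution of a reduced complex curve genuinely fits Definition~\ref{defi:modification}, which is immediate since for curves it coincides with the normalization, a finite map that is an isomorphism over the smooth locus.
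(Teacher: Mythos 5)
Your proof is correct and follows essentially the same route as the paper: excision for the modification $(\widetilde{C},\pi^{-1}(Z))\to (C,Z)$, followed by the relative cohomology long exact sequences of both pairs to see that passing from $\H^1(\cdot,\text{finite set})$ to $\H^1(\cdot)$ is an isomorphism on $(1,0)$-components. The only cosmetic difference is that the paper excises along the complement of an open over which $\pi$ is an isomorphism rather than naming the singular locus explicitly; the argument is the same.
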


\begin{proof}
The resolution $\pi\colon\widetilde{C}\to C$ is an isomorphism over a Zariski open $C_0$ of $C$, and the loci $S=C\setminus C_0$ and $\widetilde{S}=\pi^{-1}(S)$ are finite sets of points. By \eqref{eq:iso-relative-cohomology-excision}, the following natural morphism of mixed Hodge structures induced by $\pi$ is an isomorphism: 
$$\H^1(C,S) \stackrel{\sim}{\To} \H^1(\widetilde{C},\widetilde{S}). $$
 The relative cohomology  long exact sequences  of the pairs $(C,S)$ and $(\widetilde{C}, \widetilde{S})$ imply that the natural morphisms of mixed Hodge structures $\H^1(C,S)\to \H^1(C)$ and $\H^1(\widetilde{C},\widetilde{S})\to \H^1(\widetilde{C})$ are isomorphisms on the $(1,0)$ components, and the claim follows.
\end{proof}

In higher dimension, the genus of a singular variety may differ from the genus of a resolution of singularities, as illustrated in the next example borrowed from \cite[Example 4.2]{chataurcirici}.

\begin{ex}
Let $d\geq 2$, let $\varphi$ and $\psi$ be non-zero homogeneous polynomials in  three variables $x,y,z$ of respective degrees $d$ and $d+1$, and let $C=\{\varphi=0\}$ and $D=\{\psi=0\}$ denote the corresponding curves in $\mathbb{P}^2_\CC$. We assume that $C$ and $D$ are smooth and intersect transversely in  $d(d+1)$ points. Let $\widetilde{X}$ denote the blow-up of $\mathbb{P}^2_\CC$ along $C\cap D$, and  $\widetilde{C}\subset \widetilde{X}$ the strict transform of $C$. Since $\widetilde{X}$ is a smooth rational surface we have $g(\widetilde{X})=0$ by Example \ref{ex:genus-smooth-rational}, and more precisely,
\begin{equation}\label{eq:H2-of-Xtilde-in-example}
\H^2(\widetilde{X})\simeq \QQ(-1)\oplus \QQ(-1)^{d(d+1)}.
\end{equation}
Denote coordinates on $\mathbb{P}^3_\CC$ by  $(x:y:z:w)$ and let  $X
\subset\mathbb{P}^3_\CC$  be the surface defined by the equation $w\,\varphi(x,y,z)=\psi(x,y,z)$. Its only singular point is $o=(0:0:0:1)$. Consider the rational map
$$\mathbb{P}^2_\CC \dashrightarrow X\; , \; (x:y:z) \mapsto (x\varphi(x,y,z):y\varphi(x,y,z):z\varphi(x,y,z):\psi(x,y,z)),$$
which sends a line through $o$ in $\PP^3_\CC$ to its other intersection point with $X$. It induces a regular morphism  
$$\pi\colon \widetilde{X}\to X,$$
which is an isomorphism over $X\setminus\{o\}$, with $\pi^{-1}(\{o\})=\widetilde{C}$. By \eqref{eq:iso-relative-cohomology-excision}, the natural morphism of mixed Hodge structures induced by $\pi$:
$$\H^2(X)\simeq \H^2(X,\{o\})\To \H^2(\widetilde{X},\widetilde{C}),$$
is an isomorphism. Consider the long exact sequence in relative cohomology for the pair $(\widetilde{X},\widetilde{C})$:
$$\cdots \To \H^1(\widetilde{X})\To \H^1(\widetilde{C}) \To \H^2(\widetilde{X},\widetilde{C}) \To \H^2(\widetilde{X})\To \H^2(\widetilde{C})\To \cdots .$$
We have $\H^1(\widetilde{X})\simeq \H^1(\mathbb{P}^2_\CC)=0$ and the map $\H^2(\widetilde{X})\to \H^2(\widetilde{C})\simeq \QQ(-1)$ is easily seen to be surjective, with kernel $\QQ(-1)^{d(d+1)}$ by \eqref{eq:H2-of-Xtilde-in-example}. Since $\widetilde{C}\simeq C$, we obtain the short exact sequence
$$0\To \H^1(C)\To \H^2(X)\To \QQ(-1)^{d(d+1)}\To 0,$$
which shows that the only non-zero Hodge numbers of $\H^2(X)$ are $h^{1,0}=h^{0,1}=g(C)=\frac{(d-1)(d-2)}{2}$, and $h^{1,1}=d(d+1)$. Therefore the genus of $X$ is $g(X)=g(C)$, and if $d\geq 3$,
$$g(X) \neq   g(\widetilde{X}) =0 $$
which implies that the genus of $X$ is different from the genus of its resolution of singularities $\widetilde{X}$. 
\end{ex}

\subsection{Properties of the genus} 

\subsubsection{Invariance under modification} 

If $f:(X',Y')\to (X,Y)$ is a modification then we have
\begin{equation}\label{eq:genus-modification}
g(X,Y)=g(X',Y')
\end{equation}
because of the isomorphism of mixed Hodge structures \eqref{eq:iso-relative-cohomology-excision} for $\bullet=\dim(X)$.

\subsubsection{Products}

\begin{prop}\label{prop:product-genus-zero}
If $(X,Y)$ and $(X',Y')$ have genus zero, then so does $(X\times X',(Y\times X'\cup X\times Y'))$.
\end{prop}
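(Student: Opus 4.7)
The plan is to use the Künneth formula for pairs in the category of mixed Hodge structures. Writing $n=\dim X$ and $n'=\dim X'$, I would first establish the isomorphism of MHS
\[ \H^{n+n'}\!\bigl(X\times X',\,(Y\times X')\cup (X\times Y')\bigr) \;\simeq\; \bigoplus_{i+j=n+n'} \H^i(X,Y)\otimes \H^j(X',Y'), \]
which follows from the standard topological relative Künneth theorem (over $\QQ$, so no $\mathrm{Tor}$ terms) together with the fact, recalled in the excerpt, that the Künneth isomorphism is compatible with mixed Hodge structures. Alternatively, when $X,X'$ are compact one identifies both sides with compactly supported cohomology of the product of complements, and the Künneth formula is then classical.

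Next I would extract the $(p,0)$ Hodge component with $p>0$ of both sides. By the compatibility of the tensor product with Hodge types, the right-hand side decomposes as a direct sum of summands $\H^i(X,Y)^{(p_1,q_1)}\otimes \H^j(X',Y')^{(p_2,q_2)}$ with $i+j=n+n'$, $p_1+p_2=p$, and $q_1+q_2=0$. The bounds \eqref{eq:bound-on-hodge-numbers} force all indices to be non-negative, so $q_1=q_2=0$. The same bounds imply $\H^i(X,Y)^{(\cdot,0)}=0$ whenever $i>n$ (since then $0\notin\{i-n,\dots,n\}$), and symmetrically $\H^j(X',Y')^{(\cdot,0)}=0$ whenever $j>n'$. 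Combined with $i+j=n+n'$, this forces $i=n$ and $j=n'$.

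Finally, I would invoke the genus-zero hypothesis on each factor. The only surviving contributions are of the form $\H^n(X,Y)^{(p_1,0)}\otimes \H^{n'}(X',Y')^{(p_2,0)}$ with $p_1+p_2=p>0$, so at least one of $p_1,p_2$ is strictly positive; the corresponding factor then vanishes by hypothesis. Hence $h^{p,0}(\H^{n+n'}(X\times X',(Y\times X')\cup(X\times Y')))=0$ for all $p>0$, as required.

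The only step requiring any genuine attention is confirming that the relative Künneth isomorphism is compatible with mixed Hodge structures (which is standard but worth flagging); the rest is a short numerical argument using the Hodge number bounds.
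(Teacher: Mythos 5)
Your proof is correct and follows essentially the same route as the paper: the relative K\"unneth formula compatible with mixed Hodge structures, followed by the Hodge number bounds \eqref{eq:bound-on-hodge-numbers} to kill the terms with $i>n$ or $j>n'$ and the genus-zero hypothesis to kill the $(n,n')$ term. You merely spell out a few details (the non-negativity forcing $q_1=q_2=0$, and the compatibility of K\"unneth with MHS) that the paper leaves implicit.
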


\begin{proof}
Let $n=\dim(X)$ and $n'=\dim(X')$. We have the K\"{u}nneth formula for relative cohomology:
$$\H^{n+n'}(X\times X', Y\times X'\cup X\times Y') \simeq \bigoplus_{i+i'=n+n'}\H^i(X,Y)\otimes \H^{i'}(X',Y').$$
By \eqref{eq:bound-on-hodge-numbers} the terms with $i>n$ or $i'>n'$ have vanishing Hodge numbers $h^{p,0}$ for all $p$, and by assumption the term with $(i,i')=(n,n')$ has vanishing $h^{p,0}$ for all $p>0$. The claim follows.
\end{proof}

\begin{rem}
The proof of Proposition \ref{prop:product-genus-zero} shows that if $(X,Y)$ and $(X',Y')$ have combinatorial rank zero, then $g(X\times X',(Y\times X'\cup X\times Y'))=g(X,Y)g(X',Y')$.  Alternatively, this follows from the fact that $g(X,Y)= \dim \F^0\H_n(X,Y)$ and  $g(X',Y')= \dim \F^0\H_n(X',Y')$, which follows from \eqref{eq: g-plus-rk-equals-dimF0}.
\end{rem}

\subsubsection{Hartogs phenomenon}

The genus is insensitive to subvarieties of codimension $\geq 2$, as follows.

\begin{prop}\label{prop:hartogs}
Let $X$ be a complex variety, and $Y,Z\subset X$ be closed subvarieties. If $Z$ has codimension $\geq 2$, then 
$$g(X,Y\cup Z)=g(X,Y).$$
\end{prop}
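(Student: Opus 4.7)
The plan is to compare $\H^n(X,Y\cup Z)$ and $\H^n(X,Y)$ using the long exact sequence of the triple $(X,Y\cup Z,Y)$ in relative cohomology, combined with excision to rewrite the contribution of $Z$ intrinsically on $Z$, and then use the Hodge-theoretic dimension bound \eqref{eq:bound-on-hodge-numbers} to show that the relevant $(p,0)$ components on $Z$ vanish for dimensional reasons. Concretely, set $n=\dim X$ and consider the portion
\[
\H^{n-1}(Y\cup Z,Y)\To \H^n(X,Y\cup Z)\To \H^n(X,Y)\To \H^n(Y\cup Z,Y)
\]
of the long exact sequence \eqref{eq:long-exact-sequence-relative-cohomology}. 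This is an exact sequence of mixed Hodge structures. I would then apply excision \eqref{eq:iso-relative-cohomology-excision} to the closed immersion of pairs $(Z,Y\cap Z)\hookrightarrow(Y\cup Z,Y)$ (which is a modification: it is proper, and it is an isomorphism on complements since $(Y\cup Z)\setminus Y = Z\setminus(Y\cap Z)$) to identify $\H^\bullet(Y\cup Z,Y)\simeq \H^\bullet(Z,Y\cap Z)$ as mixed Hodge structures.

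The next step is to extract Hodge numbers. Since the functor $H\mapsto \gr_F^p\gr_{p+q}^W H_\CC$ is exact on the category of mixed Hodge structures, applying it with $q=0$ to the above sequence yields an exact sequence of $\CC$-vector spaces
\[
\H^{n-1}(Z,Y\cap Z)^{p,0}\To \H^n(X,Y\cup Z)^{p,0}\To \H^n(X,Y)^{p,0}\To \H^n(Z,Y\cap Z)^{p,0}
\]
for every $p>0$. The plan is now to show that the outer terms vanish. Writing $d=\dim Z$, the hypothesis that $Z$ has codimension $\geq 2$ gives $d\leq n-2$, so for $k\in\{n-1,n\}$ we have $k\geq d$, and the bound \eqref{eq:bound-on-hodge-numbers} applied to the pair $(Z,Y\cap Z)$ forces any nonzero Hodge component $(p,q)$ to satisfy $p,q\geq k-d\geq 1$. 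In particular the $(p,0)$ components of $\H^{n-1}(Z,Y\cap Z)$ and $\H^n(Z,Y\cap Z)$ both vanish. Therefore $\H^n(X,Y\cup Z)^{p,0}\simeq \H^n(X,Y)^{p,0}$ for every $p>0$, and summing over $p>0$ yields $g(X,Y\cup Z)=g(X,Y)$.

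There is no real obstacle here: all the needed ingredients (the long exact sequence of a triple in MHS, excision in MHS, exactness of $\gr_F^p\gr_w^W$, and the cohomological dimension bound) have already been recorded in \S\ref{sec:generalities}. The only mild subtlety to verify carefully is that the inclusion $(Z,Y\cap Z)\hookrightarrow(Y\cup Z,Y)$ genuinely qualifies as a modification in the sense of Definition \ref{defi:modification}, so that excision applies; this is immediate from $Z\cap Y=Y\cap Z$ and the fact that closed immersions are proper.
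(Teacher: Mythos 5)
Your proof is correct and follows essentially the same route as the paper's: the long exact sequence of the triple $(X,Y\cup Z,Y)$, excision to identify $\H^\bullet(Y\cup Z,Y)\simeq \H^\bullet(Z,Y\cap Z)$, and the dimension bound \eqref{eq:bound-on-hodge-numbers} to kill the $(p,0)$ components of the outer terms since $\dim Z\leq n-2$. Your extra care in checking that the inclusion is a modification and in invoking the exactness of $\gr_\F^p\gr_w^\W$ is just a more explicit rendering of what the paper leaves implicit.
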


\begin{proof}
Let $n=\dim(X)$. We have the long exact sequence in relative cohomology \eqref{eq:long-exact-sequence-relative-cohomology}:
$$\cdots \to \H^{n-1}(Z,Y\cap Z) \to \H^n(X,Y\cup Z)\to \H^n(X,Y)\to \H^n(Z,Y\cap Z)\to \cdots.$$
We have used the isomorphism $\H^\bullet(Y\cup Z, Z)\simeq \H^\bullet(Z,Y\cap Z)$, which follows from \eqref{eq:iso-relative-cohomology-excision} applied to the modification $(Z,Y\cap Z)\hookrightarrow (Y\cup Z,Y)$. Since $\dim(Z)\leq n-2$, we have by \eqref{eq:bound-on-hodge-numbers} that $\H^{n-1}(Z,Y\cap Z)$ and $\H^n(Z,Y\cap Z)$ have vanishing Hodge numbers $h^{p,0}$ for all $p>0$, and the claim follows.
\end{proof}

\subsubsection{Recursion}

One can estimate and sometimes compute the genus in a recursive manner, as follows.

\begin{prop}\label{prop:genus-inequalities-new}
Let $X$ be a complex variety of dimension $n$ and $Z\subset Y\subset X$ be closed subvarieties, with $Y$ of codimension $1$. We have the inequalities:
$$g(X,Z)\leq g(X,Y)\leq g(X,Z)+g(Y,Z).$$
If $\H^{n-1}(X,Z)^{p,0}=0$ for all $p>0$ (e.g., if $X$ is compact and $X\setminus Z$ is smooth and affine), then
$$g(X,Y)=g(X,Z)+g(Y,Z).$$
\end{prop}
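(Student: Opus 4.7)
The plan is to apply the long exact sequence in relative cohomology \eqref{eq:long-exact-sequence-relative-cohomology} for the triple $(X,Y,Z)$,
\begin{equation*}
\cdots \To \H^{n-1}(X,Z) \xrightarrow{\alpha} \H^{n-1}(Y,Z) \xrightarrow{\beta} \H^n(X,Y) \xrightarrow{\gamma} \H^n(X,Z) \To \H^n(Y,Z) \To \cdots,
\end{equation*}
and then apply an exact functor that extracts precisely the Hodge pieces contributing to the genus. For a mixed Hodge structure $H$, define
$$A(H) := \bigoplus_{p>0} \F^p (\gr_p^{\W} H)_\CC.$$
This functor is exact, since $\gr_w^{\W}$ is exact on mixed Hodge structures and $\F^p$ is exact on pure Hodge structures, by strictness of morphisms of MHS with respect to both filtrations. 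By construction, $\dim_\CC A(H) = \sum_{p>0} h^{p,0}(H)$, so $\dim A(\H^n(X,Y)) = g(X,Y)$, $\dim A(\H^n(X,Z)) = g(X,Z)$, and $\dim A(\H^{n-1}(Y,Z)) = g(Y,Z)$, the last because $\dim Y = n-1$.

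The crucial vanishing is $A(\H^n(Y,Z)) = 0$, which follows from \eqref{eq:bound-on-hodge-numbers} applied to $Y$ of dimension $n-1$: with $k = n \geq \dim Y$, the admissible range is $p,q \in \{1,\ldots,n-1\}$, excluding $q = 0$. Applying $A$ to the piece of the long exact sequence displayed above therefore produces an exact sequence of finite-dimensional $\CC$-vector spaces
\begin{equation*}
A(\H^{n-1}(X,Z)) \xrightarrow{\alpha} A(\H^{n-1}(Y,Z)) \xrightarrow{\beta} A(\H^n(X,Y)) \xrightarrow{\gamma} A(\H^n(X,Z)) \To 0.
\end{equation*}
Surjectivity of $\gamma$ yields $g(X,Z) \leq g(X,Y)$, and exactness at $A(\H^n(X,Y))$ gives
$$g(X,Y) \;=\; \dim \mathrm{im}(\beta) + \dim \mathrm{im}(\gamma) \;\leq\; g(Y,Z) + g(X,Z),$$
which together are the two stated inequalities.

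For the equality statement, the hypothesis $\H^{n-1}(X,Z)^{p,0} = 0$ for all $p>0$ is precisely $A(\H^{n-1}(X,Z)) = 0$; this forces $\alpha = 0$, hence $\beta$ is injective, hence $\dim \mathrm{im}(\beta) = g(Y,Z)$, and equality holds. Finally, for the parenthetical criterion, if $X$ is compact and $U := X \setminus Z$ is smooth and affine of dimension $n$, then \eqref{eq:relative-homology-is-lf-homology} (dualized to cohomology) gives $\H^{n-1}(X,Z) \simeq \H^{n-1}_{\mathrm{c}}(U)$, which vanishes: Andreotti--Frankel ensures $\H^k(U) = 0$ for $k > n$, and Poincar\'e duality for smooth varieties translates this into $\H^k_{\mathrm{c}}(U) = 0$ for $k < n$. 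The only non-routine ingredient in the entire argument is the exactness of $A$, which is a direct consequence of Deligne's strictness theorem; everything else is straightforward bookkeeping with the long exact sequence, so I do not anticipate a genuine obstacle.
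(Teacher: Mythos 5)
Your proof is correct and follows essentially the same route as the paper: both take the long exact sequence of the triple $(X,Y,Z)$, apply the exact functor extracting the $(p,0)$ Hodge components (your $A$ coincides with $\bigoplus_{p>0}H^{p,0}$ here since all $q\geq 0$ by \eqref{eq:bound-on-hodge-numbers}), kill $\H^n(Y,Z)^{p,0}$ via the dimension bound on $Y$, and handle the parenthetical criterion by Poincar\'e duality plus Artin/Andreotti--Frankel vanishing. No gaps.
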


Note that by Proposition \ref{prop:hartogs}, if $Z$ has codimension $\geq 2$ in $X$ then $g(X,Z)=g(X)$. We state the case $Z=\varnothing$ for future reference.

\begin{coro}\label{coro:genus-inequalities-simple}
Let $X$ be a complex variety of dimension $n$ and $Y\subset X$ be a closed subvariety of codimension $1$. We have the inequalities:
\begin{equation}\label{eq:genus-two-inequalities}
g(X)\leq g(X,Y)\leq g(X)+g(Y).
\end{equation}
If $\H^{n-1}(X)^{p,0}=0$ for all $p>0$, then
$$g(X,Y)=g(X)+g(Y).$$
\end{coro}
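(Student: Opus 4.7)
The plan is to obtain the corollary as a direct specialization of Proposition \ref{prop:genus-inequalities-new} to the case $Z=\varnothing$. With this choice, $g(X,Z)=g(X)$ and $g(Y,Z)=g(Y)$ by definition, so the two inequalities of the proposition collapse to \eqref{eq:genus-two-inequalities}, and the hypothesis $\H^{n-1}(X,Z)^{p,0}=0$ becomes exactly the stated condition $\H^{n-1}(X)^{p,0}=0$ for all $p>0$, yielding the equality.

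The real content therefore lies in Proposition \ref{prop:genus-inequalities-new} itself, which I would prove using the long exact sequence in relative cohomology \eqref{eq:long-exact-sequence-relative-cohomology} for the triple $(X,Y,Z)$:
$$\cdots \to \H^{n-1}(X,Z)\to \H^{n-1}(Y,Z)\to \H^n(X,Y)\to \H^n(X,Z)\to \H^n(Y,Z)\to\cdots,$$
which is exact in the category of mixed Hodge structures. I would then apply the functor $H\mapsto H^{p,0}$ for $p>0$; this is exact, since it factors through the exact operations $\gr^\W_p$ and $\gr^p_\F$. The decisive vanishing, which is where the codimension-$1$ hypothesis on $Y$ enters, is $\H^n(Y,Z)^{p,0}=0$ for every $p>0$: since $\dim Y=n-1$, the bound \eqref{eq:bound-on-hodge-numbers} applied in degree $k=n\geq \dim Y$ constrains both Hodge indices of $\H^n(Y,Z)$ to lie in $\{1,\ldots,n-1\}$, in particular excluding $q=0$.

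With that vanishing, the resulting four-term exact sequence in each $(p,0)$ component,
$$\H^{n-1}(X,Z)^{p,0}\stackrel{\alpha_p}{\longrightarrow} \H^{n-1}(Y,Z)^{p,0}\longrightarrow \H^n(X,Y)^{p,0}\longrightarrow \H^n(X,Z)^{p,0}\longrightarrow 0,$$
yields $\dim \H^n(X,Y)^{p,0}=\dim\H^n(X,Z)^{p,0}+\dim\H^{n-1}(Y,Z)^{p,0}-\dim\mathrm{image}(\alpha_p)$. Summing over $p>0$ gives both inequalities of the proposition simultaneously: the upper bound is immediate, while the lower bound $g(X,Y)\geq g(X,Z)$ follows from the surjection onto $\H^n(X,Z)^{p,0}$ alone. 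The equality clause amounts to $\alpha_p=0$ for every $p>0$, which is secured by the hypothesis $\H^{n-1}(X,Z)^{p,0}=0$. For the parenthetical ``e.g.''\ remark, Artin vanishing for smooth affine varieties of dimension $n$ gives $\H^{n+1}(X\setminus Z)=0$, hence by Poincar\'e duality \eqref{eq:poincare-duality} (combined with \eqref{eq:relative-homology-is-lf-homology}) the vanishing $\H^{n-1}(X,Z)=0$ altogether.

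The one place where genuine argument is needed is the Hodge-theoretic vanishing at the right-hand end of the exact sequence; the rest is linear-algebraic bookkeeping made available by the exactness of the functors $\gr^\W_p$ and $\gr^p_\F$.
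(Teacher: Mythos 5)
Your proposal is correct and follows exactly the paper's route: the corollary is obtained as the case $Z=\varnothing$ of Proposition \ref{prop:genus-inequalities-new}, and your proof of that proposition — the long exact sequence of the triple $(X,Y,Z)$, exactness of the functor $H\mapsto H^{p,0}$, the vanishing $\H^n(Y,Z)^{p,0}=0$ from \eqref{eq:bound-on-hodge-numbers} since $\dim Y=n-1$, and Artin vanishing plus Poincar\'e duality for the parenthetical remark — coincides with the one in the paper. No gaps.
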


Note that we can have $g(X,Y)<g(X)+g(Y)$, see \S\ref{sec:genus-zero-not-recursive}.

\begin{proof}[Proof of Proposition \ref{prop:genus-inequalities-new}]
Consider the long exact sequence in relative cohomology \eqref{eq:long-exact-sequence-relative-cohomology}:
$$\cdots\to \H^{n-1}(X,Z)\to \H^{n-1}(Y,Z)\to \H^n(X,Y)\to \H^n(X,Z)\to \H^n(Y,Z)\to \cdots.$$
For $p>0$, by \eqref{eq:bound-on-hodge-numbers} we get that $\H^n(Y,Z)^{p,0}=0$ because $Y$ has dimension $n-1$, and therefore we get a long exact sequence
$$\cdots\to \H^{n-1}(X,Z)^{p,0}\to \H^{n-1}(Y,Z)^{p,0}\to \H^n(X,Y)^{p,0}\to \H^n(X,Z)^{p,0}\to 0.$$
The result follows. If $X$ is compact and $X\setminus Z$ is smooth, then Poincar\'{e} duality \eqref{eq:poincare-duality} gives an isomorphism between $\H^{n-1}(X,Z)$ and $\H^{n+1}(X\setminus Z)^\vee(n)$, which vanishes if $X\setminus Z$ is affine (``Artin vanishing'').
\end{proof}

We also have the following useful special cases of Proposition \ref{prop:genus-inequalities-new}.

\begin{coro}\label{coro:genus-inequalities-two-hypersurfaces}
Let $X$ be a complex variety of dimension $n$, and $Y,Y'\subset X$ be two closed subvarieties of codimension $1$. We have the inequalities:
\begin{equation}\label{eq:genus-inequality-intersection}
g(X,Y')\leq g(X,Y\cup Y')\leq g(X,Y')+g(Y,Y\cap Y').
\end{equation}
If $\H^{n-1}(X,Y')^{p,0}=0$ for all $p>0$ (e.g., if $X$ is compact and $X\setminus Y'$ is smooth and affine), then
$$g(X,Y\cup Y')=g(X,Y')+g(Y,Y\cap Y').$$
\end{coro}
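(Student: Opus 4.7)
The plan is to derive the corollary directly from Proposition \ref{prop:genus-inequalities-new} applied to the triple of closed subvarieties $Y'\subset Y\cup Y'\subset X$. Here $Y\cup Y'$ has codimension $1$ in $X$ since both $Y$ and $Y'$ do, and $Y'$ is manifestly a closed subvariety of $Y\cup Y'$, so the proposition applies with $Z$ replaced by $Y'$ and its $Y$ replaced by $Y\cup Y'$. It yields
$$g(X,Y')\leq g(X,Y\cup Y')\leq g(X,Y')+g(Y\cup Y',Y'),$$
together with equality in the second inequality under the cohomological vanishing hypothesis $\H^{n-1}(X,Y')^{p,0}=0$ for all $p>0$.

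The remaining step is the identification $g(Y\cup Y',Y')=g(Y,Y\cap Y')$. I would verify that the closed immersion of pairs $(Y,Y\cap Y')\hookrightarrow (Y\cup Y',Y')$ is a modification in the sense of Definition \ref{defi:modification}: the underlying map is a closed immersion, hence proper; the preimage of $Y'$ in $Y$ equals $Y\cap Y'$; and on the complements the map is the identity bijection $Y\setminus (Y\cap Y')=(Y\cup Y')\setminus Y'$. The excision isomorphism \eqref{eq:iso-relative-cohomology-excision} then gives an isomorphism of mixed Hodge structures $\H^\bullet(Y\cup Y',Y')\simeq \H^\bullet(Y,Y\cap Y')$ in all degrees, and in particular the Hodge numbers $h^{p,0}$ of the top-degree relative cohomology agree, giving the desired equality of genera.

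For the parenthetical remark about the compact case, the observation is that when $X$ is compact and $X\setminus Y'$ is smooth, Poincar\'{e} duality \eqref{eq:poincare-duality} provides an isomorphism $\H^{n-1}(X,Y')\simeq \H^{n+1}(X\setminus Y')^{\vee}(n)$; if in addition $X\setminus Y'$ is affine, Artin vanishing forces $\H^{n+1}(X\setminus Y')=0$, so the cohomological hypothesis $\H^{n-1}(X,Y')^{p,0}=0$ is satisfied vacuously. This is exactly the same reduction used at the end of the proof of Proposition \ref{prop:genus-inequalities-new}, so no new ingredient is needed. There is no real obstacle: the content of the corollary is just the excision identification $g(Y\cup Y',Y')=g(Y,Y\cap Y')$ combined with the already-established recursion in Proposition \ref{prop:genus-inequalities-new}.
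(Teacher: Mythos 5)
Your argument is correct and is essentially identical to the paper's proof: both apply Proposition \ref{prop:genus-inequalities-new} to the triple $Y'\subset Y\cup Y'\subset X$ and then identify $g(Y\cup Y',Y')=g(Y,Y\cap Y')$ via the excision isomorphism for the modification $(Y,Y\cap Y')\hookrightarrow (Y\cup Y',Y')$. The verification of the Artin-vanishing parenthetical is likewise the same reduction the paper uses, so nothing further is needed.
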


\begin{proof}
By applying \eqref{eq:genus-modification} to the modification $(Y,Y\cap Y')\hookrightarrow (Y\cup Y',Y')$ we get $g(Y\cup Y',Y')=g(Y, Y\cap Y')$, and the result follows from Proposition \ref{prop:genus-inequalities-new}.
\end{proof}

\begin{coro}\label{coro:genus-inequalities-special-case}
Let $X$ be a complex variety of dimension $n$, and $Y\subset X$ be a closed subvariety of codimension $1$, decomposed into irreducible components as $Y=Y_1\cup\cdots \cup Y_N$. Set $Z=Z_1\cup\cdots \cup Z_N$ with $Z_i=Y_i\cap \bigcup_{j\neq i}Y_j$. We have the inequalities:
$$g(X) \leq g(X,Y) \leq g(X) + \sum_{i=1}^N g(Y_i,Z_i).$$
If $\H^{n-1}(X,Z)^{p,0}=0$ for all $p>0$ then
$$g(X,Y)=g(X)+\sum_{i=1}^N g(Y_i,Z_i).$$
\end{coro}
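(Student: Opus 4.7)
The plan is to apply Proposition \ref{prop:genus-inequalities-new} to the triple $Z \subset Y \subset X$, and then identify the two quantities appearing on the right-hand side in terms of the data of the corollary. Proposition \ref{prop:genus-inequalities-new} directly yields
$$g(X,Z) \leq g(X,Y) \leq g(X,Z) + g(Y,Z),$$
with equality on the right when $\H^{n-1}(X,Z)^{p,0}=0$ for all $p>0$, so the task reduces to computing $g(X,Z)$ and $g(Y,Z)$.

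For $g(X,Z)$: each $Z_i = Y_i \cap \bigcup_{j\neq i} Y_j$ is contained in a pairwise intersection $Y_i \cap Y_j$, which has codimension $\geq 2$ in $X$ since the $Y_i$ are distinct irreducible hypersurfaces. Hence $Z = Z_1 \cup \cdots \cup Z_N$ has codimension $\geq 2$ in $X$, and Proposition \ref{prop:hartogs} gives $g(X,Z) = g(X)$.

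For $g(Y,Z)$: I would observe that the natural map
$$f \colon \Bigl(\bigsqcup_{i=1}^N Y_i,\, \bigsqcup_{i=1}^N Z_i\Bigr) \longrightarrow (Y,Z)$$
is a modification in the sense of Definition \ref{defi:modification}. Indeed, $f$ is proper (the $Y_i$ are closed in $Y$), the preimage of $Z$ is $\bigsqcup_i Z_i$ by construction, and the restriction $\bigsqcup_i (Y_i \setminus Z_i) \to Y \setminus Z$ is an isomorphism because a point of $Y \setminus Z$ lies in exactly one irreducible component. By the invariance of the genus under modifications \eqref{eq:genus-modification}, together with the fact that relative cohomology splits over a disjoint union, we get
$$g(Y,Z) = g\Bigl(\bigsqcup_i Y_i,\, \bigsqcup_i Z_i\Bigr) = \sum_{i=1}^N g(Y_i,Z_i).$$
Substituting these two identifications into the inequalities from Proposition \ref{prop:genus-inequalities-new} gives the desired bounds, and likewise for the equality in the stated vanishing case.

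No step looks genuinely difficult, but the main thing to get right is the codimension statement ensuring $Z$ has codimension $\geq 2$ (so that Proposition \ref{prop:hartogs} applies) and the verification that the normalization-style map from the disjoint union is a modification; once these are in place the result is an immediate combination of earlier propositions.
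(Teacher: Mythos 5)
Your proof is correct and follows essentially the same route as the paper: apply Proposition \ref{prop:genus-inequalities-new} to the triple $Z\subset Y\subset X$, use Proposition \ref{prop:hartogs} to identify $g(X,Z)=g(X)$ since $Z$ has codimension $\geq 2$, and use the modification $(\bigsqcup_i Y_i,\bigsqcup_i Z_i)\to (Y,Z)$ together with \eqref{eq:genus-modification} to get $g(Y,Z)=\sum_i g(Y_i,Z_i)$. The only (harmless) imprecision is the phrase that each $Z_i$ ``is contained in a pairwise intersection $Y_i\cap Y_j$''; it is a \emph{union} of such intersections, which still gives codimension $\geq 2$.
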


\begin{proof}
Note that $Z$ has codimension $\geq 2$ in $X$, hence $g(X,Z)=g(X)$ by Proposition \ref{prop:hartogs}. By applying \eqref{eq:genus-modification} to the modification $(\bigsqcup_i Y_i,\bigsqcup Z_i)\to (Y,Z)$, we get $g(Y,Z)=\sum_i g(Y_i,Z_i)$, and the result follows from Proposition \ref{prop:genus-inequalities-new}.   
\end{proof}

One can use Corollary \ref{coro:genus-inequalities-special-case} inductively to estimate the genus, as in the next result.

\begin{coro}
Let $X$ be a complex variety, and $D\subset X$ be a simple normal crossing divisor, written as a union $D_1\cup\cdots \cup D_N$ of smooth irreducible components. Then we have the inequality:
$$g(X,D)\leq \sum_{I\subset \{1,\ldots,N\}}g(D_I).$$
\end{coro}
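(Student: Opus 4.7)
The plan is to proceed by induction on $N$. The base case $N=0$ means $D=\varnothing$ and the inequality reduces to the tautological equality $g(X,\varnothing)=g(X)=g(D_\varnothing)$. I would \emph{not} attempt to use Corollary \ref{coro:genus-inequalities-special-case} directly in the inductive step: applying it once and inducting on each $g(D_i,Z_i)$ would lead to each non-empty subset $I$ being counted $|I|$ times, which is too weak.

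Instead, for the inductive step I would peel off one component at a time. Set $D':=D_1\cup\cdots\cup D_{N-1}$, a SNCD in $X$ with $N-1$ smooth irreducible components. Applying Proposition \ref{prop:genus-inequalities-new} to the triple $(X,D,D')$ (here $D$ has codimension $1$ in $X$ and $D'\subset D$) gives
\[ g(X,D)\;\leq\; g(X,D') + g(D,D'). \]
To reinterpret the second summand, observe that the closed immersion of pairs $(D_N, D_N\cap D')\hookrightarrow (D,D')$ is a modification in the sense of Definition \ref{defi:modification}, since the identity $D\setminus D' = D_N\setminus(D_N\cap D')$ holds. By the invariance of genus under modifications \eqref{eq:genus-modification},
\[ g(D,D') \;=\; g\bigl(D_N, \textstyle\bigcup_{j<N}(D_N\cap D_j)\bigr). \]

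Now I would apply the inductive hypothesis to each of these two pairs, both of which have SNCDs with $N-1$ smooth irreducible components. For $(X,D')$ the relevant intersections are $D_I$ with $I\subset\{1,\ldots,N-1\}$. For $(D_N,\bigcup_{j<N}D_N\cap D_j)$ the components $D_N\cap D_j$ are smooth (this is where the SNCD hypothesis is essential), and their intersections indexed by $J\subset\{1,\ldots,N-1\}$ are precisely $D_{J\cup\{N\}}$. Summing,
\[ g(X,D) \;\leq\; \sum_{I\subset\{1,\ldots,N-1\}}g(D_I) \;+\; \sum_{J\subset\{1,\ldots,N-1\}}g(D_{J\cup\{N\}}) \;=\; \sum_{I\subset\{1,\ldots,N\}}g(D_I), \]
using the partition of subsets of $\{1,\ldots,N\}$ according to whether they contain $N$.

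The potential obstacles are minor bookkeeping points rather than substantive difficulties: one must verify that $D_N$ is itself a (smooth) complex variety so that the inductive hypothesis applies to the pair $(D_N,\bigcup_{j<N}D_N\cap D_j)$, and that this latter divisor is indeed SNCD with the intersection pattern $D_{J\cup\{N\}}$. Both are immediate from the hypothesis that $D$ is a SNCD in $X$ with smooth components. The genuine content of the argument is the choice to slice off a single component via Proposition \ref{prop:genus-inequalities-new}, which ensures that each stratum $D_I$ is counted exactly once.
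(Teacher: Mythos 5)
Your proof is correct, and it takes a genuinely different route from the one the paper indicates. The paper gives no written argument, only the remark that the bound follows by applying Corollary \ref{coro:genus-inequalities-special-case} inductively; your objection to that route is well founded, since iterating that corollary reaches each nonempty stratum $D_I$ once for every element of $I$, yielding at best $g(X,D)\leq g(X)+\sum_{I\neq\varnothing}|I|\,g(D_I)$ rather than the stated multiplicity-one bound. Your deletion--restriction induction on $N$ repairs this: the peeling step $g(X,D)\leq g(X,D')+g(D_N,D_N\cap D')$ is exactly Corollary \ref{coro:genus-inequalities-two-hypersurfaces} with $Y=D_N$ and $Y'=D'$ (your observation that $(D_N,D_N\cap D')\hookrightarrow (D,D')$ is a modification is precisely how that corollary is deduced from Proposition \ref{prop:genus-inequalities-new}), and partitioning the subsets of $\{1,\ldots,N\}$ according to whether they contain $N$ then counts each $D_I$ exactly once. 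The only point deserving a word more than you give it is the one you flag: the components $D_N\cap D_j$ of the induced divisor on $D_N$ are smooth but need not be irreducible, so either state the induction hypothesis for smooth, possibly reducible components, or pass to connected components and note that intersections involving two distinct components of the same $D_N\cap D_j$ are empty (contributing zero genus) while the remaining terms regroup, by additivity of the genus over disjoint unions, into the $g(D_{J\cup\{N\}})$. With that bookkeeping in place your argument is complete, and it is the correct way to obtain the inequality with unit multiplicities.
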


\subsubsection{Genus and compactification}

A subtelty of the notion of the genus of a pair is that in general it can change after  passing to a compactification, contrary to the case of single variety (Proposition \ref{prop:genus-smooth-compactification}), see Example \ref{ex:triple-point} below. One is naturally led to the following constraints.

    \begin{defi}
    Let $X$ be a complex variety, and let $Y,Z\subset X$ be two closed subvarieties. The triple $(X,Y,Z)$ is \emph{locally of product type} if it is locally (in the analytic topology) isomorphic to a triple $(U\times V,Y_U\times V,U\times Z_V)$ for pairs $(U,Y_U)$ and $(V,Z_V)$ of complex analytic varieties. 
    \end{defi}

    The prototypical example of such a situation is a triple $(X,A,B)$ where $X$ is a smooth complex variety $X$ and $A,B$ are two hypersurfaces in $X$ that do not share an irreducible component and such that $A\cup B$ is a normal crossing divisor in $X$.

    \begin{defi}\label{defi:good-compactification}
    Let $(X,Y)$ be a pair of complex varieties such that $X\setminus Y$ is smooth. A \emph{good compactification} of $(X,Y)$ is a pair $(\overline{X},\overline{Y})$ where $\overline{X}$ is a compactification of $X$ and $\overline{Y}$ is the Zariski closure of $Y$ in $\overline{X}$, such that $\overline{X}\setminus \overline{Y}$ is smooth and such that if $Z$ denotes the complement $\overline{X}\setminus X$, the triple $(\overline{X},\overline{Y},Z)$ is locally of product type.
    \end{defi}

    Note that a good compactification of $(X,\varnothing)$ is simply a pair $(\overline{X},\varnothing)$ with $\overline{X}$ a smooth compactification of $X$. If $X$ is compact, then $X$ itself is a good compactification of $(X,Y)$. 

    \begin{ex}\label{ex:not-a-good-compactification}
    Take $X=\CC^2$ with coordinates $x,y$, and $Y$ the union of the two parallel lines $L=\{x=0\}$ and $L'=\{x=1\}$. Then $\overline{X}=\PP^2_\CC$ does not give rise to a good compactification because near the point at infinity $(1:0:0)$ where $\overline{L}$ and $\overline{L}'$ meet the line at infinity $Z$, the triple $(\overline{X},\overline{Y}, Z)$ is not of product type. An example of a good compactification in this case is $\PP^1_\CC\times \PP^1_\CC$.
    \end{ex}
    
    We do not know whether every pair $(X,Y)$ admits a good compactification. However, one can easily prove, using embedded resolution of singularities, that every pair $(X,Y)$ has a modification $(X',Y')$ which admits a good compactification, which is enough for most purposes in view of \eqref{eq:iso-relative-cohomology-excision}.

    \begin{prop}\label{prop:poincare-duality-good-compactification-appendix}
    Let $(X,Y)$ be a pair of complex varieties such that $X\setminus Y$ is smooth. Let $(\overline{X},\overline{Y})$ be a good compactification of $(X,Y)$, and let $Z=\overline{X}\setminus X$. For all $k$ we have an isomorphism of mixed Hodge structures:
    \begin{equation}\label{eq:poincare-duality-good-compactification}
    \H_k(X,Y)=\H_k(\overline{X}\setminus Z, \overline{Y}\setminus \overline{Y}\cap Z) \simeq \H^{2n-k}(\overline{X}\setminus \overline{Y}, Z\setminus \overline{Y}\cap Z)(n).
    \end{equation}
    \end{prop}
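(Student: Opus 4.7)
The first equality in the statement is simply a restatement of $X=\overline{X}\setminus Z$ and $Y=\overline{Y}\setminus(\overline{Y}\cap Z)$, so only the isomorphism requires proof. My plan is to express both sides as hypercohomology on the compact variety $\overline{X}$ and to match them via Verdier duality. Set $U_0:=X\setminus Y=\overline{X}\setminus(\overline{Y}\cup Z)$, which is smooth of dimension $n$, and consider the two factorizations of the open immersion $U_0\hookrightarrow\overline{X}$:
\[ U_0\stackrel{j_1}{\hookrightarrow} X \stackrel{k}{\hookrightarrow} \overline{X} \qquad \mbox{and} \qquad U_0\stackrel{j_2}{\hookrightarrow} M \stackrel{k'}{\hookrightarrow} \overline{X}, \]
where $M=\overline{X}\setminus\overline{Y}$ and $A=Z\cap M$. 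Then
\begin{align*} \H^\bullet(X,Y) & \simeq \H^\bullet\bigl(\overline{X},\, Rk_*(j_1)_!\QQ_{U_0}\bigr), \\ \H^\bullet(M,A) & \simeq \H^\bullet\bigl(\overline{X},\, Rk'_*(j_2)_!\QQ_{U_0}\bigr). \end{align*}

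Since $\overline{X}$ is compact, Verdier duality provides a perfect pairing $\H^k(\overline{X},F)\otimes \H^{-k}(\overline{X},\mathbb{D}_{\overline{X}}F)\to \QQ$ for any bounded constructible complex $F$, compatible with mixed Hodge structures via Saito's formalism of mixed Hodge modules. Applied to $F=Rk_*(j_1)_!\QQ_{U_0}$, and using the standard identities $\mathbb{D}(Rk_*G)\simeq k_!\,\mathbb{D}G$ for open immersions and $\mathbb{D}\bigl((j_1)_!\QQ_{U_0}\bigr)\simeq R(j_1)_*\QQ_{U_0}[2n](n)$ (since the dualizing complex of the smooth $U_0$ is $\QQ_{U_0}[2n](n)$), I would obtain $\mathbb{D}_{\overline{X}}\bigl(Rk_*(j_1)_!\QQ_{U_0}\bigr)\simeq k_!\,R(j_1)_*\QQ_{U_0}[2n](n)$, whence
\[ \H_k(X,Y)\;\simeq\; \H^{2n-k}\bigl(\overline{X},\,k_!\,R(j_1)_*\QQ_{U_0}\bigr)(n). \]
It therefore remains to establish the sheaf-level quasi-isomorphism
\[ k_!\,R(j_1)_*\QQ_{U_0} \;\simeq\; Rk'_*\,(j_2)_!\QQ_{U_0} \]
on $\overline{X}$.

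This is precisely where the locally of product type hypothesis intervenes. In a local chart $(\overline{X},\overline{Y},Z)\simeq (U\times V,\,Y_U\times V,\,U\times Z_V)$, the stalks of both complexes decompose via K\"{u}nneth into tensor products of local data built from the pairs $(U,Y_U)$ and $(V,Z_V)$. A comparison across the four strata $U_0$, $Y=\overline{Y}\setminus(\overline{Y}\cap Z)$, $Z\setminus(\overline{Y}\cap Z)$, and $\overline{Y}\cap Z$ shows agreement: trivially on $U_0$; by matching local cohomology along $Y$, where the $V$-factor contributes trivially away from $Z_V$; by joint vanishing on $Z\setminus(\overline{Y}\cap Z)$ via extension-by-zero (the $k_!$ side vanishes on $Z$, while $(j_2)_!$ vanishes on $A$); and at corner points of $\overline{Y}\cap Z$ by joint vanishing, the nontrivial check being that the stalk of $Rk'_*(j_2)_!\QQ_{U_0}$ is zero thanks to the acyclicity of the local model $R\Gamma(V_\varepsilon,\,\tilde{j}_!\QQ)\simeq \H^\bullet(V_\varepsilon,\,V_\varepsilon\cap Z_V)=0$ for contractible polydiscs $V_\varepsilon$ meeting $Z_V$ (both factors being contractible by the conic structure of analytic germs). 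The main obstacle is exactly this corner calculation: without the locally of product type assumption, the $Rk'_*$-stalks at corners can be nontrivial and the sheaf identification fails, which is why the hypothesis is essential rather than cosmetic. Combining this quasi-isomorphism with the preceding Verdier-duality computation then yields the desired isomorphism, with compatibility of mixed Hodge structures following from the same Verdier/Saito framework.
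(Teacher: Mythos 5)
Your proposal is correct and follows essentially the same route as the paper, whose proof simply defers to \cite[Theorem 3.1]{BD1}: the argument there is precisely this combination of Verdier duality on the compact $\overline{X}$ with the commutation $k_!R(j_1)_*\QQ_{U_0}\simeq Rk'_*(j_2)_!\QQ_{U_0}$, verified stalkwise stratum by stratum, with the locally-of-product-type hypothesis entering exactly at the points of $\overline{Y}\cap Z$ as you describe. The only step you leave implicit is the construction of the comparison morphism itself (obtained by adjunction from the identification $k'^*k_!R(j_1)_*\QQ_{U_0}\simeq (j_2)_!\QQ_{U_0}$ on $\overline{X}\setminus\overline{Y}$), after which your stalk computation does the rest.
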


    \begin{proof}
    This follows from the same argument as in the proof of \cite[Theorem 3.1]{BD1} (see Remark 3.1 of \emph{op.~cit.}). 
    \end{proof}

\begin{prop}\label{prop:genus-good-compactification}
Let $(X,Y)$ be a pair of complex varieties such that $X\setminus Y$ is smooth, and let $(\overline{X},\overline{Y})$ be a good compactification of $(X,Y)$. Then we have the equality
$$g(X,Y)=g(\overline{X},\overline{Y}).$$
\end{prop}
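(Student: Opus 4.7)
The plan is to reduce both $g(X,Y)$ and $g(\overline{X},\overline{Y})$ to Hodge numbers of cohomology of the single smooth variety $U=\overline{X}\setminus\overline{Y}$ and of the pair $(U,V)$, where $V$ is the ``boundary at infinity inside $U$'', and then to compare them using the long exact sequence of the pair together with a trivial dimension bound.

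Set $Z=\overline{X}\setminus X$ and $V:=Z\setminus(\overline{Y}\cap Z)=Z\cap U$, a closed subvariety of the smooth $n$-dimensional variety $U$ (smoothness of $U$ is part of the definition of a good compactification). Proposition \ref{prop:poincare-duality-good-compactification-appendix} provides an isomorphism of mixed Hodge structures $\H_n(X,Y)\simeq \H^n(U,V)(n)$, and classical Poincar\'{e} duality \eqref{eq:poincare-duality} applied to the compact pair $(\overline{X},\overline{Y})$ gives $\H_n(\overline{X},\overline{Y})\simeq \H^n(U)(n)$. Since the Tate twist by $n$ shifts Hodge bidegrees by $(n,n)$, and since $\H^n(X,Y)$ is by definition the mixed Hodge dual of $\H_n(X,Y)$, one has $h^{p,0}(\H^n(X,Y))=h^{-p,0}(\H_n(X,Y))=h^{n-p,n}(\H^n(U,V))$, and similarly $h^{p,0}(\H^n(\overline{X},\overline{Y}))=h^{n-p,n}(\H^n(U))$. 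Consequently
$$g(X,Y)=\sum_{p>0}h^{n-p,n}(\H^n(U,V))\quad\text{and}\quad g(\overline{X},\overline{Y})=\sum_{p>0}h^{n-p,n}(\H^n(U)).$$

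I would then apply the exact functor $H\mapsto H^{n-p,n}$ on mixed Hodge structures (exactness follows from the strict compatibility of morphisms with the weight and Hodge filtrations) to the long exact sequence of mixed Hodge structures associated to the pair $(U,V)$:
$$\cdots\to \H^{n-1}(V)\to \H^n(U,V)\to \H^n(U)\to \H^n(V)\to\cdots .$$
The crucial input is the vanishing $\H^k(V)^{a,n}=0$ for every $k$ and every $a$: indeed $\dim V\leq \dim Z\leq n-1$, so the bound \eqref{eq:bound-on-hodge-numbers} (with $Y=\varnothing$) forces the second Hodge index on any $\H^k(V)$ to be at most $\dim V<n$. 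Substituting this into the neighbouring terms $\H^{n-1}(V)$ and $\H^n(V)$ yields an isomorphism $\H^n(U,V)^{n-p,n}\simeq \H^n(U)^{n-p,n}$ for every $p>0$, and summing over $p>0$ gives the equality $g(X,Y)=g(\overline{X},\overline{Y})$.

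The main bookkeeping difficulty will be verifying that Poincar\'{e} duality, passage to the dual mixed Hodge structure, and the Tate twist combine correctly to translate $h^{p,0}(\H^n(X,Y))$ into $h^{n-p,n}(\H^n(U,V))$; once this is straightened out, the argument is simply the combination of the standard dimension bound on Hodge numbers with the exactness of Hodge bigraded pieces on the category of mixed Hodge structures. The good compactification hypothesis enters only through the smoothness of $U$ (needed for Proposition \ref{prop:poincare-duality-good-compactification-appendix}); the locally-of-product-type condition, while not used explicitly here, is what ensures that such a compactification is a well-behaved object in practice.
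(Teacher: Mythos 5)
Your proof is correct and follows essentially the same route as the paper's: both pass through Proposition \ref{prop:poincare-duality-good-compactification-appendix} and classical Poincar\'{e} duality to identify the two genera with sums of Hodge numbers $h^{n-p,n}$ of $\H^n(U,V)$ and $\H^n(U)$, and then invoke the long exact sequence of the pair $(U,V)$ together with the bound \eqref{eq:bound-on-hodge-numbers} for the low-dimensional variety $V$. One small correction to your closing remark: the locally-of-product-type condition is not merely a matter of good behaviour ``in practice'' --- it is exactly the hypothesis that makes Proposition \ref{prop:poincare-duality-good-compactification-appendix} (the duality $\H_k(X,Y)\simeq \H^{2n-k}(U,V)(n)$) valid, and Example \ref{ex:triple-point} shows the conclusion genuinely fails without it.
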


\begin{proof}
It is enough to prove that the natural map $\H^n(\overline{X},\overline{Y})\to \H^n(X,Y)$ induces an isomorphism after applying  $\gr^0_\F$. Let $Z=\overline{X}\setminus X$. 
By Proposition \ref{prop:poincare-duality-good-compactification-appendix} we have the Poincar\'{e} duality isomorphism
$$\H^n(X,Y)=\H^n(\overline{X}\setminus Z,\overline{Y}\setminus \overline{Y}\cap Z) \simeq \H^n(\overline{X}\setminus \overline{Y},Z\setminus \overline{Y}\cap Z)^\vee (n) \;\; \mbox{ and }\;\; \H^n(\overline{X},\overline{Y}) \simeq \H^n(\overline{X}\setminus\overline{Y})^\vee(n).$$
We are therefore reduced to proving that the natural map $\H^n(\overline{X}\setminus \overline{Y},Z\setminus\overline{Y}\cap Z) \to \H^n(\overline{X}\setminus\overline{Y}) $ induces an isomorphism on $\gr^n_\F$. This follows from the same argument as in the proof of Proposition \ref{prop:genus-smooth-compactification},  which uses the fact that $\dim Z<n$. 
\end{proof}

\begin{ex}\label{ex:triple-point}

The following situation, which builds upon Example \ref{ex:not-a-good-compactification}, illustrates the relevance of the ``good compactification'' assumption in Proposition \ref{prop:genus-good-compactification} (this is inspired by \cite[Remark 3.3]{BD1}). Start with $X_0=\CC^2$ with coordinates $x,y$ and $Y_0=L\cup L'\cup L''$ the union of the three affine lines $L=\{x=0\}$, $L'=\{x=1\}$, and $L''=\{y=0\}$. Since both $X_0$ and $Y_0$ are contractible, we have $\H^i(X_0,Y_0)=0$ for all $i$. Now consider the compactification $\overline{X}_0=\mathbb{P}^2_\CC$, $\overline{Y}_0=\overline{L}\cup\overline{L'}\cup\overline{L''}$ the union of three projective lines forming a triangle with one vertex $\overline{L}\cap\overline{L'}$ on the line at infinity $Z_0=\overline{X}_0\setminus X_0$. It is \emph{not} a good compactification of $(X_0,Y_0)$ because it is not of product type near $\overline{L}\cap\overline{L'}$. Using Poincar\'{e} duality \eqref{eq:poincare-duality} one sees that the only non-zero cohomology groups of the pair $(\overline{X}_0,\overline{Y}_0)$ are $\H^2\simeq \QQ(0)$, $\H^3\simeq \QQ(-1)^{\oplus 2}$, and $\H^4\simeq \QQ(-2)$. 

Now fix a smooth compact connected complex curve $C$ of genus $g>0$, and consider the pair $(X,Y)=(X_0\times C,Y_0\times C)$ and its compactification $(\overline{X},\overline{Y})=(\overline{X}_0\times C,\overline{Y}_0\times C)$. It is \emph{not} a good compactification. By the K\"{u}nneth formula for relative cohomology, $\H^i(X,Y)$ vanishes for all $i$, and $\H^3(\overline{X},\overline{Y})\simeq \H^1(C)\oplus \QQ(-1)^{\oplus 2}$. Therefore $(X,Y)$ has genus $0$ while $(\overline{X},\overline{Y})$ has genus $g$.
\end{ex}

\subsection{Examples}

\subsubsection{The $0$-dimensional case}
A point has genus zero because $\H^0(\mathrm{pt})$ is pure of weight $0$.

\subsubsection{The $1$-dimensional case}

Let $C$ be a curve and $S\subset C$ be a finite set. By Corollary \ref{coro:genus-inequalities-simple} and the fact that $g(S)=0$, we have
$$g(C,S)=g(C),$$
the classical genus of $C$ (Proposition \ref{prop: genusofcurveandresolution}). Therefore, $(C,S)$ has genus zero if and only if $C$ does.

\subsubsection{Projective plane curves}

We start with a general statement about projective hypersurfaces, which follows from Corollary \ref{coro:genus-inequalities-simple}.

\begin{prop}\label{prop:genus-projective-hypersurface}
For a hypersurface $Y\subset \PP^n_\CC$ we have
$$g(\PP^n_\CC,Y)=g(Y).$$
\end{prop}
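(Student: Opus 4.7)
The plan is to apply Corollary~\ref{coro:genus-inequalities-simple} directly with $X=\PP^n_\CC$, which reduces the statement to two separate computations: that $g(\PP^n_\CC)=0$ and that $\H^{n-1}(\PP^n_\CC)^{p,0}=0$ for all $p>0$.

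First, I would invoke Example~\ref{ex:genus-smooth-rational}, which records the vanishing $g(\PP^n_\CC)=0$ as a consequence of smooth rationality. This already gives the lower bound $g(\PP^n_\CC,Y)\geq g(\PP^n_\CC)=0$ and reduces \eqref{eq:genus-two-inequalities} to the upper inequality $g(\PP^n_\CC,Y)\leq g(Y)$.

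Next, to upgrade the inequality to an equality via Corollary~\ref{coro:genus-inequalities-simple}, I would verify the hypothesis $\H^{n-1}(\PP^n_\CC)^{p,0}=0$ for all $p>0$. This is immediate from the classical computation of the cohomology of projective space: $\H^{n-1}(\PP^n_\CC)$ is zero when $n-1$ is odd, and is the pure Tate Hodge structure $\QQ(-(n-1)/2)$ (of Hodge type $((n-1)/2,(n-1)/2)$) when $n-1$ is even. In either case, the only possibly non-zero Hodge number is $h^{(n-1)/2,(n-1)/2}$, and this is of the form $h^{p,0}$ with $p>0$ only if $(n-1)/2>0$ and $(n-1)/2=0$ simultaneously, which is impossible. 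Hence the vanishing hypothesis holds.

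Applying the equality case of Corollary~\ref{coro:genus-inequalities-simple} then yields $g(\PP^n_\CC,Y)=g(\PP^n_\CC)+g(Y)=g(Y)$, as claimed. There is no real obstacle here; the only thing to double-check is the Hodge-theoretic computation for $\H^{n-1}(\PP^n_\CC)$, which is standard.
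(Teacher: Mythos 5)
Your proof is correct and follows exactly the route the paper intends: the paper gives no separate argument beyond asserting that the proposition ``follows from Corollary~\ref{coro:genus-inequalities-simple}'', and you have supplied precisely the two verifications needed, namely $g(\PP^n_\CC)=0$ and the vanishing of $\H^{n-1}(\PP^n_\CC)^{p,0}$ for $p>0$, both of which are checked correctly.
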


If $C\subset \PP^2_\CC$ is a curve, we therefore have
\begin{equation}\label{eq:genus-degree-curves}
g(\PP^2_\CC,C)=g(C),
\end{equation}
which by Proposition \ref{prop: genusofcurveandresolution} equals the classical genus of $C$. Recall that this is computed, for an irreducible curve $C$ of degree $d$ in $\PP^2_\CC$, by the formula
$$g(C)=\frac{(d-1)(d-2)}{2} - \sum_{P\in C} \delta_P(C),$$
where $\delta_P(C)$ is the \emph{$\delta$-invariant} of a point $P$. (It equals $0$ if and only if $P$ is a smooth point, and $1$ if $P$ is a node.) In particular, if $C$ is smooth then its genus is simply $\frac{(d-1)(d-2)}{2}$. Examples of projective plane curves of genus zero include all lines ($d=1$) and quadrics ($d=2$), along with cubics ($d=3$) with at least one singular point. 

For curves with many irreducible components, we have the following result.

\begin{prop}\label{prop:genus-curve-many-irreducible components}
Let $C_1,\ldots,C_r$ be curves in $\PP^2_\CC$ without common irreducible components. Then 
$$g(C_1\cup\cdots \cup C_r) = g(C_1)+\cdots +g(C_r).$$
In particular, any union of genus zero curves (e.g., lines, quadrics, singular cubics) has genus zero.
\end{prop}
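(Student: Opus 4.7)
The plan is to recognize Proposition \ref{prop:genus-curve-many-irreducible components} as essentially a direct consequence of Corollary \ref{coro:genus-inequalities-special-case} applied to the pair $(X,Y) = (\PP^2_\CC, C_1 \cup \cdots \cup C_r)$, combined with Proposition \ref{prop:genus-projective-hypersurface} to move between $g(\PP^2_\CC,Y)$ and $g(Y)$.

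Concretely, I would proceed as follows. Set $Y = C_1 \cup \cdots \cup C_r$, and $Z_i = C_i \cap \bigcup_{j \neq i} C_j$. Since the $C_i$ share no irreducible component, each $Z_i$ is a finite set of points (intersection of distinct irreducible curves in $\PP^2_\CC$), and $Z := \bigcup_i Z_i$ is also finite. From the one-dimensional case discussed earlier in \S\ref{sec:genus}, $g(C_i, Z_i) = g(C_i)$ for each $i$. Also $g(\PP^2_\CC) = 0$ by Example \ref{ex:genus-smooth-rational}, and $g(\PP^2_\CC, Y) = g(Y)$ by Proposition \ref{prop:genus-projective-hypersurface}. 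So the desired equality $g(Y) = \sum_i g(C_i)$ will follow if we can invoke the \emph{equality} case of Corollary \ref{coro:genus-inequalities-special-case}, i.e., if we can verify the hypothesis $\H^{n-1}(\PP^2_\CC, Z)^{p,0} = 0$ for all $p > 0$, which here means $\H^1(\PP^2_\CC, Z)^{p,0} = 0$ for $p > 0$.

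The only nontrivial point, and therefore the main step to carry out, is this vanishing. I would verify it by considering the relative cohomology long exact sequence
\begin{equation*}
\H^0(\PP^2_\CC) \to \H^0(Z) \to \H^1(\PP^2_\CC, Z) \to \H^1(\PP^2_\CC).
\end{equation*}
Here $\H^1(\PP^2_\CC) = 0$, while $\H^0(\PP^2_\CC)$ and $\H^0(Z)$ are both pure of weight $0$ and type $(0,0)$ (the latter because $Z$ is a finite set of points). Hence $\H^1(\PP^2_\CC, Z)$ is pure of weight $0$ and type $(0,0)$, and in particular $\H^1(\PP^2_\CC, Z)^{p,0} = 0$ for $p > 0$.

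Plugging everything in, Corollary \ref{coro:genus-inequalities-special-case} yields
\begin{equation*}
g(\PP^2_\CC, Y) = g(\PP^2_\CC) + \sum_{i=1}^r g(C_i, Z_i) = \sum_{i=1}^r g(C_i),
\end{equation*}
and combining with $g(\PP^2_\CC, Y) = g(Y)$ gives the result. The final assertion (that any union of genus zero curves has genus zero) is then immediate. No serious obstacle is expected; the proof is essentially bookkeeping around the already-established recursion for the genus of a pair, with the Hodge-theoretic input reduced to the trivial fact that the cohomology of a finite set of points is pure of weight zero.
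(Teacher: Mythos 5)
Your argument is correct, but it takes a genuinely different (and longer) route than the paper. The paper's proof is two lines and entirely intrinsic to the curve: since the $C_i$ share no irreducible component, the resolution of singularities of $C_1\cup\cdots\cup C_r$ is the disjoint union $\bigsqcup_i \widetilde{C}_i$ of the resolutions of the $C_i$, so the claim follows from Proposition \ref{prop: genusofcurveandresolution} (the genus of a curve equals that of its resolution) together with the additivity of the genus over disjoint unions; the ambient $\PP^2_\CC$ plays no role. You instead pass through the pair $(\PP^2_\CC, Y)$ and run the deletion--restriction recursion: you invoke the equality case of Corollary \ref{coro:genus-inequalities-special-case}, verify its hypothesis $\H^1(\PP^2_\CC,Z)^{p,0}=0$ for $p>0$ via the long exact sequence and the type-$(0,0)$ purity of $\H^0(Z)$ for $Z$ finite, and translate back and forth with Proposition \ref{prop:genus-projective-hypersurface}. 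All of these steps are sound. One small point to tidy up: Corollary \ref{coro:genus-inequalities-special-case} is stated for the decomposition of $Y$ into its \emph{irreducible} components, whereas your $C_i$ may themselves be reducible; either apply the corollary to the full irreducible decomposition of $Y$ and regroup afterwards, or observe that its proof (via the modification $(\bigsqcup_i C_i,\bigsqcup_i Z_i)\to(Y,Z)$) works verbatim for any decomposition into closed curves without common irreducible components. The trade-off is that your route exercises the recursion machinery of \S\ref{sec:genus} but needs the projective embedding, while the paper's resolution argument is shorter and makes clear that the statement is independent of the embedding.
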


\begin{proof}
This follows from Proposition \ref{prop: genusofcurveandresolution} and the fact that the resolution of singularities of $C_1\cup\cdots \cup C_r$ is the disjoint union of the resolutions of singularities of the $C_i$.
\end{proof}

\subsubsection{Projective hypersurfaces}

Let $Y$ be a hypersurface in $\PP^n_\CC$ defined by a homogeneous polynomial of degree $d$. By Proposition \ref{prop:genus-projective-hypersurface} we have $g(\PP^n_\CC, Y) = g(Y)$. If $Y$ is smooth and $n\geq 1$, Proposition \ref{prop:log-forms-have-simple-poles} and Remark \ref{rem:log-forms-on-projective-space} yield the genus-degree formula $g(Y)= \binom{d-1}{n}$, and in particular $g(Y)=0$ if and only if $d\leq n$. In general, we still have the following useful result \cite[Th\'{e}or\`{e}me 1]{delignedimca}, which is a Hodge-theoretic version of the Chevalley--Warning theorem.

\begin{thm}\label{thm:projective-hypersurface-of-small-degree}
A hypersurface in $\PP^n_\CC$ of degree $\leq n$ has genus zero.
\end{thm}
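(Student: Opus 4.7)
The plan is to show that everything reduces to the vanishing of $\logforms{n}(\PP^n_\CC\setminus Y)$ when $\deg Y\leq n$, and then to invoke the explicit description of forms with simple poles on projective space from Remark~\ref{rem:log-forms-on-projective-space}.

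First I would use Proposition~\ref{prop:genus-projective-hypersurface} to replace $g(Y)$ with $g(\PP^n_\CC, Y)$, so that the statement to prove becomes: \emph{the pair $(\PP^n_\CC, Y)$ has genus zero}. Write $U=\PP^n_\CC\setminus Y$. Since $\PP^n_\CC$ is smooth, $U$ is smooth too, and $\PP^n_\CC$ is compact, so we are in the setting of the $R$-map \eqref{eq:R-map}. The key identity \eqref{eq: g-plus-rk-equals-dimF0} gives
$$g(\PP^n_\CC,Y)+\rk(\PP^n_\CC,Y)=\dim\F^0\H_n(\PP^n_\CC,Y)_\CC.$$
By Poincar\'{e} duality \eqref{eq:poincare-duality}, there is an isomorphism of mixed Hodge structures $\H_n(\PP^n_\CC,Y)\simeq \H^n(U)(n)$; since a Tate twist by $n$ shifts the Hodge filtration by $n$, this gives $\F^0\H_n(\PP^n_\CC,Y)_\CC\simeq \F^n\H^n(U)_\CC$, which by \eqref{eq:log-forms-to-cohomology} is identified with $\logforms{n}(U)$. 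Hence
$$g(\PP^n_\CC,Y)\leq \dim \logforms{n}(U),$$
so it suffices to prove that $\logforms{n}(\PP^n_\CC\setminus Y)=0$ when $\deg Y\leq n$.

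The second step is to bound this space using forms with at most simple poles. By Proposition~\ref{prop:log-forms-have-simple-poles}, we have the inclusion $\logforms{n}(U)\subset \mathcal{S}^n(\PP^n_\CC, Y)$. By Remark~\ref{rem:log-forms-on-projective-space}, there is a canonical isomorphism
$$\mathcal{S}^n(\PP^n_\CC, Y)\simeq \H^0(\PP^n_\CC, \mathcal{O}(d-n-1)),$$
where $d=\deg Y$. When $d\leq n$ we have $d-n-1\leq -1$, and the line bundle $\mathcal{O}(d-n-1)$ admits no non-zero global sections on $\PP^n_\CC$. Therefore $\mathcal{S}^n(\PP^n_\CC,Y)=0$, which forces $\logforms{n}(U)=0$, and by the previous paragraph $g(\PP^n_\CC,Y)=0$ and thus $g(Y)=0$.

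There is essentially no hard step to speak of here: the argument is a clean concatenation of tools already developed in \S\ref{sec:generalities}--\S\ref{sec:canonical-forms}. The only thing to be vigilant about is that $U=\PP^n_\CC\setminus Y$ is allowed to be singular in the sense that $Y$ itself may be very singular (as in the convex polytope case), but Proposition~\ref{prop:log-forms-have-simple-poles} is stated precisely to accommodate this: it only requires $\PP^n_\CC$ to be smooth, not $Y$, and the inclusion $\logforms{n}(U)\subset \mathcal{S}^n(\PP^n_\CC,Y)$ holds without any normal crossing hypothesis. This is what makes the proof work uniformly for all hypersurfaces of degree $\leq n$, recovering the Hodge-theoretic Chevalley--Warning statement of Deligne--Dimca.
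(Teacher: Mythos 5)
Your proof is correct, but it takes a genuinely different route from the paper: for this statement the paper offers no argument of its own and simply cites Deligne--Dimca, presenting the result as a Hodge-theoretic Chevalley--Warning theorem. Your argument is instead self-contained within the paper's own toolkit. In one line it is the chain
$$g(Y)=g(\PP^n_\CC,Y)\leq \dim\F^0\H_n(\PP^n_\CC,Y)_\CC=\dim\logforms{n}(\PP^n_\CC\setminus Y)\leq \dim\H^0\bigl(\PP^n_\CC,\mathcal{O}(d-n-1)\bigr)=0 \quad (d\leq n),$$
using Proposition \ref{prop:genus-projective-hypersurface}, the identity \eqref{eq: g-plus-rk-equals-dimF0}, Poincar\'e duality combined with \eqref{eq:log-forms-to-cohomology}, Proposition \ref{prop:log-forms-have-simple-poles} (whose Hartogs-based proof is precisely what makes the inclusion $\logforms{n}(U)\subset\mathcal{S}^n(\PP^n_\CC,Y)$ valid for arbitrarily singular $Y$), and the vanishing of global sections of a negative line bundle on $\PP^n_\CC$; each of these is available without circularity. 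Two points of comparison are worth recording. First, your argument proves slightly more than the stated theorem: since it establishes $g(\PP^n_\CC,Y)+\rk(\PP^n_\CC,Y)=0$, it simultaneously yields Theorem \ref{thm:projective-hypersurface-of-small-degree-rank} on the vanishing of the combinatorial rank (for $n\geq 2$, via Proposition \ref{prop:rank-projective-hypersurface}). Second, what the citation to Deligne--Dimca buys that your argument does not is a far stronger comparison between the Hodge filtration and the pole-order filtration on the whole of $\H^\bullet(\PP^n_\CC\setminus Y)$; for the single graded piece $\gr_\F^n\H^n(\PP^n_\CC\setminus Y)$ needed here, your elementary degree count suffices.
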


\begin{rem}
In general, the middle degree  cohomology $\H^{n-1}(Y)$ of a hypersurface $Y$ in $\PP^n_\CC$ of degree $d\leq n$ is not of Tate type, even though its $h^{0,n-1}$ vanishes. The first example of such a phenomenon is a smooth cubic $3$-fold ($n=4$ and $d=3$) for which $h^{1,2}=h^{2,1}=5$. By the work of Hirzebruch \cite[\S 22.1]{hirzebruch}, all Hodge numbers of smooth projective hypersurfaces have explicit expressions in terms of $d$ and $n$. 
\end{rem}

If $Y$ has many irreducible components, then one can bound its genus using Corollaries \ref{coro:genus-inequalities-two-hypersurfaces} and \ref{coro:genus-inequalities-special-case} and the following generalization of Theorem \ref{thm:projective-hypersurface-of-small-degree}.

\begin{thm}\label{thm:projective-suvariety-of-small-degree}
Let $Y_1,\ldots, Y_r$ be (possibly singular) hypersurfaces in $\PP^n_\CC$ of degrees $d_1,\ldots,d_r$. If $d_1+\cdots +d_r\leq n$ then $Y_1\cap\cdots \cap Y_r$ has genus zero.
\end{thm}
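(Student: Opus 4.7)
The plan is to induct on the number $r$ of hypersurfaces. The base case $r = 1$ is exactly Theorem \ref{thm:projective-hypersurface-of-small-degree}. For $r \geq 2$, I would exploit that the union $Y := Y_1 \cup \cdots \cup Y_r$ is a hypersurface in $\PP^n_\CC$ of degree $d_1 + \cdots + d_r \leq n$, so Theorem \ref{thm:projective-hypersurface-of-small-degree} applies directly to $Y$ and yields $g(Y) = 0$. Simultaneously, every proper subintersection $Y_I = \bigcap_{i \in I} Y_i$ for $I \subsetneq \{1,\ldots,r\}$ satisfies $g(Y_I) = 0$ by the inductive hypothesis, applied to $|I| < r$ hypersurfaces of total degree still bounded by $n$.

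The main tool will be the Mayer--Vietoris-type spectral sequence in the category of mixed Hodge structures, associated to the closed cover of $Y$ by the $Y_i$ and obtained by simplicial descent as in \cite{delignehodge3}:
$$E_1^{p,q} = \bigoplus_{|I| = p+1} \H^q(Y_I) \;\Longrightarrow\; \H^{p+q}(Y).$$
Setting $m := \dim W$, the group $\H^m(W)$ occupies position $E_1^{r-1,m}$, from which every outgoing differential vanishes automatically since $\{1,\ldots,r\}$ admits no subsets of cardinality $> r$. Because the functors $\gr_\F^a$ and $\gr_a^\W$ are exact, I can extract the Hodge type $(a,0)$ components throughout the spectral sequence. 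The abutment $\H^{r-1+m}(Y)^{a,0}$ vanishes for every $a > 0$: either because $r-1+m = n-1$ and then $g(Y) = 0$ applies, or because $r-1+m > n-1 = \dim Y$ and the dimension bound \eqref{eq:bound-on-hodge-numbers} kicks in. Hence $(E_\infty^{r-1,m})^{a,0} = 0$ for all $a > 0$.

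The hard part will be to conclude from this that the whole $(a,0)$-component of $E_1^{r-1,m} = \H^m(W)$ vanishes, which requires controlling the incoming differentials. These originate in positions $(r-1-s, m+s-1)$ with $s \geq 1$, and bring in factors $\H^{m+s-1}(Y_I)^{a,0}$ with $|I| = r-s < r$. Three cases arise: if $m+s-1 > \dim Y_I$, the bound \eqref{eq:bound-on-hodge-numbers} makes the factor vanish automatically; if $m+s-1 = \dim Y_I$, the inductive hypothesis $g(Y_I) = 0$ suffices. The delicate case is $m+s-1 < \dim Y_I$, which occurs precisely when $W$ has the expected codimension $r$ in $\PP^n_\CC$; there the required vanishing lies strictly below the top degree of $Y_I$ and is not controlled by the genus alone. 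To close this gap I would strengthen the induction to the refined statement that $\H^k(Y_I)^{a,0} = 0$ for every $k$ and every $a > 0$. For smooth complete intersections this follows from the Lefschetz hyperplane theorem, which identifies $\H^k(Y_I) \simeq \H^k(\PP^n_\CC)$ in degrees $k < \dim Y_I$ as a pure Tate structure; the general, possibly singular, case is a variant of the Hodge-theoretic Chevalley--Warning statement that underlies Theorem \ref{thm:projective-hypersurface-of-small-degree}, and is proved in \cite{delignedimca} by a Hodge-filtration argument on forms with logarithmic poles. With this strengthened vanishing, every incoming differential has zero source on the $(a,0)$-strand, so $\H^m(W)^{a,0} = (E_\infty^{r-1,m})^{a,0} = 0$, completing the induction.
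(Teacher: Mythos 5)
Your spectral sequence set-up, the extraction of $(a,0)$-components by exactness of the functors $\gr^\W$ and $\gr_\F$, and the treatment of the abutment via $g(Y)=0$ together with the bound \eqref{eq:bound-on-hodge-numbers} (using $m\geq n-r$, so $r-1+m\geq n-1$) are all correct, and you have put your finger on exactly the right difficulty: the incoming differentials involve $\H^{m+s-1}(Y_I)$ in degrees strictly below $\dim Y_I$, which the genus of $Y_I$ does not control. This is also the paper's approach in spirit: induction on $r$, inclusion--exclusion, and \cite[Th\'{e}or\`{e}me 1]{delignedimca} applied to the unions $Y_{i_1}\cup\cdots\cup Y_{i_k}$, which are single hypersurfaces of degree $\leq n$.

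The gap is in how you close the delicate case. You propose the refined vanishing $\H^k(Y_I)^{a,0}=0$ for all $k$ and $a>0$, but then justify it by the Lefschetz hyperplane theorem (which requires a smoothness that is not assumed) and by \cite{delignedimca} ``for the general, possibly singular, case''. That citation does not do the job: Th\'{e}or\`{e}me 1 of \emph{loc.~cit.} concerns a \emph{single} hypersurface, whereas $Y_I$ for $|I|\geq 2$ is an intersection, and the all-degrees vanishing for such intersections is precisely a strengthening of the statement you are trying to prove --- so as written the step is circular (or, read charitably, an appeal to Esnault's theorem, which would make the whole induction unnecessary). The fix is to make the refined statement the inductive claim itself: prove by induction on $r$ that $\H^k(Y_1\cap\cdots\cap Y_r)^{a,0}=0$ for \emph{every} $k\geq 1$ and $a>0$. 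The base case $r=1$ is the all-degrees form of \cite[Th\'{e}or\`{e}me 1]{delignedimca} (Theorem \ref{thm:projective-hypersurface-of-small-degree} as stated only records the top-degree consequence, so you must invoke the stronger form). For the inductive step, run your spectral sequence argument verbatim with $m$ replaced by an arbitrary $k\geq 1$: the incoming differentials are killed by the (now all-degrees) inductive hypothesis applied to the $Y_I$ with $|I|<r$, and the abutment $\H^{r-1+k}(Y)^{a,0}$ --- which for $r-1+k<n-1$ is no longer controlled by $g(Y)$ or by \eqref{eq:bound-on-hodge-numbers} --- is killed by the all-degrees statement for the single hypersurface $Y=Y_1\cup\cdots\cup Y_r$ of degree $d_1+\cdots+d_r\leq n$. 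As written, your inductive step only outputs the top-degree conclusion for $W$, so the strengthened hypothesis is never propagated; with these two adjustments the induction closes and your argument becomes a correct, more detailed version of the paper's.
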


\begin{proof}
By induction on $r$, using inclusion-exclusion and applying \cite[Th\'{e}or\`{e}me 1]{delignedimca} to the hypersurfaces $Y_{i_1}\cup\cdots \cup Y_{i_k}$ which all have degree $\leq n$.
\end{proof}

In low degree one has the following result.

\begin{prop}
The following projective hypersurfaces have genus zero:
\begin{enumerate}[1)]
\item a union of hyperplanes;
\item a union of hyperplanes and one quadric.
\end{enumerate}
\end{prop}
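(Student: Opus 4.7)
The plan is to establish both parts (1) and (2) at once by proving the combined claim: for every $n \geq 1$ and every closed $Y \subset \PP^n_\CC$ that is a union of hyperplanes and at most one quadric, $g(\PP^n_\CC, Y) = 0$. By Proposition \ref{prop:genus-projective-hypersurface} this is equivalent to $g(Y) = 0$. The central tool will be the recursive inequality of Corollary \ref{coro:genus-inequalities-two-hypersurfaces}, which allows one to peel off one hyperplane at a time.

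I would first reduce to the case of an \emph{irreducible} quadric: a reducible quadric is a union of two hyperplanes and may be absorbed into the hyperplane list without changing the hypothesis, so part (1) becomes simply the ``no quadric'' special case of part (2). Then I would argue by double induction: an outer induction on the ambient dimension $n$ and, inside it, an inner induction on the number $N$ of hyperplane components of $Y$. The base case $n=1$ is immediate since $Y$ is then a finite set of points. The inner base case $N=0$ is either $Y = \varnothing$ (genus zero) or $Y = Q$ a quadric in $\PP^n_\CC$ with $n \geq 2$; in the latter case $\deg Q = 2 \leq n$ and $g(Q) = 0$ by Theorem \ref{thm:projective-hypersurface-of-small-degree}.

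For the inner inductive step, pick a hyperplane component $H$ of $Y$, write $Y = Y' \cup H$, and apply Corollary \ref{coro:genus-inequalities-two-hypersurfaces} to obtain
$$g(\PP^n_\CC, Y) \leq g(\PP^n_\CC, Y') + g(H, H \cap Y').$$
The first term vanishes by the inner inductive hypothesis on $N$. For the second, $H \simeq \PP^{n-1}_\CC$ and $H \cap Y' = \bigcup_j (H \cap H_j) \cup (H \cap Q)$, where the $H_j$ are the remaining hyperplane components and $Q$ is the quadric (if present). Each $H \cap H_j$ is a hyperplane of $H$, and $H \cap Q$ is a quadric of $H$ because $H \not\subset Q$ whenever $Q$ is irreducible of degree two. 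Thus $H \cap Y'$ is once again a union of hyperplanes and at most one quadric, now in $\PP^{n-1}_\CC$, so the outer inductive hypothesis yields $g(H, H \cap Y') = 0$. Combining the two vanishings closes the induction.

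The step requiring the most attention is the initial reduction to an irreducible quadric and the verification that, after peeling off $H$, the geometry on $H$ remains of the required form; both rely solely on the degree comparison $1 < 2$, which guarantees $H \not\subset Q$ for an irreducible $Q$. Beyond that, no substantive obstacle is expected: the argument is a clean unwinding of Corollary \ref{coro:genus-inequalities-two-hypersurfaces}, with Theorem \ref{thm:projective-hypersurface-of-small-degree} supplying the only nontrivial base input (a single quadric has genus zero in dimensions $\geq 2$).
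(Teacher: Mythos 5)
Your proof is correct and takes essentially the same route as the paper's: the paper's one-line argument is an induction on the number of hyperplanes via Corollary \ref{coro:genus-inequalities-two-hypersurfaces} applied with $Y$ a hyperplane, combined with the genus-zero statements for projective spaces and quadrics (Theorem \ref{thm:projective-hypersurface-of-small-degree}). Your write-up simply makes explicit what the paper leaves implicit — the outer induction on dimension needed for the term $g(H, H\cap Y')$, the absorption of a reducible quadric into the hyperplane list, and the observation that $H\cap Y'$ is again of the required form in $H\simeq \PP^{n-1}_\CC$.
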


\begin{proof}
This follows by induction on the number of hyperplanes, applying Corollary \ref{coro:genus-inequalities-two-hypersurfaces} to $X=$ projective space and $Y=$ a hyperplane and using the fact that projective spaces and quadrics have genus zero.
\end{proof}

\subsubsection{Linear fibrations}

In $\PP^n_\CC$ with homogeneous coordinates $(x_0:x_1:\cdots :x_n)$, let us consider a hypersurface defined by a homogeneous polynomial
$$f(x_0,x_1,\ldots,x_n)=x_0f^0(x_1,\ldots,x_n)+f_0(x_1,\ldots,x_n),$$
with $f^0$ and $f_0$ homogeneous polynomials in $x_1,\ldots,x_n$, and $f^0\neq 0$.

\begin{prop}\label{prop:linear-fibration-genus}
We have the equality
$$g(\PP^n_\CC, V(f)) = g(\PP^{n-1}_\CC, V(f^0)).$$
\end{prop}

\begin{proof}
Let $o$ denote the point $(1:0:\cdots:0)\in \PP^n_\CC$, and consider the projection from $o$, 
$$\PP^n_\CC\setminus \{o\}\to \PP^{n-1}_\CC \; ,\; (x_0:x_1:\cdots:x_n) \mapsto (x_1:\cdots :x_n).$$ 
It restricts to an isomorphism 
\begin{equation}\label{eq:projection-cone-in-proof-linear-fibration}
V(f) \setminus V(f,f^0) \overset{\sim}{\To}  \PP^{n-1}_\CC \setminus V(f^0)
\end{equation}
whose inverse is given by the rational map $(x_1:\cdots: x_n) \mapsto (-f_0: f^0 x_1:\cdots : f^0 x_n)$. The latter admits the following interpretation: identify the pencil of lines $\ell$ through $o$ with $\PP^{n-1}_\CC$, and consider the map $\ell \mapsto \ell \cap V(f)$ which is well-defined outside $V(f^0).$
The isomorphism \eqref{eq:projection-cone-in-proof-linear-fibration} induces an isomorphism in compactly supported cohomology, which, since $V(f)$ and $\PP^{n-1}_\CC$ are compact, gives an isomorphism of mixed Hodge structures
\begin{equation}\label{eq:intermediate-iso-cone-in-proof-linear-fibration}
\H^{n-1}( V(f), V(f,f^0)) \simeq \H^{n-1} (\PP^{n-1}_\CC, V(f^0)).
\end{equation}

The long exact sequence in relative cohomology \eqref{eq:long-exact-sequence-relative-cohomology} for the triple $(\PP^n_\CC, V(f), V(f,f^0))$ reads:
\begin{equation}\label{eq:long-exact-sequence-triple-in-proof-linear-fibration}
\cdots \to \H^{n-1} (\PP^{n}_\CC, V(f,f^0)) \to \H^{n-1} (V(f), V(f,f^0)) \to \H^{n} (\PP^n_\CC, V(f)) \to \H^n (\PP^n_\CC, V(f,f^0)) \to \cdots \end{equation}
By definition, $V(f,f^0)\subset \PP^n_\CC$ is a cone over $V(f_0,f^0)\subset \PP^{n-1}_\CC$ with cone point $o$, and the projection
$$V(f,f^0)\setminus\{o\} \to V(f_0,f^0)$$
is a locally trivial $\mathbb{A}^1_\CC$-bundle. Using the Leray spectral sequence in compactly supported cohomology, we therefore get isomorphisms of mixed Hodge structures
\begin{equation}\label{eq:something-is-a-tate-twist-in-proof-linear-fibration}
\H^\bullet_{\mathrm{c}}(V(f,f^0)\setminus\{o\}) \simeq \H^{\bullet-2}_{\mathrm{c}}(V(f_0,f^0))(-1),
\end{equation}
and therefore
$$\H^k_{\mathrm{c}}(V(f,f^0)\setminus\{0\})^{0,p}=0$$
for all $k$ and $p\geq 0$. By considering  the localization long exact sequence 
$$\cdots \to \H^{k-1}_{\mathrm{c}}(\{o\})\to \H^k_{\mathrm{c}}(V(f,f^0)\setminus \{o\}) \to \H^k_{\mathrm{c}}(V(f,f^0)) \to \H^k_{\mathrm{c}}(\{o\})\to \cdots $$
we deduce that $\H^k(V(f,f^0))^{0,p}= \H^k_{\mathrm{c}}(V(f,f^0))^{0,p}=0$ for all $k\geq 1$ and $p\geq 0$. The long exact sequence in relative cohomology \eqref{eq:long-exact-sequence-relative-cohomology} for the pair $(\PP^n_\CC, V(f,f^0))$ implies that $\H^k(\PP^n_\CC, V(f,f^0))^{0,p}=0$ for all $k$ and $p\geq 0$. Thus by \eqref{eq:long-exact-sequence-triple-in-proof-linear-fibration} we conclude that  
\[   \H^n (\PP^n_\CC, V(f))^{0,p} \simeq  \H^{n-1} (V(f), V(f,f^0))^{0,p}   \quad\mbox{ for all }p\geq 0,  \]
and the claim follows on applying the isomorphism  \eqref{eq:intermediate-iso-cone-in-proof-linear-fibration}. 
\end{proof}

\section{The combinatorial rank of a pair of varieties}\label{sec:combinatorial-rank}

    We study the notion of combinatorial rank for a pair of complex varieties. For the sake of convenience, we switch from homology to cohomology and restate Definition \ref{defi:rank} as follows.

    \begin{defi}
    Let $X$ be a complex variety of dimension $n$, and $Y\subset X$ be a closed subvariety. The \emph{combinatorial rank} of $(X,Y)$, denoted by $\rk(X,Y)$, is the Hodge number $h^{0,0}$ of $\H^n(X,Y)$, or equivalently the dimension of the weight zero subspace $\gr_0^\W\H^n(X,Y)=\W_0\H^n(X,Y)$.
    $$\rk(X,Y)=\dim\H^n(X,Y)^{0,0} = \dim\gr_0^\W\H^n(X,Y).$$
    \end{defi}

    Note that
    \begin{equation}\label{eq:cr-as-dim-of-canonical-forms}
    \mbox{if } (X,Y) \mbox{ has genus zero,} \qquad \rk(X,Y)=\dim\logforms{n}(X\setminus Y)
    \end{equation}
    is the dimension of the space of forms on $X\setminus Y$ with logarithmic poles at infinity, or, in other words, the maximum number of linearly independent canonical forms for $(X,Y)$.

\subsection{The case of a smooth variety}

    We let $\rk(X):=\rk(X,\varnothing)$. Since the $\H^0$ of a point is pure of weight $0$ and dimension $1$, the combinatorial rank of a point is $1$:
    $$\rk(\mathrm{pt})=1.$$
    For smooth varieties of positive dimension, the combinatorial rank vanishes:

    \begin{prop}\label{prop:rank-smooth}
    If $X$ is smooth of dimension $\geq 1$, then $\rk(X)=0$.
    \end{prop}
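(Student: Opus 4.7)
The plan is to deduce this immediately from the Hodge-theoretic bound \eqref{eq:bound-on-hodge-numbers-smooth} already cited from \cite{delignehodge3}. Recall that this bound states: for $X$ smooth, if $\H^k(X)^{p,q} \neq 0$ then $p+q \geq k$. By definition, $\rk(X) = h^{0,0}(\H^n(X))$, where $n = \dim X$, so it suffices to show that $\H^n(X)^{0,0} = 0$.

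With $k = n$ and $(p,q) = (0,0)$, the bound would require $0 = p+q \geq k = n$, which contradicts the assumption $n \geq 1$. Hence $\H^n(X)^{0,0} = 0$, i.e., $\rk(X) = 0$.

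The ``hard part'' is really just invoking the right ingredient; no obstacle arises since all the input (namely the Hodge-theoretic bound for smooth varieties) has already been established. One could alternatively phrase the argument in the language of weight filtrations: for $X$ smooth of dimension $n \geq 1$, Deligne's theory implies that $\H^n(X)$ has weights $\geq n \geq 1$, so its weight-zero part vanishes; this is logically the same statement.
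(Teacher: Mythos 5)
Your proof is correct and is exactly the paper's argument: both invoke the bound \eqref{eq:bound-on-hodge-numbers-smooth} with $k=n\geq 1$ and $(p,q)=(0,0)$ to conclude that $\H^n(X)^{0,0}=0$. Nothing further is needed.
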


    \begin{proof}
    By \eqref{eq:bound-on-hodge-numbers-smooth} we have $h^{0,0}(\H^n(X))=0$ if $n=\dim(X)\geq 1$.
    \end{proof}

\subsection{The combinatorial rank as a combinatorial invariant}

    Let $X$ be a connected complex variety of dimension $n$, and $Y\subset X$ be a non-empty closed subvariety, decomposed into distinct irreducible components as $Y_1\cup\cdots \cup Y_N$. For a set $I\subset \{1,\ldots,N\}$ we let $Y_I=\bigcap_{i\in I} Y_i$ denote the corresponding intersection. This includes the special case $Y_\varnothing=X$. The following discussion is standard in the case of simple normal crossing divisors.

    \begin{defi}
    Assume that all the multiple intersections $Y_I$ are smooth, for $I\subset \{1,\ldots,N\}$. The \emph{dual complex} of $Y$, denoted by $\Delta(Y)$, is the abstract $\Delta$-complex with one $k$-simplex for each connected component of some multiple intersection $Y_I$ for $|I|=k+1$, with attaching maps prescribed by the inclusions of such intersections.
    \end{defi}

    Concretely, $\Delta(Y)$ has $N$ vertices, an edge between vertices $i,j$ for each connected component of $Y_i\cap Y_j$, a triangle with vertices $i,j,k$ for each connected component of $Y_i\cap Y_j\cap Y_k$, etc.

    \begin{prop}\label{prop:rank-as-combinatorial-invariant}
    If all the multiple intersections $Y_I$ are smooth, for $I\subset \{1,\ldots,N\}$, then
    $$\rk(X,Y) = \dim \widetilde{\H}_{n-1}(\Delta(Y)).$$
    \end{prop}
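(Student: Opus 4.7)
The plan is to exploit the Mayer--Vietoris-type spectral sequence \eqref{eq:relative-cohomology-spectral-sequence},
$$E_1^{p,q} = \bigoplus_{|I|=p}\H^q(Y_I) \;\Longrightarrow\; \H^{p+q}(X,Y),$$
which lives in the category of mixed Hodge structures. Since each intersection $Y_I$ is smooth by hypothesis, the bound \eqref{eq:bound-on-hodge-numbers-smooth} implies that $\H^q(Y_I)$ has all Hodge bi-weights $p'+q'\geq q$; in particular, $E_1^{p,q}$ has weights $\geq q$, and the row $q=0$ is pure of weight zero. Because $\gr_0^\W$ is exact on mixed Hodge structures, applying it to the spectral sequence kills every $E_r^{p,q}$ with $q\geq 1$, so in the weight-zero part only the row $q=0$ survives.

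The key identification is then of that row. Its $E_1$ page reads
$$\H^0(X) \longrightarrow \bigoplus_{|I|=1}\H^0(Y_I) \longrightarrow \bigoplus_{|I|=2}\H^0(Y_I) \longrightarrow \cdots,$$
with $d_1$ equal to an alternating sum of restriction maps. Since $\H^0(Y_I)$ has basis indexed by the connected components of $Y_I$, which by definition index the $(|I|-1)$-simplices of $\Delta(Y)$, this complex is precisely the augmented rational cochain complex of $\Delta(Y)$, so
$$E_2^{p,0} \simeq \widetilde{\H}^{p-1}(\Delta(Y);\QQ).$$
Next, the higher differentials out of $E_r^{n,0}$ for $r\geq 2$ land in $E_r^{n+r,1-r}$ with $q=1-r<0$ and are therefore zero. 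The incoming differentials $d_r:E_r^{n-r,r-1}\to E_r^{n,0}$ for $r\geq 2$ go from a subquotient of weights $\geq r-1\geq 1$ to an object pure of weight zero; by strictness of morphisms of mixed Hodge structures with respect to the weight filtration, they vanish. Hence $E_\infty^{n,0}=E_2^{n,0}=\widetilde{\H}^{n-1}(\Delta(Y);\QQ)$.

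Finally, for $p+q=n$ with $p<n$ we have $q\geq 1$, so $\gr_0^\W E_\infty^{p,q}=0$. Since the abutment filtration and $\gr_0^\W$ commute by exactness, we conclude
$$\gr_0^\W \H^n(X,Y) \simeq \widetilde{\H}^{n-1}(\Delta(Y);\QQ),$$
and $\dim_\QQ\widetilde{\H}^{n-1}(\Delta(Y);\QQ)=\dim_\QQ\widetilde{\H}_{n-1}(\Delta(Y);\QQ)$ by universal coefficients, which yields the claim. The main obstacle is the vanishing of the higher differentials touching the weight-zero diagonal, which rests entirely on strictness of MHS morphisms together with the purity of the bottom row.
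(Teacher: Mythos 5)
Your proof is correct and follows essentially the same route as the paper's: the paper applies the exact functor $\gr_0^\W$ to the (dual, homological) Mayer--Vietoris spectral sequence, uses \eqref{eq:bound-on-hodge-numbers-smooth} to kill all rows with $q>0$, and identifies the surviving $q=0$ row with the augmented (co)chain complex of $\Delta(Y)$. Your extra discussion of the higher differentials in and out of $E_r^{n,0}$ is valid but redundant once one notes that $\gr_0^\W$ of the whole spectral sequence is itself a spectral sequence concentrated in a single row.
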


    \begin{proof}
    Consider the spectral sequence in relative homology \eqref{eq:relative-homology-spectral-sequence}, and note that $\gr_0^\W\H_q(Y_I)=0$ for all $I$ and $q>0$ as a consequence of 
    \eqref{eq:bound-on-hodge-numbers-smooth} and the smoothness assumption. It follows that $\gr_0^\W\H_\bullet(X,Y)$ is the homology of the complex
    $$\cdots \to \bigoplus_{|I|=n}\H_0(Y_I)\to \bigoplus_{|J|=n-1}\H_0(Y_J)\to \cdots \to\bigoplus_{i<j}\H_0(Y_i\cap Y_j)\to   \bigoplus_i \H_0(Y_i) \to \H_0(X)\to 0,$$
    where $\H_0(X)$ sits in degree $0$. By definition, this is, up to a shift, the complex which computes the reduced homology of $\Delta(Y)$, and the claim follows.
    \end{proof}

    The proposition applies in particular if $X$ is smooth and $Y=D$ is a simple normal crossing divisor, in which case $\Delta(D)$ has dimension $\leq n-1$ and $\gr_0^\W\H_n(X,D)$ is a subspace of $\bigoplus_{|I|=n}\H_0(Y_I)$ as explained in Proposition \ref{prop:corners}.

    \begin{ex}
    In $X=\PP^2_\CC$, consider $Y=C\cup L_1\cup L_2$ where $C$ is a smooth conic and $L_1$, $L_2$ are two distinct lines which intersect on $C$ (see Figure \ref{figureTwoLinesAndAConic}). The dual complex has three vertices $C$, $L_1$, $L_2$, five edges (two for $C\cap L_1$, two for $C\cap L_2$, and one for $L_1\cap L_2$), and one triangle for $C\cap L_1\cap L_2$. It is homotopy equivalent to a wedge of two circles (one can contract the triangle to a point), hence $\rk(X,Y)= \dim\H_1(\Delta(Y)) = 2$. The  shaded region (left) has a relative homology class in $\H_2(X,Y)$ whose image in $\gr_0^\W\H_2(X,Y)\simeq \H_1(\Delta(Y))$ can be represented by the outer cycle of the dual complex $\Delta(Y)$. (Note that in this case $\H_2(X,Y)$ is in fact isomorphic to $\gr_0^\W\H_2(X,Y)\simeq \QQ(0)^2$.)

    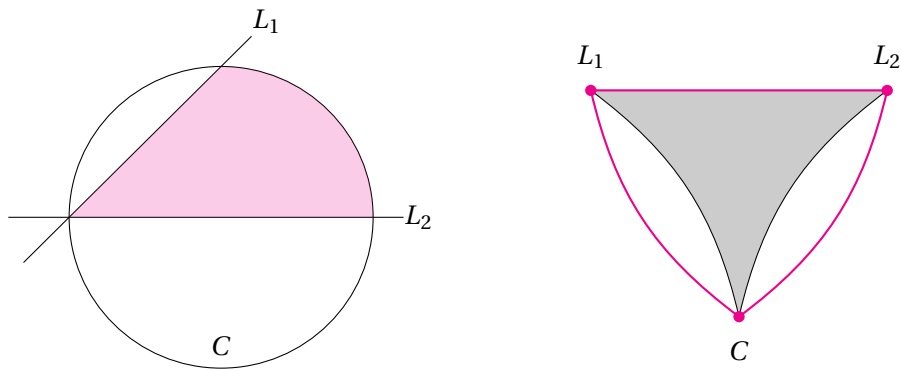
\begin{figure}[h]
    \begin{center}
    \begin{tikzpicture}[scale=2]
    \draw (1,0) circle (1);
    \draw (-.4,0) -- (2.2,0);
    \draw (-.3,-.3) -- (1.2,1.2);
    \fill[fill=magenta,opacity=0.2] (2,0) arc (0:90:1) -- (0,0) -- (2,0);
    \node at (1.3, 1.3) {$L_1$};
    \node at (2.3, 0) {$L_2$};
    \node at (1,-.85) {$C$};
    \end{tikzpicture}
    \hspace{1.5cm}
    \begin{tikzpicture}[scale=1.5]
    \draw[draw=black, fill=black, fill opacity=0.2] (-1.3,1) -- (1.3,1) to[bend right = 20] (0,-1) to[bend right = 20] (-1.3,1);
    
     \draw (-1.3,1) node[circle, fill=magenta, inner sep=1.5pt] {};
    \node at (-1.3,1.3) {$L_1$};
    \draw (1.3,1) node[circle, fill=magenta, inner sep=1.5pt] {};
    \node at (1.3,1.3) {$L_2$};
    \draw (0,-1) node[circle, fill=magenta, inner sep=1.5pt] {};
    \node at (0,-1.3) {$C$};
    \draw[magenta, thick] (-1.3,1) -- (1.3,1);

    \draw[magenta, thick] (-1.3,1) to[bend right = 20] (0,-1);
    \draw[magenta, thick] (1.3,1) to[bend left = 20] (0,-1);
    
    \end{tikzpicture}

    \end{center}
    \caption{The union of a conic and two lines in the projective plane (left), and its dual complex (right). The shaded region (left) corresponds to the outer cycle (right).}
    \label{figureTwoLinesAndAConic}
    \end{figure}
    \end{ex}

    If the multiple intersections $Y_I$ are not smooth, then it is not as easy to compute the combinatorial rank. In the case of a non-simple normal crossing divisor (e.g., the nodal cubic, see \S\ref{example: NodalCubic}), there is a generalization of Proposition \ref{prop:rank-as-combinatorial-invariant} which uses the notion of symmetric $\Delta$-complexes, see \cite[\S 3]{changalatiuspayne}. 

\subsection{Properties of the combinatorial rank}

\subsubsection{Invariance under modification} 

If $f:(X',Y')\to (X,Y)$ is a modification then we have
\begin{equation}\label{eq:rank-modification}
\rk(X,Y)=\rk(X',Y')
\end{equation}
because of the isomorphism of mixed Hodge structures \eqref{eq:iso-relative-cohomology-excision} for $\bullet=\dim(X)$.

\subsubsection{Products}

The combinatorial rank is multiplicative:

\begin{prop}\label{prop:product-rank}
For pairs $(X,Y)$ and $(X',Y')$ we have
$$\rk(X\times X', Y\times X'\cup X\times Y')=\rk(X,Y)\rk(X',Y').$$
\end{prop}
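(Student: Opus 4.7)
The plan is to compute $h^{0,0}$ of $\H^{n+n'}(X \times X', Y \times X' \cup X \times Y')$ directly via the Künneth formula in the category of mixed Hodge structures, where $n = \dim X$ and $n' = \dim X'$. The Künneth isomorphism gives
$$\H^{n+n'}(X \times X', Y \times X' \cup X \times Y') \simeq \bigoplus_{i+i'=n+n'} \H^i(X,Y) \otimes \H^{i'}(X',Y'),$$
as mixed Hodge structures, so it suffices to identify which summand(s) contribute to the $(0,0)$-component.

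For any two mixed Hodge structures $H_1, H_2$, the definition of the tensor-product weight and Hodge filtrations yields the convolution formula $h^{p,q}(H_1 \otimes H_2) = \sum_{(p_1,q_1)+(p_2,q_2)=(p,q)} h^{p_1,q_1}(H_1) \cdot h^{p_2,q_2}(H_2)$. When $(p,q) = (0,0)$, and when the Hodge numbers of each $H_i$ are supported in the region $p, q \geq 0$ --- which is the case for relative cohomology by \eqref{eq:bound-on-hodge-numbers} --- the only contribution comes from $(p_1, q_1) = (p_2, q_2) = (0,0)$, giving $h^{0,0}(H_1 \otimes H_2) = h^{0,0}(H_1) \cdot h^{0,0}(H_2)$.

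Finally, applying \eqref{eq:bound-on-hodge-numbers} once more, $h^{0,0}(\H^i(X,Y))$ vanishes whenever $i > n$ (since then any nonzero Hodge number satisfies $p, q \geq i - n > 0$), and symmetrically $h^{0,0}(\H^{i'}(X',Y')) = 0$ when $i' > n'$; combined with the constraint $i + i' = n + n'$, only the summand $(i, i') = (n, n')$ contributes, and
$$\rk(X \times X', Y \times X' \cup X \times Y') = h^{0,0}(\H^n(X,Y)) \cdot h^{0,0}(\H^{n'}(X',Y')) = \rk(X,Y) \cdot \rk(X',Y').$$
I do not anticipate any substantive obstacle: the Künneth isomorphism for relative cohomology is known to be compatible with mixed Hodge structures (a fact already recalled in the paper), and the remainder of the argument is a purely formal manipulation of indices.
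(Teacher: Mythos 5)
Your proof is correct and follows essentially the same route as the paper: both apply the K\"{u}nneth isomorphism for relative cohomology (compatibly with mixed Hodge structures) and then use the bound \eqref{eq:bound-on-hodge-numbers} to show that only the summand $(i,i')=(n,n')$ contributes to the $(0,0)$-component. Your convolution formula for Hodge numbers of a tensor product is just a more granular phrasing of the paper's observation that the terms with $i>n$ or $i'>n'$ have vanishing $\gr_0^\W$.
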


\begin{proof}
Let $n=\dim(X)$ and $n'=\dim(X')$. We have the K\"{u}nneth formula for relative cohomology:
$$\H^{n+n'}(X\times X', Y\times X'\cup X\times Y') \simeq \bigoplus_{i+i'=n+n'}\H^i(X,Y)\otimes \H^{i'}(X',Y').$$
By \eqref{eq:bound-on-hodge-numbers} the terms with $i>n$ or $i'>n'$ have vanishing $\gr_0^\W$, hence the claim.
\end{proof}

\subsubsection{Hartogs phenomenon}

The following proposition is proved in the same way as Proposition \ref{prop:hartogs}. 

\begin{prop}\label{prop:hartogs-rank}
Let $X$ be a complex variety, and $Y,Z\subset X$ be closed subvarieties. If $Z$ has codimension $\geq 2$, then 
$$\rk(X,Y\cup Z)=\rk(X,Y).$$
\end{prop}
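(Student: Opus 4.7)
The plan is to follow verbatim the structure of the proof of Proposition \ref{prop:hartogs}, simply tracking the $h^{0,0}$ Hodge number in place of the numbers $h^{p,0}$ for $p>0$. Both arguments pivot on the dimensional bound \eqref{eq:bound-on-hodge-numbers}, which controls different edges of the Hodge diamond but in exactly the same way.

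First I would write down the long exact sequence in relative cohomology \eqref{eq:long-exact-sequence-relative-cohomology} for the triple $(X, Y\cup Z, Y)$, and use the excision isomorphism \eqref{eq:iso-relative-cohomology-excision} applied to the modification $(Z, Y\cap Z)\hookrightarrow (Y\cup Z, Y)$ to rewrite it as
$$\cdots \to \H^{n-1}(Z, Y\cap Z) \to \H^n(X, Y\cup Z) \to \H^n(X, Y) \to \H^n(Z, Y\cap Z) \to \cdots,$$
where $n=\dim(X)$. This is a long exact sequence in the abelian category of mixed Hodge structures, so it remains exact after applying the exact functor picking out the $(0,0)$-component.

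Next I would invoke \eqref{eq:bound-on-hodge-numbers} for the variety $Z$, whose dimension $d$ is at most $n-2$. For $k=n-1$ we have $k> d$, so the only possibly non-zero Hodge bidegrees $(p,q)$ of $\H^{n-1}(Z, Y\cap Z)$ satisfy $p\geq k-d \geq 1$; in particular $\H^{n-1}(Z, Y\cap Z)^{0,0}=0$. The same reasoning, with room to spare, shows $\H^n(Z, Y\cap Z)^{0,0}=0$. Extracting the $(0,0)$-component of the long exact sequence therefore yields the isomorphism $\H^n(X, Y\cup Z)^{0,0}\simeq \H^n(X, Y)^{0,0}$, from which the desired equality $\rk(X,Y\cup Z) = \rk(X,Y)$ follows.

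I do not anticipate any real obstacle here: the argument is a direct transcription of that of Proposition \ref{prop:hartogs}, with only the bookkeeping of which Hodge bidegrees vanish being modified. The only small subtlety to verify is that the bound \eqref{eq:bound-on-hodge-numbers}, applied to the pair $(Z, Y\cap Z)$ of maximal relevant dimension $d\leq n-2$, does rule out the $(0,0)$-component in degrees $k=n-1$ and $k=n$, which is immediate from $k-d\geq 1$.
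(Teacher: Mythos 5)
Your proof is correct and is exactly the argument the paper intends: the paper simply states that Proposition \ref{prop:hartogs-rank} ``is proved in the same way as Proposition \ref{prop:hartogs}'', and your transcription --- the long exact sequence of the triple $(X,Y\cup Z,Y)$, excision for $(Z,Y\cap Z)\hookrightarrow(Y\cup Z,Y)$, and the bound \eqref{eq:bound-on-hodge-numbers} killing the $(0,0)$-components of $\H^{n-1}(Z,Y\cap Z)$ and $\H^n(Z,Y\cap Z)$ --- is precisely that adaptation. The only check that needed care, namely that $k-\dim Z\geq 1$ in the relevant degrees rules out bidegree $(0,0)$, is handled correctly.
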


\subsubsection{Recursion}

 The following proposition is proved in the same way as Proposition \ref{prop:genus-inequalities-new}.

\begin{prop}\label{prop:rank-inequalities-new}
Let $X$ be a complex variety of dimension $n$ and $Z\subset Y\subset X$ be closed subvarieties, with $Y$ of codimension $1$. We have the inequalities:
$$\rk(X,Z)\leq \rk(X,Y)\leq \rk(X,Z)+\rk(Y,Z).$$
If $\gr_0^\W\H^{n-1}(X,Z)=0$ (e.g., if $X$ is compact and $X\setminus Z$ is smooth and affine), then
$$\rk(X,Y)=\rk(X,Z)+\rk(Y,Z).$$
\end{prop}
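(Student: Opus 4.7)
The plan is to mimic almost verbatim the proof of Proposition \ref{prop:genus-inequalities-new}, replacing $\H^n(-)^{p,0}$ (for $p>0$) with $\gr_0^\W \H^n(-)$ throughout. The key observation that makes the argument work is that, since $\gr_0^\W$ is an exact functor on mixed Hodge structures, any long exact sequence of mixed Hodge structures remains exact after applying $\gr_0^\W$, and the dimension of $\gr_0^\W H$ equals $h^{0,0}(H)$ (because $\gr_0^\W H$ is pure of weight $0$ and hence of type $(0,0)$).

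First I would write down the long exact sequence in relative cohomology \eqref{eq:long-exact-sequence-relative-cohomology} for the triple $(X,Y,Z)$:
\[
\cdots\to \H^{n-1}(X,Z)\to \H^{n-1}(Y,Z)\to \H^n(X,Y)\to \H^n(X,Z)\to \H^n(Y,Z)\to \cdots.
\]
Since $Y$ has dimension $n-1$, the bound \eqref{eq:bound-on-hodge-numbers} forces $\H^n(Y,Z)^{0,0}=0$, hence $\gr_0^\W\H^n(Y,Z)=0$. Applying the exact functor $\gr_0^\W$ therefore yields the exact sequence
\[
\gr_0^\W\H^{n-1}(X,Z)\to \gr_0^\W\H^{n-1}(Y,Z)\to \gr_0^\W\H^n(X,Y)\to \gr_0^\W\H^n(X,Z)\to 0.
\]
From this, a trivial dimension count gives $\rk(X,Z)\leq \rk(X,Y)\leq \rk(X,Z)+\rk(Y,Z)$ (recalling that $\rk(Y,Z)=\dim\gr_0^\W\H^{n-1}(Y,Z)$ since $\dim Y=n-1$), and equality on the right whenever $\gr_0^\W\H^{n-1}(X,Z)=0$.

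It remains to check the parenthetical sufficient condition: if $X$ is compact and $X\setminus Z$ is smooth and affine, then $\gr_0^\W\H^{n-1}(X,Z)=0$. For this I would invoke Poincar\'e duality \eqref{eq:poincare-duality} (dualized), which identifies $\H^{n-1}(X,Z)$ with a Tate twist of $\H^{n+1}(X\setminus Z)^\vee$, and then apply Artin vanishing for smooth affine varieties to conclude that $\H^{n+1}(X\setminus Z)=0$; hence $\H^{n-1}(X,Z)$ itself vanishes, a fortiori its weight zero graded piece.

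There is essentially no obstacle here; the only tiny subtlety is making sure that exactness of $\gr_0^\W$ is invoked correctly and that the bound \eqref{eq:bound-on-hodge-numbers} is applied to the right pair $(Y,Z)$ of dimension $n-1$. This is the direct analog of the observation, used in the genus case, that $\H^n(Y,Z)^{p,0}=0$ for all $p>0$.
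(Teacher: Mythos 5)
Your proposal is correct and is exactly the argument the paper intends: the paper states that this proposition ``is proved in the same way as Proposition \ref{prop:genus-inequalities-new}'', i.e.\ by applying the exact functor $\gr_0^\W$ to the long exact sequence of the triple $(X,Y,Z)$, using \eqref{eq:bound-on-hodge-numbers} to kill $\gr_0^\W\H^n(Y,Z)$, and invoking Poincar\'e duality plus Artin vanishing for the parenthetical condition. No gaps.
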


Note that by Proposition \ref{prop:hartogs-rank}, if $Z$ has codimension $\geq 2$ in $X$ then $\rk(X,Z)=\rk(X)$. 

\begin{coro}\label{coro:rank-inequalities-simple}
Let $X$ be a complex variety of dimension $n$ and $Y\subset X$ be a closed subvariety of codimension $1$. We have the inequalities:
\begin{equation}\label{eq:rank-two-inequalities}
\rk(X)\leq \rk(X,Y)\leq \rk(X)+\rk(Y).
\end{equation}
If $\gr_0^\W\H^{n-1}(X)=0$, then
$$\rk(X,Y)=\rk(X)+\rk(Y).$$
In particular, if $X$ is smooth of dimension $n\geq 2$ then
$$\rk(X,Y)=\rk(Y).$$
\end{coro}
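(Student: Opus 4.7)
The plan is to deduce this corollary by specializing Proposition \ref{prop:rank-inequalities-new} to the case $Z=\varnothing$. By definition $\rk(X,\varnothing)=\rk(X)$ and $\rk(Y,\varnothing)=\rk(Y)$, and the hypothesis of the proposition becomes precisely $\gr_0^\W\H^{n-1}(X)=0$. Therefore the two inequalities \eqref{eq:rank-two-inequalities} and the conditional equality follow immediately, with no further work required.

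For the final ``in particular'' statement, assume that $X$ is smooth of dimension $n\geq 2$. First, Proposition \ref{prop:rank-smooth} gives $\rk(X)=0$, so the desired conclusion $\rk(X,Y)=\rk(Y)$ will follow from the equality case once we verify the hypothesis $\gr_0^\W\H^{n-1}(X)=0$. For this I would invoke the bound \eqref{eq:bound-on-hodge-numbers-smooth}, which states that for a smooth $X$, $\H^k(X)^{p,q}\neq 0$ forces $p+q\geq k$. Applying this with $k=n-1\geq 1$ and $(p,q)=(0,0)$ yields $0\geq n-1$, a contradiction; hence $\H^{n-1}(X)^{0,0}=0$, which is the required vanishing of $\gr_0^\W\H^{n-1}(X)$.

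There is no real obstacle: the whole argument is a direct specialization of Proposition \ref{prop:rank-inequalities-new} combined with the two structural facts about smooth varieties (Proposition \ref{prop:rank-smooth} and the Hodge-number bound \eqref{eq:bound-on-hodge-numbers-smooth}). The only point worth flagging is that the hypothesis $n\geq 2$ is essential in the last clause, since for $n=1$ the variety $X$ may have $h^{0,0}(\H^0(X))>0$ (in fact equal to the number of connected components), so $\gr_0^\W\H^{n-1}(X)$ need not vanish and the simplification $\rk(X,Y)=\rk(Y)$ can fail.
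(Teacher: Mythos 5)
Your proof is correct and is exactly the paper's argument: the corollary is the specialization of Proposition \ref{prop:rank-inequalities-new} to $Z=\varnothing$, and the paper likewise derives the last clause from Proposition \ref{prop:rank-smooth} together with the bound \eqref{eq:bound-on-hodge-numbers-smooth} applied to $\H^{n-1}(X)$. Your closing observation about why $n\geq 2$ is needed is also accurate.
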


The fact that $\gr_0^\W\H^{n-1}(X)=0$ if $X$ is smooth and $n\geq 2$ follows from \eqref{eq:bound-on-hodge-numbers-smooth}. For an example where $\rk(X,Y)<\rk(X)+\rk(Y)$, take $X=\PP^1_\CC$, $Y=\mathrm{pt}$, and note that $\rk(X)=\rk(X,Y)=0$ and $\rk(Y)=1$.

We also have the following useful special cases of Proposition \ref{prop:rank-inequalities-new}, which are proved in the same way as Corollaries \ref{coro:genus-inequalities-two-hypersurfaces} and \ref{coro:genus-inequalities-special-case}.

\begin{coro}\label{coro:rank-inequalities-two-hypersurfaces}
Let $X$ be a complex variety of dimension $n$, and $Y,Y'\subset X$ be two closed subvarieties of codimension $1$. We have the inequalities:
\begin{equation}\label{eq:rank-inequality-intersection}
\rk(X,Y')\leq \rk(X,Y\cup Y')\leq \rk(X,Y')+\rk(Y,Y\cap Y').
\end{equation}
If $\gr_0^\W\H^{n-1}(X,Y')=0$ (e.g., if $X$ is compact and $X\setminus Y'$ is smooth and affine), then
$$\rk(X,Y\cup Y')=\rk(X,Y')+\rk(Y,Y\cap Y').$$
\end{coro}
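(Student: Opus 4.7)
The strategy mirrors that of Corollary \ref{coro:genus-inequalities-two-hypersurfaces}: apply the recursive Proposition \ref{prop:rank-inequalities-new} to a well-chosen triple, then replace one of the terms via invariance under modification.

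First, I would apply Proposition \ref{prop:rank-inequalities-new} to the triple $Y'\subset Y\cup Y'\subset X$, in which $Y\cup Y'$ has codimension $1$ in $X$. This immediately yields
$$\rk(X,Y') \,\leq\, \rk(X,Y\cup Y') \,\leq\, \rk(X,Y')+\rk(Y\cup Y',Y'),$$
and, assuming $\gr_0^\W\H^{n-1}(X,Y')=0$, the corresponding equality
$$\rk(X,Y\cup Y')=\rk(X,Y')+\rk(Y\cup Y',Y').$$

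Next, I would rewrite $\rk(Y\cup Y', Y')$ in terms of $(Y, Y\cap Y')$. The closed immersion of pairs $(Y, Y\cap Y')\hookrightarrow (Y\cup Y', Y')$ is a modification in the sense of Definition \ref{defi:modification}: it is proper, $Y\cap Y'$ is the preimage of $Y'$, and the restriction $Y\setminus (Y\cap Y')\to (Y\cup Y')\setminus Y'$ is an isomorphism. By the invariance of the combinatorial rank under modifications \eqref{eq:rank-modification}, we have
$$\rk(Y\cup Y', Y') \,=\, \rk(Y, Y\cap Y').$$
Substituting into the inequalities and the conditional equality above gives both claims of the corollary.

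The only point that requires attention is the parenthetical remark that $\gr_0^\W\H^{n-1}(X,Y')=0$ when $X$ is compact and $X\setminus Y'$ is smooth and affine; this is precisely the vanishing statement already invoked in the proof of Proposition \ref{prop:rank-inequalities-new} (via Poincar\'e duality \eqref{eq:poincare-duality} together with Artin vanishing on the affine variety $X\setminus Y'$), so no new argument is needed. There is thus no genuine obstacle: the proof is purely a combination of the recursive inequality from Proposition \ref{prop:rank-inequalities-new} with the excision-type identity \eqref{eq:rank-modification}.
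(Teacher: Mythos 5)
Your proof is correct and follows exactly the paper's route: the paper proves this corollary ``in the same way as'' Corollary \ref{coro:genus-inequalities-two-hypersurfaces}, namely by applying the recursive Proposition \ref{prop:rank-inequalities-new} to the triple $Y'\subset Y\cup Y'\subset X$ and then identifying $\rk(Y\cup Y',Y')=\rk(Y,Y\cap Y')$ via the modification $(Y,Y\cap Y')\hookrightarrow (Y\cup Y',Y')$ and \eqref{eq:rank-modification}. Your verification that this closed immersion satisfies Definition \ref{defi:modification}, and your remark on the Artin-vanishing parenthetical, are both accurate.
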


\begin{coro}\label{coro:rank-inequalities-special-case}
Let $X$ be a complex variety of dimension $n$, and $Y\subset X$ be a closed subvariety of codimension $1$, decomposed into irreducible components as $Y=Y_1\cup\cdots \cup Y_N$. Set $Z=Z_1\cup\cdots \cup Z_N$ with $Z_i=Y_i\cap \bigcup_{j\neq i}Y_j$. We have the inequalities:
$$\rk(X) \leq \rk(X,Y) \leq \rk(X) + \sum_{i=1}^N \rk(Y_i,Z_i).$$
If $\gr_0^\W\H^{n-1}(X,Z)=0$, then
$$\rk(X,Y)=\rk(X)+\sum_{i=1}^N \rk(Y_i,Z_i).$$
\end{coro}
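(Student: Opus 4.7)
The plan is to mirror the proof of Corollary \ref{coro:genus-inequalities-special-case} verbatim, with ``genus'' replaced by ``combinatorial rank'' throughout, substituting the rank-theoretic analogues of the tools used there. All the needed ingredients have already been established in the excerpt, so this should amount to a one-line deduction.

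First I would observe that $Z = \bigcup_i Z_i$ has codimension $\geq 2$ in $X$, since each $Z_i$ is the intersection of two distinct irreducible components of the hypersurface $Y$, hence has codimension $\geq 2$ in $X$. Proposition \ref{prop:hartogs-rank} then gives the identification
\[
\rk(X,Z) = \rk(X).
\]
Next, the natural morphism $\bigsqcup_i (Y_i, Z_i) \to (Y, Z)$ is a modification in the sense of Definition \ref{defi:modification}: it is proper, the preimage of $Z$ is $\bigsqcup_i Z_i$, and over the open complement $Y \setminus Z = \bigsqcup_i (Y_i \setminus Z_i)$ it is an isomorphism. Invariance of the combinatorial rank under modification \eqref{eq:rank-modification}, together with the evident additivity on disjoint unions (which follows from the K\"unneth-type splitting of cohomology of a disjoint union), yields
\[
\rk(Y,Z) = \sum_{i=1}^N \rk(Y_i, Z_i).
\]

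Substituting these two equalities into the inequalities of Proposition \ref{prop:rank-inequalities-new}, applied to the triple $Z \subset Y \subset X$ with $Y$ of codimension one, gives
\[
\rk(X) \;\leq\; \rk(X,Y) \;\leq\; \rk(X) + \sum_{i=1}^N \rk(Y_i, Z_i),
\]
and under the vanishing hypothesis $\gr_0^\W \H^{n-1}(X,Z) = 0$, the same proposition upgrades this to the equality
\[
\rk(X,Y) = \rk(X) + \sum_{i=1}^N \rk(Y_i, Z_i).
\]
I do not anticipate any genuine obstacle: the entire argument is a formal consequence of Propositions \ref{prop:hartogs-rank} and \ref{prop:rank-inequalities-new} combined with modification invariance. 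The only point requiring minor care is verifying that $(\bigsqcup_i Y_i, \bigsqcup_i Z_i) \to (Y,Z)$ indeed satisfies the definition of a modification; this is immediate but worth stating explicitly, exactly as was done in the proof of Corollary \ref{coro:genus-inequalities-special-case}.
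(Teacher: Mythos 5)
Your proposal is correct and matches the paper's intended argument exactly: the paper states that this corollary is proved in the same way as Corollary \ref{coro:genus-inequalities-special-case}, namely by combining the Hartogs property (Proposition \ref{prop:hartogs-rank}), invariance of the rank under the modification $(\bigsqcup_i Y_i,\bigsqcup_i Z_i)\to (Y,Z)$, and the recursion of Proposition \ref{prop:rank-inequalities-new}. No issues.
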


\subsubsection{Combinatorial rank and compactification}

The following proposition is proved in the same way as Proposition \ref{prop:genus-good-compactification}.

\begin{prop}\label{prop:rank-good-compactification}
Let $(X,Y)$ be a pair of complex varieties such that $X\setminus Y$ is smooth, and let $(\overline{X},\overline{Y})$ be a good compactification of $(X,Y)$. Then we have the equality
$$\rk(X,Y)=\rk(\overline{X},\overline{Y}).$$
\end{prop}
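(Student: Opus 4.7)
The plan is to mirror the proof of Proposition~\ref{prop:genus-good-compactification} almost verbatim, substituting the combinatorial rank (i.e., the Hodge number $h^{0,0}$, extracted by the ``$(0,0)$-functor'' on mixed Hodge structures) for the genus (extracted by $\gr_\F^0$). Setting $Z=\overline{X}\setminus X$, it suffices to prove that the natural pullback $\H^n(\overline{X},\overline{Y})\to \H^n(X,Y)$ induces an isomorphism on the $(0,0)$-Hodge component.

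By the Poincaré duality isomorphisms provided by Proposition~\ref{prop:poincare-duality-good-compactification-appendix}, this reduces to showing that the natural restriction morphism
$$\H^n(\overline{X}\setminus \overline{Y},\, Z\setminus \overline{Y}\cap Z)\To \H^n(\overline{X}\setminus \overline{Y})$$
induces an isomorphism on the $(n,n)$-Hodge component: indeed, dualization negates Hodge types and the Tate twist appearing in Poincaré duality shifts them by $(n,n)$, so the $(0,0)$-component on the relative-cohomology side corresponds to the $(n,n)$-component on the good-compactification side. This last statement then follows from the long exact sequence in relative cohomology
$$\cdots \To \H^{n-1}(Z\setminus \overline{Y}\cap Z)\To \H^n(\overline{X}\setminus \overline{Y},\, Z\setminus \overline{Y}\cap Z)\To \H^n(\overline{X}\setminus \overline{Y})\To \H^n(Z\setminus \overline{Y}\cap Z)\To \cdots$$
together with the dimension bound $\dim Z<n$: by~\eqref{eq:bound-on-hodge-numbers}, the outer terms $\H^{n-1}(Z\setminus \overline{Y}\cap Z)$ and $\H^n(Z\setminus \overline{Y}\cap Z)$ have vanishing $(n,n)$-component, and extracting the $(n,n)$-component is an exact operation (it factors as $\gr_\F^n\circ \gr_{2n}^\W$, both of which are exact on morphisms of mixed Hodge structures).

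I do not expect any substantive obstacle; the argument is structurally identical to that of Proposition~\ref{prop:genus-good-compactification}, exactly as the author signals. The only piece of bookkeeping to be careful with is verifying that, under the combination of duality and Tate twist coming from Proposition~\ref{prop:poincare-duality-good-compactification-appendix}, the Hodge components $(0,0)$ and $(n,n)$ are indeed swapped; this is a direct computation from the definitions of $H^\vee$ and $H(-n)$ recalled at the start of Section~\ref{sec:generalities}.
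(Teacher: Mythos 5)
Your proposal is correct and is exactly the paper's intended argument: the paper's proof of Proposition~\ref{prop:rank-good-compactification} consists of the single remark that it is ``proved in the same way as Proposition~\ref{prop:genus-good-compactification}'', and you have carried out precisely that adaptation, with the right bookkeeping ($(0,0)\leftrightarrow(n,n)$ under duality and the Tate twist, vanishing of the outer terms via \eqref{eq:bound-on-hodge-numbers} since $\dim Z<n$, and exactness of extracting the $(n,n)$-component).
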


\begin{ex}\label{ex:triple-point-rank}
With the notation of Example \ref{ex:triple-point}, we see that $(X_0,Y_0)$ has combinatorial rank $0$, and $(\overline{X}_0,\overline{Y}_0)$ has combinatorial rank $1$, illustrating the necessity for the ``good compactification'' assumption in Proposition \ref{prop:rank-good-compactification}.
\end{ex}

\subsection{Examples}

\subsubsection{The $0$-dimensional case}
A point has combinatorial rank $\rk(\mathrm{pt})=1$.

\subsubsection{The $1$-dimensional case}

Let $C$ be a complex curve. For a point $P\in C$, we let $r_P(C)$ denote the number of local branches of $C$ at $P$ (it is usually called the \emph{branching number}). If $P$ is a smooth point then $r_P(C)=1$.  

\begin{prop}\label{prop:rank-curve}
Let $C$ be a connected complex curve.
\begin{enumerate}[1)]
\item Let $k$ denote the number of irreducible components of $C$. Then we have the equality:
$$\rk(C) = \left(\sum_{P\in C} (r_P(C)-1)\right) - (k-1).$$
\item For a non-empty finite subset $S\subset C$, we have the equality:
$$\rk(C,S)=\rk(C)+|S|-1.$$
\end{enumerate}
\end{prop}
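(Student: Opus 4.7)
The plan is to establish part (2) first, directly from the long exact sequence in relative cohomology, and then to deduce part (1) by pulling back along the normalization morphism of $C$.

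For part (2), I would begin with the fragment of the long exact sequence for the pair $(C,S)$:
$$0 \To \H^0(C) \To \H^0(S) \To \H^1(C,S) \To \H^1(C) \To 0,$$
where the vanishing $\H^1(S)=0$ holds because $S$ is zero-dimensional. Since $C$ is connected and $S$ is non-empty, the restriction map $\H^0(C)\to\H^0(S)$ is injective. As both $\H^0(C)$ and $\H^0(S)$ are pure of weight $0$, applying the exact functor $\gr_0^\W$ yields
$$0 \To \mathrm{coker}(\H^0(C)\to\H^0(S)) \To \gr_0^\W\H^1(C,S)\To \gr_0^\W\H^1(C)\To 0,$$
and the cokernel has dimension $|S|-1$, giving the desired formula $\rk(C,S)=|S|-1+\rk(C)$.

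For part (1), let $\pi\colon \widetilde{C}\to C$ denote the normalization. I would choose a finite set $S\subset C$ that contains the singular locus of $C$ and meets every irreducible component of $C$ (enlarging by smooth points if necessary). Setting $\widetilde{S}:=\pi^{-1}(S)$, one has $|\widetilde{S}|=\sum_{P\in S}r_P(C)$, and since $r_P(C)=1$ at smooth points we obtain
$$|\widetilde{S}|-|S|=\sum_{P\in S}(r_P(C)-1)=\sum_{P\in C}(r_P(C)-1).$$
Since $(\widetilde{C},\widetilde{S})\to (C,S)$ is a modification, \eqref{eq:rank-modification} gives $\rk(C,S)=\rk(\widetilde{C},\widetilde{S})$. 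The smooth curve $\widetilde{C}$ has $k$ connected components, each of which meets $\widetilde{S}$ by our choice of $S$; hence the same argument as in part (2), but applied to $(\widetilde{C},\widetilde{S})$ and using $\gr_0^\W\H^1(\widetilde{C})=0$ from Proposition \ref{prop:rank-smooth}, yields $\rk(\widetilde{C},\widetilde{S})=|\widetilde{S}|-k$. Combining with (2) applied to $(C,S)$:
$$\rk(C)+|S|-1=|\widetilde{S}|-k=|S|+\sum_{P\in C}(r_P(C)-1)-k,$$
from which the claimed formula for $\rk(C)$ follows on simplification.

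The only subtlety, rather than a genuine obstacle, is ensuring injectivity of $\H^0(\widetilde{C})\to\H^0(\widetilde{S})$ so that the weight-zero calculation on $\widetilde{C}$ runs in parallel with the one on $C$; this is precisely why we enlarge $S$ by smooth points to meet every irreducible component, noting that such enlargement does not alter the branching sum since $r_P(C)-1=0$ outside the singular locus.
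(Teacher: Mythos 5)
Your proposal is correct and follows essentially the same route as the paper: part (2) via the exact sequence $0\to \H^0(C)\to \H^0(S)\to \H^1(C,S)\to \H^1(C)\to 0$ and exactness of $\gr_0^\W$, then part (1) by transporting the computation to the normalization through the modification $(\widetilde{C},\pi^{-1}(S))\to (C,S)$ and applying part (2) componentwise on the smooth curve. The only (harmless) difference is that you enlarge $S$ beyond the singular locus so that it meets every irreducible component, whereas the paper takes $S$ to be the singular locus itself and observes that connectedness of $C$ already forces each component to contain a singular point once $C$ is not smooth.
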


\begin{proof}
The second statement follows from the long exact sequence in relative cohomology \eqref{eq:long-exact-sequence-relative-cohomology}: 
\begin{equation} \label{eq:zeroth-equality-in-proof-rank-curves}
0\to \H^0(C)\to \H^0(S)\to \H^1(C,S)\to \H^1(C)\to 0.
\end{equation}
For the first statement, we assume without loss of generality that $C$ has at least one singular point. Let $Z\subset C$ denote its singular locus, and note the equality
\begin{equation}\label{eq:first-equality-in-proof-rank-curves}
\rk(C)=\rk(C,Z)-|Z|+1,
\end{equation}
which follows from the second statement. The canonical resolution of singularities $\pi: \widetilde{C}\to C$ induces a modification $(\widetilde{C},\pi^{-1}(Z))\to (C,Z)$, and therefore $\rk(C,Z)=\rk(\widetilde{C},\pi^{-1}(Z))$.
Note that $\widetilde{C}$ is smooth and has $k$ connected components, each of which contains at least one point of $\pi^{-1}(Z)$.
By applying  \eqref{eq:zeroth-equality-in-proof-rank-curves} to $(\widetilde{C},Z)$ we deduce that:
\begin{equation}\label{eq:second-equality-in-proof-rank-curves}
\rk(\widetilde{C},\pi^{-1}(Z)) =|\pi^{-1}(Z)|-k.
\end{equation}
By combining \eqref{eq:first-equality-in-proof-rank-curves} and \eqref{eq:second-equality-in-proof-rank-curves} we get the equality
$$\rk(C)=|\pi^{-1}(Z)|-|Z| - (k-1) = \left(\sum_{P\in C}(|\pi^{-1}(P)|-1)\right)-(k-1),$$
and the claim follows from the fact that $|\pi^{-1}(P)|=r_P(C)$ for every point $P$.
\end{proof}

\subsubsection{Projective hypersurfaces}

The following proposition follows from Corollary \ref{coro:rank-inequalities-simple}.

\begin{prop}\label{prop:rank-projective-hypersurface}
Let $n\geq 2$. If $Y\subset \PP^n_\CC$  is a hypersurface, we have
$$\rk(\PP^n_\CC,Y)=\rk(Y).$$
\end{prop}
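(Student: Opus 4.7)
The strategy is to invoke Corollary \ref{coro:rank-inequalities-simple} directly, with $X = \PP^n_\CC$. The hypotheses are immediately satisfied: $\PP^n_\CC$ is smooth of dimension $n \geq 2$, and a hypersurface $Y \subset \PP^n_\CC$ is by definition of codimension one. The ``in particular'' clause of the corollary then yields $\rk(\PP^n_\CC, Y) = \rk(Y)$ with no further work.

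If one wishes to spell out why the clause applies: the corollary gives the equality $\rk(X,Y) = \rk(X) + \rk(Y)$ provided $\gr_0^\W \H^{n-1}(X) = 0$. For $X = \PP^n_\CC$ with $n \geq 2$, this vanishing follows from \eqref{eq:bound-on-hodge-numbers-smooth}: since $X$ is smooth, the non-zero Hodge numbers $h^{p,q}$ of $\H^{n-1}(X)$ satisfy $p+q \geq n-1 \geq 1$, ruling out the $(0,0)$-type. Moreover $\rk(\PP^n_\CC) = 0$ by Proposition \ref{prop:rank-smooth}, so the sum $\rk(X) + \rk(Y)$ collapses to $\rk(Y)$.

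There is no real obstacle here: the substance of the statement is already encoded in the general corollary, and the proof consists entirely of verifying the two smoothness/dimension hypotheses on $\PP^n_\CC$.
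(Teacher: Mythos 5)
Your proposal is correct and matches the paper's own argument exactly: the paper likewise derives this proposition directly from Corollary \ref{coro:rank-inequalities-simple}, using the vanishing $\gr_0^\W\H^{n-1}(\PP^n_\CC)=0$ from \eqref{eq:bound-on-hodge-numbers-smooth} and $\rk(\PP^n_\CC)=0$. Nothing further is needed.
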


Similarly to the genus (Theorem \ref{thm:projective-hypersurface-of-small-degree}), projective hypersurfaces of small degree have vanishing combinatorial rank, by \cite[Th\'{e}or\`{e}me 1]{delignedimca}.

\begin{thm}\label{thm:projective-hypersurface-of-small-degree-rank}
A hypersurface in $\PP^n_\CC$ of degree $\leq n$ has combinatorial rank zero.
\end{thm}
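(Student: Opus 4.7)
The plan is to reduce the statement to a Hodge-theoretic vanishing on $\H^{n-1}(Y)$ and then invoke the Deligne--Dimca theorem \cite[Th\'{e}or\`{e}me 1]{delignedimca}, exactly as in the parallel genus statement (Theorem \ref{thm:projective-hypersurface-of-small-degree}). By definition,
\[
\rk(Y) \;=\; h^{0,0}(\H^{n-1}(Y)) \;=\; \dim \gr_0^\W \H^{n-1}(Y),
\]
so the content is the vanishing $\gr_0^\W \H^{n-1}(Y) = 0$ whenever $Y \subset \PP^n_\CC$ is a hypersurface of degree $d \leq n$. The Deligne--Dimca theorem is a Hodge-theoretic Chevalley--Warning result which controls the full mixed Hodge structure on $\H^{n-1}(Y)$ in the low-degree range $d \leq n$; it yields at once both the vanishing of the $(p,0)$ Hodge components for all $p > 0$ (used to prove genus zero in Theorem \ref{thm:projective-hypersurface-of-small-degree}) and, by the sharper range in \emph{loc.~cit.}, the vanishing of the $(0,0)$ component.

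As an alternative, self-contained approach, I would argue by induction on $n$, adapting the linear fibration argument of Proposition \ref{prop:linear-fibration-genus}. After a generic linear change of coordinates, the defining polynomial of $Y$ may be written $f = x_0 f^0 + f_0$ with $\deg f^0 = d-1$; projecting from $o = (1 : 0 : \cdots : 0)$ and rerunning the proof of \emph{loc.~cit.} shows that the Hodge components $\H^k(\PP^n_\CC, V(f,f^0))^{0,p}$ vanish for all $k$ and all $p \geq 0$, not only for $p = 0$. Setting $p = 0$ in that computation gives an isomorphism $\gr_0^\W \H^n(\PP^n_\CC, V(f)) \simeq \gr_0^\W \H^{n-1}(\PP^{n-1}_\CC, V(f^0))$, after which Proposition \ref{prop:rank-projective-hypersurface} and the inductive hypothesis applied to $V(f^0) \subset \PP^{n-1}_\CC$ (a hypersurface of degree $d - 1 \leq n - 1$) close the induction. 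The base case $d = 1$ is immediate from Proposition \ref{prop:rank-smooth}, since a hyperplane is smooth of positive dimension once $n \geq 2$.

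The main obstacle in the direct approach is simply extracting the $(0,0)$-vanishing from the precise statement of Deligne--Dimca; in the inductive approach, it is verifying that the Hodge-theoretic arguments of Proposition \ref{prop:linear-fibration-genus}, written there for the $(p,0)$ components, carry over verbatim for the $(0,p)$ (in particular $(0,0)$) components. Both verifications reduce to the same combinatorial bookkeeping using the localization and Leray spectral sequences that already appears in \emph{loc.~cit.}, so the argument transfers essentially line by line from the genus case to the combinatorial rank case.
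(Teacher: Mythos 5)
Your first argument coincides with the paper's: the paper gives no proof beyond the citation of \cite[Th\'eor\`eme 1]{delignedimca}, observing, exactly as you do, that the vanishing of $h^{0,0}(\H^{n-1}(Y))$ is part of the same Hodge-theoretic Chevalley--Warning statement that yields Theorem \ref{thm:projective-hypersurface-of-small-degree} (concretely, the theorem forces the coniveau of $\H^{n-1}(Y)$ to be $\geq 1$ when $d\leq n$, killing all components $(0,p)$ and $(p,0)$ for $p\geq 0$ at once). That route is correct and is the intended one.

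Your ``self-contained'' alternative, however, has a genuine gap at its very first step. Writing the defining polynomial as $f = x_0 f^0 + f_0$ with $f^0, f_0$ independent of $x_0$ and $f^0\neq 0$ means that $f$ has degree $\leq 1$ in $x_0$, i.e., that the point $o=(1:0:\cdots:0)$ lies on $Y$ with multiplicity $d-1$. No linear change of coordinates can arrange this for a general hypersurface of degree $d\geq 3$: a smooth cubic surface in $\PP^3_\CC$ (degree $3\leq n=3$) has no point of multiplicity $2$, since such a point would be singular. Propositions \ref{prop:linear-fibration-genus} and \ref{prop:linear-fibration-rank} apply only to the special class of hypersurfaces admitting a point of multiplicity $d-1$ (such as the determinantal locus and the hypersurfaces $x_0x_1\cdots x_n=\psi$ treated in \S\ref{sec:examples}), so they cannot drive an induction over all hypersurfaces of degree $\leq n$. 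The rest of that paragraph (rerunning the $(0,p)$-vanishing in the proof of Proposition \ref{prop:linear-fibration-genus}) is fine --- indeed the paper's proof already establishes $\H^k(\PP^n_\CC,V(f,f^0))^{0,p}=0$ for all $p\geq 0$ --- but without the decomposition the induction never gets started. You should drop the alternative and rest the proof on the Deligne--Dimca citation.
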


We also have the following useful result.

\begin{prop}\label{prop:combinatorial-rank-projective-space}
Let $D\subset \PP^n_\CC$ be a normal crossing divisor of degree $d$, and assume that the pair $(\PP^n_\CC,D)$ has genus zero. Then its combinatorial rank is given by the formula
$$\rk(\PP^n_\CC, D) = \binom{d-1}{n}.$$
\end{prop}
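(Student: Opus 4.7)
The plan is very short because all the heavy lifting has been done earlier in the paper. The strategy is to combine three ingredients: the genus zero assumption to pass from combinatorial rank to the dimension of the space of logarithmic forms, the normal crossing hypothesis to identify logarithmic forms with forms having at most simple poles, and an explicit computation on projective space.

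First, since $(\PP^n_\CC, D)$ has genus zero, I would invoke the identity \eqref{eq:cr-as-dim-of-canonical-forms}, which gives
$$\rk(\PP^n_\CC, D) = \dim \logforms{n}(\PP^n_\CC \setminus D).$$
Next, because $D$ is a normal crossing divisor in the smooth compact variety $\PP^n_\CC$, Proposition \ref{prop:log-forms-have-simple-poles} upgrades the inclusion \eqref{eq:inclusion-log-forms-simple-poles} to an equality
$$\logforms{n}(\PP^n_\CC \setminus D) = \mathcal{S}^n(\PP^n_\CC, D) = \H^0(\PP^n_\CC, \Omega^n_{\PP^n_\CC}(D)).$$

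Finally, I would compute this global section space using Remark \ref{rem:log-forms-on-projective-space}: the isomorphism $\Omega^n_{\PP^n_\CC}(D) \simeq \mathcal{O}(d-n-1)$ identifies it, when $d \geq n+1$, with the space of homogeneous polynomials of degree $d-n-1$ in $n+1$ variables, whose dimension is $\binom{d-n-1+n}{n} = \binom{d-1}{n}$. When $d \leq n$, the line bundle $\mathcal{O}(d-n-1)$ has no global sections, and the binomial coefficient $\binom{d-1}{n}$ likewise vanishes, so the formula holds uniformly.

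There is no genuine obstacle here since each step is a direct citation. The only thing worth double-checking is the edge case $d \leq n$, and this is consistent with Theorem \ref{thm:projective-hypersurface-of-small-degree-rank}, which already guarantees the vanishing of the combinatorial rank in that range.
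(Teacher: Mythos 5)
Your proof is correct and follows essentially the same route as the paper, which cites Proposition \ref{prop:log-forms-have-simple-poles} and Remark \ref{rem:log-forms-on-projective-space} (with the identity \eqref{eq:cr-as-dim-of-canonical-forms} implicit). Your explicit treatment of the edge case $d\leq n$ is a welcome addition but not a deviation.
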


\begin{proof}
This follows from Proposition \ref{prop:log-forms-have-simple-poles} and the description of differential forms with at most a simple pole on projective space (Remark \ref{rem:log-forms-on-projective-space}).
\end{proof}

\begin{rem}
Proposition \ref{prop:combinatorial-rank-projective-space} implies that $(\PP^n_\CC, D)$ has combinatorial rank $1$ if and only if $d=n+1$. By Proposition \ref{prop:adjoint-hypersurface-degree} this is equivalent to $(\PP^n_\CC, D)$ underlying a ``simplex-like positive geometry'' in the sense of Remark \ref{rem:simplex-like}.
\end{rem}

\subsubsection{Linear fibrations}

In $\PP^n_\CC$ with homogeneous coordinates $(x_0:x_1:\cdots :x_n)$, let us consider a hypersurface defined by a homogeneous polynomial
$$f(x_0,x_1,\ldots,x_n)=x_0f^0(x_1,\ldots,x_n)+f_0(x_1,\ldots,x_n),$$
with $f^0$ and $f_0$ homogeneous polynomials in $x_1,\ldots,x_n$, and $f^0\neq 0$. The following proposition is proved in the same way as Proposition \ref{prop:linear-fibration-genus}.

\begin{prop}\label{prop:linear-fibration-rank}
We have the equality
$$\rk(\PP^n_\CC, V(f)) = \rk(\PP^{n-1}_\CC, V(f^0)).$$
\end{prop}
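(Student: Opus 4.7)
The plan is to mirror the proof of Proposition \ref{prop:linear-fibration-genus} almost verbatim, observing that the vanishing statements established there for the $(0,p)$ Hodge components for all $p\geq 0$ apply in particular to the $(0,0)$ component, which is exactly what is needed to control the combinatorial rank via $\gr_0^\W$.

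First, I would reuse the same projection from $o=(1:0:\cdots:0)$ to produce the isomorphism $V(f)\setminus V(f,f^0)\xrightarrow{\sim}\PP^{n-1}_\CC\setminus V(f^0)$, and hence the isomorphism of mixed Hodge structures
$$\H^{n-1}(V(f),V(f,f^0))\simeq \H^{n-1}(\PP^{n-1}_\CC,V(f^0)),$$
obtained via compactly supported cohomology exactly as in the genus proof. Applying $\gr_0^\W$ to both sides immediately identifies the two candidate contributions to the combinatorial ranks.

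Next, I would take the long exact sequence in relative cohomology for the triple $(\PP^n_\CC,V(f),V(f,f^0))$ and apply the exact functor $\gr_0^\W$. The key input, carried over unchanged from the genus proof, is that $V(f,f^0)$ is a cone over $V(f_0,f^0)$ with apex $o$, so that the Leray spectral sequence for the locally trivial $\mathbb{A}^1_\CC$-bundle $V(f,f^0)\setminus\{o\}\to V(f_0,f^0)$ yields an isomorphism $\H^\bullet_{\mathrm{c}}(V(f,f^0)\setminus\{o\})\simeq \H^{\bullet-2}_{\mathrm{c}}(V(f_0,f^0))(-1)$. The Tate twist $(-1)$ shifts weights up by $2$, so $\gr_0^\W\H^\bullet_{\mathrm{c}}(V(f,f^0)\setminus\{o\})=0$. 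Combining with the localization long exact sequence for $\{o\}\subset V(f,f^0)$ and the fact that $V(f,f^0)$ is projective (so ordinary and compactly supported cohomology coincide) yields $\gr_0^\W\H^k(V(f,f^0))=0$ for every $k\geq 1$. The long exact sequence in relative cohomology for the pair $(\PP^n_\CC,V(f,f^0))$ then produces $\gr_0^\W\H^k(\PP^n_\CC,V(f,f^0))=0$ for every $k$ (in degrees $k\geq 1$ using the previous vanishing together with the fact that $\H^k(\PP^n_\CC)$ is pure Tate of weight $2k$, and in degree $0$ using that $\PP^n_\CC$ is connected and surjects onto $\gr_0^\W\H^0(V(f,f^0))$).

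Finally, the $\gr_0^\W$-image of the triple long exact sequence collapses to an isomorphism
$$\gr_0^\W\H^{n-1}(V(f),V(f,f^0))\xrightarrow{\sim}\gr_0^\W\H^n(\PP^n_\CC,V(f)),$$
which together with the first step yields $\rk(\PP^n_\CC,V(f))=\rk(\PP^{n-1}_\CC,V(f^0))$. There is no significant new obstacle compared with the genus case: every vanishing used in the earlier argument was in fact a vanishing of the full $(0,\ast)$-column of the Hodge diamond, which a fortiori contains the $\gr_0^\W$-piece, so the proof is essentially a rerun with the $h^{p,0}$ bookkeeping replaced by $\gr_0^\W$.
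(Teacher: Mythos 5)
Your proposal is correct and matches the paper's intent exactly: the paper gives no separate argument for Proposition \ref{prop:linear-fibration-rank}, stating only that it ``is proved in the same way as Proposition \ref{prop:linear-fibration-genus}'', and your rerun of that proof with $\gr_0^\W$ in place of the $(p,0)$-components is precisely what is meant. Your observation that every vanishing used in the genus proof was already a vanishing of the full $(0,p)$-column for all $p\geq 0$, hence covers the $h^{0,0}$ piece, is the correct justification.
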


\section{Examples}\label{sec:examples}

We give some examples of genus zero pairs and canonical forms, some of which are standard in the context of positive geometries \cite{arkanihamedbailampositive}.

\subsection{The point}

A point $X=\mathrm{pt}$ has genus zero and combinatorial rank $1$, and the map
$$\can:\H_0(\mathrm{pt}) \to \logforms{0}(\mathrm{pt})$$
is the natural inclusion of $\QQ$ inside $\CC$, i.e., sends the class of the $0$-cycle $\{\mathrm{pt}\}$ to the $0$-form $1$:
$$\omegacan_{\{\mathrm{pt}\}}=1.$$

\subsection{The projective line}

Let $a,b\in\PP^1_\CC$ be two distincts points. The pair $(\PP^1_\CC,\{a,b\})$ has genus zero and combinatorial rank $1$, and a basis of $\H_1(\PP^1_\CC,\{a,b\})$ is given by (the class of) a continuous path $\gamma_{a,b}$ from $a$ to $b$ in $\PP^1_\CC$. Its canonical form is
$$\omegacan_{\gamma_{a,b}}=\dlog\left(\frac{z-b}{z-a}\right),$$
with the convention that $z-\infty$ should be replaced with $1$ in this formula.

\subsection{Toric varieties}

Let $X$ be a compact toric variety over $\CC$, with its open dense torus $(\CC^*)^n\subset X$, and let $Y=X\setminus (\CC^*)^n$. Then by Poincar\'{e} duality \eqref{eq:poincare-duality},
$$\H_n(X,Y)\simeq \H^n((\CC^*)^n)(n) \simeq \QQ(0)$$
is $1$-dimensional, with a basis element $\sigma$, which is well-defined up to a sign because Poincar\'{e} duality holds over 
$\ZZ$. The pair $(X,Y)$ therefore has genus zero and combinatorial rank $1$, and the canonical form of $\sigma$ is, up to a sign, the standard $n$-form on $X\setminus Y=(\CC^*)^n$,
$$\omegacan_\sigma = \pm \frac{\dd z_1}{z_1}\wedge\cdots\wedge \frac{\dd z_n}{z_n}.$$

\begin{rem}
It was already proved in \cite[\S 5.6]{arkanihamedbailampositive} that toric varieties give rise to positive geometries in the sense of \emph{op.~cit.}.
\end{rem}

\subsection{The nodal cubic} \label{example: NodalCubic}
(Compare with \cite[\S 5.3.1]{arkanihamedbailampositive} in the context of positive geometries.) We work in the projective plane $\PP^2_\CC$ with homogeneous coordinates $(x_0:x_1:x_2)$ and the affine chart $\CC^2$ with affine coordinates $(x,y):=(x_1/x_0,x_2/x_0)$. Consider the \emph{nodal cubic}
$$Y=\{(x_0:x_1:x_2)\in\PP^2_\CC \,|\, x_0x_2^2=x_1^2(x_0-x_1)\}$$
with affine equation $y^2=x^2(1-x)$. Note that it is a (non-simple) normal crossing divisor. Indeed, near the singular point $(0,0)$, one can choose a holomorphic branch of $\sqrt{1-x}$ and consider the holomorphic coordinates $(x'=x\sqrt{1-x},y)$ for which the equation of $Y$ reads $(y-x')(y+x')=0$. 

The ``teardrop''
$$\sigma = \{(x,y)\in\RR^2 \, | \, x\geq 0,  y^2\leq x^2-x^3\},$$
shown in Figure \ref{figureNodalCubic} has boundary along $Y$. 

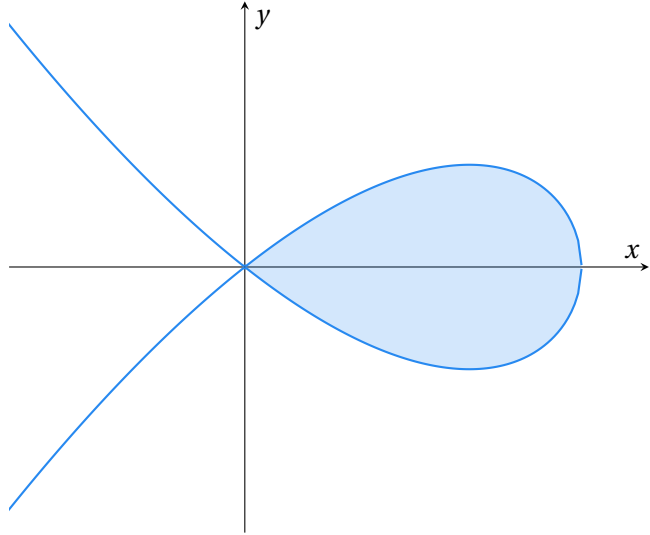
\begin{figure}[h]
    \centering
    \begin{tikzpicture}
    \begin{axis}[
    width=10cm, 
    domain=-1:1, 
    xmin=-.7, xmax=1.2, 
    ymin=-1, ymax=1,   
    axis x line = center, 
    axis y line = center,
    tick style={draw=none}, 
    ymajorticks=false, 
    xmajorticks=false,
    xlabel = {$x$},
    ylabel = {$y$},]
    \addplot [samples=200, thick, name path=f, color= MyBlue] {sqrt(x^2-x^3)};
    \addplot [samples=200, thick, name path=g, color=MyBlue] {-sqrt(x^2-x^3)};
    \addplot[MyBlue, opacity=0.2] fill between[of=f and g, soft clip={domain=0:1}];
    \end{axis}
    \end{tikzpicture}
    \caption{The nodal cubic $y^2=x^2(1-x)$.}
\label{figureNodalCubic}
\end{figure}

The long exact sequence in relative homology \eqref{eq:long-exact-sequence-relative-homology} shows that
$$\H_2(\PP^2,Y)\simeq \QQ(0),$$
with basis the class of $\sigma$. Therefore $(\PP^2,Y)$ has genus zero and combinatorial rank $1$.

\begin{prop}\label{prop:canonical-form-of-nodal-cubic}
The canonical form of $\sigma$ is given by the formula
$$\omegacan_\sigma = 2 \, \frac{x_0\dd x_1\wedge \dd x_2 - x_1\dd x_0\wedge \dd x_2 + x_2\dd x_0\wedge \dd x_1}{x_0x_2^2-x_1^2(x_0-x_1)} = 2\, \frac{\dd x\wedge \dd y}{y^2-x^2(1-x)}.$$
\end{prop}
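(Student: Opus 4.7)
The pair $(\PP^2_\CC, Y)$ has genus zero and combinatorial rank $1$, so $\logforms{2}(\PP^2_\CC\setminus Y)$ is $1$-dimensional and every canonical form is a scalar multiple of a fixed generator. My first step would be to check that $Y$ is a normal crossing divisor (not simple): away from the node it is smooth, and near $(0,0)$, choosing a local branch of $\sqrt{1-x}$ and setting $x'=x\sqrt{1-x}$ puts the defining polynomial $g=y^2-x^2(1-x)$ into product form $(y-x')(y+x')$. Hence by Remark \ref{rem:log-forms-on-projective-space}, $\logforms{2}(\PP^2_\CC\setminus Y)=\mathcal{S}^2(\PP^2_\CC,Y)$ has dimension $\binom{2}{2}=1$, spanned (in the affine chart) by $\omega_0 := \dd x\wedge\dd y/g$. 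Thus $\omegacan_\sigma = c\,\omega_0$ for some scalar $c\in\CC$, and the proposition amounts to the statement $c=2$.

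To pin down $c$, I would invoke the recursion formula of Proposition \ref{prop:recursion} with $Z=\{(0,0)\}$ the singular locus of $Y$: both $\PP^2_\CC\setminus Z$ and $Y\setminus Z$ are smooth, and $(Y,Z)$ has genus zero because the normalization of the nodal cubic is a rational curve. Explicitly, the normalization $\pi\colon \widetilde{Y}\simeq\PP^1_t\to Y$ is the rational parametrization $t\mapsto(1-t^2,\,t(1-t^2))$, which sends the pair of points $\{-1,1\}$ to the node; it is a modification $(\widetilde{Y},\{-1,1\})\to(Y,Z)$. Under the induced isomorphism $\H_1(\widetilde Y,\{-1,1\})\simeq \H_1(Y,Z)$, the boundary $\partial\sigma$ corresponds to the path $\gamma_{-1,1}$ from $-1$ to $1$ on $\PP^1_t$ --- indeed, since $y=t(1-t^2)$ has the sign of $t$ on $|t|\leq 1$, the lower branch of the teardrop corresponds to $t\in[-1,0]$ and the upper branch to $t\in[0,1]$, so the counterclockwise traversal of $\partial\sigma$ matches $t$ increasing from $-1$ to $1$. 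The canonical form of $\gamma_{-1,1}$ is therefore $\dlog\frac{t-1}{t+1}$.

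Finally I would compute $\Res_Y(\omega_0)$ explicitly using the standard formula for the residue along a hypersurface $\{g=0\}$: one seeks a $1$-form $\beta$ with $\beta\wedge\dd g\equiv\dd x\wedge\dd y\pmod{g}$, and $\beta=\dd x/(2y)$ works. Thus $\Res_Y(\omega_0)=\dd x/(2y)$ on $Y^{\mathrm{reg}}$, which pulls back via the parametrization to
\[
\frac{-2t\,\dd t}{2t(1-t^2)}=\frac{-\dd t}{(1-t)(1+t)}=\tfrac{1}{2}\dlog\tfrac{t-1}{t+1},
\]
after a partial fraction decomposition. The recursion $\Res_Y(\omegacan_\sigma)=\omegacan_{\partial\sigma}$ then gives $c\cdot\tfrac{1}{2}\dlog\tfrac{t-1}{t+1}=\dlog\tfrac{t-1}{t+1}$, forcing $c=2$.

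The main obstacle is not computational but rather the careful bookkeeping of orientations and signs --- in particular, verifying that the standard orientation on $\sigma\subset\RR^2$ lifts to $\gamma_{-1,1}$ rather than $\gamma_{1,-1}$, and that the residue formula is applied consistently with the sign convention of Remark \ref{rem:sign-convention-residue}. An equivalent and even more direct proof uses Corollary \ref{coro:corners}: the node is the unique corner of $Y$, and in the local coordinates $u=y-x\sqrt{1-x}$, $v=y+x\sqrt{1-x}$ one has $g=uv$ and $\dd x\wedge\dd y=-\tfrac12\dd u\wedge\dd v$ at the origin, so the iterated corner residue of $\omega_0$ equals $\pm\tfrac12$, while the iterated corner boundary $\partial_{(0,0)}(\sigma)$ equals $\pm 1$, again yielding $c=2$ once signs are aligned.
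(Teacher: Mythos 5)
Your argument is correct and is essentially the paper's own proof (the paper gives three; yours is its second one, via the recursion of Proposition \ref{prop:recursion} applied with $Z=\{(0,0)\}$ and the normalization $t\mapsto(1-t^2,t(1-t^2))$, preceded by the same normal-crossing/simple-pole argument pinning $\logforms{2}(\PP^2_\CC\setminus Y)$ down to the line spanned by $\dd x\wedge\dd y/g$). Your closing sketch via corner residues is likewise the paper's first proof, and your residue and orientation computations agree with the paper's.
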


We give several different proofs of this proposition.

\subsubsection{By computing iterated residues at corners}

Since $Y$ is a (non-simple) normal crossing divisor in $\PP^2_\CC$, we can compute canonical forms by using Corollary \ref{coro:corners}.
Let $\omega$ denote the right-hand side of the claimed equality, and let us note that it has logarithmic poles along $Y$. Indeed, this is clearly  the case away from $(0,0)$ (by proposition \ref{prop:log-forms-have-simple-poles}), and in local holomorphic coordinates $(x'=x\sqrt{1-x},y)$ near $(0,0)$ we can write
$$\omega= 2\,\varphi \frac{\dd x'\wedge \dd y}{(y-x')(y+x')} = \varphi \,\dlog(y+x')\wedge \dlog(y-x'),$$
where $\varphi$ is holomorphic and verifies $\varphi(0,0)=1$.
Locally around $(0,0)$, let $Y_+=\{y+x'=0\}$ and $Y_-=\{y-x'=0\}$ denote the two local branches of $Y$. We have
$$\Res_{Y_+\cap Y_-}\Res_{Y_-}(\omega)=1 \quad \mbox{ and } \quad \partial_{Y_+\cap Y_-}\partial_{Y_-}(\sigma)=1,$$
which proves the claim.

\subsubsection{By using the recursion}

Having noted that $\omega$ has logarithmic poles along $Y$ as in the first proof, we want to apply Proposition \ref{prop:recursion} for $Z=\{(0,0)\}$. Note that the normalisation of the nodal cubic produces a modification (Definition \ref{defi:modification})
$$p: (\PP^1_\CC,\{-1,1\})\to (Y,Z)$$
defined in affine coordinates by $t\mapsto (1-t^2, t(1-t^2))$. (Concretely, if $t\notin\{-1,1\}$, then $p(t)$ is the intersection of $Y\setminus Z$ with the line $y=tx$.) This implies that $(Y,Z)$ has genus zero. Via the isomorphism $\H_1(\PP^1_\CC,\{-1,1\})\simeq \H_1(Y,Z)$, the boundary of $\sigma$ along $Y$ corresponds to  the real interval $[-1,1]$ oriented from $-1$ to $1$. By the invariance under modification \eqref{eq:can-and-modification}, it is enough to check the equality
$$\Res_Y(\omega) \stackrel{?}{=} \omegacan_{\partial\sigma} = \omegacan_{[-1,1]}$$
in $\logforms{1}(Y\setminus Z)\simeq \logforms{1}(\PP^1_\CC\setminus \{-1,1\})$. We easily compute
$$\Res_Y(\omega) = \Res_Y\left(\frac{\dd x}{y}\wedge \dlog(y^2-x^2(1-x))\right) = \left.\frac{\dd x}{y}\right\vert_Y = \frac{-2\, \dd t}{1-t^2} = \dlog\left(\frac{t-1}{t+1}\right),$$
which is the canonical form of the interval $[-1,1]$, and the claim follows.

\subsubsection{By blowing up}

One replaces $Y$ by a \emph{simple} normal crossing divisor by performing the blow-up $\pi: P\rightarrow \PP^2$ along the point $(0,0)$. By the invariance by modification \eqref{eq:can-and-modification}, we have
$$\pi^*\omegacan_\sigma = \omegacan_{\widetilde{\sigma}},$$
where $\widetilde{\sigma}$ is the strict transform of $\sigma$, shown in Figure \ref{figureNodalCubicBlowup}.

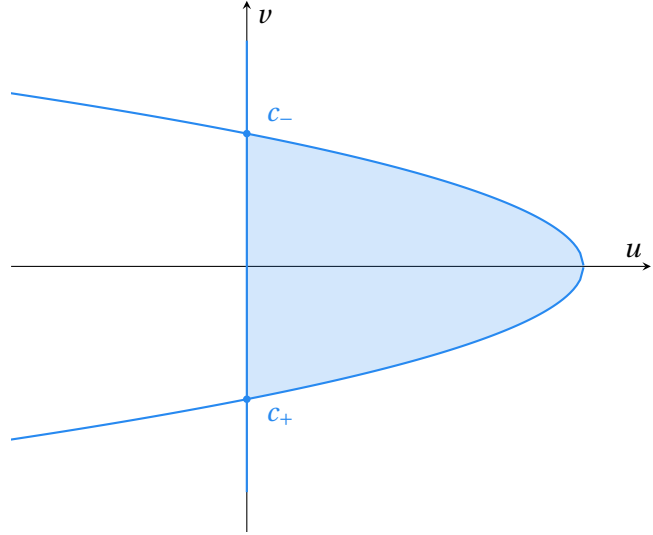
\begin{figure}[h]
    \centering
    \begin{tikzpicture}
    \begin{axis}[
    width=10cm, 
    domain=-1:1, 
    xmin=-.7, xmax=1.2, 
    ymin=-2, ymax=2,   
    axis x line = center, 
    axis y line = center,
    tick style={draw=none}, 
    ymajorticks=false, 
    xmajorticks=false,
    xlabel = {$u$},
    ylabel = {$v$},]
    \addplot [samples=200, thick, name path=f, color= MyBlue] {sqrt(1-x)};
    \addplot [samples=200, thick, name path=g, color=MyBlue] {-sqrt(1-x)};
    \addplot[MyBlue, opacity=0.2] fill between[of=f and g, soft clip={domain=0:1}];
    \addplot[thick, MyBlue] coordinates {(0, -1.7) (0, 1.7)};
    \node[color=MyBlue, circle,fill, color=MyBlue, inner sep=1pt] at (axis cs:0,1) {};
    \node[color=MyBlue, circle,fill, color=MyBlue, inner sep=1pt] at (axis cs:0,-1) {};
    \node[] at (axis cs:0.1,1.13) {$\textcolor{MyBlue}{c_-}$};
    \node[] at (axis cs:0.1,-1.13) {$\textcolor{MyBlue}{c_+}$};
    \end{axis}
    \end{tikzpicture}
    \caption{The nodal cubic $y^2=x^2(1-x)$ after blow-up of the node $(0,0)$, in the coordinate chart $(u,v)=(x,y/x)$. The vertical line $v=0$ is the exceptional divisor.}
\label{figureNodalCubicBlowup}
\end{figure}

It is contained in the affine chart of $P$ with coordinates $(u,v)=(x,y/x)$, where $\pi(u,v)=(u,uv)$. The exceptional divisor $E$ is given by the equation $u=0$, and the strict transform $\widetilde{Y}$ of $Y$, by the equation $v^2=1-u$.
One easily computes:
\[ \pi^* \omega =   2\,\frac{\dd u \wedge \dd v}{u(v^2-(1-u))}  = 2\,\frac{\dd u}{u}\wedge\frac{\dd v}{v^2-(1-u)}  .\]
From this expression, one easily verifies that $\pi^*\omega$ is indeed the canonical form of $\widetilde{\sigma}$, for instance by using Corollary \ref{coro:corners}.
Taking first the residue along the exceptional divisor yields
\[  \Res_E (\pi^*\omega) =  - 2 \, \frac{\dd v}{v^2-1} = \dlog\left(\frac{v+1}{v-1}\right)\ , \]
while $\partial_E(\widetilde{\sigma})$ is the interval $[-1,1]$ in the $v$-coordinate, oriented from $c_-=\{v-1=0\}$ to $c_{+}=\{v+1=0\}$. By defining the double residues along $c_\pm$ in this order (first along $E$ then along $v=\pm 1$),  we  obtain
$$\Res_{c_-}(\pi^*\omega)=-1 \quad \mbox{ and } \quad \Res_{c_+}(\pi^*\omega)=1,$$ 
which is consistent with  taking the double boundary (in the same order):
$$\partial_{c_-}(\widetilde{\sigma}) = -1 \quad \mbox{ and } \quad \partial_{c_+}(\widetilde{\sigma})= 1.$$

\subsection{The cuspidal cubic and  a line}\label{sec:cuspidal-cubic-and-line}

We work in the projective plane $\PP^2_\CC$ with homogeneous coordinates $(x_0:x_1:x_2)$ and the affine chart $\CC^2$ with affine coordinates $(x,y):=(x_1/x_0,x_2/x_0)$. Consider the \emph{cuspidal cubic}
$$C=\{(x_0:x_1:x_2)\in\PP^2_\CC \,|\, x_0x_2^2=x_1^3\}$$
with affine equation $y^2=x^3$. Its only singular point is $(0,0)$ and it is not a normal crossing divisor. It has genus zero because its resolution of singularities is 
$$\PP^1_\CC\to C \;  , \; (t_0:t_1)\mapsto (t_0^3:t_0t_1^2:t_1^3),$$
which in the affine coordinate $t=t_1/t_0$ corresponds to $t\mapsto (t^2,t^3)$. For any line $L\subset \PP^2_\CC$, Propositions \ref{prop:genus-projective-hypersurface} and \ref{prop:genus-curve-many-irreducible components} then implies that the pair $(\PP^2_\CC, C\cup L)$ has genus zero.

As for the combinatorial rank, we have, by Corollary \ref{coro:rank-inequalities-two-hypersurfaces} and Proposition \ref{prop:rank-projective-hypersurface}, $\rk(\PP^2_\CC, C\cup L)=\rk(\PP^2_\CC, L) + \rk(C,C\cap L) = \rk(L)+\rk(C,C\cap L)$. Using Proposition \ref{prop:rank-curve}, we see that $\rk(L)=0$ and $\rk(C,C\cap L)=|C\cap L|-1$. Hence,
$$\rk(\PP^2_\CC,C\cup L)=|C\cap L|-1.$$

\subsubsection{A line in general position}

Let $L\subset \PP^2_\CC$ denote the line defined by the projective equation $x_1=x_0$, i.e., the affine equation $x=1$.
Then $|C\cap L|=3$ (including a point at infinity) and the pair $(\PP^2_\CC, C\cup L)$ has combinatorial rank $2$. The domain
\[  \sigma = \{(x,y) \in \RR^2 \,|\, 0\leq x \leq 1, y^2\leq x^3\}  \ \subset  \  \PP^2(\RR), \]
shown in Figure \ref{figureCuspidalCubicAndLineGeneralPosition}, has  boundary contained in $C \cup L$, and therefore defines a class in $\H_2(\PP^2_\CC, C\cup L)$. We compute its canonical form after determining the space of forms on $\PP^2_\CC\setminus C\cup L$ with logarithmic poles at infinity, which we already know has dimension $2$ by \eqref{eq:cr-as-dim-of-canonical-forms}.

\begin{figure}[h]
    \centering
    \begin{tikzpicture}
    \begin{axis}[
    width=10cm, 
    domain=0:1.5, 
    xmin=-.2, xmax=1.6, 
    ymin=-2, ymax=2,   
    axis x line = center, 
    axis y line = center,
    tick style={draw=none}, 
    ymajorticks=false, 
    xmajorticks=false,
    xlabel = {$x$},
    ylabel = {$y$},]
    \addplot [samples=200, thick, name path=f, color= MyRed] {sqrt(x^5)};
    \addplot [samples=200, thick, name path=g, color=MyRed] {-sqrt(x^5)};
    \addplot[MyRed, opacity=0.2] fill between[of=f and g, soft clip={domain=0:1}];
    \addplot[MyRed, thick] coordinates {(1,-2) (1,2)};
    \end{axis}
    \end{tikzpicture}
    \caption{The cuspidal cubic $y^2=x^3$ and the line $x=1$.}
\label{figureCuspidalCubicAndLineGeneralPosition}
\end{figure}

\begin{prop}\label{prop:log-forms-cuspidal-cubic}
The space $\logforms{2}(\PP^2_\CC\setminus C\cup L)$ consists of the forms
$$(\alpha_1 x_1+\alpha_2 x_2) \frac{x_0\,\dd x_1\wedge \dd x_2 - x_1\,\dd x_0\wedge \dd x_2 + x_2\,\dd x_0\wedge \dd x_1}{(x_1-x_0)(x_0x_2^2-x_1^3)} = (\alpha_1 x + \alpha_2 y ) \frac{\dd x\wedge \dd y}{(x-1)(y^2-x^3)},$$
with $\alpha_1,\alpha_2\in \CC$. The canonical form of $\sigma$ is given by the formula
\[  \omegacan_{\sigma} =  -2x_1\,\frac{x_0\,\dd x_1\wedge \dd x_2 - x_1\,\dd x_0\wedge \dd x_2 + x_2\,\dd x_0\wedge \dd x_1}{(x_1-x_0)(x_0x_2^2-x_1^3)} = -2x\, \frac{\dd x\wedge \dd y}{(x-1)(y^2-x^3)} . \]
\end{prop}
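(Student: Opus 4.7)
The plan has two parts: first, determine the space $\logforms{2}(\PP^2_\CC\setminus C\cup L)$ by identifying the linear conditions that single out logarithmic forms inside the larger space of forms with simple poles; second, pin down the particular form $\omegacan_\sigma$ using the recursion formula on a smooth boundary component.

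For the first part, by Proposition \ref{prop:log-forms-have-simple-poles} we have $\logforms{2}(\PP^2_\CC\setminus C\cup L)\subset \mathcal{S}^2(\PP^2_\CC, C\cup L)$, which by Remark \ref{rem:log-forms-on-projective-space} is the $3$-dimensional space of forms with numerator $\alpha_0 x_0+\alpha_1 x_1+\alpha_2 x_2$. Since \S\ref{sec:cuspidal-cubic-and-line} shows $\rk(\PP^2_\CC,C\cup L)=2$ and the pair has genus zero, the identity \eqref{eq:cr-as-dim-of-canonical-forms} tells us $\dim\logforms{2}=2$, so exactly one linear condition should appear. I will find it via Proposition \ref{prop:criterion-logarithmic}: the residue along $C$, after pullback by the normalization $p\colon\PP^1_\CC\to C$, $t\mapsto(t^2,t^3)$, must be logarithmic on $\PP^1_\CC$. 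The residue along $L$ is automatically logarithmic since $L$ is smooth and its poles lie at $C\cap L$. A direct computation of $p^*\Res_C$ yields, up to sign,
$$\frac{\alpha_0+\alpha_1 t^2+\alpha_2 t^3}{t^2(t^2-1)}\,\dd t,$$
which has a double pole at $t=0$ (the preimage of the cusp) unless $\alpha_0=0$. This is the required condition.

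For the second part, I will apply the recursion of Proposition \ref{prop:recursion-irreducible-components} along the smooth component $L$: the partial residue $\Res_L(\omegacan_\sigma)$ must coincide with $\omegacan_{\partial_L\sigma}$. The partial boundary $\partial_L\sigma$ is the real segment on $L$ from $(1,-1)$ to $(1,1)$, oriented by increasing $y$ (so that the outward normal to $\sigma$ wedged with the boundary orientation recovers the bulk orientation), with canonical form $2\,\dd y/(y^2-1)$. Computing the residue of the ansatz $(\alpha_1 x+\alpha_2 y)\,\dd x\wedge \dd y/((x-1)(y^2-x^3))$ along $L=\{x=1\}$ using the residue convention of Remark \ref{rem:sign-convention-residue}---the rewriting $\dd x\wedge \dd y/(x-1)=-\dd y\wedge \dlog(x-1)$ introduces a minus sign---I obtain $-(\alpha_1+\alpha_2 y)\,\dd y/(y^2-1)$. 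Matching this with $\omegacan_{\partial_L\sigma}$ forces $\alpha_1=-2$ and $\alpha_2=0$, which is the claimed formula.

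The main obstacle will be careful sign-bookkeeping: reconciling the paper's non-standard residue convention (Remark \ref{rem:sign-convention-residue}) with the orientation of $\partial_L\sigma$. As an independent consistency check, one can verify that $p^*\Res_C(\omegacan_\sigma)$ recovers the canonical form of the interval $[-1,1]\subset\PP^1_\CC$ representing the pull-back to the normalization of $\partial_C\sigma$.
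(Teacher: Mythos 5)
Your proposal is correct, and its first half — the three-parameter ansatz from Proposition \ref{prop:log-forms-have-simple-poles} and Remark \ref{rem:log-forms-on-projective-space}, followed by the criterion of Proposition \ref{prop:criterion-logarithmic} applied to $\Res_C$ pulled back along the normalization $t\mapsto(t^2,t^3)$ to extract the single condition $\alpha_0=0$ — is exactly the paper's argument. The one place you diverge is the determination of $(\alpha_1,\alpha_2)$: you run the recursion along the line $L$, matching $\Res_L(\omega)=-(\alpha_1+\alpha_2 y)\,\dd y/(y^2-1)$ against $\omegacan_{\partial_L\sigma}=\dlog\bigl(\tfrac{y-1}{y+1}\bigr)=2\,\dd y/(y^2-1)$, whereas the paper matches $\Res_C(\omega)=(\alpha_1+\alpha_2 t)\,\dd t/(t^2-1)$ against $\omegacan_{\partial_C\sigma}=\dlog\bigl(\tfrac{t+1}{t-1}\bigr)=-2\,\dd t/(t^2-1)$ on the normalization of $C$. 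Both conditions pin down both coefficients (each forces an identity of linear polynomials), and your signs and orientations check out against the paper's convention, since your $\partial_L\sigma$ oriented by increasing $y$ is consistent with the paper's $\partial_C\sigma$ running from $t=1$ to $t=-1$. Your choice of $L$ is arguably a little cleaner at this step, since $L$ is smooth and you need only the canonical form of a path on $\PP^1_\CC$, with no further reference to the cusp; the paper's choice of $C$ recycles the residue computation already performed for the logarithmicity criterion. One small point worth making explicit either way: to invoke the recursion along a single component one should take $Z$ to contain the cusp as well as $C\cap L$ (so that $Y\setminus Z$ is smooth), which is harmless here and is also left implicit in the paper. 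Your closing cross-check via $\Res_C$ is exactly the paper's computation, so the two routes corroborate each other.
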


\begin{proof}
By Proposition \ref{prop:criterion-logarithmic}, an element of the space $\logforms{2}(\PP^2_\CC\setminus C\cup L)$ is a two-form 
\begin{align}\label{eq:general-potential-log-form-cuspidal-cubic}
\omega & = (\alpha_0 x_0+\alpha_1 x_1+\alpha_2 x_2) \frac{x_0\,\dd x_1\wedge \dd x_2 - x_1\,\dd x_0\wedge \dd x_2 + x_2\,\dd x_0\wedge \dd x_1}{(x_1-x_0)(x_0x_2^2-x_1^3)} \\
&= (\alpha_0+\alpha_1 x + \alpha_2 y ) \frac{\dd x\wedge \dd y}{(x-1)(y^2-x^3)}, 
\end{align}
with $\alpha_0,\alpha_1,\alpha_2\in \CC$, such that $\Res_C(\omega)$ and $\Res_L(\omega)$ have logarithmic poles at infinity. The latter condition ends up being vacuous, and we consider the former by computing:
By noting that 
\[      \frac{\dd x\wedge \dd y}{(x-1)(y^2-x^3)}=   \frac{1}{2y} \frac{\dd x}{x-1} \wedge  \dd\log( y^2-x^3) \ , \]
we find that 
$$\Res_C(\omega) = (\alpha_0+\alpha_1x+\alpha_2 y)\frac{1}{2y}\frac{\dd x}{x-1}\Bigg|_C = (\alpha_0+\alpha_1t^2+\alpha_2 t^3)\frac{1}{2t^3}\frac{2t \,\dd t}{t^2-1} = \left(\frac{\alpha_0}{t^2}+\alpha_1+\alpha_2t\right)\frac{\dd t}{t^2-1},$$
where we work in the coordinates $(x,y)=(t^2,t^3)$ on $C^{\mathrm{reg}}\setminus C\cap L\simeq \PP^1_\CC\setminus \{-1,0,1,\infty\}$. Clearly, such a form always has logarithmic poles at $-1,1,\infty$, and has a logarithmic pole (and is in fact regular) at $0$ if and only if $\alpha_0=0$.

Now by Proposition \ref{prop:recursion}, the canonical form $\omega=\omegacan_\sigma$ must satisfy
$$\Res_C(\omega) = \omegacan_{\partial_C(\sigma)} = \dlog\left(\frac{t+1}{t-1}\right) = -2\frac{\dd t}{t^2-1}$$
because $\partial_C(\sigma)$ is the interval $[-1,1]$ oriented from $1$ to $-1$. Therefore, $\alpha_1=-2$ and $\alpha_2=0$.
\end{proof}

\begin{rem}
There are two other interesting relative cycles in this situation, namely the (topological closures of the) domains 
$$\sigma_+=\{(x,y)\in\RR^2\,|\, x\geq 1, y\geq 1, y^2-x^3\geq 0\} \quad \mbox{ and } \quad \sigma_-=\{x\geq 1, y\leq -1, y^2\geq x^3\},$$ 
which touch the point at infinity $(0:0:1)$. Their canonical forms equal
$$\omegacan_{\sigma_+} = -(x+y)\frac{\dd x\wedge \dd y}{(x-1)(y^2-x^3)} \quad \mbox{ and } \quad \omegacan_{\sigma_-} = (y-x)\frac{\dd x\wedge \dd y}{(x-1)(y^2-x^3)}.$$
Note the equality $\omegacan_\sigma = \omegacan_{\sigma_+}+\omegacan_{\sigma_-}$, which reflects the equality $\sigma = \sigma_+ + \sigma_-$ in $\H_2(\PP^2_\CC, C\cup L)$.
\end{rem}

\begin{rem}
Another strategy to determine the space $\logforms{2}(\PP^2_\CC\setminus C\cup L)$ is to resolve the singularities of $C\cup L$ by repeatedly blowing up points. Consider first the blow-up $\pi_1:P_1\to \PP^2_\CC$ along the cusp $(x,y)=(0,0)$, shown in Figure \ref{figureCuspidalCubicBlowup}. It is given in an affine chart by the formula $\pi_1(u,v)=(u,uv)$, for which the exceptional divisor $E_1$ is $\{u=0\}$ and the strict transforms of $C$ and $L$ are $C_1=\{u=v^2\}$ and $L_1=\{u=1\}$ respectively. 

\begin{figure}[h]
    \centering
    \begin{tikzpicture}
    \begin{axis}[
    width=10cm, 
    domain=0:1.4, 
    xmin=-.2, xmax=1.2, 
    ymin=-2, ymax=2,   
    axis x line = center, 
    axis y line = center,
    tick style={draw=none}, 
    ymajorticks=false, 
    xmajorticks=false,
    xlabel = {$u$},
    ylabel = {$v$},]
    \addplot [samples=200, thick, name path=f, color= MyRed] {sqrt(x)};
    \addplot [samples=200, thick, name path=g, color=MyRed] {-sqrt(x)};
    \addplot[MyRed, opacity=0.2] fill between[of=f and g, soft clip={domain=0:1}];
    \addplot[thick, MyRed] coordinates {(0, -1.7) (0, 1.7)};
    \addplot[thick, MyRed] coordinates {(1, -1.7) (1, 1.7)};
    \end{axis}
    \end{tikzpicture}
    \caption{The cuspidal cubic $y^2=x^3$ and the line $x=1$ after blow-up of the node $(0,0)$, in the coordinate chart $(u,v)=(x,y/x)$. The vertical line $v=0$ is the exceptional divisor.}
\label{figureCuspidalCubicBlowup}
\end{figure}
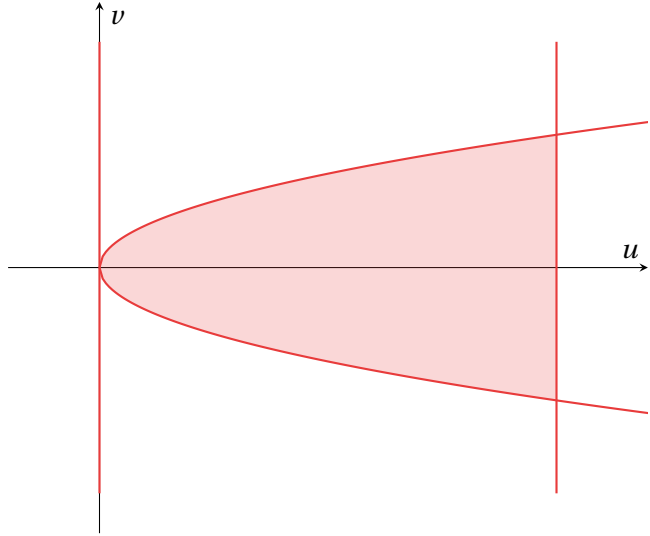

The pullback of a form \eqref{eq:general-potential-log-form-cuspidal-cubic} in this chart is 
$$\pi_1^*\omega = (\alpha_0+\alpha_1u+\alpha_2uv)\frac{\dd u\wedge\dd v}{u(u-1)(v^2-u)}.$$
Since the strict transform of $C$ is tangent to the exceptional divisor at the point $(u,v)=(0,0)$, we must perform a further blow up $\pi_2: P_2 \rightarrow P_1$ of the origin $(u,v)=(0,0)$.
It is given in an affine chart  by the formula $\pi_2(r,s)=(rs,s)$, for which the exceptional divisor is $F_2=\{s=0\}$ and the strict transforms of $E_1$, $C$, and $L$ are $E_2=\{r=0\}$, $C_2=\{r=s\}$ and $L_2=\{rs=1\}$ respectively. Computing the pullback we get:
$$\pi_2^*\pi_1^*\omega = (\alpha_0+\alpha_1rs+\alpha_2rs^2)\frac{\dd r \wedge \dd s}{rs(rs-1)(s-r)}.$$
Alas, the three divisors $\{r=0\}$, $\{r=s\}$, $\{s=0\}$ do not cross normally at the origin $(r,s)=(0,0)$ and it behoves us to perform a final blow-up $\pi_3: P_3 \rightarrow P_2$ of this point, 
given in an affine chart by $\pi_3(p,q)=(p,pq)$, for which the exceptional divisor is $G_3=\{p=0\}$ and the strict transforms of $F_2$, $C_2$, and $L_2$, are $F_3=\{q=0\}$, $C_3=\{q=1\}$, and $L_3=\{p^2q=1\}$ respectively. (The total transform $E_3$ of $E_2$ is not visible in this chart.) We get:
$$\pi_3^*\pi_2^*\pi_1^*\omega = (\alpha_0+\alpha_1p^2q+\alpha_2p^3q^2)\frac{\dd p\wedge \dd q}{p^2 q(p^2q-1)(q-1)}.$$
Because of the term $\frac{\dd p}{p^2}$, this form is logarithmic near $(p,q)=(0,0)$ if and only if $\alpha_0=0$. One can check that under this assumption, it does indeed have logarithmic poles along the normal crossing divisor $C_3\cup L_3\cup E_3\cup F_3\cup G_3$.
\end{rem}

\subsubsection{A line passing through the cusp}

We now consider the  variant where the line $L$ passes through the cusp while cutting the cuspidal cubic in another point. Let $L'\in\PP^2_\CC$ denote the line defined by the projective equation $x_2=x_1$, i.e., the affine equation $y=x$. Then $|C\cap L|=2$ and the pair $(\PP^2_\CC,C\cup L')$ has combinatorial rank $1$. The domain
$$\sigma'=\{(x,y)\in\RR^2 \, | \, 0\leq y\leq x \leq 1, y^2\geq x^3\} \subset \PP^2(\RR),$$
shown in Figure \ref{figureCuspidalCubicAndLineThroughCusp}, has  boundary contained in $C\cup L'$, and therefore defines a class in $\H_2(\PP^2_\CC, C\cup L')$. We compute its canonical form, after determining the space of forms on $\PP^2_\CC\setminus C\cup L'$ with logarithmic poles at infinity, which we already know has dimension $1$ by \eqref{eq:cr-as-dim-of-canonical-forms}.

\begin{figure}[h]
    \centering
    \begin{tikzpicture}
    \begin{axis}[
    width=10cm, 
    domain=0:1.5, 
    xmin=-.2, xmax=1.6, 
    ymin=-2, ymax=2,   
    axis x line = center, 
    axis y line = center,
    tick style={draw=none}, 
    ymajorticks=false, 
    xmajorticks=false,
    xlabel = {$x$},
    ylabel = {$y$},]
    \addplot [samples=200, thick, name path=f, color= MyGreen] {sqrt(x^5)};
    \addplot [samples=200, thick, name path=g, color=MyGreen] {-sqrt(x^5)};
    \addplot [domain=-.2:1.5, samples=200, thick, name path=h, color=MyGreen] {x};
    \addplot[MyGreen, opacity=0.2] fill between[of=f and h, soft clip={domain=0:1}];
    \end{axis}
    \end{tikzpicture}
    \caption{The cuspidal cubic $y^2=x^3$ and the line $y=x$.}
\label{figureCuspidalCubicAndLineThroughCusp}
\end{figure}

\begin{prop}\label{prop:log-forms-cuspidal-cubic-degenerate}
The space $\logforms{2}(\PP^2_\CC\setminus C\cup L')$ consists of the forms
$$\alpha_2 x_2 \,\frac{x_0\,\dd x_1\wedge \dd x_2 - x_1\,\dd x_0\wedge \dd x_2 + x_2\,\dd x_0\wedge \dd x_1}{(x_2-x_1)(x_0x_2^2-x_1^3)} = \alpha_2 y \,\frac{\dd x\wedge \dd y}{(y-x)(y^2-x^3)},$$
with $\alpha_2\in \CC$. The canonical form of $\sigma'$ is given by the formula
\[  \omegacan_{\sigma'} =  x_2\,\frac{x_0\,\dd x_1\wedge \dd x_2 - x_1\,\dd x_0\wedge \dd x_2 + x_2\,\dd x_0\wedge \dd x_1}{(x_2-x_1)(x_0x_2^2-x_1^3)} = y\, \frac{\dd x\wedge \dd y}{(y-x)(y^2-x^3)} . \]
\end{prop}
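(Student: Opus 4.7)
The plan mirrors the strategy used for Proposition \ref{prop:log-forms-cuspidal-cubic}. By Remark \ref{rem:log-forms-on-projective-space}, every $\omega \in \mathcal{S}^2(\PP^2_\CC, C\cup L')$ can be written in affine coordinates as
\[
\omega = (\alpha_0 + \alpha_1 x + \alpha_2 y)\,\frac{\dd x\wedge \dd y}{(y-x)(y^2-x^3)}
\]
for some $\alpha_0,\alpha_1,\alpha_2\in\CC$. By Proposition \ref{prop:criterion-logarithmic}, such an $\omega$ lies in $\logforms{2}(\PP^2_\CC\setminus C\cup L')$ if and only if $\Res_C(\omega)$ and $\Res_{L'}(\omega)$ are each logarithmic at infinity on the smooth loci of $C$ and of $L'$ respectively.

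The decisive step is the computation of $\Res_C(\omega)$ on the normalization $\pi\colon \widetilde{C}=\PP^1_\CC\to C$, $t\mapsto (t^2,t^3)$, which sends $t=0$ to the cusp $(0,0)$ and $t=1$ to the transverse intersection $(1,1)\in C\cap L'$. A direct calculation gives
\[
\pi^*\Res_C(\omega)=\frac{\alpha_0+\alpha_1 t^2+\alpha_2 t^3}{t^4(t-1)}\,\dd t,
\]
and demanding at most simple poles at $t\in\{0,1,\infty\}$ forces the numerator to vanish to order at least three at $t=0$, hence $\alpha_0=\alpha_1=0$. Under this constraint $\pi^*\Res_C(\omega)=\alpha_2\,\dlog((t-1)/t)$ is indeed logarithmic, and a parallel computation shows that $\Res_{L'}(\omega)=(\alpha_0+(\alpha_1+\alpha_2)x)\,\dd x/(x^2(1-x))$ only imposes the weaker, redundant condition $\alpha_0=0$. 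This establishes that $\logforms{2}(\PP^2_\CC\setminus C\cup L')$ is one-dimensional, spanned by $y\,\dd x\wedge \dd y/((y-x)(y^2-x^3))$, in agreement with \eqref{eq:cr-as-dim-of-canonical-forms} and the combinatorial rank being $1$.

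To pin down $\omegacan_{\sigma'}$ within this one-dimensional space, I would apply the recursion formula \eqref{eq:recursion-formula-special-case}: the residue $\Res_{L'}(\omegacan_{\sigma'})$ must equal the canonical form of the partial boundary $\partial_{L'}(\sigma')$. With the induced counterclockwise orientation on $\sigma'$ (a crescent bounded above by $L'$ and below by the arc of $C$, both joining $(0,0)$ and $(1,1)$), $\partial_{L'}(\sigma')$ is the segment of $L'$ from $(1,1)$ to $(0,0)$, whose canonical form in the coordinate $x$ is $\dlog(x/(x-1))$. The computation $\Res_{L'}(\alpha_2\, y\,\dd x\wedge \dd y/((y-x)(y^2-x^3)))=\alpha_2\,\dd x/(x(1-x))=\alpha_2\,\dlog(x/(x-1))$ then forces $\alpha_2=1$, completing the proof; a cross-check via $\Res_C$ and the $t$-parametrization gives the same conclusion.

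As in Proposition \ref{prop:log-forms-cuspidal-cubic}, the main subtlety lies in controlling the residue along the singular curve $C$ at its cusp; the extra complication here is that $L'$ itself passes through this cusp, so the singular locus of $C\cup L'$ is more intricate than a union of nodes. However, the normalization $\pi$ separates the local branches of $C$ from the smooth branch of $L'$ cleanly, and reduces the problem to a routine pole-order count on $\PP^1_\CC$; tracking the sign conventions for the induced orientation (and for residues, see Remark \ref{rem:sign-convention-residue}) is the only remaining bookkeeping.
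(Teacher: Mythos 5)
Your proof is correct and takes essentially the same route as the paper: reduce to forms with at most simple poles via Remark \ref{rem:log-forms-on-projective-space}, apply the criterion of Proposition \ref{prop:criterion-logarithmic} by pulling the residue along $C$ back through the normalization $t\mapsto(t^2,t^3)$ to force $\alpha_0=\alpha_1=0$, and then fix the remaining constant by the recursion. The only (immaterial) difference is that you normalize the constant using $\Res_{L'}$ and the boundary segment $\partial_{L'}(\sigma')$ on the line, whereas the paper uses $\Res_C$ and the arc $\partial_C(\sigma')$ on the cubic; both correctly yield $\alpha_2=1$.
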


\begin{proof}
By Proposition \ref{prop:criterion-logarithmic}, an element of the space $\logforms{2}(\PP^2_\CC\setminus C\cup L')$ is a form 
\begin{align*}
\omega  & = (\alpha_0 x_0+\alpha_1 x_1+\alpha_2 x_2) \frac{x_0\,\dd x_1\wedge \dd x_2 - x_1\,\dd x_0\wedge \dd x_2 + x_2\,\dd x_0\wedge \dd x_1}{(x_2-x_1)(x_0x_2^2-x_1^3)} \\
& = (\alpha_0+\alpha_1 x + \alpha_2 y ) \frac{\dd x\wedge \dd y}{(y-x)(y^2-x^3)}, 
\end{align*}
with $\alpha_0,\alpha_1,\alpha_2\in \CC$, which is such that $\Res_C(\omega)$ and $\Res_{L'}(\omega)$ have logarithmic poles at infinity. We consider the first condition by computing:
$$\Res_C(\omega) = (\alpha_0+\alpha_1x+\alpha_2 y)\frac{1}{2y}\frac{\dd x}{y-x}\Bigg|_C = (\alpha_0+\alpha_1t^2+\alpha_2 t^3)\frac{1}{2t^3}\frac{2t \,\dd t}{t^3-t^2} = \left(\frac{\alpha_0}{t^3}+\frac{\alpha_1}{t}+\alpha_2\right)\frac{\dd t}{t(t-1)},$$
where we  work in the coordinates $(x,y)=(t^2,t^3)$ on $C\setminus C\cup L'\simeq \PP^1_\CC\setminus \{0,1,\infty\}$. Clearly, such a form always has logarithmic poles at $1$ and $\infty$, and has a logarithmic pole at $0$ if and only if $\alpha_0=\alpha_1=0$. If these conditions are satisfied, one easily checks that $\Res_L(\omega)$ has a logarithmic pole at infinity (In fact, this must be the case since we know that $\logforms{2}(\PP^2_\CC\setminus C\cup L')$ has dimension $1$ by \eqref{eq:cr-as-dim-of-canonical-forms}).

Now, by Proposition \ref{prop:recursion}, the canonical form $\omega=\omegacan_{\sigma'}$ must satisfy 
$$\Res_C(\omega)=\omegacan_{\partial_C(\sigma')} = \dlog\left(\frac{t-1}{t}\right) = \frac{\dd t}{t(t-1)},$$
because $\partial_C(\sigma')$ is the interval $[0,1]$ oriented from $0$ to $1$. We conclude that $\alpha_2=1$.
\end{proof}

\subsection{A cubic surface and a plane}

In $\mathbb{P}^3_\CC$, let $C$ be a smooth cubic surface and  $P$ a plane  such that  $C\cap P$ is a nodal cubic in $P\simeq \PP^2_\CC$. 
\begin{figure}[h] 
\begin{center}
\includegraphics[width=9cm]{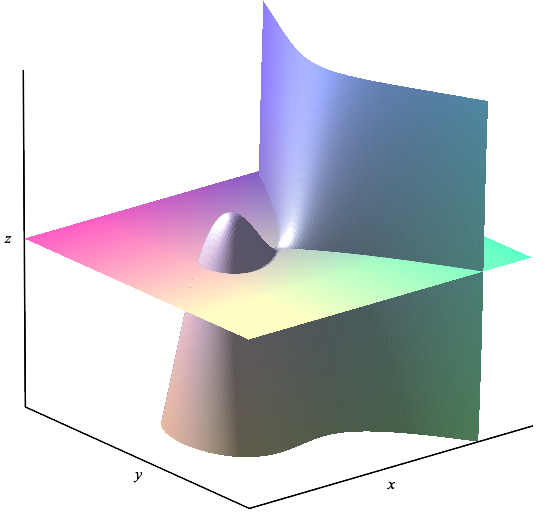}
\end{center}
\caption{(``Coastal sea stack''). The cubic surface $C: y^2=x^2(1-x)+z+z^3$ and the  hyperplane $P: z=0$, where $C\cap P: y^2=x^2(1-x)$ is the nodal cubic. The interior of the region bounded by the dome on the left (sea stack) and the hyperplane $z=0$ (the sea) defines a relative homology class in $\H_3(\PP^3, C \cup P)$. 
}\label{figureCubicSurfaceAndLine}
\end{figure}

Recall that $C$ has genus zero by Theorem \ref{thm:projective-hypersurface-of-small-degree} and that $C\cap P$ has genus zero because its resolution of singularities is $\PP^1_\CC$. Therefore, by Proposition \ref{prop:genus-inequalities-new}:
$$g(\PP^3_\CC,C\cup P)\leq g(\PP^3_\CC, C)+ g(P,C\cap P) \leq g(\PP_\CC^3)+g(C)+g(P)+ g(C\cap P)=0,$$
and hence $(\PP^3_\CC, C\cup P)$ has genus zero. We can also apply Proposition \ref{prop:rank-inequalities-new} to compute the combinatorial rank: since $\PP^3_\CC$ is compact and $\PP^3_\CC\setminus C$ is affine,
$$\rk(\PP_\CC^3,C\cup P) = \rk(\PP^3_\CC, C)+\rk(P,C\cap P)=0+1=1,$$
where we have used the fact that $\rk(\PP^3_\CC, C)=\rk(C)=0$ (Propositions \ref{prop:rank-projective-hypersurface} and \ref{prop:rank-smooth}), and the computation of the combinatorial rank of the nodal cubic (\S\ref{example: NodalCubic}). The next proposition gives a more detailed description of $\H^3(\PP^3_\CC, C\cup P)$ and is included for completeness, but is not required for the purposes of understanding the associated canonical form. 

\begin{prop}
We have
$$\gr_w^\W\H^3(\PP^3_\CC, C\cup P) \simeq \begin{cases} \QQ(0) & \mbox{ for } w=0;\\ \QQ(-1)^6 & \mbox{ for } w=2; \\ 0 & \mbox{ otherwise}.\end{cases}$$
\end{prop}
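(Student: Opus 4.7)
The plan is to compute $\H^3(\PP^3_\CC, C \cup P)$ as a mixed Hodge structure via the long exact sequence of the pair
\[ \cdots \to \H^{k-1}(C\cup P) \to \H^k(\PP^3_\CC, C\cup P) \to \H^k(\PP^3_\CC) \to \H^k(C\cup P) \to \cdots, \]
which reduces the problem to determining $\H^\bullet(C\cup P)$ through degree $3$. For this I would apply the Mayer--Vietoris sequence
\[ \cdots \to \H^{k-1}(C\cap P) \to \H^k(C\cup P) \to \H^k(C)\oplus \H^k(P) \to \H^k(C\cap P) \to \cdots, \]
after separately identifying $\H^\bullet(C)$, $\H^\bullet(P)$, and $\H^\bullet(C\cap P)$ as mixed Hodge structures.

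First I would collect the cohomology of the pieces. The cohomology of $\PP^3_\CC$ is pure Tate and known, and likewise for $P\simeq \PP^2_\CC$. The smooth cubic surface $C$ is the blow-up of $\PP^2_\CC$ at six points in general position (equivalently, its Hodge numbers are fixed by the Lefschetz hyperplane theorem and the Euler characteristic), so $\H^0(C)\simeq \QQ(0)$, $\H^2(C)\simeq \QQ(-1)^7$, $\H^4(C)\simeq \QQ(-2)$, and odd cohomology vanishes. For the nodal cubic $C\cap P$, whose normalization is $\PP^1_\CC$ and which has a single node, the normalization exact sequence yields $\H^0(C\cap P)\simeq \QQ(0)$, $\H^1(C\cap P)\simeq \QQ(0)$ pure of weight zero (the weight-one part would come from $\H^1(\PP^1_\CC)$, which vanishes), and $\H^2(C\cap P)\simeq \QQ(-1)$.

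Plugging these into Mayer--Vietoris, connectedness of $C\cup P$ gives $\H^1(C\cup P)=0$, and the restriction map $\H^2(C)\oplus\H^2(P)\to \H^2(C\cap P)$ is surjective, because already $\H^2(P)=\QQ(-1)\to \H^2(C\cap P)=\QQ(-1)$ is an isomorphism (the hyperplane class of $P$ restricts to three times the fundamental class of the degree-three curve $C\cap P$). This produces $\H^3(C\cup P)=0$ together with an extension
\[ 0 \to \QQ(0) \to \H^2(C\cup P) \to \QQ(-1)^7 \to 0, \]
so $\gr_0^\W\H^2(C\cup P)\simeq\QQ(0)$ and $\gr_2^\W\H^2(C\cup P)\simeq\QQ(-1)^7$.

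Since $\H^3(\PP^3_\CC)=0$, the long exact sequence of the pair then becomes
\[ 0 \to \H^2(\PP^3_\CC) \to \H^2(C\cup P) \to \H^3(\PP^3_\CC, C\cup P) \to 0, \]
with injectivity on the left because the hyperplane class of $\PP^3_\CC$ restricts nontrivially to $P$. Using strictness of morphisms of mixed Hodge structures with respect to the weight filtration, I would pass to $\gr^\W$ to obtain the claimed isomorphisms $\gr_0^\W\H^3\simeq \QQ(0)$, $\gr_2^\W\H^3\simeq \QQ(-1)^6$, and vanishing in other weights. The main subtlety I anticipate is the mixed Hodge structure on the nodal cubic, particularly verifying that its $\H^1$ is pure of weight zero, which is precisely what contributes the weight-zero summand and matches the previously computed combinatorial rank $\rk(\PP^3_\CC, C\cup P)=1$.
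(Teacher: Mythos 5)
Your proposal is correct and is essentially the paper's argument: the paper runs the relative-cohomology spectral sequence for the two-component divisor $C\cup P$, which is precisely the combination of the long exact sequence of the pair with Mayer--Vietoris that you use, with the same inputs ($\H^2(C)\simeq\QQ(-1)^7$, the weight-zero $\H^1$ of the nodal cubic, and the injectivity of $\H^2(\PP^3_\CC)\to\H^2(C)$ and surjectivity of $\H^2(P)\to\H^2(C\cap P)$). Your explicit tracking of the $\QQ(0)$ summand coming from $\H^1(C\cap P)$ is a welcome elaboration of a step the paper leaves implicit.
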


\begin{proof}
We consider the spectral sequence in relative cohomology \eqref{eq:relative-cohomology-spectral-sequence}. Recall \cite{HuybrechtsCubic} that $\H^1(C)$ vanishes and that $\H^2(C)\simeq \QQ(-1)^7$. One easily sees that $\gr_w^\W\H^3(\PP^3_\CC, C\cup P)$ vanishes for $w\in\{1,3\}$, and  $\gr_2^\W$ is the cohomology in the middle of the following complex:
$$0\to \H^2(\PP^3_\CC) \to \H^2(C)\oplus \H^2(P)\to \H^2(C\cap P)\to 0.$$
The result follows from the fact that $\H^2(\PP^3_\CC)\to \H^2(C)$ is injective and that $\H^2(P)\to \H^2(C\cap P)$ is an isomorphism.
\end{proof}

By way of example, consider the cubic surface defined in affine coordinates $(x,y,z)$ by the equation $y^2=x^2(1-x)+z+z^3$, and where $P$ is defined by $z=0$, depicted in Figure \ref{figureCubicSurfaceAndLine}.

The intersection $C\cap P$ is the nodal cubic $y^2=x^2(1-x)$. Consider the region  
\[ \sigma = \{ (x,y,z) \in \RR^3 \,|\,  z\geq 0\ , \  z+z^3 \leq  y^2-x^2(1-x)     \} ,\]
which defines a class in $\H_3(\PP^3_\CC, C\cup P)$.

\begin{prop}
The canonical form of $\sigma$ is given in affine coordinates by the formula
$$\omegacan_\sigma = -2\, \frac{\dd x\wedge \dd y\wedge \dd z}{z(y^2-x^2(1-x)-z-z^3)}$$
\end{prop}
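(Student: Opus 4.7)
The plan is to exploit the facts, already established in the setup of this example, that $(\PP^3_\CC, C \cup P)$ has genus zero and combinatorial rank one, so that the space of logarithmic forms is one-dimensional and the canonical form is determined up to an overall scalar. The scalar is then pinned down by matching a single residue against the nodal cubic example.

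\textbf{Step 1 (Uniqueness up to scalar).} Since $(\PP^3_\CC, C \cup P)$ has genus zero and combinatorial rank one, by \eqref{eq:cr-as-dim-of-canonical-forms} we have $\dim \logforms{3}(\PP^3_\CC \setminus (C \cup P)) = 1$. On the other hand, $C \cup P$ has degree $n+1 = 4$ in $\PP^3_\CC$, so by Remark \ref{rem:log-forms-on-projective-space} the ambient space $\mathcal{S}^3(\PP^3_\CC, C \cup P)$ of 3-forms with at most simple poles is also one-dimensional, spanned in affine coordinates by
$$\Omega := \frac{\dd x \wedge \dd y \wedge \dd z}{z \cdot (y^2 - x^2(1-x) - z - z^3)}.$$
The inclusion $\logforms{3} \subset \mathcal{S}^3$ of Proposition \ref{prop:log-forms-have-simple-poles}, together with the dimension equality, forces these spaces to coincide, so that $\omegacan_\sigma = c\, \Omega$ for some scalar $c \in \CC$.

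\textbf{Step 2 (Fixing the scalar via recursion on $P$).} I apply Proposition \ref{prop:recursion-irreducible-components} to the decomposition $C \cup P$. Its hypotheses hold: $X = \PP^3_\CC$ is smooth, both $C$ and $P$ are smooth along $C \cap P$, and the pairs $(P, C \cap P)$ and $(C, C \cap P)$ have genus zero (the former is exactly the nodal cubic example, Proposition \ref{prop:canonical-form-of-nodal-cubic}; for the latter, both $C$ and $C \cap P$ have genus zero, so apply Corollary \ref{coro:genus-inequalities-simple}). Rewriting $\Omega = \tfrac{\dd x \wedge \dd y}{y^2 - x^2(1-x) - z - z^3} \wedge \tfrac{\dd z}{z}$ and applying the residue convention \eqref{eq:definition-residue} yields
$$\Res_P(c\, \Omega) \;=\; c \cdot \frac{\dd x \wedge \dd y}{y^2 - x^2(1-x)}.$$
By the recursion, this must equal $\omegacan_{\partial_P \sigma}$. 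The partial boundary $\partial_P \sigma$ is the teardrop $\tau$ of Figure \ref{figureNodalCubic}, endowed with the opposite orientation: the standard convention that the boundary of a region contained in $\{z \geq 0\}$ picks up a sign on its $\{z=0\}$-face gives $\partial_P \sigma = -\tau$. Combined with $\omegacan_\tau = 2 \tfrac{\dd x \wedge \dd y}{y^2 - x^2(1-x)}$ from Proposition \ref{prop:canonical-form-of-nodal-cubic}, this forces $c = -2$.

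\textbf{Main obstacle.} The only delicate point is the identification of $\partial_P \sigma$ with $-\tau$, which requires a careful application of the orientation convention for boundaries of manifolds-with-corners in order to recover the sign on the right-hand side of the claimed formula. One also verifies en passant that $[\sigma]$ is nonzero in the weight-zero quotient of $\H_3(\PP^3_\CC, C \cup P)$, which follows from the nonvanishing of $\Res_P(\omegacan_\sigma)$ just computed.
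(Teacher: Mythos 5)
Your proposal is correct and follows essentially the same route as the paper's proof: first identify $\logforms{3}(\PP^3_\CC\setminus(C\cup P))$ with the one-dimensional space $\mathcal{S}^3(\PP^3_\CC,C\cup P)=\CC\,\Omega$ via Proposition \ref{prop:log-forms-have-simple-poles} and \eqref{eq:cr-as-dim-of-canonical-forms}, then fix the scalar by the recursion $\Res_P(\omegacan_\sigma)=\omegacan_{\partial_P(\sigma)}$ together with the nodal cubic computation of Proposition \ref{prop:canonical-form-of-nodal-cubic}. The only difference is cosmetic: you spell out the hypotheses of Proposition \ref{prop:recursion-irreducible-components} and the orientation bookkeeping for $\partial_P(\sigma)=-\tau$, which the paper states without elaboration.
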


\begin{proof}
We work in projective coordinates $(x_0:x_1:x_2:x_3)$ such that $(x,y,z)=(x_1/x_0, x_2/x_0, x_3/x_0)$ and consider the form 
\begin{align*}
    \omega & = \frac{x_0\,\dd x_1\wedge \dd x_2\wedge \dd x_3 - x_1\,\dd x_0\wedge\dd x_2\wedge \dd x_3 + x_2\,\dd x_0\wedge \dd x_1\wedge \dd x_3 - x_3\,\dd x_0\wedge \dd x_1\wedge \dd x_2}{x_3(x_0x_2^2-x_1^2(x_0-x_1)-x_0^2x_3-x_3^3)} \\
    & = \frac{\dd x\wedge \dd y\wedge \dd z}{z(y^2-x^2(1-x)-z-z^3)}.
\end{align*}
It generates the space of forms on $\PP^3_\CC\setminus C\cup P$ with at most a simple pole along $C\cup P$, i.e. the space of global sections of $\Omega^3_{\PP^3_\CC}(C\cup P)$. By Proposition \ref{prop:log-forms-have-simple-poles}, this space contains the subspace $\logforms{3}(\PP^3_\CC\setminus C\cup P)$, which has dimension $1$ by \eqref{eq:cr-as-dim-of-canonical-forms}, and hence the two spaces are equal: 
$$\logforms{3}(\PP^3_\CC\setminus C\cup P) = \Omega^3_{\PP^3_\CC}(C\cup P) = \CC \omega.$$
Therefore, we have $\omegacan_\sigma=\alpha\omega$ for some $\alpha\in \CC$, which one finds by using the recursion formula
$$\Res_P(\omegacan_\sigma) = \omegacan_{\partial_P(\sigma)}.$$
Since $\partial_P(\sigma)$ is the ``teardrop'' from \S\ref{example: NodalCubic} with reversed orientation, Proposition \ref{prop:canonical-form-of-nodal-cubic} yields $\alpha=-2$. 
\end{proof}

\subsection{The determinant locus and positive semi-definite matrices}
For $n\geq 2 $, let  $\PP^{d_n}_\CC$ denote the space of projective $n\times n$ symmetric matrices, where $d_n=\binom{n+1}{2}-1$,  and $\mathrm{Det} \subset \PP^{d_n}_\CC$  the locus defined by the vanishing of the determinant, which is homogeneous in matrix entries. The subspace $P\subset \PP^{d_n}(\RR)$ consisting of  projective classes of positive definite symmetric matrices satisfies $\partial(P) \subset \mathrm{Det}(\RR)$ and is orientable because $P$ is a homogeneous space for the action of $\mathrm{SL}_n(\RR)$, which is connected.   Therefore $[P]$ defines a relative homology class in $\H_{d_n}(\PP^{d_n}_\CC , \mathrm{Det})$.    A volume form may be given explicitly  by contracting  the form $\eta=  \det(X)^{-(n+1)/2} \, \bigwedge_{1\leq i \leq j \leq n} dX_{ij} $  (for some choice of ordering in the exterior product, and taking the positive square root of $\det(X)$) with the Euler vector field $\sum_{i,j} X_{ij} \frac{\partial}{\partial X_{ij}} $, where $X= (X_{ij})$ is a positive definite symmetric matrix.   In the case $n=3$, the locus $\mathrm{Det}$ is isomorphic to the graph hypersurface $X_{W_3}$ of the wheel  graph $W_3$ with $3$ spokes (or complete graph $K_4$), and the corresponding volume form is proportional to the Feynman differential form $\Omega_{W_3} / \Psi_{W_3}^2$.  It was shown in \cite{BlochEsnaultKreimer} that $\H^5(\PP^5_\CC \setminus X_{W_3})\simeq \QQ(-3)$ and it follows that  $[P]$ is non-vanishing in $\H_5(\PP^5_\CC, \mathrm{Det})$.  

By the following result, $P$ has a well-defined canonical form, which is $\omegacan_P=0$ (in particular, it is not the volume form!).

\begin{prop} 
The pair $ (\PP^{d_n},\mathrm{Det})  $ has genus $0$ and combinatorial rank $0$.
\end{prop}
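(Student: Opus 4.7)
The plan is to reduce the computation of the genus and the combinatorial rank of the pair $(\PP^{d_n}_\CC,\mathrm{Det})$ to the corresponding invariants of the hypersurface $\mathrm{Det}$ itself, and then to apply the Deligne--Dimca type vanishing results already recorded in the paper (Theorems \ref{thm:projective-hypersurface-of-small-degree} and \ref{thm:projective-hypersurface-of-small-degree-rank}).

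First, I would invoke Propositions \ref{prop:genus-projective-hypersurface} and \ref{prop:rank-projective-hypersurface} to obtain
\[ g(\PP^{d_n}_\CC,\mathrm{Det}) = g(\mathrm{Det}) \qquad \text{and} \qquad \rk(\PP^{d_n}_\CC,\mathrm{Det}) = \rk(\mathrm{Det}),\]
the second of which applies because $d_n=\binom{n+1}{2}-1\geq 2$ for $n\geq 2$. The task thus reduces to showing that the hypersurface $\mathrm{Det}\subset \PP^{d_n}_\CC$ has genus and combinatorial rank equal to zero.

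Next, I would observe that $\mathrm{Det}$ is defined by the vanishing of the determinant of a symmetric matrix, a homogeneous polynomial of degree exactly $n$ in the $\binom{n+1}{2}$ entries. The key numerical input is the elementary inequality
\[ n \leq d_n = \frac{n(n+1)}{2}-1 \qquad \text{for all } n \geq 2,\]
which is equivalent to $(n-2)(n+1)\geq 0$. Hence $\mathrm{Det}$ is a hypersurface in the projective space $\PP^{d_n}_\CC$ whose degree is bounded above by $d_n$. Applying Theorem \ref{thm:projective-hypersurface-of-small-degree} (which states that a projective hypersurface of degree $\leq$ the ambient dimension has genus zero) yields $g(\mathrm{Det})=0$, and applying Theorem \ref{thm:projective-hypersurface-of-small-degree-rank} analogously yields $\rk(\mathrm{Det})=0$.

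There is essentially no ``hard part'' in this argument: once the reduction to the hypersurface and the degree bound are in place, the conclusion follows directly from the cited Deligne--Dimca vanishing theorems. The only point requiring minor care is to check that Proposition \ref{prop:rank-projective-hypersurface} applies, i.e.\ that the ambient dimension $d_n$ is at least $2$, which is automatic for $n\geq 2$.
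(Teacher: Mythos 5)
Your proof is correct and follows essentially the same route as the paper's first argument: bound the degree of $\mathrm{Det}$ by the ambient dimension $d_n$ and apply Theorems \ref{thm:projective-hypersurface-of-small-degree} and \ref{thm:projective-hypersurface-of-small-degree-rank}, together with the reduction of the pair's invariants to those of the hypersurface. (The paper also sketches an alternative proof via the linear-fibration Propositions \ref{prop:linear-fibration-genus} and \ref{prop:linear-fibration-rank}, exploiting linearity of the determinant in each diagonal entry, but your argument matches the primary one.)
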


\begin{proof}  
This follows from Theorems \ref{thm:projective-hypersurface-of-small-degree} and \ref{thm:projective-hypersurface-of-small-degree-rank} since the determinant hypersurface has degree $n\leq d_n$. It also follows from repeated application  of Propositions \ref{prop:linear-fibration-genus} and \ref{prop:linear-fibration-rank}, since the determinant of a symmetric matrix $A$ is linear in any entry  $a_{i,i}$ on the diagonal, the corresponding coefficient being the determinant of the symmetric matrix obtained from $A$ by deleting row $i$ and column $i$.
\end{proof}

\subsection{A class of hypersurfaces of genus zero and combinatorial rank \texorpdfstring{$1$}{1}}

Let $n\geq 1$. We  work in projective space $\PP^n_\CC$ with homogeneous coordinates $(x_0:\cdots :x_n)$ and consider a hypersurface
\[ Y =\{  x_0 x_1 \ldots x_n =  \psi(x_1,\ldots, x_n)\} \]
where $\psi$ is  a homogeneous polynomial of degree $n+1$ which  does not depend on $x_0$. By Propositions \ref{prop:linear-fibration-genus} and \ref{prop:linear-fibration-rank} we have that
$$g(\PP^n_\CC, Y)=g(\PP^{n-1}_\CC,V(x_1\cdots x_n))=0 \qquad \mbox{ and } \qquad \rk(\PP^n_\CC, Y)=\rk(\PP^{n-1}_\CC, V(x_1\cdots x_n))=1.$$

\begin{prop}\label{prop:canonical-form-rank-one-hypersurface}
For every class $\sigma\in \H_n(\PP^n_\CC,Y)$, the canonical form $\omegacan_\sigma$ is a complex multiple of
$$\omega = \frac{\sum_{i=0}^n (-1)^i x_i\, \dd x_0\wedge\cdots \wedge\widehat{\dd x_i}\wedge\cdots \wedge \dd x_n}{x_0x_1\cdots x_n-\psi(x_1,\ldots,x_n)}.$$
\end{prop}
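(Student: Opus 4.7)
\medskip

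\noindent\textbf{Proof plan.} The strategy is to show that the one-dimensional space $\logforms{n}(\PP^n_\CC\setminus Y)$ is generated by the explicit form $\omega$; every canonical form is then automatically a scalar multiple of $\omega$.

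First, I would observe that $Y$ is a projective hypersurface of degree $d=n+1$ (since both $x_0x_1\cdots x_n$ and $\psi(x_1,\ldots,x_n)$ are homogeneous of degree $n+1$). By Remark \ref{rem:log-forms-on-projective-space}, the space $\mathcal{S}^n(\PP^n_\CC,Y)$ of $n$-forms on $\PP^n_\CC\setminus Y$ with at most a simple pole along $Y$ has dimension $\binom{d-1}{n}=\binom{n}{n}=1$, and its elements are precisely the forms $\varphi\cdot\omega$ where $\varphi$ is a homogeneous polynomial of degree $d-n-1=0$, i.e., a constant. In other words, $\mathcal{S}^n(\PP^n_\CC,Y)=\CC\,\omega$.

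Second, I would invoke Proposition \ref{prop:log-forms-have-simple-poles}, which gives the general inclusion
\[
\logforms{n}(\PP^n_\CC\setminus Y)\subset \mathcal{S}^n(\PP^n_\CC,Y).
\]
Combined with the computation above, this yields $\dim\logforms{n}(\PP^n_\CC\setminus Y)\leq 1$.

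Third, I would use the combinatorial rank computation stated just before the proposition: $\rk(\PP^n_\CC,Y)=1$, together with the genus-zero hypothesis and \eqref{eq:cr-as-dim-of-canonical-forms}, to conclude that $\dim\logforms{n}(\PP^n_\CC\setminus Y)=1$. Therefore the inclusion of the previous step is an equality, and $\omega$ generates $\logforms{n}(\PP^n_\CC\setminus Y)$. Since by Definition \ref{defi:can-compact} every canonical form $\omegacan_\sigma$ lies in $\logforms{n}(\PP^n_\CC\setminus Y)$, it must be a complex multiple of $\omega$, completing the proof.

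There is no real obstacle here: once one recognises that the hypothesis on the shape of $Y$ forces the degree to equal $n+1$, the proposition is a direct consequence of the dimension count in Remark \ref{rem:log-forms-on-projective-space} together with the general bounds \eqref{eq:cr-as-dim-of-canonical-forms} and Proposition \ref{prop:log-forms-have-simple-poles}; no residue calculation is required.
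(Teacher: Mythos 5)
Your proposal is correct and follows essentially the same route as the paper's proof: both use Remark \ref{rem:log-forms-on-projective-space} to see that $\mathcal{S}^n(\PP^n_\CC,Y)=\CC\,\omega$ since $d=n+1$, the inclusion of Proposition \ref{prop:log-forms-have-simple-poles}, and the combinatorial rank computation (via \eqref{eq:cr-as-dim-of-canonical-forms}) to upgrade the inclusion to an equality. Your write-up merely makes explicit the use of the genus zero hypothesis in the last step, which the paper leaves implicit.
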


\begin{proof}
This follows from Proposition \ref{prop:log-forms-have-simple-poles} and the fact that $\omega$ is a generator of the  space of forms on $\PP^n_\CC\setminus Y$ with at most a simple pole along $Y$, which has dimension one, since the degree $d=n+1$ (remark \ref{rem:log-forms-on-projective-space}). Since $(\PP^n_\CC,Y)$ has combinatorial rank $1$, we have the equality $\logforms{n}(\PP^n_\CC\setminus Y)=\CC\omega$.
\end{proof}

The multiplicative constant in Proposition \ref{prop:canonical-form-rank-one-hypersurface} is actually a rational number (and even an integer if $\sigma$ lives in relative homology with integer coefficients). It may be computed as follows.
   Consider the blow-up $\pi:P\to \PP^n_\CC$ along the point $(1:0:\cdots :0)$, and identify the exceptional divisor $E$ with $\PP^{n-1}_\CC$ with homogeneous coordinates $(x_1:\cdots:x_n)$. Let $\widetilde{Y}\subset P$ denote the strict transform of $Y$, and note that $\widetilde{Y}\cap E$ is identified with $\{x_1\cdots x_n=0\}\subset \PP^{n-1}_\CC$.
We consider the linear map (taking the  boundary along $E$, combined with excision):
$$\partial_E:\H_n(\PP^n_\CC, Y)\simeq \H_n(P,\widetilde{Y}\cup E) \To \H_{n-1}(E,\widetilde{Y}\cap E) = \H_{n-1}(\PP^{n-1}_\CC, \{x_1\cdots x_n=0\}).$$
By adapting the proof of Proposition \ref{prop:linear-fibration-genus}, one sees that it induces an isomorphism on the level of $\gr_0^\W$.  Note that $\H_{n-1}(\PP^{n-1}_\CC,\{x_1\cdots x_n=0\})$ is one-dimensional, with basis given by (the relative homology class of) the real $(n-1)$-simplex $\Delta^{n-1}:=\{x_1\geq 0,\ldots, x_n\geq 0\}\subset \PP^{n-1}(\RR)$. 
We note that
$$\omegacan_{\Delta^{n-1}} = \pm \dlog(x_1/x_n)\wedge\cdots \wedge\dlog(x_{n-1}/x_n),$$
where the sign depends on a chosen orientation of $\Delta^{n-1}$.
Furthermore, with $\omega$ as in Proposition \ref{prop:canonical-form-rank-one-hypersurface}, we easily compute
$$\Res_E(\pi^*\omega) = \dlog(x_1/x_n)\wedge\cdots\wedge \dlog(x_{n-1}/x_n).$$
For $\sigma\in \H_n(\PP^n_\CC,Y)$, there exists $a\in \QQ$ such that $\partial_E(\sigma)=\pm a\Delta^{n-1}$ in $\H_{n-1}(\PP^{n-1}_\CC,\{x_1\cdots x_n=0\})$, and we therefore have
$$\omegacan_\sigma=\pm a\omega,$$
where the sign depends on the chosen orientation on $\Delta^{n-1}.$

By way of example, let us put
\begin{equation} \label{psiexample} 
\psi= \sum_{i=1}^n x_i^{n+1} \ . 
\end{equation}
The case $n=2$ is isomorphic to the nodal cubic considered in \S\ref{example: NodalCubic} via the change of coordinates $(x_0,x_1,x_2)=(8x'_0-6x'_1,x'_1+x'_2,x'_1-x'_2)$; the case $n=3$ is pictured below (Figure \ref{figureSextic}).
Any of the four skittles bounds a domain $\sigma$ such that $\partial_E(\sigma)$ equals, up to a sign, the class of the triangle $\Delta^2$ inside the exceptional divisor $E\simeq \PP^2_\CC$. This shows that the canonical form of each skittle is
$$\omegacan_\sigma=\pm \frac{\dd x\wedge \dd y\wedge \dd z}{xyz-(x^4+y^4+z^4)}.$$

\begin{figure}[h] \begin{center}
\quad {\includegraphics[width=7cm]{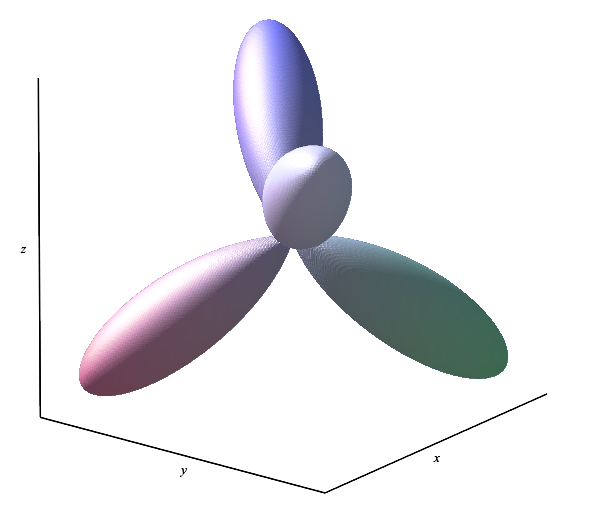}} 
{\includegraphics[width=7cm]{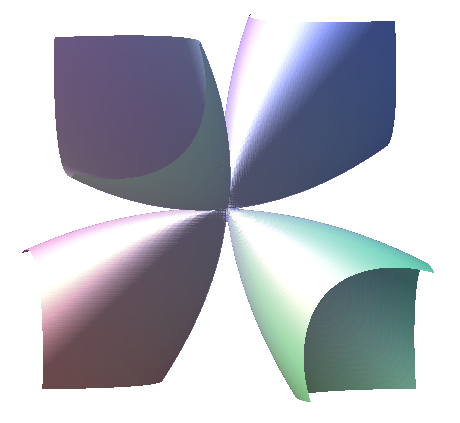}} 
\end{center}
\caption{The  surface $xyz = x^4+y^4+z^4 $ on the left (``helium''). The surface of any of the four skittles bounds a three-dimensional  domain. A zoom of the singular point at the origin (right) shows how this surface becomes tangent to the coordinate hyperplanes $xyz=0$ at the origin. 
} \label{figureSextic}
\end{figure}

\subsection{A genus zero pair of positive rank which is not recursive.}\label{sec:example-non-recursive-positive-geometry}
  We exhibit a genus zero pair $(X,Y)$ such that $X$ is a smooth compact surface of genus $0$ and $Y$ is a curve of genus $1$. This is an example for which the second inequality in Corollary \ref{coro:genus-inequalities-simple} is strict. Contrary to the easier example in \S\ref{sec:genus-zero-not-recursive} which has combinatorial rank $0$, this one has combinatorial rank $2$ and therefore two linearly independent canonical forms. One \emph{cannot} compute these canonical forms recursively using Proposition \ref{prop:recursion} because $Y$ has genus $1$, and we leave it to others to compute them.
  
  We work in the projective plane $\PP^2_\CC$ with homogeneous coordinates $(x_0:x_1:x_2)$ and the affine chart $\CC^2$ with affine coordinates $(x,y):=(x_1/x_0,x_2/x_0)$. Consider the plane elliptic curve $E\subset \PP^2_\CC$ with affine equation $y^2 = x^3 - 4x$ and  likewise $E'\subset \PP^2_\CC$ defined by the affine equation $y^2= x^3 + x$. There is an explicit isogeny of degree two between them, 
    \[ f : E' \to E   \ \mbox{ given in affine coordinates by  } \  f(x,y) = (x+ x^{-1},  y ( 1- x^{-2}) )  \ . \]
Let  $ X = \PP^1_\CC \times E$ and let $Y$ denote the image of the morphism
\[ (\pi_x, f)  : E' \To \PP^1_\CC \times E, \]
where $\pi_x:E'\rightarrow \PP^1_\CC$ is the projection onto the $x$ coordinate. We denote by 
$$\phi:E'\to Y$$
the resulting morphism of algebraic curves.

\begin{lem}\label{lem:fibers-isogeny}
For $P\in Y$, the fiber $\phi^{-1}(\{P\})$ consists of two points if $P= (1,(2,0))$ or $(-1,(-2,0))$, and one point otherwise. 
\end{lem}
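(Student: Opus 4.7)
The proof is a direct computation of fibers that is most conveniently carried out in the affine chart where $f$ is given by the explicit formula; the only mildly delicate point is that the formula breaks down at $x=0$ and $x=\infty$, so these cases must be handled separately.

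\textbf{Step 1 (reduction to the affine comparison).} Suppose two points $Q_1=(x_1,y_1)$ and $Q_2=(x_2,y_2)$ of $E'$ (in affine coordinates) satisfy $\phi(Q_1)=\phi(Q_2)$. The first component $\pi_x$ forces $x_1=x_2=:x$, and since both points lie on $E'$ we have $y_1^2=y_2^2=x^3+x$, hence $y_2=\pm y_1$. If $y_2=y_1$ the points coincide, so the only way to produce a non-trivial identification is $y_2=-y_1\neq 0$.

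\textbf{Step 2 (the $f$-equality).} Assuming $y_1\ne 0$ and $x\neq 0$, the identity $f(x,y_1)=f(x,-y_1)$ reduces, via the explicit formula $f(x,y)=(x+x^{-1},y(1-x^{-2}))$, to
\[
y_1(1-x^{-2}) = -y_1(1-x^{-2}),
\]
forcing $x^2=1$, i.e.\ $x=\pm 1$. Substituting back into $y^2=x^3+x$ gives $y_1\in\{\pm\sqrt{2}\}$ for $x=1$ and $y_1\in\{\pm i\sqrt{2}\}$ for $x=-1$. A one-line evaluation of $f$ then shows that the common images are $(1,(2,0))$ and $(-1,(-2,0))$ respectively, matching the statement.

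\textbf{Step 3 (the degenerate fibers).} It remains to check that no additional coincidence occurs over the two loci where the affine formula degenerates: $x=0$ and the point at infinity of $\PP^1_\CC$. Here the key observation, which bypasses any computation with $f$, is that $\pi_x\colon E'\to\PP^1_\CC$ has exactly one preimage over each of these values: the $2$-torsion point $(0,0)$ in the case $x=0$, and the origin $O_{E'}$ in the case $x=\infty$. A fiber of $\phi$ is contained in a fiber of $\pi_x$, so each of these two fibers is automatically a singleton, independently of the (extended) value of $f$ there.

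\textbf{Main obstacle.} Strictly speaking there is no serious obstacle; the only thing to be careful about is not to apply the formula for $f$ at $x\in\{0,\infty\}$ without justification. The singleton-fiber argument in Step 3 sidesteps this issue cleanly, and no other case analysis is required.
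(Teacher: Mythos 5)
Your proof is correct and follows essentially the same route as the paper's: reduce to two affine points with the same $x$-coordinate, use the explicit formula for $f$ to force $x^2=1$, and check the resulting images. Your Step 3 is slightly more careful than the paper (which only singles out the point at infinity), but since $\pi_x^{-1}(0)=\{(0,0)\}$ is a singleton this extra case is vacuous, and the two arguments are the same in substance.
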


\begin{proof}
The fiber of $\phi$ at the point at infinity consists of the point at infinity in $E'$, and for the other points we can work in affine coordinates. If two distinct points of $E'$ with affine coordinates $(x,y)$ and $(x',y')$ have the same image under $\phi$, then $x=x'$ and hence $y(1-x^{-2})=y'(1-x^{-2})$, which implies that $x^2=1$. The points of $E'$ with $x=1$ are $(1,\pm\sqrt{2})$ and their images under $\phi$ are $(1,(2,0))$. The points of $E'$ with $x=-1$ are $(-1,\pm\sqrt{-2})$, and their images under $\phi$ are $(-1,(-2,0))$.
\end{proof}

\begin{prop}
The pair $(X,Y)$ has genus $0$ and combinatorial rank $2$, while $Y$ has genus $1$.
\end{prop}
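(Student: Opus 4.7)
The plan is to compute $\H^2(X,Y)$ with its mixed Hodge structure and read off $g(X,Y)$ and $\rk(X,Y)$ directly from the Hodge numbers. The assertion $g(Y)=1$ would be immediate: by Lemma \ref{lem:fibers-isogeny}, the morphism $\phi\colon E'\to Y$ is surjective and generically injective, hence it is the normalization of $Y$, and Proposition \ref{prop: genusofcurveandresolution} then gives $g(Y)=g(E')=1$.

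For the cohomological input, K\"unneth would provide $\H^1(X)\simeq\H^1(E)$ (pure of weight $1$) and $\H^2(X)\simeq\QQ(-1)^2$, with generators $\mathrm{pr}_1^{*}[\PP^1_\CC]^{\vee}$ and $\mathrm{pr}_2^{*}[E]^{\vee}$. For $\H^\bullet(Y)$, I would analyze the sheaf-theoretic normalization sequence $0\to\QQ_Y\to\phi_{*}\QQ_{E'}\to\mathcal{Q}\to 0$, where Lemma \ref{lem:fibers-isogeny} implies that $\mathcal{Q}$ is the skyscraper sheaf supported at the two singular points of $Y$ with stalk $\QQ$ at each (quotient of the two-point fiber by the diagonal). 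Taking cohomology and using that $\phi$ is finite would yield the short exact sequence of mixed Hodge structures
\begin{equation*}
0\To\QQ(0)^2\To\H^1(Y)\To\H^1(E')\To 0,
\end{equation*}
together with $\H^2(Y)\simeq\QQ(-1)$; in particular $\gr_0^{\W}\H^1(Y)\simeq\QQ(0)^2$ and $\gr_1^{\W}\H^1(Y)\simeq\H^1(E')$.

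Plugging these into the long exact sequence in relative cohomology, and using that $X$ and $Y$ are both connected so that $\H^0(X)\to\H^0(Y)$ is an isomorphism and $\H^1(X,Y)=0$, the computation would reduce to the short exact sequence
\begin{equation*}
0\To\mathrm{coker}(\H^1(X)\to\H^1(Y))\To\H^2(X,Y)\To\ker(\H^2(X)\to\H^2(Y))\To 0.
\end{equation*}
I would control the two end terms by post-composing the restriction map with $\phi^{*}$, which is an isomorphism on $\H^2$ (as $\phi$ is birational between irreducible compact curves) and projects $\H^1(Y)$ onto its weight-one quotient $\H^1(E')$. The induced map on $\H^1$ is then $f^{*}\colon\H^1(E)\to\H^1(E')$, which is an isomorphism because $f$ is an isogeny of degree $2$, so that $f_{*}f^{*}=2\cdot\mathrm{id}$ is invertible over $\QQ$; hence $\H^1(X)\to\H^1(Y)$ is injective onto $\gr_1^{\W}\H^1(Y)$ with cokernel $\QQ(0)^2$. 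On $\H^2$, the two generators of $\H^2(X)$ pull back via $(\pi_x,f)^{*}$ to $\deg(\pi_x)=2$ and $\deg(f)=2$ times the fundamental class of $E'$ respectively, so $\H^2(X)\to\H^2(Y)$ is surjective with kernel $\QQ(-1)$ (the antidiagonal).

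Combining these computations would give a short exact sequence of mixed Hodge structures
\begin{equation*}
0\To\QQ(0)^2\To\H^2(X,Y)\To\QQ(-1)\To 0,
\end{equation*}
whose only non-zero Hodge numbers are $h^{0,0}=2$ and $h^{1,1}=1$; this yields $g(X,Y)=0$ and $\rk(X,Y)=2$ as claimed. The main obstacle is pinning down the weight structure of $\H^1(Y)$: one must check that the normalization sequence is correctly computing the mixed Hodge structure and that the two ``new'' weight-zero classes really do come from the identification of two pairs of points; once this is established, everything else reduces to standard degree computations for the isogeny $f$ and the double cover $\pi_x$.
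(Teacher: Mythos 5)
Your proof is correct, and it reaches the conclusion by a somewhat different route than the paper. For the genus, the key step is the same in both arguments: the composite $E'\xrightarrow{\phi} Y\xrightarrow{i} X\xrightarrow{\pi_2} E$ is the isogeny $f$, whose pullback on $\H^1$ is an isomorphism, and this forces $i^*\colon\H^1(X)\to\H^1(Y)$ to hit all of the weight-one part of $\H^1(Y)$. The difference is in the bookkeeping: the paper only tracks the $(1,0)$ components through the long exact sequence (which suffices for the genus) and obtains the combinatorial rank separately, via the branching-number formula of Proposition \ref{prop:rank-curve} applied to $Y$ together with Corollary \ref{coro:rank-inequalities-simple}; you instead compute the full mixed Hodge structure on $\H^2(X,Y)$ as an extension $0\to\QQ(0)^2\to\H^2(X,Y)\to\QQ(-1)\to 0$ and read off both invariants at once. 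Your approach buys more information (the complete weight decomposition, and in particular a direct identification of the two weight-zero classes responsible for the two canonical forms), at the cost of needing the full weight structure of $\H^1(Y)$. The point you flag as the ``main obstacle''---that the normalization sequence computes mixed Hodge structures---is indeed the one thing to justify, but it can be deduced entirely from the paper's own tools: excision \eqref{eq:iso-relative-cohomology-excision} for the modification $(E',\phi^{-1}(Z))\to(Y,Z)$, with $Z$ the two singular points, combined with the long exact sequences of the pairs $(Y,Z)$ and $(E',\phi^{-1}(Z))$, yields exactly your sequence $0\to\QQ(0)^2\to\H^1(Y)\to\H^1(E')\to 0$. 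Two very minor remarks: your parenthetical claim that $\H^1(X,Y)=0$ does not follow from connectedness alone (it also uses the injectivity of $\H^1(X)\to\H^1(Y)$, which you establish later), but it is not needed for the argument; and for the surjectivity of $\H^2(X)\to\H^2(Y)$ you only need the map to be non-zero, so the precise degrees $\deg(\pi_x)=\deg(f)=2$ are not essential.
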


\begin{proof}
Lemma \ref{lem:fibers-isogeny} implies that $\phi:E'\to Y$ is the resolution of singularities of $Y$, and hence $g(Y)=1$. Furthermore, Proposition \ref{prop:rank-curve} yields $\rk(Y)=2$ because all the points of $Y$ have branching number $1$, except for $(1,(2,0))$ and $(-1,(-2,0))$ which have branching number $2$. Since $X$ is a smooth surface, Corollary \ref{coro:rank-inequalities-simple} implies that $\rk(X,Y)=2$. To prove that $(X,Y)$ has genus zero, consider the long exact sequence in relative cohomology \eqref{eq:long-exact-sequence-relative-cohomology}:
\[  \cdots \To \H^1(X) \stackrel{i^*}{\To} \H^1(Y) \To \H^2(X,Y) \To \H^2(X) \To \cdots,  \]
where   $i: Y \hookrightarrow X$ is the inclusion. 
Since $\H^2(X)\simeq \QQ(-1)^2$ by the K\"unneth formula, it suffices to show that 
  $i^*: \H^1(X) \rightarrow \H^1(Y)$ induces an isomorphism on  $(1,0)$ components.  To see this, note that the composition
\[  E' \overset{\phi}{\To} Y  \overset{i}{\hookrightarrow} X  \overset{\pi_2}{\To} E  \]
is the isogeny $f$,  where $\pi_2: X = \PP^1_\CC \times E \rightarrow E$ is the projection onto the second component. The maps $f^*$ and $\pi_2^*$ are isomorphisms on $\H^1$; since $\phi$ is the resolution of singularities of $Y$, the same argument as in the proof of Proposition \ref{prop: genusofcurveandresolution} shows that $\phi^*$ is an isomorphism on the $(1,0)$ component of $\H^1$. The same therefore holds  for $i^*$, and hence  $(X,Y)$ has genus zero.
\end{proof}

\section{The case of arrangements of hyperplanes, and convex polytopes}\label{sec:hyperplane-arrangements}

Arrangements of hyperplanes in projective space provide a simple class of genus zero pairs of algebraic varieties. In this setting, we derive general formulas for canonical forms.

\subsection{Genus and combinatorial rank}

We refer the reader to \cite{dimcaarrangements} for basic facts about arrangements of hyperplanes. Let $n\geq 1$ and let $\mathcal{A}\subset \PP^n_\CC$ be a non-empty finite union of projective hyperplanes. By a theorem due to Brieskorn \cite{brieskorn}, $\H^k(\PP^n_\CC\setminus\mathcal{A})$ is pure of weight $2k$ for all $k$. By Remark \ref{rem:2-pure-canonical-forms}, the pair $(\PP^n_\CC,\mathcal{A})$ has genus zero and we have an isomorphism
$$\can_\CC:\H_n(\PP^n_\CC,\mathcal{A})_\CC \stackrel{\sim}{\to} \logforms{n}(\PP^n_\CC\setminus \mathcal{A}).$$

The combinatorial rank of $(\PP^n_\CC,\mathcal{A})$, which is the dimension of $\H_n(\PP^n_\CC,\mathcal{A})$, is determined as follows. A \emph{flat} of $\mathcal{A}$ is a projective subspace of $\PP^n_\CC$ which is the intersection of some of the hyperplanes in $\mathcal{A}$, and we view the set of flats as a finite poset ordered by reverse inclusion. It has a least element $\hat{0}$, the ambient projective space $\PP^n_\CC$, and if $\mathcal{A}$ is \emph{essential} (meaning that the intersection of all the hyperplanes in $\mathcal{A}$ is empty) it also has a greatest element $\hat{1}$, the empty set. We let $\mu_{\mathcal{A}}$ denote the M\"{o}bius function of the poset of flats.

\begin{prop}\label{prop:rank-hyperplane-arrangement-moebius}
We have
$$\rk(\PP^n_\CC,\mathcal{A})= \begin{cases} 
(-1)^{n-1}\mu_{\mathcal{A}}(\hat{0},\hat{1}) & \mbox{ if }\mathcal{A} \mbox{ is essential;}\\
0 & \mbox{ otherwise.}
\end{cases}$$
\end{prop}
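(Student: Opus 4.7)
The strategy is to use Brieskorn's purity to reduce the mixed Hodge theoretic question to a topological count of the top Betti number of the complement, and then to invoke the classical Orlik--Solomon formula for that Betti number.

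\emph{Step 1: Reduction to a Betti number.} Since $\H^k(\PP^n_\CC\setminus\mathcal{A})$ is pure of weight $2k$ by Brieskorn (invoked at the start of the section), Poincar\'e duality \eqref{eq:poincare-duality} implies that $\H_n(\PP^n_\CC, \mathcal{A})$ is pure of type $(0,0)$. In particular $\gr_0^\W\H_n(\PP^n_\CC,\mathcal{A}) = \H_n(\PP^n_\CC,\mathcal{A})$, so
\begin{equation*}
\rk(\PP^n_\CC,\mathcal{A}) \;=\; \dim\H_n(\PP^n_\CC,\mathcal{A}) \;=\; \dim\H^n(\PP^n_\CC\setminus\mathcal{A}),
\end{equation*}
reducing the problem to computing the top Betti number $b_n(\PP^n_\CC\setminus\mathcal{A})$.

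\emph{Step 2: Passage to the cone arrangement.} Let $\widetilde{\mathcal{A}}\subset \CC^{n+1}$ denote the affine cone over $\mathcal{A}$, a central hyperplane arrangement. Choosing any $H_0\in\mathcal{A}$ with defining linear form $\ell_0$, the assignment $[x]\mapsto x/\ell_0(x)$ is a section of the natural projection $\CC^{n+1}\setminus\widetilde{\mathcal{A}} \to \PP^n_\CC\setminus\mathcal{A}$, trivializing this $\CC^*$-bundle. By K\"unneth,
\begin{equation*}
b_{n+1}\bigl(\CC^{n+1}\setminus\widetilde{\mathcal{A}}\bigr) \;=\; b_n\bigl(\PP^n_\CC\setminus\mathcal{A}\bigr) + b_{n+1}\bigl(\PP^n_\CC\setminus\mathcal{A}\bigr) \;=\; b_n\bigl(\PP^n_\CC\setminus\mathcal{A}\bigr),
\end{equation*}
since $\PP^n_\CC\setminus\mathcal{A}$ has complex dimension $n$.

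\emph{Step 3: Orlik--Solomon formula.} The Poincar\'e polynomial of the complement of the affine arrangement $\widetilde{\mathcal{A}}$ is given by
\begin{equation*}
\pi\bigl(\CC^{n+1}\setminus\widetilde{\mathcal{A}},t\bigr) \;=\; \sum_{X\in L(\widetilde{\mathcal{A}})} \mu(\hat 0,X)(-t)^{\mathrm{codim}(X)}.
\end{equation*}
The coefficient of $t^{n+1}$ is nonzero exactly when $\widetilde{\mathcal{A}}$ admits a flat of codimension $n+1$, namely the origin, which occurs if and only if $\mathcal{A}$ is essential. In the essential case, the poset $L(\widetilde{\mathcal{A}})$ is isomorphic to the poset of flats of $\mathcal{A}$ via $\widetilde X\mapsto \PP(\widetilde X)$, sending $\{0\}\leftrightarrow \varnothing$ and preserving $\mu$, so
\begin{equation*}
b_n\bigl(\PP^n_\CC\setminus\mathcal{A}\bigr) \;=\; (-1)^{n+1}\,\mu_{\mathcal{A}}(\hat 0,\hat 1) \;=\; (-1)^{n-1}\,\mu_{\mathcal{A}}(\hat 0,\hat 1).
\end{equation*}
Otherwise the top coefficient vanishes and $\rk(\PP^n_\CC,\mathcal{A})=0$.

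\emph{Main obstacle.} Once Brieskorn's purity theorem has stripped away the mixed Hodge theoretic content, the proof is an application of standard combinatorics of hyperplane arrangements; the only mildly delicate point is sign bookkeeping, that is, verifying the identity $(-1)^{n+1}=(-1)^{n-1}$ in the passage from the rank-$(n+1)$ cone lattice to the projective lattice.
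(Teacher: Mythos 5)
Your proof is correct and follows essentially the same route as the paper: Brieskorn purity plus Poincar\'e duality reduce the combinatorial rank to the top Betti number of $\PP^n_\CC\setminus\mathcal{A}$, which the paper obtains by citing Dimca's formula for projective arrangement complements, while you rederive that formula from the central Orlik--Solomon Poincar\'e polynomial via the cone. The only point worth tightening is that the vanishing of $b_{n+1}(\PP^n_\CC\setminus\mathcal{A})$ requires more than complex dimension $n$ alone; justify it by affineness of the complement (Andreotti--Frankel) or by the fact that Orlik--Solomon cohomology vanishes above the rank of the arrangement.
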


\begin{proof}
By Poincar\'{e} duality \eqref{eq:poincare-duality}, the combinatorial rank of $(\PP^n_\CC,\mathcal{A})$ equals the dimension of $\H^n(\PP^n_\CC\setminus\mathcal{A})$, which is given by \cite[Corollary 3.6]{dimcaarrangements}.
\end{proof}

\subsection{Bounded regions}

Assume that $\mathcal{A}$ is the complexification of a real arrangement of hyperplanes $\mathcal{A}_\RR$ in $\PP^n_\RR$. The topological closure of a connected component of $\PP^n_\RR\setminus \mathcal{A}_\RR$ is a projective polytope, and is called a \emph{region} of $\mathcal{A}_\RR$. Let $H_\infty$ be a hyperplane in $\PP^n_\RR$ which is generic with respect to $\mathcal{A}_\RR$ in the sense that the intersection of $H_\infty$ with any flat of $\mathcal{A}_\RR$ of codimension $r$ has codimension $r+1$. We identify $\PP^n_\RR\setminus H_\infty$ with $\RR^n$. A region of $\mathcal{A}_\RR$ which does not intersect $H_\infty$ is an affine polytope, and is called a \emph{bounded region} of $\mathcal{A}_\RR\cap \RR^n$. We fix an orientation of $\RR^n$ and give each bounded region the induced orientation.

The following result is probably well-known, but we could not find it in the literature.

\begin{prop}\label{prop:bounded-regions}
A basis of $\,\H_n(\PP^n_\CC,\mathcal{A})$ is given by the (classes of the) bounded regions of $\mathcal{A}_\RR\cap \RR^n$.
\end{prop}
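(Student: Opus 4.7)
The plan is to proceed by induction on the number $N = |\mathcal{A}|$ of hyperplanes. The base case $N = 1$ is immediate: there are no bounded regions, and $\H_n(\PP^n_\CC, \{H\}) \cong \H_n^{\mathrm{lf}}(\CC^n) = 0$ for $n \geq 1$. For the inductive step, I would pick a hyperplane $H \in \mathcal{A}$, set $\mathcal{A}' = \mathcal{A} \setminus \{H\}$, and extract from the long exact sequence of the triple $(\PP^n_\CC, \mathcal{A}, \mathcal{A}')$ combined with the excision isomorphism $\H_*(\mathcal{A}, \mathcal{A}') \cong \H_*(H, H \cap \mathcal{A}')$ the short exact sequence of mixed Hodge structures
$$0 \To \H_n(\PP^n_\CC, \mathcal{A}') \To \H_n(\PP^n_\CC, \mathcal{A}) \stackrel{\partial_H}{\To} \H_{n-1}(H, H \cap \mathcal{A}') \To 0.$$
Exactness on the left follows from Brieskorn's purity theorem applied to $H \cong \PP^{n-1}_\CC$: the term $\H_n(H, H \cap \mathcal{A}') \cong \H^{n-2}(H \setminus (H \cap \mathcal{A}'))(n-1)$ is pure of weight $-2$, whereas $\H_n(\PP^n_\CC, \mathcal{A}')$ is pure of weight $0$, so the connecting morphism vanishes. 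Exactness on the right follows from $\H_{n-1}(\PP^n_\CC, \mathcal{A}') \cong \H^{n+1}(\PP^n_\CC \setminus \mathcal{A}')(n) = 0$, since the complement of a nonempty hypersurface in $\PP^n_\CC$ is affine of dimension $n$.

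I would then apply the inductive hypothesis to $(\PP^n_\CC, \mathcal{A}')$ with the same $H_\infty$ and to $(H, H \cap \mathcal{A}')$ with the slice $H \cap H_\infty$ as its generic hyperplane at infinity (each genericity condition being preserved); this equips the outer terms of the short exact sequence with bases of bounded regions. The geometric content I would then invoke is the following correspondence: every bounded region $\sigma$ of $\mathcal{A}_\RR \cap \RR^n$ either (a) avoids $H$, in which case it coincides with a bounded region of $\mathcal{A}'_\RR$ not cut by $H$, or (b) has a unique facet on $H$ which is a bounded region of $(H \cap \mathcal{A}')_\RR$ in $H \cap \RR^n$. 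Conversely, each bounded region $\tau$ of $\mathcal{A}'_\RR$ cut by $H$ decomposes as a chain sum $\tau = \tau_+ + \tau_-$ of two bounded regions of $\mathcal{A}_\RR$ on the two sides of $H$ sharing the common facet $\tau \cap H$, and each bounded region of $(H \cap \mathcal{A}')_\RR$ appears as a facet of at least one bounded region of $\mathcal{A}_\RR$. Consequently $\iota_*$ carries the inductive basis of $\H_n(\PP^n_\CC, \mathcal{A}')$ into the $\QQ$-span of the bounded regions of $\mathcal{A}$, and $\partial_H$ sends the bounded regions of $\mathcal{A}$ touching $H$ (with signs $\pm 1$) onto the inductive basis of $\H_{n-1}(H, H \cap \mathcal{A}')$; together these show that the bounded regions of $\mathcal{A}$ span $\H_n(\PP^n_\CC, \mathcal{A})$.

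To conclude linear independence I would invoke the deletion-restriction identity for the number of bounded regions, $b(\mathcal{A}_\RR) = b(\mathcal{A}'_\RR) + b((H \cap \mathcal{A}')_\RR)$, which follows from Zaslavsky's formula $b(\mathcal{A}) = |\chi_{\mathcal{A}}(1)|$ and the standard recursion $\chi_{\mathcal{A}} = \chi_{\mathcal{A}'} - \chi_{\mathcal{A}'|_H}$ for the characteristic polynomial; combined with the short exact sequence, this forces $b(\mathcal{A}_\RR) = \dim \H_n(\PP^n_\CC, \mathcal{A})$, so the spanning set of bounded regions has exactly the right cardinality and must be a basis. The hardest step to execute is the polytopal lemma that each bounded region of $(H \cap \mathcal{A}')_\RR$ in $H \cap \RR^n$ lifts to a facet of some bounded region of $\mathcal{A}_\RR$: this is what guarantees the surjectivity of $\partial_H$ on the $\QQ$-span of bounded regions, and requires a local argument perpendicular to $H$ at an interior point of the bounded region (tracking the closest enclosing hyperplane of $\mathcal{A}$) together with a careful consistency check against the combinatorial count above.
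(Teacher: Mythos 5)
Your overall architecture (induction on $N$, the deletion--restriction short exact sequence justified by Brieskorn purity and Artin vanishing, and a Zaslavsky count to convert a one-sided estimate into a basis statement) matches the paper's. The divergence is in which half of the argument you make geometric. The paper proves \emph{injectivity} of $\QQ\mathcal{B}\to \H_n(\PP^n_\CC,\mathcal{A})$ by a diagram chase that only needs the inclusion $\ker(g)\subseteq \mathrm{Im}(f)$ for the combinatorially defined maps $f:\QQ\mathcal{B}'\to\QQ\mathcal{B}$ and $g:\QQ\mathcal{B}\to\QQ\mathcal{B}''$, and then gets surjectivity for free from the dimension count. You instead prove \emph{spanning} through the short exact sequence, which forces you to show that $\partial_H$ restricted to the span of bounded regions surjects onto $\H_{n-1}(H,H\cap\mathcal{A}')$, i.e.\ that \emph{every} bounded region of the restricted arrangement is a facet of some bounded region of $\mathcal{A}_\RR\cap\RR^n$. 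That is exactly the ``polytopal lemma'' you flag as the hardest step, and it is a genuine gap: you do not prove it, and the local argument you sketch (pushing off $H$ perpendicularly at an interior point of $\rho$ and tracking nearby hyperplanes) cannot work, because the statement is false without the genericity hypothesis on $H_\infty$ and is therefore not a local fact. Concretely, take $H=\{z=0\}$ in $\RR^3$ and $\mathcal{A}'$ the three vertical planes $\{x=0\}$, $\{y=0\}$, $\{x+y=1\}$: the restriction to $H$ has a bounded triangle, but both chambers of $\mathcal{A}$ adjacent to it are unbounded vertical prisms. (This configuration violates genericity, since the three projective planes meet in a point of $H_\infty$; but a local push-off at the barycentre of the triangle sees nothing of that.)

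The lemma is in fact true under the genericity assumption, but proving it requires a global argument at infinity, along the following lines: if both chambers $Q_{\pm}$ adjacent to a bounded region $\rho$ of the restriction are unbounded, then since $\mathrm{rec}(\overline{Q_{\pm}})=K\cap\{\pm z\geq 0\}$ for the common cone $K=\bigcap_{W\in\mathcal{A}'}\vec{W}^{\,\epsilon_W}$ and $K\cap \vec{H}=\mathrm{rec}(\overline{\rho})=\{0\}$, convexity forces $K$ to contain a full line $\RR v$ with $v\notin\vec{H}$, hence $v\in\bigcap_{W\in\mathcal{A}'}\vec{W}$; genericity of $H_\infty$ then forces the flat $\bigcap_{W\in\mathcal{A}'}\overline{W}$ to have an affine part of positive dimension transverse to $H$, making the restricted arrangement central and contradicting the existence of $\rho$. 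If you supply this (or an equivalent) argument, your proof closes; alternatively, you can reorganize as the paper does, replacing ``$g$ is surjective onto $\QQ\mathcal{B}''$'' by the much easier ``$\ker(g)\subseteq\mathrm{Im}(f)$'' (which only requires that a bounded region of $\mathcal{A}$ with a facet on $H$ whose partner across $H$ is also bounded glues to a bounded region of $\mathcal{A}'$), and let Zaslavsky's count do the rest.
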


Using Proposition \ref{prop:rank-hyperplane-arrangement-moebius}, one sees that this is consistent with (and in fact implies) Zaslavsky's formula \cite[Theorem C]{zaslavsky} for the number of bounded regions of a real arrangement of affine hyperplanes;  the genericity assumption on $H_\infty$ ensuring that the poset of flats of the affine arrangement $\mathcal{A}_\RR\cap \RR^n$ coincides with that of $\mathcal{A}$. Note that our proof uses Zaslavsky's result.

\begin{proof}
By induction on the number of hyperplanes in $\mathcal{A}$. The case of one hyperplane is clear (there is no bounded region). For the induction step, let $H$ be one of the hyperplanes in $\mathcal{A}$ and consider the ``deleted'' arrangement of hyperplanes $\mathcal{A}'$ defined as the union of the other hyperplanes. We will also work with the ``restricted'' arrangement of hyperplanes $\mathcal{A}'\cap H$ on $H\simeq \PP^{n-1}_\CC$. Let us fix an orientation of the real hyperplane $H_\RR\cap \RR^n$. 

Figure \ref{fig:bounded-regions} shows an example of an arrangement of real affine hyperplanes $\mathcal{A}_\RR\cap \RR^2$ with $5$ bounded regions. Note that the deleted arrangement $\mathcal{A}'_\RR\cap \RR^2$ has $3$ bounded regions and that the restricted arrangement $\mathcal{A}'\cap H_\RR\cap \RR^2$ has $2$ bounded regions.

\begin{figure}[h]
\begin{center}

\begin{tikzpicture}[scale=.7]
    \begin{axis}[
    width=12cm, 
    domain=-2:2.5, 
    xmin=-.7, xmax=2.3, 
    ymin=-.15, ymax=1.2,   
    axis x line = none, 
    axis y line = none,
    tick style={draw=none}, 
    ymajorticks=false, 
    xmajorticks=false,
    x post scale=1.8]
    \addplot[samples=200, name path = h] {0};
    \addplot[samples=200, dashed, name path = f] {1-x};
    \addplot[samples=200] {1+1.7*x};
    \addplot[samples=200, domain=-.5:2.3, name path = g] {.5-.25*x};
    \addplot[samples=200] coordinates{(0,-.5) (0,1.2)};
    \addplot[MyGreen, opacity=0.1] fill between[of=f and g, soft clip={domain=0:2/3}];
    \addplot[MyBlue, opacity=0.2] fill between[of=g and h, soft clip={domain=0:2/3}];
    \addplot[MyBlue, opacity=0.2] fill between[of=f and h, soft clip={domain=2/3:1}];
    \addplot[MyBlue, opacity=0.15] fill between[of=f and g, soft clip={domain=2/3:1}];
    \addplot[MyBlue, opacity=0.15] fill between[of=g and h, soft clip={domain=1:2}];
    \addplot[MyBlue, thick] coordinates { (1,0) (2/3,1/3) };
    \addplot[MyGreen, thick] coordinates { (0,1) (2/3,1/3) };
    \node at (axis cs:-.25,1.1) {$H$};
    \node[color=MyBlue] at (axis cs:.4,.2) {$P_+$};
    \node[color=MyBlue] at (axis cs:1.15,.1) {$P_-$};
    \node[color=MyGreen] at (axis cs:.17,.62) {$Q$};
    \end{axis}
\end{tikzpicture}

\end{center}
\caption{An arrangement of real affine hyperplanes $\mathcal{A}_\RR\cap \RR^2$, with a chosen hyperplane $H_\RR\cap \RR^2$ (dashed). In blue: a bounded region $P$ of $\mathcal{A}'_\RR\cap \RR^2$ which is cut into two bounded regions $P_+$ and $P_-$ of $\mathcal{A}_\RR\cap \RR^2$ by $H_\RR\cap \RR^2$. In green: a bounded region $Q$ of $\mathcal{A}_\RR\cap \RR^2$ with a facet lying on $H_\RR\cap \RR^2$.} \label{fig:bounded-regions}
\end{figure}
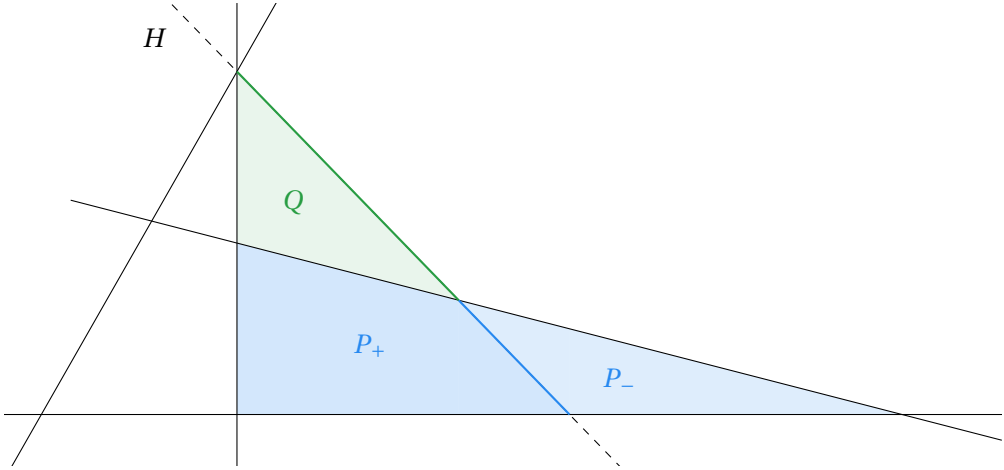

Let $\mathcal{B}$ (resp. $\mathcal{B}'$, $\mathcal{B}''$) denote the set of bounded regions of $\mathcal{A}_\RR\cap \RR^n$ (resp. of $\mathcal{A}'_\RR\cap \RR^n$, of $\mathcal{A}'_\RR\cap H_\RR\cap \RR^n$). We consider the following (yet to be defined) diagram.
\begin{equation}\label{eq:commutative-diagram-bounded-regions}
\begin{gathered}
\diagram{
0\ar[r] & \H_n(\PP^n_\CC,\mathcal{A}') \ar[r]^-{i} & \H_n(\PP^n_\CC,\mathcal{A})\ar[r]^-{\partial_H}&  \H_{n-1}(H,\mathcal{A}'\cap H)\ar[r] & 0 \\
0 \ar[r] & \QQ\mathcal{B}' \ar[u]^-{\sim}\ar[r]^{f} & \QQ\mathcal{B} \ar[u]\ar[r]^-{g} & \QQ\mathcal{B}''\ar[r]\ar[u]_-{\sim} & 0
}
\end{gathered}
\end{equation}
The first row is the ``deletion-restriction'' short exact sequence, which is a part of the long exact sequence in relative homology \eqref{eq:long-exact-sequence-relative-homology}, the map $\partial_H$ being the partial boundary map \eqref{eq:partial-boundary-map-defi}. (It splits into short exact sequences as shown in the diagram for weight reasons, because of the fact that $\H^k(\PP^n_\CC\setminus \mathcal{A})$ is pure of weight $2k$ for all $k$, and Poincar\'{e} duality \eqref{eq:poincare-duality}.) Each vertical arrow sends a bounded region to its relative homology class. By the induction hypothesis, the leftmost and rightmost vertical arrows are isomorphisms, and if we manage to define $f$ and $g$ such that the diagram commutes and such that its second row is a short exact sequence, then the middle vertical arrow is an isomorphism and we have completed the induction step. \vspace{2pt}

To define $f$, let $P$ be a bounded region of $\mathcal{A}'_\RR\cap \RR^n$.  There are two cases:\vspace{-1.5pt}
\begin{enumerate}[1)]
\item If the interior of $P$ does not meet $H_\RR$, we set $f(P)=P$;
\item If $H_\RR\cap \RR^n$ cuts $P$ into two bounded regions $P_+$ and $P_-$ of $\mathcal{A}_\RR\cap \RR^n$, and we set $f(P)=P_++P_-$ (see Figure \ref{fig:bounded-regions}).
\end{enumerate}
\vspace{-2pt}
The first square of \eqref{eq:commutative-diagram-bounded-regions} then commutes.\vspace{2pt}

To define $g$, let $Q$ be a bounded region of $\mathcal{A}_\RR\cap \RR^n$. There are two cases:\vspace{-1.5pt}
\begin{enumerate}[1)]
\item If $Q$ does not have a facet which is contained  in $H_\RR$, we set $g(Q)=0$.
\item If $Q$ has a facet contained in $H_\RR$, we set $g(Q)=\pm Q\cap H_\RR$ where the sign depends on the choices of orientations on $\RR^n$ and $H_\RR\cap \RR^n$ (see Figure \ref{fig:bounded-regions}).
\end{enumerate}
\vspace{-2pt}
With the correct signs, the second square of \eqref{eq:commutative-diagram-bounded-regions} commutes.
\vspace{2pt}

One easily sees that $\mathrm{Im}(f)=\mathrm{Ker}(g)$, and this implies that the map $\QQ\mathcal{B}\to \H_n(\PP^n_\CC,\mathcal{A})$ is injective. By Proposition \ref{prop:rank-hyperplane-arrangement-moebius} and \cite[Theorem C]{zaslavsky}, the dimension of $\H_n(\PP^n_\CC,\mathcal{A})$ equals the cardinality of $\mathcal{B}$ (by the genericity assumption on $H_\infty$, the poset of flats of the affine arrangement $\mathcal{A}_\RR\cap \RR^n$ coincides with that of $\mathcal{A}$). A dimension argument gives the claim.
\end{proof}

\begin{rem}
Proposition \ref{prop:bounded-regions} does not hold in general without the assumption that the hyperplane at infinity $H_\infty$ is generic with respect to $\mathcal{A}_\RR$. For instance, let $\mathcal{A}_\RR$ consist of three distinct lines $L_1,L_2,L_3$ in $\mathbb{P}^2_\RR$. Then $\H_2(\PP^2_\CC,\mathcal{A})$ has dimension $1$ and a basis consisting of one of the four triangles bounded by $L_1, L_2, L_3$ in $\PP^2_\RR$. However, if $H_\infty$ is a fourth line that passes through $L_1\cap L_2$, then $\mathcal{A}_\RR\cap \RR^2$ consists of three distinct lines, two of which are parallel; it does not have any bounded region.
\end{rem}

\subsection{The Orlik--Solomon relations, and the nbc basis}

We now turn to the classical description of the algebra of logarithmic forms on the complement of an arrangement of hyperplanes. It will be convenient to make the following choices.
\begin{enumerate}[--]
\item We choose a hyperplane $H_0\in\mathcal{A}$ and treat it as the hyperplane at infinity, identifying $\PP^n_\CC\setminus H_0$ with $\CC^n$. (Alternatively, one can think of this hyperplane as being \emph{added} to an arrangement for the sole purpose of giving a clean description of logarithmic forms, see Remark \ref{rem:hyperplane-at-infinity-irrelevant} below.)
\item We choose a linear order $H_1,\ldots,H_N$ of the other hyperplanes in $\mathcal{A}$.
\end{enumerate}
Writing $H_i=\{f_i=0\}$ for $i=0,\ldots,N$, the algebra $\logforms{\bullet}(\PP^n_\CC\setminus \mathcal{A}) = \logforms{\bullet}(\CC^n\setminus (H_1\cup\cdots \cup H_N))$ is generated by the $1$-forms
\begin{equation*}
\omega_i:=\dlog(f_i/f_0),
\end{equation*}
and the algebraic relations that they satisfy are consequences of two types of relations, called \emph{Orlik--Solomon relations} \cite{orliksolomon}:
\begin{enumerate}[a)]
\item For indices $1\leq i_1<\cdots <i_r\leq N$ such that the hyperplanes $H_{i_1},\ldots, H_{i_r}$ do not intersect in $\CC^n$,
$$\omega_{i_1}\wedge\cdots \wedge \omega_{i_r}=0.$$
\item For indices $1\leq i_1<\cdots <i_r\leq N$ such that the hyperplanes $H_{i_1},\ldots, H_{i_r}$ do intersect in $\CC^n$ and are linearly dependent,
$$\sum_{j=1}^r (-1)^j \omega_{i_1}\wedge \cdots \wedge \widehat{\omega_{i_j}}\wedge\cdots \wedge \omega_{i_r}=0,$$
where the notation means that $\omega_{i_j}$ is omitted from the wedge product.
\end{enumerate}

We now introduce a standard basis, called \emph{nbc basis}, of the algebra of logarithmic forms.

\begin{defi}
A \emph{circuit} of $\mathcal{A}$ is a set $\{i_1<\cdots <i_k\}\subset\{1,\ldots,N\}$ such that the hyperplanes $H_{i_1},\ldots, H_{i_k}$ intersect in $\CC^n$ and are linearly dependent, and which is minimal for this property. The corresponding \emph{broken circuit} is the set $\{i_2<\cdots <i_k\}$ obtained by removing the first element of the circuit. A \emph{non-broken-circuit} set   of $\mathcal{A}$ (or \emph{nbc} for short) is a subset of $\{1,\ldots,N\}$ that does not contain a broken circuit.
\end{defi}

\begin{prop}
The $\omega_{i_1}\wedge\cdots \wedge\omega_{i_k}$, for $\{i_1<\cdots<i_k\}$ an nbc set, form a basis of $\,\logforms{k}(\PP^n_\CC\setminus\mathcal{A})$.
\end{prop}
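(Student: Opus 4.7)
The plan is to reduce to the classical Orlik--Solomon theorem, using the fact (already invoked above) that $\H^k(\PP^n_\CC\setminus\mathcal{A})$ is pure of weight $2k$ by Brieskorn's theorem. By Proposition \ref{prop:2-pure}, the class map $\logforms{k}(\PP^n_\CC\setminus\mathcal{A})\to \H^k(\PP^n_\CC\setminus\mathcal{A})_\CC$ is an isomorphism, so it suffices to show that the cohomology classes of the $\omega_{i_1}\wedge\cdots\wedge\omega_{i_k}$ indexed by nbc sets form a basis of the right-hand side.

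For spanning, I would argue directly using the two Orlik--Solomon relations (a) and (b). Given a wedge product $\omega_{i_1}\wedge\cdots\wedge\omega_{i_k}$ with $i_1<\cdots<i_k$ whose index set contains a broken circuit $\{j_2<\cdots<j_r\}$ arising from a circuit $\{j_1<j_2<\cdots<j_r\}$, relation (b) expresses $\omega_{j_2}\wedge\cdots\wedge\omega_{j_r}$ as a $\CC$-linear combination of products each of which, when substituted back, produces an index set that is strictly smaller in reverse lexicographic order (since $j_1$ replaces $j_2$). Relation (a) handles products whose hyperplanes have empty intersection. Iterating, any product reduces to a $\CC$-linear combination of nbc products, so the latter span.

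For linear independence and the matching dimension count, I would proceed by induction on $|\mathcal{A}|$ via deletion-restriction. With $H_N$ the last hyperplane, set $\mathcal{A}'=\mathcal{A}\setminus\{H_N\}$, and let $\mathcal{A}''$ denote the arrangement induced by $\mathcal{A}'$ on $H_N\simeq\PP^{n-1}_\CC$, with inherited ordering. The Gysin/residue long exact sequence, combined with the purity of weight $2k$ on all three terms, degenerates into a short exact sequence
\begin{equation*}
0\to \logforms{k}(\PP^n_\CC\setminus\mathcal{A}')\To \logforms{k}(\PP^n_\CC\setminus\mathcal{A}) \stackrel{\Res_{H_N}}{\To} \logforms{k-1}(H_N\setminus\mathcal{A}'')\to 0.
\end{equation*}
Size-$k$ nbc sets of $\mathcal{A}$ partition into those not containing $N$ (which are exactly the size-$k$ nbc sets of $\mathcal{A}'$) and those containing $N$, whose complements $\setminus\{N\}$ correspond bijectively to the size-$(k-1)$ nbc sets of $\mathcal{A}''$. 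By induction, the two outer terms of the sequence have the appropriate nbc bases. Since $\Res_{H_N}(\omega_{i_1}\wedge\cdots\wedge\omega_{i_{k-1}}\wedge\omega_N)$ equals the corresponding nbc product of $\mathcal{A}''$ on $H_N$, the nbc products of $\mathcal{A}$ together span a subspace of $\logforms{k}(\PP^n_\CC\setminus\mathcal{A})$ whose image and whose intersection with the left term exhaust the outer terms, forcing them to be a basis.

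The main obstacle will be the combinatorial matching: proving that broken circuits of $\mathcal{A}$ containing $N$ correspond exactly to broken circuits of the induced arrangement $\mathcal{A}''$ on $H_N$, and checking the compatibility of $\Res_{H_N}$ with the nbc indexing. This is purely combinatorial but is the crux that makes the inductive short exact sequence split along the prescribed basis.
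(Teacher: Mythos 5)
Your argument is essentially correct, but it takes a very different route from the paper: the paper's entire proof is the citation ``This is [Orlik--Terao, Theorem 3.55]'', whereas you are reconstructing the standard proof of that theorem (which in Orlik--Terao is itself carried out by deletion--restriction), transported to $\logforms{k}(\PP^n_\CC\setminus\mathcal{A})$ via the purity isomorphism of Proposition \ref{prop:2-pure}. Two points deserve care. First, your spanning step uses more than purity: the reduction via the Orlik--Solomon relations only shows that the monomials $\omega_{i_1}\wedge\cdots\wedge\omega_{i_k}$ indexed by nbc sets span the subspace generated by \emph{all} such monomials, and you must separately know that these monomials exhaust $\logforms{k}(\PP^n_\CC\setminus\mathcal{A})$ (equivalently, that the classes of the $\omega_i$ generate $\H^\bullet$ of the complement as an algebra). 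This is Brieskorn's generation theorem, not a formal consequence of weight purity, and should be invoked explicitly; the paper does assert it at the start of the section. Second, the ``combinatorial matching'' you defer --- that size-$k$ nbc sets of $\mathcal{A}$ containing $N$ correspond bijectively, compatibly with $\Res_{H_N}$, to size-$(k-1)$ nbc sets of the restricted arrangement $\mathcal{A}''$ with its induced (minimal-label) ordering --- is precisely the deletion--restriction lemma for broken circuits that constitutes the combinatorial core of Orlik--Terao's proof, so you are not bypassing the cited result but re-deriving it. The Hodge-theoretic input you add (short-exactness of the residue sequence by comparing weights $2k$ and $2k+2$, then applying the exact functor $\F^k$) is a clean way to obtain the three-term sequence of spaces of logarithmic forms, and is consistent with how the paper uses purity elsewhere; with the generation statement made explicit and the combinatorial lemma supplied, your proof would be complete and self-contained where the paper's is not.
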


\begin{proof}
This is \cite[Theorem 3.55]{orlikterao}.
\end{proof}

\subsection{Canonical forms}

For a set $I=\{i_1<\cdots<i_n\}\subset \{1,\ldots,N\}$ such that $H_{i_1},\ldots, H_{i_n}$ are linearly independent, we let $H_I:=H_{i_1}\cap \cdots \cap H_{i_n}$ be the corresponding point, and let 
$$\partial_I : \H_n(\PP^n_\CC\setminus\mathcal{A}) \to \H_0(H_I)=\QQ$$
denote the composition of the partial boundary maps
$$\partial_{H_{i_1}\cap\cdots \cap H_{i_n}}\circ\cdots \circ \partial_{H_{i_{n-1}}\cap H_{i_n}}\circ \partial_{H_{i_n}}.$$
If $\mathcal{A}$ is the complexification of a real arrangement of hyperplanes $\mathcal{A}_\RR$ and $\sigma=P$ is a region of $\mathcal{A}_\RR$, then $\partial_I(P)$ is in $\{-1,0,1\}$, and is $0$ if $P$ does not contain $H_I$ as a vertex. We also set
$$\omega_I:=\omega_{i_1}\wedge\cdots \wedge\omega_{i_n}.$$

\begin{rem}\label{rem:iterated-boundaries-hyperplanes}
In the case when the arrangement of hyperplanes is not normal crossing near $H_I$, the iterated boundary map $\partial_I$ depends on the linear order on $I$ (which is fixed by the chosen linear order on the set of hyperplanes) in a more subtle way than simply determining an overall sign. For instance, let $\mathcal{A}$ be the complexified arrangement of lines in $\PP^2_\CC$ consisting of $5$ lines, with $L_0$ at infinity, and $L_1, L_2, L_3, L_4$ as in Figure \ref{figureFourLines}. Let $\sigma$ be the shaded triangle, endowed with the orientation of the plane, giving rise to a relative homology class in $\H_2(\PP^2_\CC, \mathcal{A})$. Then 
$$\partial_{\{1,2\}}(\sigma) = \partial_{L_1\cap L_2}(\partial_{L_2}(\sigma)) = -1,$$
while
$$\partial_{L_1\cap L_2}(\partial_{L_1}(\sigma)) = 0$$
because $\partial_{L_1}(\sigma)=0$.

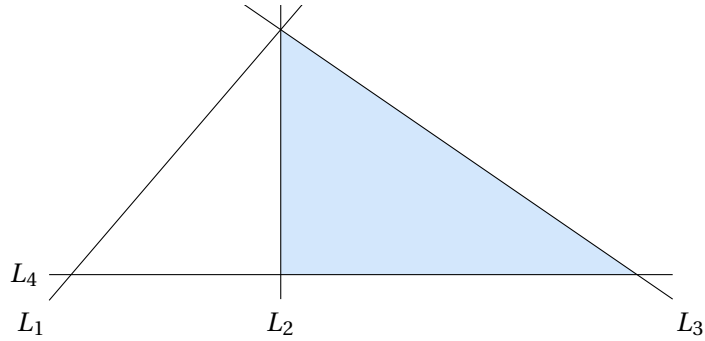
\begin{figure}[h]
\begin{center}
\begin{tikzpicture}[scale=.5]
    \begin{axis}[
    width=12cm, 
    domain=-.65:1.1, 
    xmin=-.8, xmax=1.2, 
    ymin=-.25, ymax=1.1,   
    axis x line = none, 
    axis y line = none,
    tick style={draw=none}, 
    ymajorticks=false, 
    xmajorticks=false,
    x post scale=1.8]
    \addplot[samples=200, name path = h] {0};
    \addplot[samples=200, name path = f] {1-x};
    \addplot[samples=200] {1+1.7*x};
    \addplot[samples=200] coordinates{(0,-.1) (0,1.2)};
    \addplot[MyBlue, opacity=0.2] fill between[of=f and h, soft clip={domain=0:1}];
    \node at (axis cs:-.7,-.2) {$L_1$};
    \node at (axis cs:0,-.2) {$L_2$};
    \node at (axis cs:1.15,-.2) {$L_3$};
    \node at (axis cs:-.72,0) {$L_4$};
    \end{axis}
\end{tikzpicture}
\end{center}
\caption{An arrangement of 4 lines (plus 1 line at infinity) in the projective plane.}
\label{figureFourLines}
\end{figure}
\end{rem}

We now give a general formula for canonical forms in the setting of arrangements of hyperplanes, which we believe to be new. It would be interesting to compare it with the ``dual volume'' formula of \cite[\S 7.4]{arkanihamedbailampositive}, valid in the case of complexified arrangements.

\begin{prop}\label{prop:canonical-form-nbc}
For $\sigma\in\H_n(\PP^n_\CC, \mathcal{A})$, we have
\begin{equation}\label{eq:canonical-form-nbc}
\omegacan_\sigma = \sum_{\substack{I \mbox{ \scriptsize{nbc set}} \\ |I|=n}} \partial_I(\sigma) \,\omega_I.
\end{equation}
\end{prop}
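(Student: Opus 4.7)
Both sides of \eqref{eq:canonical-form-nbc} are $\QQ$-linear in $\sigma$ and lie in $\logforms{n}(\PP^n_\CC \setminus \mathcal{A})$. Since the $\omega_I$ for $I$ nbc of size $n$ form a basis, the identity is equivalent to showing that the coefficient $c_I(\sigma)$ of $\omega_I$ in the nbc-expansion of $\omegacan_\sigma$ equals $\partial_I(\sigma)$. The plan is to extract this coefficient via an iterated residue, after reducing to a simple normal crossing situation by blow-up.

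Concretely, I would fix an iterated blow-up $\pi \colon \widetilde{X}\to \PP^n_\CC$ centred on the non-transverse strata of $\mathcal{A}$, producing a simple normal crossing divisor $\widetilde{\mathcal{A}}=\pi^{-1}(\mathcal{A})$. By invariance under modifications \eqref{eq:can-and-modification} one has $\pi^*\omegacan_\sigma = \omegacan_{\widetilde{\sigma}}$, and by Corollary \ref{coro:corners} the form $\omegacan_{\widetilde{\sigma}}$ is characterised uniquely by the relations $\Res_c(\omegacan_{\widetilde{\sigma}})=\partial_c(\widetilde{\sigma})$ at corners $c$ of $\widetilde{\mathcal{A}}$. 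Identity \eqref{eq:canonical-form-nbc} then reduces to:
\begin{enumerate}[(i)]
\item For each nbc set $I$ of size $n$, the iterated boundary $\partial_I(\sigma)$ is realised as $\partial_{c_I}(\widetilde{\sigma})$ for a distinguished corner $c_I$ of $\widetilde{\mathcal{A}}$ lying over $H_I$.
\item The nbc basis satisfies the orthogonality relations $\Res_{c_I}(\pi^*\omega_J)=\delta_{I,J}$ (with consistent signs) for all nbc sets $I,J$ of size $n$.
\end{enumerate}
Assuming both, applying $\Res_{c_I}$ to $\pi^*\omegacan_\sigma=\sum_J c_J(\sigma)\pi^*\omega_J$ gives $c_I(\sigma)=\partial_{c_I}(\widetilde{\sigma})=\partial_I(\sigma)$, as required.

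The main obstacle is assertion (ii) at corners produced by blowing up a non-transverse flat of the arrangement, where more than $n$ hyperplanes meet. The nbc condition is precisely the matroid-theoretic recipe that selects, among the many $n$-subsets with a common intersection point, a single representative; one must verify that iterated residues along the blow-up sequence single out exactly these nbc representatives (and vanish on the broken-circuit ones), with the correct signs. I would organise this by induction on the number $N$ of hyperplanes, using the deletion–restriction short exact sequence from the proof of Proposition \ref{prop:bounded-regions}. Writing $\mathcal{A}=\mathcal{A}'\cup\{H_N\}$, the nbc sets containing $N$ are in bijection (via $I\leftrightarrow I\setminus\{N\}$) with nbc sets of the restriction $\mathcal{A}'\cap H_N$ on $H_N\simeq\PP^{n-1}_\CC$, whereas nbc sets not containing $N$ are nbc sets of the deleted arrangement $\mathcal{A}'$. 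The residue along $H_N$ sends $\omegacan_\sigma$ to $\omegacan_{\partial_{H_N}\sigma}$ by the recursion of Proposition \ref{prop:recursion}, and the two halves of the nbc-expansion match the two halves of this decomposition, reducing the induction step to residue computations on the explicit $\omega_I = \omega_{i_1}\wedge\cdots\wedge\omega_{i_n}$.
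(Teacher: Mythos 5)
Your plan has the right core idea --- expand $\omegacan_\sigma$ in the nbc basis and extract the coefficient of $\omega_I$ by an iterated residue that is matched against the iterated boundary $\partial_I(\sigma)$ --- and this is indeed how the paper argues. But as written the proposal has a genuine gap: everything is made to rest on your assertions (i) and (ii), and neither is established. Assertion (ii), the orthogonality $\Res_I(\omega_J)=\delta_{I,J}$ for nbc sets $I,J$ of size $n$, is precisely the crux of the whole proof; you correctly flag it as ``the main obstacle'' but only sketch a deletion--restriction induction without carrying it out (and the induction is delicate, since broken circuits are defined by deleting the \emph{smallest} element of a circuit, so the splitting of nbc sets according to whether they contain $H_N$ must be set up with care). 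The paper does not prove this relation either, but it cites it: it is Szenes's result \cite[Proposition 3.6]{szenes}. Without either a proof or a citation, your argument is incomplete at its central step.

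The second, avoidable, source of difficulty is your detour through a blow-up. A general arrangement is not a normal crossing divisor (already three concurrent lines in the plane fail), so Corollary \ref{coro:corners} does not apply directly and you are forced to resolve; but then your assertion (i) --- that each $\partial_I(\sigma)$ is realised as a single corner boundary $\partial_{c_I}(\widetilde\sigma)$ upstairs, with a bijection between size-$n$ nbc sets localised at a dense flat and corners of $\widetilde{\mathcal A}$ over it --- is itself a nontrivial combinatorial statement that you do not address. The paper sidesteps this entirely: it defines $\Res_I$ and $\partial_I$ as compositions of the partial residue and partial boundary maps along the nested intersections $H_{i_n}\supset H_{i_{n-1}}\cap H_{i_n}\supset\cdots$, and justifies the commutative square
$$\bigoplus_I \Res_I\circ \can_\CC \;=\; \bigoplus_I \can_\CC\circ\partial_I$$
by iterating Proposition \ref{prop:recursion-irreducible-components}, which only requires smoothness of the strata complements and genus zero of the pairs $(H_i,Z_i)$ --- conditions automatic for hyperplane arrangements --- and needs no resolution of singularities. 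Combined with Szenes's orthogonality, which exhibits $\bigoplus_I\Res_I$ as an isomorphism with explicit inverse $1\mapsto\omega_I$, the formula follows immediately. I would recommend restructuring your argument along these lines: drop the blow-up, and either cite \cite{szenes} or supply a complete proof of the orthogonality relation.
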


\begin{proof}
    For a set $I=\{i_1<\cdots <i_n\}\subset \{1,\ldots,N\}$ such that the hyperplanes $H_{i_1},\ldots, H_{i_n}$ are linearly independent, we denote by 
    $$\Res_I:\logforms{n}(\PP^n_\CC\setminus\mathcal{A})\to \Omega^0(H_{i_1}\cap\cdots \cap H_{i_n})=\CC$$ 
    the composition of the residue maps 
    $$\Res_{H_{i_1}\cap\cdots \cap H_{i_n}}\circ \cdots\circ \Res_{H_{i_{n-1}}\cap H_{i_n}}\circ \Res_{H_{i_n}}.$$
    Note the subtle dependence in the chosen linear order on the set of hyperplanes, analogous to that of iterated boundaries (Remark \ref{rem:iterated-boundaries-hyperplanes}).
    We clearly have  $\Res_I(\omega_I)=1$. As observed by Szenes \cite[Proposition 3.6]{szenes}, the nbc basis has the additional remarkable property that for two nbc sets $I$, $J$ of cardinality $n$,
    \begin{equation}\label{eq:nbc-residues}
    \Res_I(\omega_J)=\delta_{I,J}.
    \end{equation}
    By iterating Proposition \ref{prop:recursion-irreducible-components} we obtain the following commutative diagram.
    $$\diagram{
    \H_n(\PP^n_\CC,\mathcal{A})_\CC \ar[r]^-{\can_\CC} \ar[d]_-{\bigoplus \partial_I} & \logforms{n}(\PP^n_\CC\setminus\mathcal{A}) \ar[d]^-{\bigoplus\Res_I}\\
    \displaystyle\bigoplus_{\substack{I \mbox{ \scriptsize{nbc set}} \\ |I|=n}}\H_0(H_I)_\CC \ar[r]^-{\can_\CC} & \displaystyle\bigoplus_{\substack{I \mbox{ \scriptsize{nbc set}} \\ |I|=n}} \Omega^0(H_I)
    }$$
    By \eqref{eq:nbc-residues}, the  vertical map on the right is an isomorphism with inverse given by the natural maps $\Omega^0(H_I)\to \logforms{n}(\PP^n_\CC\setminus \mathcal{A})$ sending $1\in\Omega^0(H_I)$ to $\omega_I$. The result follows.
\end{proof}

\begin{rem}\label{rem:hyperplane-at-infinity-irrelevant}
For the sake of writing down simple formulas, we chose a hyperplane $H_0$ in $\mathcal{A}$ that we treated as the hyperplane at infinity. One can however interpret the formula for the canonical form independently of such a choice, as follows. Suppose that we start with a projective arrangement $H_1\cup\cdots \cup H_N$ and a class $\sigma\in \H_n(\PP^n_\CC,H_1\cup\cdots \cup H_N)$. Suppose that $H_0$ is in general position with respect to $H_1,\ldots,H_N$. Then setting $\omega_i=\dlog(f_i)$ instead of $\dlog(f_i/f_0)$, one sees that \eqref{eq:canonical-form-nbc} still holds, with the caveat that only the sum defines a projective form, and not any individual summand. This is because the canonical form $\omegacan_\sigma$ does not have any pole along $H_0$.
\end{rem}

\begin{rem}
A notable consequence of Proposition \ref{prop:canonical-form-nbc} is that a canonical form $\omegacan_\sigma$ is a sum of local contributions indexed by the \emph{corners} of $\mathcal{A}$, defined as the zero-dimensional intersections of hyperplanes in $\mathcal{A}$:
$$\omegacan_\sigma = \sum_c \omegacan_{\sigma,c},$$
where $\omegacan_{\sigma,c}$ is the sum of the terms corresponding to nbc sets $I$ such that $H_I$ is the corner $c$, and only depends on the (central) arrangement of hyperplanes near $c$.
This decomposition is in fact independent of the choices made, and simply reflects a canonical direct sum decomposition of the space $\logforms{n}(\PP^n_\CC\setminus \mathcal{A})$ indexed by the corners of $\mathcal{A}$ (it is called the ``Brieskorn decomposition'' in the literature, see \cite[Theorem 3.2]{dimcaarrangements}).
\end{rem}

\subsection{Convex polygons}

    In $\RR^2$, let $P$ be a convex polygon whose sides are bounded by lines $L_1,\ldots, L_n$ labeled counterclockwise. If $L_i$ is the vanishing locus of an affine function $f_i$, we let $\omega_i=\dlog(f_i)$. Then the canonical form of $P$ is
    \begin{equation}\label{eq:canonical-form-polygon}
    \omegacan_P = -(\omega_1\wedge \omega_2 + \omega_2\wedge \omega_3 + \cdots + \omega_{n-1}\wedge \omega_n + \omega_n\wedge \omega_1).
    \end{equation}
    Indeed, for every $i$ we have
    $$\Res_{L_i}(\omegacan_P) = (\omega_{i+1}-\omega_{i-1})|_{L_i},$$
    which is the canonical form of $\partial_{L_i}(P)$. Alternatively, one can apply Proposition \ref{prop:canonical-form-nbc} and note that $\partial_{L_i\cap L_{i+1}}\partial_{L_{i+1}}(P)=-1$.

\subsection{Simple polytopes}

    Let $P$ be a full dimensional simple polytope in $\RR^n$. For a vertex $v$ of $P$, the orientation of $\RR^n$ gives rise to a prescribed orientation (linear order up to even permutations) on the set of faces $F$ of $P$ containing $v$. Concretely, in the tangent space of $\RR^n$ at $v$ one can choose a positively oriented coordinate system $(x_1,\ldots,x_n)$ such that the tangent cone of $P$ is $\{x_1,\ldots, x_n\geq 0\}$, and the prescribed order is $F_1,\ldots ,F_n$ where $F_i:=\{x_i=0\}$. The following wedge product is therefore well-defined:
    $$\omegacan_{P,v}:=(-1)^{\frac{n(n+1)}{2}}\bigwedge_{F\ni v}\omega_F,$$
    where $\omega_F:=\dlog(f_F)$ with $f_F$ an affine function defining $F$. 
    
    \begin{prop}
    The canonical form of $P$ is given by the formula
    $$\omegacan_P = \sum_{v}\omegacan_{P,v}.$$
    \end{prop}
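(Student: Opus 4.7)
The plan is to combine the Brieskorn decomposition of the space of logarithmic forms with a local residue computation at each vertex of $P$. Let $\mathcal{A}$ be the affine hyperplane arrangement of facets of $P$. The Brieskorn decomposition (recalled in the remark following Proposition \ref{prop:canonical-form-nbc}; see \cite[Theorem 3.2]{dimcaarrangements}) writes every element of $\logforms{n}(\CC^n\setminus\mathcal{A})$ uniquely as a sum of local contributions $\omega=\sum_c\omega^{(c)}$ indexed by the finite corners $c$ of $\mathcal{A}$; the local contribution $\omega^{(c)}$ has logarithmic poles only along the $n$ hyperplanes through $c$ and depends only on the arrangement near $c$. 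By Proposition \ref{prop:corners}, the local contribution $\omegacan_P^{(c)}$ is uniquely characterized, among such local forms, by the condition $\Res_c(\omegacan_P^{(c)})=\partial_c(P)$.

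The first step is to identify the corners with non-zero local contributions. By the simplicity of $P$, every $0$-dimensional face of $P$ is a vertex, and therefore $\partial_c(P)=0$ for every finite corner $c$ of $\mathcal{A}$ that is not a vertex of $P$: iteratively taking partial boundaries of $P$ along the $n$ hyperplanes meeting at such a $c$ must at some intermediate stage produce a chain whose relevant intersection is not a face of $P$, and hence vanishes. Consequently $\omegacan_P^{(c)}=0$ in this case, and one obtains
$$\omegacan_P \;=\; \sum_{v\text{ vertex of }P}\omegacan_P^{(v)}.$$

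The second step, which is the heart of the proof, is the local identification $\omegacan_P^{(v)}=(-1)^{n(n+1)/2}\bigwedge_{F\ni v}\omega_F$ at each vertex $v$. Choose local coordinates $(x_1,\ldots,x_n)$ centered at $v$ such that $F_i=\{x_i=0\}$ is the prescribed positively oriented list of facets through $v$ and such that $P$ locally coincides with the positive orthant $\{x_1,\ldots,x_n\geq 0\}$ with its standard orientation. Then $\omega_{F_i}=\dlog(x_i)$, and applying the residue convention \eqref{eq:definition-residue} iteratively in the order $\Res_v\circ\Res_{F_{n-1}\cap F_n}\circ\cdots\circ\Res_{F_n}$ strips off $\dlog(x_n),\dlog(x_{n-1}),\ldots,\dlog(x_1)$ one at a time without introducing any additional sign, yielding $\Res_v\bigl((-1)^{n(n+1)/2}\bigwedge_{F\ni v}\omega_F\bigr)=(-1)^{n(n+1)/2}$. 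A parallel iterated partial boundary computation for the positive orthant, in which the outward-normal convention contributes a factor $(-1)^m$ at each step reducing the ambient dimension from $m$ to $m-1$, accumulates the total sign $(-1)^{\sum_{m=1}^n m}=(-1)^{n(n+1)/2}$, so $\partial_v(P)=(-1)^{n(n+1)/2}$. Since both forms have the same residue at $v$ and have poles only along facets through $v$, they are equal.

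The main technical hurdle is the sign bookkeeping in the last step; the constant $(-1)^{n(n+1)/2}$ in the definition of $\omegacan_{P,v}$ is tuned precisely to make the two parallel computations match. The polygon case $n=2$ provides a useful sanity check: the prefactor equals $-1$, and summing the vertex contributions reproduces the formula \eqref{eq:canonical-form-polygon}.
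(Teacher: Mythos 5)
Your overall strategy --- localize $\omegacan_P$ at the zero-dimensional strata of the facet arrangement and match an iterated residue against an iterated boundary at each vertex --- is essentially the computation the paper performs, just packaged through the Brieskorn decomposition rather than through Proposition \ref{prop:canonical-form-nbc}. Your vertex computation is correct: the iterated residue of $(-1)^{n(n+1)/2}\bigwedge_{F\ni v}\omega_F$ and the iterated boundary $(-1)^{n+(n-1)+\cdots+1}$ of the positive orthant are taken in compatible orders, and the signs agree with the paper's.

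The gap is in your second step: the claim that the local Brieskorn component $\omegacan_P^{(c)}$ is \emph{characterized} by the single number $\Res_c(\omegacan_P^{(c)})=\partial_c(P)$, which you then use to kill the non-vertex corners. This holds only when $c$ is a normal crossing point of the arrangement, i.e.\ when exactly $n$ facet hyperplanes pass through $c$, since only then is the local space of logarithmic $n$-forms one-dimensional. Simplicity of $P$ guarantees this at the vertices of $P$ (a facet hyperplane through $v$ supports a facet containing $v$, so exactly $n$ hyperplanes of $\mathcal{A}$ pass through $v$), but it controls nothing at corners of $\mathcal{A}$ lying outside $P$: more than $n$ facet hyperplanes of a simple polytope can be concurrent at an exterior point (already for a quadrilateral, three edge lines can meet outside it). At such a corner, $\Res_c$ and $\partial_c$ are not defined by Proposition \ref{prop:corners}, the local component has dimension $>1$, and one must show that \emph{all} iterated residues $\Res_I(\omegacan_P)$, for $I$ an independent $n$-subset of the hyperplanes through $c$, vanish. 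This does follow from $\partial_I(P)=0$ (true because $H_I\not\subset P$) by iterating Proposition \ref{prop:recursion-irreducible-components} --- but that is exactly the content of Proposition \ref{prop:canonical-form-nbc}, so the clean repair is to run the paper's route: apply \eqref{eq:canonical-form-nbc}, observe that the only nbc sets $I$ of size $n$ with $\partial_I(P)\neq 0$ are the sets $I_v$ of facets at a vertex $v$ (each $I_v$ is automatically nbc, since a broken circuit inside $I_v$ would force the omitted hyperplane of the circuit to pass through $v$ as well, contradicting the independence of the $n$ facets at a simple vertex), and then conclude with your orthant sign computation. Note that the paper's own proof treats this same point lightly, asserting that simplicity rules out circuits globally; the local statement just described is what is actually needed, and your vertex analysis supplies it.
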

    
    \begin{proof}
    We apply Proposition \ref{prop:canonical-form-nbc} and note that there are no circuits in the hyperplane arrangement corresponding to the boundary of $P$ because $P$ is simple. If $Q_n=\{x_1,\ldots,x_n\geq 0\}$ denotes the standard $n$-quadrant with its usual orientation, its partial boundary $\partial_{\{x_n=0\}}Q_n$ is $Q_{n-1}$ with orientation multiplied by $(-1)^n$. Therefore, the iterated boundary $\partial_{1,\ldots,n}Q_n$ equals $(-1)^{n+(n-1)+\cdots +2 +1}$, and the claim follows.
    \end{proof}
  
    \begin{ex}
    One can check the consistency of the sign in the above formula with the formula for the canonical form of the hypercube $[0,1]^n$, which according to Proposition \ref{prop:multiplicativity} is
    $$\omegacan_{[0,1]^n} = (-1)^{\frac{n(n-1)}{2}} \bigwedge_{i=1}^n\dlog\left(\frac{z_i-1}{z_i}\right) = (-1)^{\frac{n(n+1)}{2}} \frac{\dd z_1}{z_1(1-z_1)}\wedge\cdots \wedge \frac{\dd z_n}{z_n(1-z_n)}.$$
    Indeed, the contribution of the vertex $(0,\ldots, 0)$ is $(-1)^{\frac{n(n+1)}{2}}\tfrac{\dd z_1}{z_1}\wedge\cdots \wedge \tfrac{\dd z_n}{z_n}$.
    \end{ex}

    \begin{ex}\label{ex:can-simplex}
    The canonical form of the $n$-simplex $\Delta^n=\{0\leq z_1\leq \cdots \leq z_n\leq 1\}$ is given by the formula
    $$\omegacan_{\Delta^n} = (-1)^{\frac{n(n+1)}{2}} \frac{\dd z_1\wedge\cdots \wedge \dd z_n}{z_1(z_2-z_1)\cdots (z_n-z_{n-1})(1-z_n)}.$$
    \end{ex}

\subsection{Convex polyhedra}

We start with an example. Let $P$ be the square pyramid in $\RR^3$ bounded by the $5$ hyperplanes
$$H_1=\{x=z\}, \; H_2=\{y=z\}, \; H_3=\{x=-z\}, \; H_4= \{y=-z\}, \mbox{ and } H_5=\{z=-1\}.$$
It is pictured in Figure \ref{figurePyramid}: looking from above, i.e., from the positive $z$-axis, one sees its triangular faces labeled $1, 2, 3, 4$ counterclockwise.

    \begin{figure}[h]
    \tdplotsetmaincoords{70}{120}
    \begin{center}
    \begin{tikzpicture}[scale=1.3, tdplot_main_coords,line cap=butt,line join=round,c/.style={circle,fill,color=MyDarkBlue,inner sep=1pt},
        declare function={a=4;h=3;}]
        \path
        (0,0,0) coordinate (A)
        (a,0,0) coordinate (B)
        (a,a,0) coordinate (C)
        (0,a,0) coordinate (D)     
    (a/2,a/2,h)  coordinate (S);
\draw[thick, color=MyDarkBlue] (S) -- (D) -- (C) -- (B) -- cycle (S) -- (C);
\draw[dashed, color=MyDarkBlue] (S) -- (A) --(D) (A) -- (B);
        
    \end{tikzpicture}
    \hspace{1cm}
    \begin{tikzpicture}[scale=1.3]
    \node at (0,0) {};
    \begin{scope}[yshift=2cm]
    \draw[thick, color=MyDarkBlue] (1.7,1.7)--(-1.7,1.7)--(-1.7,-1.7)--(1.7,-1.7)--(1.7,1.7);
    \draw[thick, color=MyDarkBlue] (1.7,1.7)--(-1.7,-1.7);
    \draw[thick, color=MyDarkBlue] (-1.7,1.7)--(1.7,-1.7);
    \node at (1,0) {$\textcolor{MyDarkBlue}{3}$};
    \node at (0,1) {$\textcolor{MyDarkBlue}{4}$};
    \node at (-1,0) {$\textcolor{MyDarkBlue}{1}$};
    \node at (0,-1) {$\textcolor{MyDarkBlue}{2}$};
    \node at (0,0.3) {};
    \end{scope}
    \end{tikzpicture}
    \end{center}
    \caption{The square pyramid (left), viewed from above (right).}
    \label{figurePyramid}
    \end{figure}
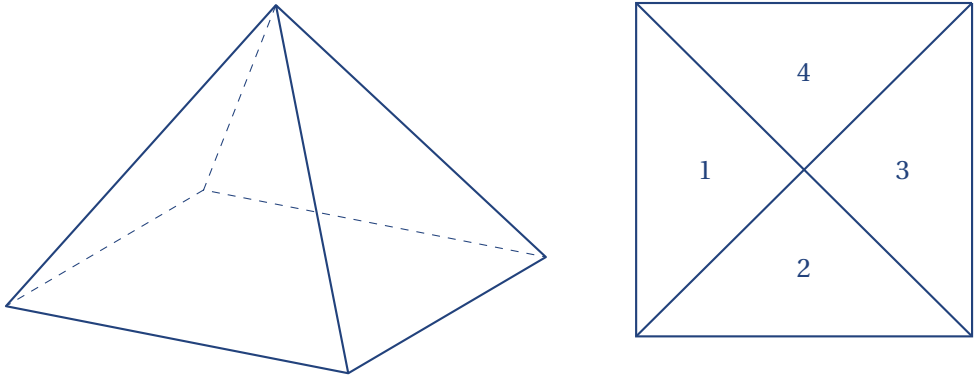

The only circuit is $\{1,2,3,4\}$, and we record the corresponding Orlik--Solomon relation
\begin{equation}\label{eq:OS-relation-pyramid}
\omega_1\wedge\omega_2\wedge\omega_3 - \omega_1\wedge\omega_2\wedge\omega_4+\omega_1\wedge\omega_3\wedge\omega_4-\omega_2\wedge\omega_3\wedge\omega_4=0.
\end{equation}
The nbc basis of $\Omega^3(\CC^3\setminus (H_1\cup H_2\cup H_3\cup H_4\cup H_5))$ consists of the seven monomials
\begin{center}$\omega_1\wedge\omega_2\wedge\omega_3 \;, \;\omega_1\wedge\omega_2\wedge\omega_4 \;,\;\omega_1\wedge\omega_3\wedge\omega_4 \;,$
\end{center}
\vspace{-3pt}
\begin{center}$\omega_1\wedge\omega_2\wedge\omega_5 \;, \;\omega_2\wedge\omega_3\wedge \omega_5 \;,\;\omega_3\wedge\omega_4\wedge \omega_5 \;, \;\omega_1\wedge\omega_4\wedge\omega_5 \; ,$
\end{center}
where the first three have residues along the top of the pyramid, and the last four along the base. 
Applying Proposition \ref{prop:canonical-form-nbc} gives the canonical form of $P$ written in this basis as 
\begin{align*}
\omegacan_P=& -\omega_1\wedge\omega_2\wedge\omega_3-\omega_1\wedge\omega_3\wedge\omega_4 \\
& + \omega_1\wedge\omega_2\wedge\omega_5 +\omega_2\wedge\omega_3\wedge\omega_5 +\omega_3\wedge\omega_4\wedge\omega_5 - \omega_1\wedge \omega_4\wedge \omega_5.
\end{align*}
Note that $\omega_1\wedge \omega_2\wedge \omega_4$ does not appear because $\partial_{\{1,2,4\}}(P)=0$. Indeed, $\partial_{H_4}(P)$ is a triangle on $H_4$ for which $H_2\cap H_4$ is not an edge but a vertex, and hence $\partial_{H_2\cap H_4}(\partial_{H_4}(P))=0$. 

One can use \eqref{eq:OS-relation-pyramid} to get an expression for $\omegacan_P$ where the cyclic symmetry $1\to 2\to 3\to 4\to 1$ is apparent:
\begin{align*}
    \omegacan_P  &= - \tfrac{1}{2}(\omega_1\wedge\omega_2\wedge \omega_3 + \omega_2\wedge\omega_3\wedge\omega_4 + \omega_3\wedge\omega_4\wedge\omega_1+\omega_4\wedge\omega_1\wedge\omega_2)\\
    & \qquad\quad + (\omega_1\wedge\omega_2+\omega_2\wedge\omega_3+\omega_3\wedge\omega_4+\omega_4\wedge\omega_1)\wedge \omega_5 \\
    & = -\tfrac{1}{2}\mathrm{Cyc}_4\left( \omega_1\wedge\omega_2\wedge\omega_3\right) + \mathrm{Cyc}_4(\omega_1\wedge\omega_2)\wedge\omega_5.
\end{align*}

The example of the square pyramid actually teaches us how to compute the canonical form of any convex polyhedron $P$ in $\RR^3$. It is a sum
$$\omegacan_P = \sum_v\omegacan_{P,v}$$
indexed by vertices $v$ of $P$, where $\omegacan_{P,v}$ can be described as follows. Let $F_1,\ldots,F_r$ denote the faces of $P$ at $v$, labeled counterclockwise when looking at $v$ from outside of $P$. Then 
$$\omegacan_{P,v} = -\sum_{i=2}^{r-1} \omega_1\wedge\omega_i\wedge\omega_{i+1}.$$
As above, averaging over the cyclic group $\ZZ/r\ZZ$ produces a more symmetric expression at the cost of introducing a denominator if $r\geq 4$.

\subsection{Moduli spaces of genus zero curves}

For $S$ a set with $n+3$ elements, we let 
$$\mathcal{M}_{0,S}\subset \overline{\mathcal{M}}_{0,S}$$
denote the moduli space of genus zero curves with marked points labeled by $S$, and its Deligne--Mumford compactification respectively. The latter is a smooth projective variety of dimension $n$, and the complement $\partial\overline{\mathcal{M}}_{0,S}:=\overline{\mathcal{M}}_{0,S}\setminus \mathcal{M}_{0,S}$ is a simple normal crossing divisor.

Fixing the genus zero curve to be $\PP^1_\CC$ and three of the marked points to be $0,1,\infty$, we have the following simple description:
$$\mathcal{M}_{0,S} = \{(z_1,\ldots,z_n)\in(\CC\setminus \{0,1\})^n \,|\, z_i\neq z_j \mbox{ if } i\neq j\}.$$
Consider $\PP^n_\CC$ with homogeneous coordinates $(z_0:z_1:\cdots :z_n)$, we see that $\mathcal{M}_{0,S}$ is the complement of the projective arrangement of hyperplanes
$$\mathcal{A}_n:=\bigcup_{0\leq i\leq n} \{z_i=0\} \cup \bigcup_{0\leq i<j\leq n} \{z_i=z_j\}.$$
By the work of Kapranov \cite{kapranov}, one can construct $\overline{\mathcal{M}}_{0,S}$ as a modification (Definition \ref{defi:modification})
$$\pi:(\overline{\mathcal{M}}_{0,S},\partial\overline{\mathcal{M}}_{0,S}) \to (\PP^n_\CC,\mathcal{A}_n),$$
explicitly given by an iterated blow-up along flats of $\mathcal{A}_n$. It follows that the pair $(\overline{\mathcal{M}}_{0,S},\partial\overline{\mathcal{M}}_{0,S})$ has genus zero, and by \eqref{eq:can-and-modification} one can compute canonical forms ``downstairs'' in projective space. For instance, there is a cell which is combinatorially an associahedron:
$$X_S\subset \overline{\mathcal{M}}_{0,S}(\RR),$$
with boundary along $\partial\overline{\mathcal{M}}_{0,S}$, which is mapped to the simplex $\Delta^n=\{0\leq z_1\leq \cdots \leq z_n\leq 1\}$ by $\pi$. Therefore, by Example \ref{ex:can-simplex}, its canonical form is given by the formula
$$\omegacan_{X_S} = (-1)^{\frac{n(n+1)}{2}} \frac{\dd z_1\wedge\cdots \wedge \dd z_n}{z_1(z_2-z_1)\cdots (z_n-z_{n-1})(1-z_n)} $$ 
in coordinates on $\mathcal{M}_{0,S}$.

\begin{rem}
The fact that moduli spaces of genus zero curves give rise to positive geometries in the sense of \cite{arkanihamedbailampositive} was first proved in \cite{arkanihamedhelam}, see also \cite{lammodulispaces}; the fact that $\omegacan_{X_S}$  only has logarithmic poles along $\partial X_S$ was known long before.  It would be interesting to study the more subtle case of (moduli spaces of) del Pezzo surfaces from the point of view of mixed Hodge theory \cite{positivedelpezzo}. 
\end{rem}

\bibliographystyle{hyperamsalpha}
\bibliography{biblio}
		
\end{document}